\font\wncyr=wncyr9.8
\newcommand{\sha}{\text{\wncyr{W}}}
\newtheorem{theorem}{Theorem}
\newtheorem{lemma}[theorem]{Lemma}
\newtheorem{prop}[theorem]{Proposition}
\newtheorem{cor}[theorem]{Corollary}
\newtheorem{conv}[theorem]{Convention}
\newtheorem{nota}[theorem]{Notation}
\theoremstyle{definition}
\newtheorem{definition}[theorem]{Definition}
\newtheorem{remark}[theorem]{Remark}
\newtheorem{theoremintro}{Theorem}
\newcommand{\xrightarrowdbl}[2][]{%
  \xrightarrow[#1]{#2}\mathrel{\mkern-14mu}\rightarrow}
\newcommand{\rank}{\operatorname{rank}}
\newcommand{\corank}{\operatorname{corank}}
\renewcommand{\thetheorem}{\arabic{section}.\arabic{subsection}.\arabic{theorem}}
\begin{document}

\title{Structure of ordinary $\Lambda$-adic arithmetic cohomology groups}
\author{JAEHOON LEE}
\maketitle

\begin{abstract} We study the $\Lambda$-module structure of the ordinary parts of the arithmetic cohomology groups of modular Jacobians made out of various towers of modular curves. We prove that the ordinary parts of $\Lambda$-adic Selmer groups coming from two different towers have ``almost same" $\Lambda$-module structures. We also prove the cotorsionness of $\Lambda$-adic Tate-Shafarevich group under mild assumptions.
\end{abstract}

\tableofcontents

\section{Introduction}

\smallskip

\subsection{Basic Setups}

\medskip 

Fix one rational prime $p \geq 5$, a positive integer $N$ prime to $p$ and $W$ an integer ring of a finite unramified extension of $\mathbb{Q}_p$ throughout the paper. We let $\Lambda:=W[\![T]\!]$, the one variable power series ring over $W$. We also define $\omega_n=\omega_n(T):=(1+T)^{p^{n-1}}-1$. 

\medskip

In this paper, we study the various modular curves defined in \cite{hida2015analytic}. (See Definition \ref{definition 2.2.2} for details) For simplicity, in this introduction 
let $X_{r /\mathbb{Q}}$ be the (compactified) modular curve classifying the triples 
$$(E, \mu_N \xhookrightarrow{\phi_N} E, \mu_{p^r} \xhookrightarrow{\phi_{p^r}} E[p^r] )_{/R}$$ where $E$ is an elliptic curve defined over a $\mathbb{Q}$-algebra $R$. Define $X'_{r /\mathbb{Q}}$ be the another modular curve classifying the triples 
$$(E, \mu_N \xhookrightarrow{\phi_N} E, E[p^r] \xrightarrowdbl{\varphi_{p^r}} \mathbb{Z}/p^r\mathbb{Z})_{/R}.$$ They are smooth projective curves defined over $\mathbb{Q}$ on which the group $(\mathbb{Z}/N\mathbb{Z})^{\times}\times\mathbb{Z}_p^{\times}$ acts naturally. (For an explicit description, see subsection \ref{sec 2.2})

\begin{remark}\label{remark 1.1.1} As explained in the introduction of \cite{hida2015analytic}, the tower of modular curves $ \lbrace X_{r /\mathbb{Q}} \rbrace$ (resp. $ \lbrace X'_{r /\mathbb{Q}} \rbrace$) corresponds to one variable $p$-adic L-function interpolating $L(k+2, k+1)$ (resp. $L(k+2, 1)$) for integers $k$, where $L$ is a Kitagawa-Mazur two variable $p$-adic $L$-function. Studying the general tower $ \lbrace X_{\alpha, \delta, \xi}(Np^r)_{/\mathbb{Q}} \rbrace$ (see subsection \ref{sec 2.2} for the precise definition) and moving $(\alpha, \delta)$ corresponds to studying various one-variable specialization of $L$-function (interpolating $L((\alpha+\delta)k+2, \delta k+1)$ for integers $k$) on analytic side.
\end{remark}

Let $J_{r /\mathbb{Q}}$ and $J_{r /\mathbb{Q}}'$ be the Jacobians of $X_r$ and $X'_r$, respectively. For a number field $K$, we have the Mordell-Weil group $J_{r}(K) \otimes \mathbb{Q}_p/\mathbb{Z}_p$, (geometric) $p$-adic Selmer group $\mathrm{Sel}_{K}(J_{r})_p$ and the $p$-adic Tate-Shafarevich group $\sha^{1}_{K}(J_{r})_p$ of the Jacobian $J_r$ (See \cite[Page 74, 75]{milne2006arithmetic} for the definition). We can apply the idempotent $\displaystyle e:=\lim_{n \rightarrow \infty}U(p)^{n!}$ to those groups to take ordinary parts $\left(J_{r}(K) \otimes \mathbb{Q}_p/\mathbb{Z}_p\right)^{ord}$, $\mathrm{Sel}_{K}(J_{r})^{ord}_p$ and $\sha^{1}_{K}(J_{r})^{ord}_p$. (See Definition \ref{definition 4.1.3} for the precise definition.) By the Picard functoriality, we can consider \begin{itemize}
\item $\displaystyle \left(J_{\infty}(K) \otimes \mathbb{Q}_p/\mathbb{Z}_p\right)^{ord}:=\lim_{\substack{\longrightarrow \\ r}}\left(J_{r}(K) \otimes \mathbb{Q}_p/\mathbb{Z}_p\right)^{ord}$
\item $\displaystyle \mathrm{Sel}_{K}(J_{\infty})^{ord}_p:=\lim_{\substack{\longrightarrow \\ r}}\mathrm{Sel}_{K}(J_{r})^{ord}_p$
\item $\displaystyle \sha^{1}_{K}(J_{\infty})^{ord}_p:=\lim_{\substack{\longrightarrow \\ r}}\sha^{1}_{K}(J_{r})^{ord}_p$
\end{itemize} which are naturally $\Lambda$-modules on which we will give discrete topology. We call them as the $\Lambda$-adic Mordell-Weil group, $\Lambda$-adic Selmer group and the $\Lambda$-adic Tate-Shafarevich group in order following \cite{hida2015analytic}. 

\smallskip

For the Selmer groups, we have the natural map $$s_r:\mathrm{Sel}_{K}(J_{r})^{ord}_p \rightarrow \mathrm{Sel}_{K}(J_{\infty})^{ord}_p[\omega_{r}]$$ induced by the restrictions whose kernel $\mathrm{Ker}(s_r)$ is finite and bounded independent of $n$. (See Lemma \ref{lemma 7.0.2} and Remark \ref{remark 8.1.3}-(3)) For the cokernel, we have the following theorem of Hida  (\cite[Theorem 10.4]{hida2015analytic}) which we refer as ``Hida's control theorem" in this article.

\smallskip

\begin{theorem}[Hida]\label{Theorem H}  If $X_r$ does not have split multiplicative reduction at all places of $K$ dividing $p$, then $\mathrm{Coker}(s_r)$ is finite. In particular, $\mathrm{Coker}(s_r)$ is finite for all $r \geq 2$.
\end{theorem}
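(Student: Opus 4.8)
The plan is to run the standard control argument for Greenberg-type Selmer groups, feeding in Hida's theory of ordinary $\Lambda$-adic forms at the places above $p$. Let $S$ be the set of places of $K$ dividing $Np$ together with the archimedean ones and, using Picard functoriality, set $\mathcal{A}:=\varinjlim_{r} J_r[p^\infty]^{ord}$, a cofinitely generated discrete $\Lambda$-module with continuous $G_{K,S}$-action; by Hida's freeness theorem its Pontryagin dual is free over $\Lambda$ up to finite error, so $\omega_r$ acts on $\mathcal{A}$ with finite, bounded cokernel. I would first identify $\mathrm{Sel}_K(J_\infty)^{ord}_p$ with the Selmer group $\mathrm{Sel}(\mathcal{A}/K)\subseteq H^1(G_{K,S},\mathcal{A})$ whose local conditions are the direct limits of those defining $\mathrm{Sel}_K(J_r)^{ord}_p$ (immediate since cohomology and images of local points commute with direct limits), and then compare $\mathrm{Sel}_K(J_r)^{ord}_p$ with $\mathrm{Sel}(\mathcal{A}/K)[\omega_r]$ by the snake lemma applied to the two exact sequences $0\to\mathrm{Sel}(-/K)\to H^1(G_{K,S},-)\to\bigoplus_{v\in S}\mathcal{H}_v(-)$ (with $\mathcal{H}_v$ the local singular quotient), the first for $J_r[p^\infty]^{ord}$ and the second for $\mathcal{A}$, followed by $-[\omega_r]$. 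This expresses $\mathrm{Coker}(s_r)$ in terms of the cokernel of the degree-one global comparison map and the kernels of the local comparison maps $\mathcal{H}_v(J_r)\to\mathcal{H}_v(J_\infty)[\omega_r]$, $v\in S$.

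For the global term the key input is the module-level control statement that $J_r[p^\infty]^{ord}\to\mathcal{A}[\omega_r]$ has finite kernel and cokernel, bounded independently of $r$ --- of a piece with Lemma~\ref{lemma 7.0.2}, and ultimately resting on Hida's freeness and control results for the big ordinary Hecke module. Since $G_{K,S}$ has finite cohomology in finite $p$-power-torsion coefficients, the associated long exact sequences, together with the (finite, bounded) cokernel of $H^1(G_{K,S},\mathcal{A}[\omega_r])\to H^1(G_{K,S},\mathcal{A})[\omega_r]$ coming from $\omega_r$ having bounded cokernel on $\mathcal{A}$, show that $H^1(G_{K,S},J_r[p^\infty]^{ord})\to H^1(G_{K,S},\mathcal{A})[\omega_r]$ has finite and bounded kernel and cokernel. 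Hence $\mathrm{Coker}(s_r)$ is finite provided each local kernel is.

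The local kernels at $v\in S$ with $v\nmid p$ are harmless: there $J_r(K_v)\otimes\mathbb{Q}_p/\mathbb{Z}_p$ is finite and the relevant local cohomology groups and transition maps are controlled by standard facts about $H^1$ of abelian varieties at places $v\nmid p$ (the classical part of Mazur's and Greenberg's arguments). The serious case is $v\mid p$, where $J_r(K_v)\otimes\mathbb{Q}_p/\mathbb{Z}_p$ has corank growing with $r$. Here the idempotent $e$ furnishes, for each $v\mid p$, a short exact sequence of $G_{K_v}$-modules $0\to\mathcal{F}^{+}_v\mathcal{A}\to\mathcal{A}\to\mathcal{A}^{-}_v\to 0$ with $\mathcal{A}^{-}_v$ \emph{unramified}, compatible with its finite-level analogues, and --- this is where Hida's ordinary local theory enters, in the spirit of Coates--Greenberg --- the Selmer condition at $v$ agrees up to finite error with the Greenberg condition $\ker\bigl(H^1(K_v,-)\to H^1(K_v,\mathcal{A}^{-}_v)\bigr)$. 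The kernel of the local comparison map is then controlled by the Galois invariants $H^0(K_v,\mathcal{A}^{-}_v[\omega_r])$, $H^0(K_v,(\mathcal{F}^{+}_v\mathcal{A})/\omega_r)$ and $H^0(K_v,\mathcal{A}^{-}_v)[\omega_r]$; since $\mathcal{A}^{-}_v$ is unramified these are governed by $\mathrm{Frob}_v-1$, and each is finite and bounded in $r$ exactly when $1$ is not a Frobenius eigenvalue on the relevant graded piece, i.e.\ when $X_r$ has no split multiplicative reduction at $v$ --- which is the hypothesis. Finally, for $r\ge 2$ the known structure of the special fibre of $X_r$ at $p$ (its irreducible components and the Frobenius/Atkin--Lehner action on them) excludes split multiplicative reduction at all $v\mid p$, giving the last assertion.

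As with all control theorems of this type, the main obstacle is the local analysis at $p$: identifying the Selmer local condition (image of Mordell--Weil points) with the Greenberg condition attached to the ordinary filtration is not formal and uses genuine $p$-adic Hodge theory and Hida theory, and one must translate ``no split multiplicative reduction'' into the statement that $\mathrm{Frob}_v-1$ acts on $\mathcal{A}^{-}_v$ with finite cokernel bounded in $r$. Together with the module-level $\omega_r$-control of $J_r[p^\infty]^{ord}\to\mathcal{A}[\omega_r]$, these are the real content; the rest is snake-lemma bookkeeping.
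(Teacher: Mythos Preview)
The paper does not prove this statement: Theorem~\ref{Theorem H} is quoted from Hida's paper \cite[Theorem~10.4]{hida2015analytic} and used as a black box throughout (see the sentence preceding the theorem, and Notation~\ref{Selmer}). So there is no ``paper's own proof'' to compare against.

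That said, your sketch follows the standard Greenberg--Mazur control-theorem template and is broadly in the spirit of how such results are proved: snake lemma between the Selmer-defining sequences at level $r$ and at infinity, bounded global error via Lemma~\ref{lemma 7.0.2}, and the real work concentrated at $v\mid p$ where one must identify the image-of-points local condition with the Greenberg condition attached to the ordinary filtration and then translate the split-multiplicative hypothesis into finiteness of the relevant Frobenius coinvariants. One caution: the claim ``for $r\ge 2$ the known structure of the special fibre of $X_r$ at $p$ \ldots\ excludes split multiplicative reduction at all $v\mid p$'' is exactly the input you need for the last sentence, but it is not something you have justified here---it requires the explicit description of the reduction of $X_r$ (Deligne--Rapoport/Katz--Mazur) and an analysis of how Frobenius permutes the components, which is where Hida's argument actually does the work. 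Your outline is a reasonable roadmap, but the substantive steps (the local-condition identification at $p$ and the reduction-type analysis) are asserted rather than carried out.
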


\smallskip

Hence the behavior of (the \emph{ordinary} parts of) the Selmer groups made out of towers of modular curves are \emph{controlled} by the one $\Lambda$-adic object $\mathrm{Sel}_{K}(J_{\infty})^{ord}_p$ and it is  natural to study the \emph{structure} of $\mathrm{Sel}_{K}(J_{\infty})^{ord}_p$ as a $\Lambda$-module (under the control theorem).

\smallskip

For a finitely generated $\Lambda$-module $M$, there is a $\Lambda$-linear map $$\displaystyle M \rightarrow \Lambda^{r}\oplus \left(\bigoplus_{i=1}^{n}\frac{\Lambda}{g_{i}^{e_i}}\right)\oplus \left( \bigoplus_{j=1}^{m}\frac{\Lambda}{p^{f_j}} \right)$$ with finite kernel and cokernel where $r, n, m \geq 0$, $e_1, \cdots e_n, f_1, \cdots f_m$ are positive integers, and $g_1, \cdots g_n$ are distinguished irreducible polynomial of $\Lambda$. The quantities $r, e_1, \cdots e_n, f_1, \cdots f_m, g_1, \cdots g_n$ are uniquely determined, and we call $$\displaystyle E(M):=\Lambda^{r}\oplus \left(\bigoplus_{i=1}^{n}\frac{\Lambda}{g_{i}^{e_i}}\right)\oplus \left( \bigoplus_{j=1}^{m}\frac{\Lambda}{p^{f_j}} \right)$$ as an \emph{elementary module of $M$} following \cite[Page 292]{neukirch2000cohomology}. \\

In this paper, we want to study the module $$E\left( (\mathrm{Sel}_{K}(J_{\infty})^{ord}_p)^{\vee} \right).$$ We will also consider the elementary modules of (the Pontryagin dual of) $$\left(J_{\infty}(K) \otimes \mathbb{Q}_p/\mathbb{Z}_p\right)^{ord} \quad \mathrm{and} \quad \sha^{1}_{K}(J_{\infty})^{ord}_p.$$

\smallskip

\begin{remark}\label{remark 1.1.3} Hida \cite[Theorem 5.6]{hida2015analytic} proved that in most cases, the connected component of big ordinary Hecke algebra is isomorphic to $\Lambda$. Hence our result also describes Hecke module structure of cohomology groups for those cases.
\end{remark}

\begin{remark}\label{remark 1.1.4}
\emph{The main novelties} of this paper are:
\begin{itemize}
\item This paper has three main results having similar flavour (and proofs) with those of \cite{Lee2018}. The main difference is the ``origin" of $p$-adic variation: including \cite{Lee2018}, the Selmer groups of an abelian variety (or Galois representations) under \emph{cyclotomic} variations are usually considered. (For instance, \cite{mazur1972rational}) On the other hand, this paper deals with the $p$-adic variation of the \emph{classical} Selmer groups from \emph{towers of modular curves.}
\smallskip
\item The main results of this paper works under Hida's control theorem and does not need the \emph{cotorsionness} assumption on the Selmer groups.
\smallskip
\item Instead of the characteristic ideals of the modules above (which are usually studied because of their connections with the Iwasawa Main Conjecture), we can study the $\Lambda$-module \emph{structure} of the Mordell-Weil, Selmer and Tate-Shafarevich groups in a purely algebraic way.
\end{itemize}
\end{remark}

\smallskip

\subsection{Statements of the main results} \label{sec 1.2}

Our first result compares the structure of the Selmer groups for two towers $ \lbrace X_{r /\mathbb{Q}} \rbrace$ and $ \lbrace X'_{r /\mathbb{Q}} \rbrace$ as $\Lambda$-modules. On the analytic side, we have a functional equation of the $p$-adic $L$-function relating two values $L(k+2, k+1)$ and $L(k+2, 1)$. (See \cite[Corollary 17.2]{mazur1986onp} and \cite[Theorem 3.8.1-(2)]{greenberg1994conjecture})
From the perspective of the Iwasawa Main Conjecture, we can expect that the similar duality result would be true on the algebraic side also. 

\smallskip

\begin{theoremintro}[Main Theorem $\mathrm{I}$: Theorem \ref{theorem 8.3.10}]\label{Theorem A} We have an isomorphism $$E\left( (\mathrm{Sel}_{K}(J_{\infty})^{ord}_p)^{\vee} \right) \simeq E\left( (\mathrm{Sel}_{K}(J'_{\infty})^{ord}_p)^{\vee} \right)^{\iota}$$ of $\Lambda$-modules. Here $\iota$ is an involution of $\Lambda$ satisfying $\iota(T)=\frac{1}{1+T}-1$.
\end{theoremintro}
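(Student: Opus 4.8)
The plan is to relate the two towers $\{X_r\}$ and $\{X_r'\}$ not by a direct map — there is none, since one uses level structure $\mu_{p^r}\hookrightarrow E[p^r]$ and the other $E[p^r]\twoheadrightarrow \mathbb{Z}/p^r$ — but through the Weil pairing / Atkin–Lehner involution $w_r$, which identifies $X_r$ with $X_r'$ up to the action of the diamond operators and intertwines the $U(p)$-operator with the dual operator $U(p)^*$. Concretely, $w_r$ sends $(E,\phi_N,\mu_{p^r}\hookrightarrow E[p^r])$ to a triple of the second type by dualizing the $p$-power level structure, and on Jacobians it induces an isomorphism $J_r \simeq J_r'$ over $\mathbb{Q}$ (or at worst over a cyclotomic field, to be tracked carefully) that is equivariant for the Hecke action only after twisting the $\Lambda$-action by $\iota$. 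The appearance of $\iota$ with $\iota(T)=\frac{1}{1+T}-1$ is exactly the effect of $w_r$ on the diamond operators $\langle u\rangle$ for $u\in\mathbb{Z}_p^\times$, which get sent to $\langle u^{-1}\rangle$, and since $1+T$ corresponds to $\langle 1+p\rangle$ (or the chosen topological generator of $1+p\mathbb{Z}_p$), inversion of the group element is precisely the substitution $1+T \mapsto (1+T)^{-1}$.

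First I would set up the Atkin–Lehner / Weil-pairing isomorphism $w_r: J_r \to J_r'$ at each finite level, check its compatibility with the transition maps in the two towers (Picard functoriality), and verify the Hecke-equivariance up to the $\iota$-twist, so that passing to the limit gives a $\Lambda$-semilinear isomorphism $\mathrm{Sel}_K(J_\infty)^{ord}_p \simeq \mathrm{Sel}_K(J'_\infty)^{ord}_p$ — equivalently a genuine $\Lambda$-linear isomorphism after applying $\iota$ to one side. Taking Pontryagin duals turns this into $(\mathrm{Sel}_K(J_\infty)^{ord}_p)^\vee \simeq ((\mathrm{Sel}_K(J'_\infty)^{ord}_p)^\vee)^\iota$. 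Since the formation of the elementary module $E(-)$ only depends on a finitely generated $\Lambda$-module up to pseudo-isomorphism, and $\iota$ is a ring automorphism of $\Lambda$ (so it preserves pseudo-isomorphisms, carries distinguished irreducible polynomials to distinguished irreducible polynomials, and fixes $p$), applying $E(-)$ to both sides yields the claimed isomorphism $E\big((\mathrm{Sel}_K(J_\infty)^{ord}_p)^\vee\big)\simeq E\big((\mathrm{Sel}_K(J'_\infty)^{ord}_p)^\vee\big)^\iota$.

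The subtle points I expect to be the main obstacle are: (1) pinning down over which field the Weil-pairing isomorphism is actually defined — the dual level structure naturally lives over $\mathbb{Q}(\mu_{p^r})$, so one must either twist by a character to descend, or argue that the resulting ambiguity disappears after applying the ordinary idempotent $e$ and passing to Selmer groups over the fixed number field $K$; (2) checking that $e = \lim U(p)^{n!}$ is carried to the analogous idempotent on the primed side — this is where the intertwining $w_r U(p) = U(p)^* w_r$ and the fact that $U(p)$ and $U(p)^*$ have the same ordinary part (they are adjoint under a perfect pairing) must be used carefully; and (3) verifying the behavior on diamond operators precisely enough to extract exactly $\iota(T)=\frac{1}{1+T}-1$ and not some other automorphism. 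I would organize the bulk of the work around a clean lemma stating the $w$-equivariance of $U(p)$, the diamonds, and the transition maps, and then the rest is the formal pseudo-isomorphism bookkeeping sketched above.
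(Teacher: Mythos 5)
Your central claim is that the Weil/Atkin--Lehner involutions $w_n : J_n \to J_n'$ assemble into a ($\iota$-semilinear) isomorphism $\mathrm{Sel}_{K}(J_{\infty})^{ord}_p \simeq \mathrm{Sel}_{K}(J'_{\infty})^{ord}_p$, and then the statement about elementary modules is a formality. That first step fails: the $w_n$ are not compatible with the Picard transition maps $P_{n,n+1}$ that define the two $\Lambda$-adic Selmer groups. Because $w_n$ involves the matrix $\begin{pmatrix}0 & -1\\ p^n & 0\end{pmatrix}$ whose $p$-power grows with $n$, conjugating the Picard pull-back by $w$ does not return a Picard pull-back; explicitly, $w_{n+1}\circ P_{n,n+1}\circ w_{n}^{-1}$ differs from $P'_{n,n+1}$ by (essentially) a degree-$p$ factor, and in fact the $w$-conjugate of the Albanese push-forward is what appears in the paper as the ``twisted-covariant map'' $V'_{n+1,n}=w_n\circ A_{n+1,n}\circ w_{n+1}^{-1}$ (Definition \ref{definition 5.0.4}). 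So there is no map from the direct limit $\mathrm{Sel}_{K}(J^{ord}_{\infty})$ to $\mathrm{Sel}_{K}(J^{'ord}_{\infty})$ induced by the $w_n$, and you cannot simply ``pass to the limit.'' This is also reflected in the shape of the theorem itself: the conclusion is about $E(-)$ (i.e., up to pseudo-isomorphism), not an isomorphism, which would be what your argument produces if it worked.

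The paper's route is genuinely different and is designed precisely to sidestep this incompatibility. Rather than a direct map, it uses the $w$-twisted Flach pairing $S^n$ (Proposition \ref{prop 5.4.11}) to obtain a perfect $\Lambda$-antiequivariant duality
$\mathrm{Sel}_{K}(\hat{J}^{ord}_{n})_{/div}\simeq\bigl(\mathrm{Sel}_{K}(\hat{J'}^{ord}_{n})_{/div}\bigr)^{\vee}$
at each finite level, whose adjunction exchanges $P_{n,n+1}$ on one side for $V'_{n+1,n}$ on the other (property (3) there). Note this is a statement about the quotients by the maximal divisible submodules, not the Selmer groups themselves. The $\Lambda$-adic statement is then assembled using Hida's control theorem (Theorem \ref{Theorem H}), the functors $\mathfrak{F}$ and $\mathfrak{G}$, the control of $\mathrm{Ker}(s_n)$ and $\mathrm{Coker}(s_n)$, and the commutative-algebra Lemma \ref{lemma 8.3.9}: one shows (a) equality of $W$-coranks of $\mathrm{Sel}_{K}(J^{ord}_\infty)[\omega_n]$ and $\mathrm{Sel}_{K}(J^{'ord}_\infty)[\omega_n]$ via Greenberg--Wiles (Corollary \ref{cor 8.1.4}), and (b) a $\Lambda$-linear map $\mathfrak{G}(\mathrm{Sel}_{K}(J^{ord}_\infty)^\vee)\to\mathfrak{F}(\mathrm{Sel}_{K}(J^{'ord}_\infty)^\vee)$ with finite kernel, obtained by passing to inverse limits of the finite-level dualities. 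The lemma then yields $E(X)\simeq E(Y^\iota)$. Your intuition about what $\iota$ encodes (the effect of $w$ on diamond operators) is correct and does enter the paper through $\eqref{e_10}$ and Corollary \ref{cor 5.1.6}, but it enters via the adjointness of the pairing, not through a map of Selmer groups. To repair your argument you would need to replace the hoped-for isomorphism by the pairing of Flach (or an analogous duality), and then you will find the $\mathfrak{F}/\mathfrak{G}$ formalism or something equivalent is needed to pass from finite levels to $\Lambda$-adic elementary modules.
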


\smallskip

\begin{remark}\label{remark 1.2.5} (1) This result is not just an equality of characteristic ideals but a statement about the \emph{isomorphism classes}.

\smallskip

(2) If $\mathrm{Sel}_{K}(J_{\infty})^{ord}_p$ and $\mathrm{Sel}_{K}(J'_{\infty})^{ord}_p$ are cotorsion $\Lambda$-modules, let $g(T)$ (resp. $h(T)$) be a generator of $char_{\Lambda}(\mathrm{Sel}_{K}(J_{\infty})^{ord}_p)^{\vee}$ (resp. $char_{\Lambda}(\mathrm{Sel}_{K}(J'_{\infty})^{ord}_p)^{\vee}$) for the moment. They satisfy a relation $$g(T)=h(\frac{1}{1+T}-1)$$ up to unit in $\Lambda$, which can be regarded as an algebraic counterpart of \cite[Theorem 3.8.1-(2)]{greenberg1994conjecture}.

\smallskip

(3) In particular, for the self-dual line case in the sense of \cite[Page 1]{hida2015analytic} (which is expected to be the case of $p$-adic $L$-function interpolating $L(2k+2, k+1)$ on the analytic side), we get $$g(T)=g(\frac{1}{1+T}-1)u(T)$$ for some $u(T) \in \Lambda^{\times}$. We expect that $u(0)=(-1)^{\text{ord}_{T=0}(g(T))}$ (which is well-defined) is the root number of the $p$-adic $L$-function (For instance, \cite[Section 18]{mazur1986onp}) on the analytic side.

\smallskip

(4) For the proof of this theorem, in technical aspect, we construct a \emph{twisted} version of the perfect pairing of Flach (\cite{flach1990generalisation}) between Selmer groups of an abelian variety $A$ over a number field. Here the word \emph{twisted} means twisting the original pairing by the Weil involution $w_{r /\mathbb{Q}}$ inducing an isomorphism $$w_{r /\mathbb{Q}}:J_{r} \xrightarrow{\sim} J_{r}'.$$ (For instance, see the first part of \ref{sec 5.1}) This allows us to have a perfect pairing between the \emph{ordinary} parts of the Selmer groups. (Proposition \ref{prop 5.4.11}) By the functorial property of our pairing, we can lift this duality to the $\Lambda$-adic setting (by using two functors $\mathfrak{F}$ and $\mathfrak{G}$ defined in subsection \ref{sec 6.2}), which gives the result.
\end{remark}

\medskip

To analyze the $\Lambda$-module structure of $\sha^{1}_{K}(J_{\infty})^{ord}_p$, we need to introduce one functor defined in \cite[Appendix]{Lee2018}. For a finitely generated $\Lambda$-module $X$, we define $\displaystyle \mathfrak{G}(X):=\lim_{\substack{\longleftarrow \\n}}\left(\frac{X}{\omega_nX}[p^{\infty}] \right)$ 
where the inverse limit is taken with respect to the natural projections. (For an explicit description of this functor, see subection \ref{sec 6.2}.) Since $\mathfrak{G}(\Lambda)=0$, $\mathfrak{G}(X)$ is a finitely generated torsion $\Lambda$-module. The following statement can be regarded as a $\Lambda$-adic analogue of the Tate-Shafarevich conjecture. 
\medskip

\begin{theoremintro}[Main Theorem $\mathrm{II}$: Theorem \ref{theorem 9.2.5}]\label{Theorem B} Suppose that $\sha^{1}_{K}(J_{r})^{ord}_p$ is finite for all $r$. Then the functor $\mathfrak{G}$ induces an isomorphism $$\left(\sha^{1}_{K}(J_{\infty})^{ord}_p\right)^{\vee} \simeq \mathfrak{G}\left((\mathrm{Sel}_{K}(J_{\infty})^{ord}_p)^{\vee}\right)$$ of $\Lambda$-modules. In particular, $\sha^{1}_{K}(J_{\infty})^{ord}_p$ is a cofinitely generated cotorsion $\Lambda$-module.
\end{theoremintro}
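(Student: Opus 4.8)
The plan is to exploit the exact sequence relating the Selmer group, the Mordell--Weil group, and the Tate--Shafarevich group at each finite level $r$, then pass to the limit in $r$ and feed the result into the functor $\mathfrak{G}$. Concretely, for each $r$ we have the short exact sequence
\[
0 \to \left(J_{r}(K)\otimes\mathbb{Q}_p/\mathbb{Z}_p\right)^{ord} \to \mathrm{Sel}_{K}(J_{r})^{ord}_p \to \sha^{1}_{K}(J_{r})^{ord}_p \to 0,
\]
and taking the direct limit over $r$ (which is exact) yields the analogous sequence for the $\infty$-level objects. Since $\sha^{1}_{K}(J_{r})^{ord}_p$ is assumed finite for every $r$, the map $s_r$ on Selmer groups together with Hida's control theorem (Theorem \ref{Theorem H}) should give that the cokernel of the natural map $\sha^{1}_{K}(J_{r})^{ord}_p \to \sha^{1}_{K}(J_{\infty})^{ord}_p[\omega_r]$ is finite and that the kernel is finite and bounded; hence, after dualizing, $\left(\sha^{1}_{K}(J_{\infty})^{ord}_p\right)^{\vee}$ is a finitely generated $\Lambda$-module, and in fact $\mathfrak{G}$-related to the Selmer side. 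The key input is that finiteness of $\sha^{1}_{K}(J_{r})^{ord}_p$ forces $\left(J_{r}(K)\otimes\mathbb{Q}_p/\mathbb{Z}_p\right)^{ord}$ and $\mathrm{Sel}_{K}(J_{r})^{ord}_p$ to have the same divisible part, so that the ``error'' between them is exactly the (finite) $\sha$.

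Next I would identify $\left(\sha^{1}_{K}(J_{\infty})^{ord}_p\right)^{\vee}$ with $\mathfrak{G}$ applied to $(\mathrm{Sel}_{K}(J_{\infty})^{ord}_p)^{\vee}$ directly from the definition $\mathfrak{G}(X)=\varprojlim_n \left(X/\omega_n X\right)[p^\infty]$. Writing $S:=\mathrm{Sel}_{K}(J_{\infty})^{ord}_p$ and $\mathcal{M}:=\left(J_{\infty}(K)\otimes\mathbb{Q}_p/\mathbb{Z}_p\right)^{ord}$, dualize the limit sequence to get $0 \to (\sha^{1}_{K}(J_{\infty})^{ord}_p)^{\vee} \to S^{\vee} \to \mathcal{M}^{\vee} \to 0$. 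Now $\mathcal{M}^{\vee}$ is, up to pseudo-isomorphism, a free $\Lambda$-module (this is where one uses that the Mordell--Weil groups $J_r(K)$ have bounded rank and that $\mathcal{M}$ is cofree over $\Lambda$ up to finite error — the cotorsionness of $\sha$ at finite level guarantees $\mathcal{M}$ captures the full divisible part of the Selmer group). Since $\mathfrak{G}$ kills free $\Lambda$-modules ($\mathfrak{G}(\Lambda)=0$ as noted) and is only sensitive to the torsion part, applying $\mathfrak{G}$ to the sequence should collapse $\mathcal{M}^{\vee}$ and show $\mathfrak{G}(S^{\vee}) \simeq \mathfrak{G}\big((\sha^{1}_{K}(J_{\infty})^{ord}_p)^{\vee}\big)$. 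Finally, because $(\sha^{1}_{K}(J_{\infty})^{ord}_p)^{\vee}$ is itself built as an inverse limit of the finite groups $\sha^{1}_{K}(J_{r})^{ord}_p$ via the control maps, it is ``already'' in the image of $\mathfrak{G}$, i.e.\ $\mathfrak{G}\big((\sha^{1}_{K}(J_{\infty})^{ord}_p)^{\vee}\big) \simeq (\sha^{1}_{K}(J_{\infty})^{ord}_p)^{\vee}$; combining gives the desired isomorphism.

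For the last sentence of the theorem: once we know $(\sha^{1}_{K}(J_{\infty})^{ord}_p)^{\vee} \simeq \mathfrak{G}(S^{\vee})$ and $S^{\vee}$ is finitely generated over $\Lambda$, the general fact (recalled in the excerpt) that $\mathfrak{G}$ of a finitely generated module is a finitely generated \emph{torsion} $\Lambda$-module immediately yields that $\sha^{1}_{K}(J_{\infty})^{ord}_p$ is cofinitely generated and cotorsion. So no extra work is needed there beyond citing the properties of $\mathfrak{G}$ from \cite[Appendix]{Lee2018}.

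The main obstacle I anticipate is the bookkeeping of the finite kernels and cokernels when passing from level $r$ to the limit and then to $\mathfrak{G}$: one needs the control-theorem error terms to be uniformly bounded in $r$ (so that they disappear after applying $[p^\infty]$ and taking inverse limits), and one must carefully check that the isomorphism induced by $\mathfrak{G}$ is genuinely an isomorphism and not merely a pseudo-isomorphism. This hinges on the precise description of $\mathfrak{G}$ in subsection \ref{sec 6.2} and on showing that $\mathcal{M}^{\vee}$, while possibly having a nonzero finite submodule, contributes nothing to $\mathfrak{G}$ because its torsion part is annihilated in the relevant $\omega_n$-quotients in the limit. Establishing that $\mathcal{M}^{\vee}$ is pseudo-null-to-free in a way strong enough for $\mathfrak{G}$ to vanish on it — rather than just up to finite error — is the delicate point, and I expect the finiteness hypothesis on $\sha^{1}_{K}(J_{r})^{ord}_p$ for \emph{all} $r$ to be exactly what makes this go through.
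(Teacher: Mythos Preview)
Your overall strategy has the right ingredients, but it diverges from the paper's argument in a way that introduces a genuine gap.

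The paper does \emph{not} apply $\mathfrak{G}$ to the $\infty$-level sequence $0 \to (\sha^{1}_{K}(J_\infty)^{ord}_p)^{\vee} \to S^{\vee} \to \mathcal{M}^{\vee} \to 0$. Instead it works entirely at finite levels. From the finite-level sequence
\[
0 \to \sha^{1}_{K}(\hat{J}_n^{ord})^{\vee} \to \mathrm{Sel}_{K}(\hat{J}_n^{ord})^{\vee} \to (\hat{J}_n^{ord}(K)\otimes\mathbb{Q}_p/\mathbb{Z}_p)^{\vee} \to 0,
\]
since the last term is $\mathbb{Z}_p$-free and the first is finite by hypothesis, one gets $\sha^{1}_{K}(\hat{J}_n^{ord})^{\vee} = \mathrm{Sel}_{K}(\hat{J}_n^{ord})^{\vee}[p^\infty]$ on the nose. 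Passing to the inverse limit gives $(\sha^{1}_{K}(J_\infty)^{ord}_p)^{\vee} \simeq \varprojlim_n \mathrm{Sel}_{K}(\hat{J}_n^{ord})^{\vee}[p^\infty]$. Then one compares $\mathrm{Sel}_{K}(\hat{J}_n^{ord})^{\vee}[p^\infty]$ with $(S^{\vee}/\omega_n S^{\vee})[p^\infty]$ via the control map $s_n$: dualizing and applying Lemma~\ref{lemma A.0.2}-(3) yields a four-term sequence whose outer terms are $\mathrm{Coker}(s_n)^{\vee}$ and $\mathrm{Ker}(s_n)^{\vee}$, and these vanish in the inverse limit by Remark~\ref{remark 8.1.3}-(3). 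This gives $(\sha^{1}_{K}(J_\infty)^{ord}_p)^{\vee} \simeq \mathfrak{G}(S^{\vee})$ directly, as a genuine isomorphism.

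Your route has two problems. First, the claim $\mathfrak{G}\big((\sha^{1}_{K}(J_\infty)^{ord}_p)^{\vee}\big) \simeq (\sha^{1}_{K}(J_\infty)^{ord}_p)^{\vee}$ is unjustified and essentially circular: by Proposition~\ref{prop 6.2.4}, $\mathfrak{G}$ is \emph{not} the identity on torsion modules (it lowers the exponent of any $\omega_{a+1,a}$-factor by one), so this identification would require knowing in advance that $(\sha^{1}_{K}(J_\infty)^{ord}_p)^{\vee}$ has no such factors --- precisely the kind of structural information you are trying to extract. Second, your description of $\mathcal{M}^{\vee}$ is off: it is \emph{not} pseudo-isomorphic to a free $\Lambda$-module in general (Proposition~\ref{prop 9.1.1}-(4) allows $\Lambda/\omega_{b_n+1,b_n}$ summands), and the Mordell--Weil ranks $\mathrm{rank}_W\,\hat{J}_n^{ord}(K)$ need not be bounded. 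The fact you actually need, $\mathfrak{G}(\mathcal{M}^{\vee})=0$, is true (Proposition~\ref{prop 9.1.1}-(3)), but even granting it, the functor $\mathfrak{G}$ is not exact, so you cannot simply read off $\mathfrak{G}(S^{\vee}) \simeq \mathfrak{G}((\sha_\infty)^{\vee})$ from the short exact sequence without further argument. The paper's finite-level approach sidesteps all of this.
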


\smallskip

\begin{remark}\label{remark 1.2.6} (1) As we remarked earlier, this result is a $\Lambda$-adic analogue of the Tate-Shafarevich conjecture in a sense that the $\mathbb{Z}_p$-cotorsionness of $\sha^{1}_{K}(J_{r})^{ord}_p$ can be lifted to the $\Lambda$-cotorsionness of $\sha^{1}_{K}(J_{\infty})^{ord}_p$.

\smallskip

(2) This result \emph{distinguishes} the Mordell-Weil group and the Tate-Shafarevich group from the Selmer group. More precisely, if we know the elementary module of $(\mathrm{Sel}_{K}(J_{\infty})^{ord}_p)^{\vee}$, then we can explicitly describe elementary modules of (Pontryagin duals of) $$\left(J_{\infty}(K) \otimes \mathbb{Q}_p/\mathbb{Z}_p\right)^{ord} \quad \mathrm{and} \quad \sha^{1}_{K}(J_{\infty})^{ord}_p,$$ since we have an explicit description of the functor $\mathfrak{G}$ (Proposition \ref{prop 6.2.4}).

\smallskip

(3) If we assume the Tate-Shafarevich conjecture for dual tower also (i.e. assuming the same conditions for $J_{r}'$), combining Theorem \ref{Theorem A} and Theorem \ref{Theorem B} shows that the elementary modules of $\left(\sha^{1}_{K}(J_{\infty})^{ord}_p\right)^{\vee}$ and $\left(\sha^{1}_{K}(J'_{\infty})^{ord}_p\right)^{\vee}$ are isomorphic up to twisting by $\iota$. (See Corollary \ref{cor 9.2.6} for the precise statement.)
\end{remark}

\medskip

Assuming the boundedness of $\mathrm{Coker}(s_r)$ additionally, we can explicitly compute an estimate of the size of $\sha^{1}_{K}(J_{r})^{ord}_p$. 

\smallskip

\begin{theoremintro}[Main Theorem $\mathrm{III}$: Theorem \ref{theorem 9.2.7}]\label{Theorem C} Suppose that the group $\mathrm{Coker}(s_r)$ is finite and bounded independent of $r$, and also suppose that $\sha^{1}_{K}(J_{r})^{ord}_p$ is finite for all $r$. Then there exists an integer $\nu$ independent of $n$ such that $$ | \sha^{1}_{K}(J_{r})^{ord}_p |=p^{e_n} \quad (n>>0).$$ where $$e_n=p^n \mu \left((\sha^{1}_{K}(J_{\infty})^{ord}_p)^{\vee} \right) + n \lambda \left((\sha^{1}_{K}(J_{\infty})^{ord}_p)^{\vee} \right) +\nu.$$
\end{theoremintro}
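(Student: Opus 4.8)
The plan is to combine Theorem B (which identifies $(\sha^{1}_{K}(J_{\infty})^{ord}_p)^{\vee}$ with $\mathfrak{G}((\mathrm{Sel}_{K}(J_{\infty})^{ord}_p)^{\vee})$, a finitely generated torsion $\Lambda$-module) with the Iwasawa-theoretic growth formula for finitely generated torsion $\Lambda$-modules, applied along the characters $\omega_n$. Write $M:=(\mathrm{Sel}_{K}(J_{\infty})^{ord}_p)^{\vee}$ and $N:=\mathfrak{G}(M)\simeq(\sha^{1}_{K}(J_{\infty})^{ord}_p)^{\vee}$. The standard structure theory gives $|N/\omega_n N|=p^{e_n}$ with $e_n=p^n\mu(N)+n\lambda(N)+\nu_0$ for $n\gg 0$ and some constant $\nu_0$, provided $N$ has no nonzero finite $\Lambda$-submodule killed by the relevant $\omega_n$ — more precisely provided the $\omega_n$ are coprime to $\mathrm{char}_\Lambda(N)$, which holds for $n\gg0$ since there are only finitely many height-one primes in the support of $N$. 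So the real content is to show that $|\sha^{1}_{K}(J_{r})^{ord}_p|$ agrees with $|N/\omega_n N|$ up to a bounded multiplicative error (absorbed into $\nu$).

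First I would recall from Theorem B and the explicit description of $\mathfrak{G}$ (Proposition \ref{prop 6.2.4}) that the natural maps realize $N/\omega_n N$ up to finite bounded kernel and cokernel as $\left(\sha^{1}_{K}(J_{\infty})^{ord}_p[\omega_n]\right)^{\vee}$; indeed $\sha^{1}_{K}(J_{\infty})^{ord}_p$ is cotorsion and cofinitely generated, so $\big|\sha^{1}_{K}(J_{\infty})^{ord}_p[\omega_n]\big|=\big|(\sha^{1}_{K}(J_{\infty})^{ord}_p)^{\vee}/\omega_n\big|$ exactly (for a cofinitely generated cotorsion module these two quantities coincide whenever either is finite). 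Next I would run the control-theorem comparison: the map $s_r:\mathrm{Sel}_{K}(J_{r})^{ord}_p\to\mathrm{Sel}_{K}(J_{\infty})^{ord}_p[\omega_r]$ has finite kernel bounded independently of $r$ (Lemma \ref{lemma 7.0.2}, Remark \ref{remark 8.1.3}-(3)) and, by hypothesis, finite cokernel bounded independently of $r$. Chasing this through the defining exact sequences $0\to J_r(K)\otimes\mathbb{Q}_p/\mathbb{Z}_p\to\mathrm{Sel}\to\sha^1\to 0$ at finite and infinite level, together with the analogous control statement for the Mordell-Weil part (whose ordinary $\Lambda$-adic version is cofinitely generated), yields that the induced map $\sha^{1}_{K}(J_{r})^{ord}_p\to\sha^{1}_{K}(J_{\infty})^{ord}_p[\omega_r]$ has finite kernel and cokernel, each bounded independently of $r$. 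Hence $\big|\sha^{1}_{K}(J_{r})^{ord}_p\big|=\big|\sha^{1}_{K}(J_{\infty})^{ord}_p[\omega_r]\big|\cdot p^{c_r}$ with $|c_r|$ bounded.

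Combining the two displays, $\big|\sha^{1}_{K}(J_{r})^{ord}_p\big|=\big|N/\omega_n N\big|\cdot p^{c_r}$ with bounded $c_r$, and then invoking the $\Lambda$-module growth formula for $N$ gives $\big|\sha^{1}_{K}(J_{r})^{ord}_p\big|=p^{p^n\mu(N)+n\lambda(N)+\nu_0+c_r}$ for $n\gg0$. The final point is that the bounded error terms eventually stabilize: since the left side is a power of $p$ and the dominant part $p^n\mu(N)+n\lambda(N)+\nu_0$ is exact for $n\gg0$, the integer $c_r$ must itself be eventually constant, and setting $\nu:=\nu_0+c_\infty$ finishes the proof. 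I expect the main obstacle to be the second step — propagating the bounded control on the Selmer side to bounded control on $\sha^1$ at finite level uniformly in $r$. The subtlety is that a snake-lemma chase across the Mordell–Weil/Selmer/Sha sequences at level $r$ versus level $\infty$ requires knowing that $\big(J_r(K)\otimes\mathbb{Q}_p/\mathbb{Z}_p\big)^{ord}\to\big(J_\infty(K)\otimes\mathbb{Q}_p/\mathbb{Z}_p\big)^{ord}[\omega_r]$ also has bounded kernel and cokernel; this should follow from the finite generation of the $\Lambda$-adic Mordell–Weil group and boundedness of $\mathrm{Ker}(s_r)$, but the bookkeeping — making every bound genuinely independent of $r$ rather than merely finite for each $r$ — is where care is needed.
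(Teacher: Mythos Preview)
Your route is genuinely different from the paper's, and the step you flag as ``where care is needed'' is in fact a real gap, not just bookkeeping.

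You try to reach $|\sha^{1}_{K}(J_{r})^{ord}_p|$ by first computing $|N/\omega_n N|$ for $N=(\sha^{1}_{K}(J_{\infty})^{ord}_p)^{\vee}$ and then proving a bounded control statement for $\sha^1$ itself, namely that $\sha^{1}_{K}(J_{r})^{ord}_p\to\sha^{1}_{K}(J_{\infty})^{ord}_p[\omega_r]$ has kernel and cokernel bounded independently of $r$. Your snake-lemma argument for this needs the cokernel of the Mordell--Weil comparison map $f_r:\hat{J}_r^{ord}(K)\otimes\mathbb{Q}_p/\mathbb{Z}_p\to (J_\infty^{ord}(K)\otimes\mathbb{Q}_p/\mathbb{Z}_p)[\omega_r]$ to be bounded. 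But Proposition~\ref{prop 9.1.1}-(1) only bounds $\mathrm{Ker}(f_r)$; Theorem~\ref{theorem 9.1.4} shows that even surjectivity of $f_r$ for large $r$ is equivalent to stabilization of $\rank_W\hat{J}_r^{ord}(K)$, which is \emph{not} among the hypotheses (and fails whenever $\mathrm{Sel}_K(J_\infty^{ord})$ has positive $\Lambda$-corank). Indeed Remark~\ref{remark 1.2.7} presents bounded control of $\sha^1$ as an expected \emph{consequence} of Theorem~C, not an ingredient in its proof. So your second step does not go through as written.

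The paper sidesteps this entirely by never comparing $\sha_r$ with $\sha_\infty[\omega_r]$. Instead it works on the Selmer side throughout: for $M=\mathrm{Sel}_K(J_\infty^{ord})^{\vee}$ one checks directly that $\log_p\bigl|(M/\omega_nM)[p^\infty]\bigr|=p^n\mu(\mathfrak{G}(M))+n\lambda(\mathfrak{G}(M))+\nu$ for $n\gg0$. The given bounded control of Selmer (on $\mathrm{Ker}(s_n),\mathrm{Coker}(s_n)$) then relates $(M/\omega_nM)[p^\infty]$ to $\mathrm{Sel}_K(\hat{J}_n^{ord})^{\vee}[p^\infty]$ up to bounded error, and finiteness of $\sha^1_K(\hat{J}_n^{ord})$ gives $\mathrm{Sel}_K(\hat{J}_n^{ord})^{\vee}[p^\infty]\simeq\sha^1_K(\hat{J}_n^{ord})^{\vee}$, since the Mordell--Weil quotient is $W$-cofree. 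Finally Theorem~B identifies $\mathfrak{G}(M)$ with $(\sha^1_K(J_\infty^{ord}))^{\vee}$, giving the stated invariants. No control of the Mordell--Weil cokernel or of $\sha^1$ is needed.

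A smaller point: your closing assertion that the bounded error $c_r$ ``must itself be eventually constant'' is not justified; a bounded integer sequence need not stabilize.
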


\smallskip

\begin{remark}\label{remark 1.2.7} This theorem is an evidence for the control of the Tate-Shafarevich groups over tower of modular curves. More precisely, if the characteristic ideal of $(\sha^{1}_{K}(J_{\infty})^{ord}_p)^{\vee}$ is coprime to $\omega_n$ for all $n$, then by the $\Lambda$-module theory, we get $$ | \sha^{1}_{K}(J_{\infty})^{ord}_p [\omega_n]|=p^{e_n}.$$ for the same $e_n$ appearing in the above theorem. Hence we can expect that the natural map $$\sha^{1}_{K}(J_{r})^{ord}_p \rightarrow \sha^{1}_{K}(J_{\infty})^{ord}_p[\omega_n]$$ to have finite and bounded kernel and cokernel.
\end{remark}

\smallskip

\subsection{Organization of the paper} \label{sec 1.3}

\begin{itemize}
\item We record some lemmas from the commutative algebra in the appendix.
\item This paper can be divided into four parts. For the first part (Section 2, 3, 4), following \cite{hida2015analytic} we recall the basic settings of paper. In the Section 2, we define various modular curves. We define and briefly recall important facts about the big ordinary Hecke algebra in Section 3. In Section 4, we define our main objects: $\Lambda$-adic Mordell-Weil groups and Barsotti-Tate groups (Definition \ref{definition 4.2.5}) and recall the control results from \cite{hida2015analytic}. 
\medskip
\item The second part is Section 5 which is technically important throughout the paper. We construct various twisted perfect pairings which are crucial in comparing two different towers of modular curves. After twisting by the Weil involution, the original (Weil, Poitou-Tate, Flach) pairings become Hecke-equivariant which allow us to compare the ordinary parts of two arithmetic cohomology groups. (Proposition \ref{prop 5.1.5}, \ref{prop 5.3.9} and \ref{prop 5.4.11})
\medskip
\item For the third part, in Section 6 and 7, we analyze the structure of local and global cohomology groups of the Barsotti-Tate groups $\mathfrak{g}$ (Definition \ref{definition 4.2.5}) as $\Lambda$-modules. We use functors $\mathfrak{F}$ and $\mathfrak{G}$ defined in \cite{Lee2018}. The explicit description of two functors $\mathfrak{F}$ and $\mathfrak{G}$ will be given in Proposition \ref{prop 6.2.4}.
\medskip
\item The last part is Section 8 and 9. In Section 8, we study the $\Lambda$-adic Selmer groups. In \ref{sec 8.2}, we prove the surjectivity of the ``Selmer-defining map" (Proposition \ref{prop 8.2.7} and Corollary \ref{cor 8.2.8}) which is an analogue of \cite[Proposition 2.1]{greenberg2000iwasawa}.
In the subsection \ref{sec 8.3}, we prove one of the main results of this paper (Theorem \ref{theorem 8.3.10}). 
\medskip
\item In Section 9, we will analyze the $\Lambda$-module structure of the $\Lambda$-adic Mordell-Weil group and the $\Lambda$-adic Tate-Shafarevich groups. We show another main result mentioned in introduction (Theorem \ref{theorem 9.2.5}) and give an estimate on the size of an ordinary part of Tate-Shafarevich group of the modular Jacobians. (Theorem \ref{theorem 9.2.7})
\end{itemize}

\medskip

\subsection{Notations}\label{sec 1.4} We fix notations below throughout the paper.

\smallskip

\begin{itemize}
\item Notation \ref{No}, Convention \ref{Id} will be used from Section 5. Notation \ref{Selmer} will also be used from Section 8.
\medskip
\item We fix one rational prime $p \geq 5$ and a positive integer $N$ prime to $p$.
\medskip
\item Also fix $W$, which is a $p$-adic integer ring of a finite unramified extension of $\mathbb{Q}_p$. Note that $p$ is a uniformizer of $W$. We let $Q(W)$ be the quotient field of $W$.
\medskip
\item We let $\Lambda:=W[\![T]\!]$, the one variable power series ring over $W$. We also define $\omega_n=\omega_n(T):=(1+T)^{p^{n-1}}-1$ and $\omega_{n+1, n}:=\frac{\omega_{n+1}(T)}{\omega_n(T)}$. Note that $\omega_{n+1, n}$ is a distinguished irreducible polynomial in $W[\![T]\!]$.
\medskip
\item For a locally compact Hausdorff continuous $\Lambda$-module $M$, we define $M^{\vee}:=\text{Hom}_{cts}(M, Q(W)/W)$ which is also a locally compact Hausdorff. $M^{\vee}$ becomes a continuous $\Lambda$-module via action defined by $(f\cdot \phi)(m):=\phi(\iota(f)\cdot m)$ where $f \in \Lambda$, $m \in M$, $\phi \in M^{\vee}$. We also define $M^{\iota}$ to be same underlying set $M$ whose $\Lambda$-action is twisted by an involution $\iota
:T \rightarrow \frac{1}{1+T}-1$ of $W[\![T]\!]$.
\medskip
\item If $K$ is a number field and $v$ is a place of $K$, we let $K_v$ be the completion of $K$ at $v$. If $S$ is a finite set of places of $K$ containing all infinite places and places over $Np$, we let $K^{S}$ be a maximal extension of $K$ unramified outside $S$.  We remark here that since prime $p$ is odd, $\text{Gal}(K^S/K)$ has $p$-cohomological dimension 2. When we deal with abelian varieties, we suppose that this finite set $S$ contains infinite places, places over $p$ and primes of bad reduction of an abelian variety.
\medskip
\item For an integral domain $R$ and an $R$-module $M$, we define $M_{R-\mathrm{tor}}$ as the maximal $R$-torsion submodule of $M$. Also we define $\rank_{R}M:=\dim_{Q(R)}\left(M \otimes_{R}Q(R) \right)$ where $Q(R)$ is a quotient field of $R$. 
\medskip 
\item For a cofinitely generated $\mathbb{Z}_p$-module $X$, we define $X_{/ div}:=\frac{X}{X_{div}}$ where $X_{div}$ is the maximal $p$-divisible subgroup of $X$. Note that $\displaystyle X_{/ div} \simeq \displaystyle \lim_{\substack{\longleftarrow \\ n}}\frac{X}{p^nX}$ and $\left( X_{/div} \right)^{\vee} \simeq X^{\vee}[p^{\infty}]$.
\medskip
\item For a finitely generated $\Lambda$-module $M$, there are prime elements $g_1, \cdots g_n$ of $\Lambda$, non-negative integer $r$, positive integers $e_1, \cdots e_n$ and a pseudo-isomorphism $\displaystyle M \rightarrow \Lambda^{r}\oplus \left(\bigoplus_{i=1}^{n}\frac{\Lambda}{g_{i}^{e_i}} \right) $. We call $$\displaystyle E(M):=\Lambda^{r}\oplus \left(\bigoplus_{i=1}^{n}\frac{\Lambda}{g_{i}^{e_i}} \right)$$ as an \emph{elementary module} of $M$ following \cite[Page 292]{neukirch2000cohomology}. 
\end{itemize}

\subsection{Acknowledgements} The author would like to thank Haruzo Hida for his endless guidance and encouragement. The author also thanks to Chan-Ho Kim for his comments and suggestions on the first draft of this paper, and KIAS for the hospitality during the Summer 2018.

\medskip

\section{Various modular curves}

We define and analyze some properties of our main geometric objects in this section.

\subsection{Congruence Subgroups} \label{sec 2.1}

For $(\alpha, \delta) \in \mathbb{Z}_p \times \mathbb{Z}_p $ satisfying $\alpha\mathbb{Z}_p+ \delta \mathbb{Z}_p=\mathbb{Z}_p$, define a map $$\pi_{\alpha, \delta}:\Gamma \times \Gamma \rightarrow \Gamma$$ by $$\pi_{\alpha, \delta}(x, y)=x^{\alpha}y^{-\delta}$$ where $\Gamma=1+p\mathbb{Z}_p$ and let $\Lambda_{\alpha, \delta}:= W[\![ (\Gamma \times \Gamma) / \text{Ker}(\pi_{\alpha, \delta}) ]\!]$ be the completed group ring. 

\smallskip

Let $\mu $ be the maximal torsion subgroup of $\mathbb{Z}_p^{\times}$. Pick a character $\xi: \mu \times \mu \rightarrow \mu$ and define another character $\xi': \mu \times \mu \rightarrow \mu$ by $\xi'(a, b):=\xi(b, a)$. Consider $\text{Ker}(\xi) \times \text{Ker}(\pi_{\alpha, \delta}) $ as a subgroup of $\mathbb{Z}_p^{\times}\times \mathbb{Z}_p^{\times}$ and let $H_{\alpha, \delta, \xi, r}$ be the image of $\text{Ker}(\xi) \times \text{Ker}(\pi_{\alpha, \delta}) $ in $(\mathbb{Z}_p^{\times})^{2} / (\Gamma^{p^{r-1}})^{2} $.\\

\begin{definition}\label{definition 2.1.1} Let $\displaystyle \hat{\mathbb{Z}}=\prod_{l:\text{prime}}\mathbb{Z}_l$ and $N$ be a positive integer prime to $p$. For a positive integer $r$, we define the following congruence subgroups:

(1) $\widehat{\Gamma}_{0}(Np^r):=\left\lbrace 
\begin{pmatrix}
a & b \\
c & d
\end{pmatrix} 
 \in \text{GL}_{2}(\widehat{\mathbb{Z}})\mid c \in Np^r\widehat{\mathbb{Z}} \right\rbrace$. Let $ \Gamma_{0}(Np^n)=\widehat{\Gamma}_{0}(Np^n) \cap \text{SL}_2(\mathbb{Q})$.

(2) $\widehat{\Gamma}_{\alpha, \delta, \xi}(Np^r):=\left\lbrace 
\begin{pmatrix}
a & b \\
c & d
\end{pmatrix} 
 \in \widehat{\Gamma}_{0}(Np^r)\mid (a_p, d_p) \in H_{\alpha,\delta, \xi, r}, d-1 \in N\widehat{\mathbb{Z}} \right\rbrace$. Let $$\Gamma_{\alpha, \delta, \xi}(Np^n)=\widehat{\Gamma}_{\alpha, \delta, \xi}(Np^n)\cap \text{SL}_2(\mathbb{Q}).$$

\smallskip

(3) $\widehat{\Gamma}_{1}^{1}(Np^r):=\left\lbrace 
\begin{pmatrix}
a & b \\
c & d
\end{pmatrix} 
 \in \widehat{\Gamma}_{0}(Np^r)\mid c, d-1 \in Np^r\widehat{\mathbb{Z}}, a-1 \in p^{r}\widehat{\mathbb{Z}} \right\rbrace$.
\end{definition}

\smallskip

\begin{remark}\label{remark 2.1.2} We have an isomorphism $$\displaystyle (\mathbb{Z}/N\mathbb{Z})^{\times}\times\mathbb{Z}_p^{\times} \times\mathbb{Z}_p^{\times} \simeq \lim_{\substack{\longleftarrow \\ r}}\left(\widehat{\Gamma}_{0}(Np^r)/\widehat{\Gamma}_{1}^{1}(Np^r)\right)$$ sending $(u, a, d)$ to the matrix  whose $p$-component is $\begin{pmatrix}
a & 0 \\
0 & d
\end{pmatrix}$, $l$-component is $\begin{pmatrix}
1 & 0 \\
0 & u
\end{pmatrix}$ for $l \mid N$ and trivial for $l \nmid Np$. This isomorphism induces another isomorphisms
\begin{align*}
\text{Ker}(\xi) \times H_{\alpha, \delta, \xi, r} &\simeq \widehat{\Gamma}_{\alpha, \delta, \xi}(Np^r) /\widehat{\Gamma}_{1}^{1}(Np^r),\\
(\mathbb{Z}/N\mathbb{Z})^{\times} \times \text{Im}(\xi) \times \Gamma / \Gamma^{p^{r-1}} &\simeq  \widehat{\Gamma}_{0}(Np^r) /\widehat{\Gamma}_{\alpha, \delta, \xi}(Np^r).
\end{align*}
\end{remark}

\medskip

\subsection{Modular Curves} \label{sec 2.2}

Following \cite[Section 3]{hida2015analytic}, we study various modular curves arose from congruence subgroups we defined in the previous subsection. Consider the moduli problem over $\mathbb{Q}$ classifying the triples $$(E, \mu_N \xhookrightarrow{\phi_N} E, \mu_{p^r} \xhookrightarrow{\phi_{p^r}} E[p^r] \xrightarrowdbl{\varphi_{p^r}} \mathbb{Z}/p^r\mathbb{Z})$$ where $E$ is an elliptic curve defined over $\mathbb{Q}$-algebra $R$ and the sequence $$\mu_{p^r} \xhookrightarrow{\phi_{p^r}} E[p^r]\xrightarrowdbl{\varphi_{p^r}} \mathbb{Z}/p^r\mathbb{Z}$$ is exact in the category of finite flat group schemes. The triples are classified by a modular curve $U_{r / \mathbb{Q}}$ and we let $X_{1}^{1}(Np^r)_{/\mathbb{Q}}$ be the compactification of $U_r$ smooth at cusps. \\

Note that $(\mathbb{Z}/N\mathbb{Z})^{\times}\times\mathbb{Z}_p^{\times}\times\mathbb{Z}_p^{\times}$ naturally acts on $ X_{1}^{1}(Np^r)_{/\mathbb{Q}}$ : $ (u, a, d) \in (\mathbb{Z}/N\mathbb{Z})^{\times}\times \mathbb{Z}_p^{\times} \times\mathbb{Z}_p^{\times}$ sends $$ (E, \mu_N \xhookrightarrow{\phi_N} E, \mu_{p^r} \xhookrightarrow{\phi_{p^r}} E[p^r] \xrightarrowdbl{\varphi_{p^r}} \mathbb{Z}/p^r\mathbb{Z})$$ to $$(E, \mu_N \xhookrightarrow{\phi_N \circ u} E, \mu_{p^r} \xhookrightarrow{\phi_{p^r} \circ a} E[p^r] \xrightarrowdbl{d \circ \varphi_{p^r}} \mathbb{Z}/p^r\mathbb{Z}).$$

\medskip

We now define our main geometric objects.

\begin{definition}\label{definition 2.2.3} We define the following two curves:
\begin{align*}
X_{\alpha, \delta, \xi}(Np^r)_{/ \mathbb{Q}}&:=X_{1}^{1}(Np^r)_{/ \mathbb{Q}}/\left(\mathrm{Ker}(\xi) \times \mathrm{Ker}(\pi_{\alpha, \delta})\right)\\
X_{0}(Np^r)_{/ \mathbb{Q}}&:=X_{1}^{1}(Np^r)_{/ \mathbb{Q}}/\left((\mathbb{Z}/N\mathbb{Z})^{\times}\times\mathbb{Z}_p^{\times}\times\mathbb{Z}_p^{\times}\right).
\end{align*}
\end{definition}

\medskip

\begin{remark}\label{remark 2.2.4}
(1) Since $p \geq 5$, they are smooth projective curves over $\mathbb{Q}$. Therefore, the natural projection maps $$\pi_{1/ \mathbb{Q}}: X_{1}^{1}(Np^r)_{/ \mathbb{Q}}\rightarrow X_{\alpha, \delta, \xi}(Np^r)_{/ \mathbb{Q}} \quad \mathrm{and} \quad \pi_{2/ \mathbb{Q}}:X_{\alpha, \delta, \xi}(Np^r)_{/ \mathbb{Q}}\rightarrow X_{0}(Np^r)_{/ \mathbb{Q}} $$ are finite flat.

\smallskip

(2) We have the following isomorphisms of geometric Galois groups: $$\text{Gal}(X_{1}^{1}(Np^r)/X_{\alpha, \delta, \xi}(Np^r))  \simeq \text{Ker}(\xi) \times H_{\alpha, \delta, \xi, r} \simeq \hat{\Gamma}_{\alpha, \delta, \xi}(Np^r)/\hat{\Gamma}_{1}^{1}(Np^r)$$
 $$\text{Gal}(X_{\alpha, \delta, \xi}(Np^r)/X_{0}(Np^r))   \simeq (\mathbb{Z}/N\mathbb{Z})^{\times} \times \text{Im}(\xi) \times \Gamma / \Gamma^{p^{r-1}} \simeq \hat{\Gamma}_{0}(Np^r) /\hat{\Gamma}_{\alpha, \delta, \xi}(Np^r).$$

\smallskip

(3) The complex points of those curves are given by
\begin{align*}
X_{1}^{1}(Np^r)(\mathbb{C})-\lbrace \text{cusps} \rbrace \simeq \text{GL}_{2}(\mathbb{Q}) \backslash \text{GL}_{2}(\mathbb{A}) /\hat{\Gamma}_{1}^{1}(Np^r) \mathbb{R}^{+}\text{SO}_{2}(\mathbb{R}) \\
X_{\alpha, \delta, \xi}(Np^r)(\mathbb{C})-\lbrace \text{cusps} \rbrace \simeq \text{GL}_{2}(\mathbb{Q}) \backslash \text{GL}_{2}(\mathbb{A}) / \hat{\Gamma}_{\alpha, \delta, \xi}(Np^r)\mathbb{R}^{+}\text{SO}_{2}(\mathbb{R}) \\
X_{0}(Np^r)(\mathbb{C})-\lbrace \text{cusps} \rbrace\simeq\text{GL}_{2}(\mathbb{Q}) \backslash \text{GL}_{2}(\mathbb{A}) /\hat{\Gamma}_{0}(Np^r) \mathbb{R}^{+}\text{SO}_{2}(\mathbb{R}).
\end{align*} Hence $X_{1}^{1}(Np^r), X_{\alpha, \delta, \xi}(Np^r), X_{0}(Np^r) $ are geometrically reduced over $\mathbb{Q}$. Moreover, the projection maps  $$\pi_{1}: X_{1}^{1}(Np^r)\rightarrow X_{\alpha, \delta, \xi}(Np^r) \quad \mathrm{and} \quad \pi_{2}:X_{\alpha, \delta, \xi}(Np^r)\rightarrow X_{0}(Np^r)$$ have constant degree, since the degree is invariant under the flat base change.
\end{remark}

\medskip

\section{Big Ordinary Hecke algebra}

\medskip

Following \cite[Section 4]{hida2015analytic}, we define Hecke algebras associated to the modular curves $X_{\alpha, \delta, \xi}(Np^r)$ in this subsection. For each rational prime $l$, consider $\varpi_l=\begin{pmatrix}
1 & 0 \\
0 & l
\end{pmatrix} \in \text{GL}_{2}(\mathbb{A})$ whose component at prime $q \neq l$ is identity matrix. The group $$\Theta_{r}=\varpi_l^{-1}\widehat{\Gamma}_{\alpha, \delta, \xi}(Np^r)\varpi_l \cap \widehat{\Gamma}_{\alpha, \delta, \xi}(Np^r)$$ give rise to the modular curve $X(\Theta_{r})$ whose complex points are given by 
\begin{align*}
X(\Theta_{r})(\mathbb{C})-\lbrace \text{cusps} \rbrace \simeq \text{GL}_{2}(\mathbb{Q}) \backslash \text{GL}_{2}(\mathbb{A}) /\Theta_{r} \mathbb{R}^{+}\text{SO}_{2}(\mathbb{R}).
\end{align*}

\medskip

We have two projection maps $$\pi_{l, 1}:X(\Theta_{r}) \rightarrow X_{\alpha, \delta, \xi}(Np^r) \quad \mathrm{and} \quad \pi_{l, 2}:X(\Theta_{r}) \rightarrow X_{\alpha, \delta, \xi}(Np^r)$$ defined by $\pi_{l, 1}(z)=z $ and $\pi_{l, 2}(z)=z/l $ for $z \in \mathbb{H}$. Now an embedding $$X(\Theta_{r}) \xrightarrow{\pi_{l, 1} \times \pi_{l, 2}} X_{\alpha, \delta, \xi}(Np^r) \times X_{\alpha, \delta, \xi}(Np^r)$$ defines a modular correspondence. We write this correspondence as $T(l)$ if $l\nmid Np$ and $U(l)$ if $l\mid Np$. Hence we get endomorphisms $$T(l)\quad (l\nmid Np) \quad \mathrm{and} \quad U(l) \quad (l\mid Np)$$ of the Jacobian of $X_{\alpha, \delta, \xi}(Np^r)$ which we denote as $J_{\alpha, \delta, \xi, r}$. (See \cite[Section 4]{hida2015analytic} for the details.) This $J_{\alpha, \delta, \xi, r}$ admits a geometric Galois action of $(\mathbb{Z}/N\mathbb{Z})^{\times} \times (\Gamma \times \Gamma)/\text{Ker}(\pi_{\alpha, \delta})$ acts on $J_{\alpha, \delta, \xi, r}$ via Picard functoriality. 

\medskip

\begin{definition}\label{definition 3.0.1} We define $h_{\alpha, \delta, \xi, r}(\mathbb{Z})$ by the subalgebra of $\text{End}(J_{\alpha, \delta, \xi, r})$ generated by $T(l)$ for $l \nmid Np$, $U(l)$ for $l \mid Np$, and the action of $(\mathbb{Z}/N\mathbb{Z})^{\times} \times (\Gamma \times \Gamma)/\text{Ker}(\pi_{\alpha, \delta})$. We let $h_{\alpha, \delta, \xi, r}(R):=h_{\alpha, \delta, \xi, r}(\mathbb{Z}) \otimes_{\mathbb{Z}}R$ for a commutative ring $R$ and define $\textbf{h}_{\alpha, \delta, \xi, r}:=e(h_{\alpha, \delta, \xi, r}(W))$ where $\displaystyle e:=\lim_{\substack{n \rightarrow  \infty}}U(p)^{n!}$ is the ordinary projector. 
\end{definition}

\medskip

\begin{remark}\label{remark 3.0.2} (1) Since $h_{\alpha, \delta, \xi, r}(W)$ is a finitely generated $W$-module, the ordinary projector $e$ and $\textbf{h}_{\alpha, \delta, \xi, r}$ are well-defined.

(2) For all $r$, $\textbf{h}_{\alpha, \delta, \xi, r}$ is a 
$\Lambda_{\alpha, \delta}$-algebra. For $r \leq s$, we have natural $\Lambda_{\alpha, \delta}$-equivariant surjection $$\textbf{h}_{\alpha, \delta, \xi, s} \twoheadrightarrow \textbf{h}_{\alpha, \delta, \xi, r}.$$
\end{remark}

\medskip

\begin{definition}\label{definition 3.0.3} Define the one variable big ordinary Hecke algebra $\displaystyle \textbf{h}_{\alpha, \delta, \xi}(N):=\lim_{\substack{\longleftarrow \\ r}}\textbf{h}_{\alpha, \delta, \xi, r}$ where the inverse limit is taken with respect to the natural surjections $\textbf{h}_{\alpha, \delta, \xi, s} \twoheadrightarrow \textbf{h}_{\alpha, \delta, \xi, r}$ for $r \leq s$. By Remark \ref{remark 3.0.2}, $\textbf{h}_{\alpha, \delta, \xi}(N)$ is a $\Lambda_{\alpha, \delta}$-algebra.
\end{definition}

\medskip

We quote some facts about structure of the big ordinary Hecke algebra. For the proofs of the following theorems, see \cite[Corollary 4.31]{hida2012p}.

\medskip

\begin{theorem}\label{theorem 3.0.4} (1) Fix a generator $\gamma$ of $ (\Gamma \times \Gamma)/\mathrm{Ker}(\pi_{\alpha, \delta})$. Then we have an isomorphism $$\frac{\textbf{h}_{\alpha, \delta, \xi}(N) }{(\gamma^{p^{r-1}}-1)\textbf{h}_{\alpha, \delta, \xi}(N) } \simeq \textbf{h}_{\alpha, \delta, \xi, r}.$$

(2) $\textbf{h}_{\alpha, \delta, \xi}(N)$ is a $\Lambda_{\alpha, \delta}$-free module of finite rank.
\end{theorem}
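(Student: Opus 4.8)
This is a foundational theorem of Hida; the plan is to deduce it from the structure of the module of $\Lambda$-adic ordinary cusp forms (equivalently, the ordinary part of the cohomology of the tower $\{X_{\alpha,\delta,\xi}(Np^r)\}_r$) by means of the perfect Hecke duality. Throughout I would identify $\Lambda_{\alpha,\delta}$ with $W[\![T]\!]$ by sending a chosen generator $\gamma$ of $(\Gamma\times\Gamma)/\mathrm{Ker}(\pi_{\alpha,\delta})\cong\mathbb{Z}_p$ to $1+T$, so that $\gamma^{p^{r-1}}-1=\omega_r(T)$ and $\Lambda_{\alpha,\delta}/\omega_r$ is $W$-free of rank $p^{r-1}$.

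\textbf{Step 1: finite level and duality.} First I would record that each $\textbf{h}_{\alpha,\delta,\xi,r}$ is $W$-free: $h_{\alpha,\delta,\xi,r}(\mathbb{Z})$ sits inside $\mathrm{End}(J_{\alpha,\delta,\xi,r})$, a free $\mathbb{Z}$-module of finite rank, so $h_{\alpha,\delta,\xi,r}(W)$ is $W$-free, and $\textbf{h}_{\alpha,\delta,\xi,r}=e\,h_{\alpha,\delta,\xi,r}(W)$, being cut out by the idempotent $e$, is a $W$-direct summand of it and hence $W$-free ($W$ being a discrete valuation ring). Next I would invoke the classical perfect Hecke pairing
\[
\textbf{h}_{\alpha,\delta,\xi,r}\times S_r\longrightarrow W,\qquad (h,f)\longmapsto a_1(f|h),
\]
where $S_r$ denotes the $W$-module of ordinary weight-two cusp forms of the level and character defining $X_{\alpha,\delta,\xi}(Np^r)$; this $S_r$ is $W$-free of finite rank, and the pairing identifies $\textbf{h}_{\alpha,\delta,\xi,r}\cong\mathrm{Hom}_W(S_r,W)$ compatibly with the transition maps of Remark \ref{remark 3.0.2} as $r$ varies.

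\textbf{Step 2: Hida's vertical control theorem (the hard part).} The essential input is that the $\Lambda_{\alpha,\delta}$-module $\mathbb{S}_\infty:=\varinjlim_r S_r$, formed along the degeneracy maps, has the property that $\mathrm{Hom}_W(\mathbb{S}_\infty,W)=\varprojlim_r\mathrm{Hom}_W(S_r,W)$ is \emph{free of finite rank} $d$ over $\Lambda_{\alpha,\delta}$, and that the control map identifying the reduction $\mathrm{Hom}_W(\mathbb{S}_\infty,W)/\omega_r$ with $\mathrm{Hom}_W(S_r,W)$ is an isomorphism for every $r$. I would prove this following Hida: (i) write the Hochschild--Serre / Ihara-type exact sequences relating cohomology, or spaces of forms, at consecutive levels $Np^r\subseteq Np^{r+1}$ for the cyclic $p$-cover; (ii) apply the ordinary projector $e$, which annihilates the error terms — the $H^0$ and $H^2$ contributions, on which $U(p)$ acts through a non-unit of $W$, together with the non-ordinary summand — so that the control sequence becomes exact on ordinary parts; and (iii) invoke Hida's fundamental finiteness, that $\dim_{Q(W)}(S_r\otimes Q(W))/p^{r-1}$ is bounded (indeed eventually constant, equal to $d$), and deduce freeness of the complete limit by Nakayama's lemma over $\Lambda_{\alpha,\delta}$.

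\textbf{Step 3: conclusion.} Taking the $W$-dual of Step 1 and passing to the limit gives $\textbf{h}_{\alpha,\delta,\xi}(N)=\varprojlim_r\textbf{h}_{\alpha,\delta,\xi,r}\cong\varprojlim_r\mathrm{Hom}_W(S_r,W)\cong\mathrm{Hom}_W(\mathbb{S}_\infty,W)$, which by Step 2 is $\Lambda_{\alpha,\delta}$-free of rank $d$; this is (2). For (1), I would reduce the isomorphism $\textbf{h}_{\alpha,\delta,\xi}(N)\cong\mathrm{Hom}_W(\mathbb{S}_\infty,W)$ modulo $\omega_r$: the control statement of Step 2 identifies the right-hand side modulo $\omega_r$ with $\mathrm{Hom}_W(S_r,W)\cong\textbf{h}_{\alpha,\delta,\xi,r}$, and since the resulting map is the one induced by the Hecke operators on both sides it coincides with the canonical surjection of Remark \ref{remark 3.0.2}, hence is a $\Lambda_{\alpha,\delta}$-algebra isomorphism. (A duality-free alternative for (1) and (2) together: lift a $W$-basis of $\textbf{h}_{\alpha,\delta,\xi}(N)/\omega_1=\textbf{h}_{\alpha,\delta,\xi,1}$ through topological Nakayama to a surjection $\Lambda_{\alpha,\delta}^{d}\twoheadrightarrow\textbf{h}_{\alpha,\delta,\xi}(N)$, reduce modulo each $\omega_r$, and compare $W$-ranks — both equal $d\,p^{r-1}$ once (1) and the dimension bound are known — so that the kernel lies in $\bigcap_r\omega_r\Lambda_{\alpha,\delta}^{d}=0$.) I expect Step 2 to be the only genuine obstacle: the exactness of the control sequence after applying $e$ rests on the geometric input that $e$ kills the boundary and the non-ordinary part of the cohomology of modular curves, and the freeness needs Hida's uniform bound on the dimensions of ordinary forms up the $p$-tower — exactly the content packaged in \cite[Corollary 4.31]{hida2012p}; everything else is formal commutative algebra over the regular local ring $\Lambda_{\alpha,\delta}\cong W[\![T]\!]$.
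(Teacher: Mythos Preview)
The paper does not actually prove this theorem: it simply quotes it as a known fact and refers the reader to \cite[Corollary 4.31]{hida2012p} for the proof. Your outline is a correct sketch of that standard Hida argument (duality with ordinary cusp forms, exact control on the ordinary part of cohomology up the $p$-tower, boundedness of the ordinary rank, and topological Nakayama over $\Lambda_{\alpha,\delta}$), so there is nothing to compare beyond noting that you have supplied what the paper deliberately omits.
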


\medskip

\section{Limit Mordell-Weil groups and Limit Barsotti-Tate groups}

\medskip

\subsection{Sheaves attached to abelian varieties} \label{sec 4.1}

\begin{definition}\label{definition 4.1.1} Let $K$ be a finite extension of $\mathbb{Q}$ or $\mathbb{Q}_l$ and $A$ be an abelian variety over $K$.

\smallskip

(1) For a finite extension $F/K$, define a finitely generated $W$-module $\displaystyle \widehat{A}(F):=A(F) \otimes_{\mathbb{Z}}W$.

\smallskip

(2) For an algebraic extension $E/K$, define $\displaystyle \widehat{A}(E):=\lim_{\substack{\longrightarrow \\ F}}\widehat{A}(F)$ where direct limit is taken with respect to the natural inclusions.
\end{definition}

\smallskip

Note that if $F$ is a finite extension of $\mathbb{Q}_l$ with $l \neq p$, then $\widehat{A}(F)=A(F)[p^{\infty}]$ is a finite module.\\

We record (without proof) one lemma \cite[Lemma 7.2]{hida2015analytic} regarding the Galois cohomology of $\widehat{A}$.

\begin{lemma}\label{lemma 4.1.2} Let $K$ be a finite extension of $\mathbb{Q}$ or $\mathbb{Q}_l$, $A$ be an abelian variety defined over $K$, and $q$ be a positive integer.

(1) If $K$ is a number field, let $S$ be finite set of places containing infinite places, primes over $p$ and primes of bad reduction of $A$. Then $H^{q}(K^S/K, \widehat{A}) \simeq H^{q}(K^S/K, A)[p^{\infty}] $.

(2) If $K$ is a local field, then $H^{q}(K, \widehat{A}) \simeq H^{q}(K, A)[p^{\infty}] $.
\end{lemma}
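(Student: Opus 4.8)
The plan is to prove Lemma \ref{lemma 4.1.2} by relating the cohomology of $\widehat A$ to that of $A$ via the defining tensor product, using the fact that $W$ is a flat $\mathbb{Z}$-module together with a direct limit argument, and then pinning down the $p$-primary part using the divisibility properties of the relevant cohomology groups.

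First I would treat part (2), the local case. Write $W = \mathbb{Z}_q^{f}$ as an abelian group (so $W \cong \mathbb{Z}_p^{d}$ for some $d$, since $W$ is a finite unramified extension ring of $\mathbb{Z}_p$), hence $W = \varinjlim \frac{1}{p^n}\mathbb{Z}^d$ inside $W \otimes \mathbb{Q}$, or more usefully $W \cong \varprojlim W/p^n W$ with $W/p^nW$ finite. The key point is that $\widehat A(F) = A(F)\otimes_{\mathbb{Z}} W$ and, since $W$ is $\mathbb{Z}$-flat, $- \otimes_{\mathbb{Z}} W$ is exact; moreover for a local field $F$ the group $A(F)$ is a finitely generated $\mathbb{Z}_p$-module times a finite group (for $\ell = p$) or finite (for $\ell \neq p$), so $A(F)\otimes_{\mathbb{Z}}W$ already equals $A(F)\otimes_{\mathbb{Z}_p}W$ and the $\ell \neq p$ remark in the text handles that subcase. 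For Galois cohomology over the local field $K$, I would use that $H^q(K, -)$ commutes with the filtered direct limit defining $\widehat A$ over finite $E/K$, and then compare $H^q(K, A \otimes_{\mathbb{Z}} W)$ with $H^q(K, A)$. Since $W$ is a free $\mathbb{Z}_p$-module of finite rank $d$ with trivial Galois action, $A\otimes_{\mathbb Z} W \cong A^{\oplus d}$ as Galois modules \emph{after} $p$-adic completion issues are dealt with; the cleanest route is: $A \otimes_{\mathbb Z} W = A \otimes_{\mathbb{Z}_p} W_p$ where $W_p$ denotes $W$ as a $\mathbb{Z}_p$-module (using that $A(\bar K)$ is already $p$-divisible-plus-prime-to-$p$, and tensoring with $W$ only sees the $p$-part up to a torsion-free prime-to-$p$ piece that contributes nothing new to $p^\infty$-torsion of cohomology), and then $H^q(K, A\otimes_{\mathbb{Z}_p} W_p) \cong H^q(K,A)\otimes_{\mathbb{Z}_p} W_p$ because $W_p$ is $\mathbb{Z}_p$-free of finite rank so tensoring is exact and commutes with cohomology of a single (profinite) group in the sense of $H^q(K, A^{\oplus d}) = H^q(K,A)^{\oplus d}$. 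Finally $H^q(K,A)\otimes_{\mathbb{Z}_p} W_p$ has the same $p$-primary torsion (indeed is the same up to the $W$-module structure) as claimed; more precisely one checks $\left(H^q(K,A)\otimes_{\mathbb{Z}} W\right) \simeq H^q(K,A)[p^\infty]\otimes_{\mathbb{Z}} W = H^q(K, \widehat A)$, using that $H^q(K,A)$ for $q\geq 1$ is already torsion (it is a theorem of Tate-type that $H^q(K,A)$ is torsion for $q \geq 1$ over a local field, and for $q\geq 2$ it vanishes), so that tensoring with $W$ only retains the $p$-part as a $W$-module. I would phrase the statement as the natural map $H^q(K^S/K,\widehat A)\to H^q(K^S/K,A)$ (resp. local) having image exactly the $p^\infty$-torsion and being injective modulo that, giving the displayed isomorphism.

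For part (1), the global case, the strategy is the same but one must be careful that $\text{Gal}(K^S/K)$ has finite $p$-cohomological dimension $2$ (noted in the excerpt) and that $A$ is replaced by the sheaf it represents on the étale site of $\text{Spec}\,\mathcal{O}_{K,S}$, so that $H^q(K^S/K, A) = H^q_{\text{ét}}(\mathcal{O}_{K,S}, A)$; the filtered colimit defining $\widehat A$ is over finite subextensions inside $K^S$, and continuous cohomology of the profinite group $\text{Gal}(K^S/K)$ commutes with such colimits of discrete modules. Then I would again use $\widehat A = A\otimes_{\mathbb Z} W$, flatness of $W$ over $\mathbb Z$ (equivalently $W$ is $\mathbb{Z}_p$-free of finite rank), and the finiteness of $p$-cohomological dimension to conclude $H^q(K^S/K, A\otimes_{\mathbb Z} W) \cong H^q(K^S/K,A)\otimes_{\mathbb{Z}}W$, hence isomorphic to $H^q(K^S/K,A)[p^\infty]$ as a group (the higher-torsion / prime-to-$p$ part washes out because $W$ sees only the $p$-part up to a free prime-to-$p$ factor, and $H^q(K^S/K,A)$ for $q\geq1$ is torsion — Selmer/Tate-Shafarevich finiteness is not needed here, only that $H^q$ is torsion for $q\geq1$, which follows from the Mordell-Weil theorem for $q=1$ and cohomological dimension for $q\geq 3$, with $q=2$ handled by Poitou-Tate duality showing it is torsion).

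The main obstacle I anticipate is not any deep input but rather making the bookkeeping between $\otimes_{\mathbb Z} W$ and $[p^\infty]$ precise: a priori $A(F)\otimes_{\mathbb Z}W$ contains a torsion-free part (coming from the Mordell-Weil rank) which does not literally equal $A(F)[p^\infty]$, so the claimed isomorphism on $H^q$ must genuinely use that for $q\geq 1$ the cohomology is torsion, so that after applying $H^q$ the free directions contribute a $p$-divisible (hence, in the relevant categories, canonically identifiable) piece. I would handle this by first establishing the short exact sequence $0 \to A[p^\infty] \to \widehat A \to \widehat A / A[p^\infty] \to 0$ of Galois modules with the quotient being $p$-divisible and torsion-free (a $\mathbb{Q}_p$-vector space times prime-to-$p$ stuff), taking cohomology, and observing the quotient's cohomology is uniquely $p$-divisible so contributes nothing to the $p^\infty$-torsion, then chasing the long exact sequence — together with flatness to identify the relevant maps — to extract exactly $H^q(K^S/K,A)[p^\infty]$. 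Once this identification is set up cleanly in one place it applies verbatim to both the local and global cases and to all $q\geq 1$.
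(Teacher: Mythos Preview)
The paper does not give its own proof of this lemma; it simply records the statement and cites \cite[Lemma 7.2]{hida2015analytic}. So there is no ``paper's proof'' to compare against, and the question is only whether your sketch is correct.

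The strategy in your final paragraph is the right one and does lead to a proof: pass to the short exact sequence of Galois modules with uniquely $p$-divisible quotient, use that $H^q$ of any discrete module is torsion for $q\geq 1$ (so a uniquely $p$-divisible torsion group is prime-to-$p$, hence receives and emits only the zero map from/to $p$-primary groups), and read off the result from the long exact sequence. Two points need tightening. First, several of your intermediate claims are wrong as stated: $A(\overline K)\otimes_{\mathbb Z}W$ is \emph{not} isomorphic to $A(\overline K)^{\oplus d}$ as a Galois module (tensoring with $W\simeq\mathbb Z_p^{d}$ kills all prime-to-$p$ torsion), and $A(\overline K)$ is not a $\mathbb Z_p$-module, so the expression $A\otimes_{\mathbb Z_p}W$ is meaningless. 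These missteps are harmless because you abandon them, but they should be deleted. Second, in your exact sequence you should write $\widehat A[p^\infty]=A[p^\infty]\otimes_{\mathbb Z_p}W$ rather than $A[p^\infty]$, and you still owe the identification of $H^q(G,\widehat A[p^\infty])$ with $H^q(G,A)[p^\infty]\otimes_{\mathbb Z_p}W$; this comes from running the \emph{same} argument on the untwisted sequence $0\to A[p^\infty]\to A\to A/A[p^\infty]\to 0$ and then tensoring with the free $\mathbb Z_p$-module $W$. With those two corrections your outline goes through; note also that for $q=1$ the boundary map $H^0(G,Q)\to H^1(G,\widehat A[p^\infty])$ does not vanish for formal reasons alone, but its image is exactly what gets quotiented out when you compare with the corresponding boundary for $A$, so the identification survives.
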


Now we define the Selmer group and the Tate-Shafarevich group for $\widehat{A}$.

\begin{definition}\label{definition 4.1.3} Let $K$ be a number field, $A$ be an abelian variety defined over $K$, and $S$ be a finite set of places of $K$ containing infinite places, primes over $p$ and primes of bad reduction of $A$.

(1) $\displaystyle \mathrm{Sel}_{K}(\widehat{A}):=\mathrm{Ker}(H^{1}(K^S/K, \widehat{A}[p^{\infty}]) \rightarrow \prod_{\substack{v \in S}}H^{1}(K_v, \widehat{A}))
$

(2) $\displaystyle \sha_{K}^{1}(\widehat{A}):=\mathrm{Ker}(H^{1}(K^S/K, \widehat{A}) \rightarrow
\prod_{\substack{v \in S}}H^{1}(K_v, \widehat{A}))$

(3) $\displaystyle \sha_{K}^{i}(\widehat{A}[p^{\infty}]):=\mathrm{Ker}(H^{i}(K^S/K, \widehat{A}[p^{\infty}])  \rightarrow
\prod_{\substack{v \in S}}H^{i}(K_v, \widehat{A}[p^{\infty}]))$ for $i=1, 2$
\end{definition}

\begin{remark}\label{remark 4.1.4} (1) By \cite[Corollary $\mathrm{I}.6.6$]{milne2006arithmetic}, this definition is independent of the choice of $S$ as long as $S$ contains infinite places, primes over $p$ and primes of bad reduction of $A$. Moreover, all three modules in the above definition are cofinitely generated $W$-modules.

(2) If we consider the usual classical $p$-adic Selmer group $\mathrm{Sel}_{K}(A):=\mathrm{Ker}(H^{1}(K^S/K, A[p^{\infty}]) \rightarrow \prod_{\substack{v \in S}}H^{1}(K_v, A))$, $\mathrm{Sel}_{K}(\widehat{A})$ is isomorphic to $\mathrm{Sel}_{K}(A)_p$ by Lemma \ref{lemma 4.1.2}. Same assertion holds for the Tate-Shafarevich groups.

(3) Since $A[p^{\infty}] \simeq \widehat{A}[p^{\infty}]$, we get $ \sha_{K}^{1}(\widehat{A}[p^{\infty}]) =\sha_{K}^{1}(A[p^{\infty}])$ and $ \sha_{K}^{2}(\widehat{A}[p^{\infty}])=\sha_{K}^{2}(A[p^{\infty}])$. 
\end{remark}

\medskip

\subsection{Control sequences} \label{sec 4.2}

 Note that the $U(p)$-operator acts on sheaf $\hat{J}_{\alpha, \delta, \xi, r/\mathbb{Q}}$ where $J_{\alpha, \delta, \xi, r/\mathbb{Q}}$ is the Jacobian of modular curve $X_{\alpha, \delta, \xi}(Np^r)_{/\mathbb{Q}}$. By considering the idempotent $\displaystyle e:=\lim_{\substack{n \rightarrow \infty}}U(p)^{n!}$, we define $$\hat{J}^{ord}_{\alpha, \delta, \xi, r/\mathbb{Q}}:=e(\hat{J}_{\alpha, \delta, \xi, r/\mathbb{Q}}).$$ 

\smallskip

\begin{definition}\label{definition 4.2.5} We define the following $\Lambda$-adic Mordell-Weil group and $\Lambda$-adic Barsotti-Tate group as injective limits of sheaves:
(1) $\displaystyle J_{\alpha, \delta, \xi, \infty /\mathbb{Q}}^{ord}:=\lim_{\substack{\longrightarrow \\ n}}\hat{J}_{\alpha, \delta, \xi, n /\mathbb{Q}}^{ord}$ \quad (2) $\displaystyle G_{\alpha, \delta, \xi /\mathbb{Q}}:=\lim_{\substack{\longrightarrow \\ n}}\hat{J}_{\alpha, \delta, \xi, n}^{ord}[p^{\infty}]_{/\mathbb{Q}}$
\end{definition}

\begin{nota}\label{No} Hereafter, we fix the following notations about the Jacobians and the Barsotti-Tate groups. For $\alpha, \delta \in \mathbb{Z}_p$ with $\alpha\mathbb{Z}_p+\delta\mathbb{Z}_p=\mathbb{Z}_p$ and a character $\xi:\mu_{p-1}\times\mu_{p-1} \rightarrow \mu_{p-1}$, we let
\begin{itemize}
\item $J_n=J_{\alpha, \delta, \xi, n}$ and $J'_n=J_{\delta, \alpha, \xi', n}$.
\item $\hat{J}_n^{ord}=\hat{J}_{\alpha, \delta, \xi, n}^{ord}$ and $ \hat{J'}_n^{ord}=\hat{J}_{\delta, \alpha, \xi', n}^{ord}$. 
\item $J_{\infty}^{ord}=J_{\alpha, \delta, \xi, \infty }^{ord}$ and $J_{\infty}^{' ord}=J_{\delta, \alpha, \xi', \infty}^{ord}$
\item $\mathfrak{g}=G_{\alpha, \delta, \xi}$ and $\mathfrak{g}'=G_{\delta, \alpha, \xi'}$.
\end{itemize}
\end{nota}

\begin{remark}\label{remark 4.2.8} (1) Note that natural map $$J_{n /\mathbb{Q}} \rightarrow J_{n+1 /\mathbb{Q}}$$ induced from projection map $X_{\alpha, \delta, \xi}(Np^{n+1})_{/\mathbb{Q}} \twoheadrightarrow X_{\alpha, \delta, \xi}(Np^{n})_{/\mathbb{Q}}$ via Picard functoriality is Hecke-equivariant. Hence we have an induced map $$P_{n, n+1}:\hat{J}_{n /\mathbb{Q}}^{ord} \rightarrow \hat{J}_{n+1 /\mathbb{Q}}^{ord}.$$ In Definition \ref{definition 4.2.5}, direct limit is taken with respect to this $P_{n, n+1}$.

\medskip

(2) If $K$ is an algebraic extension of $\mathbb{Q}$ or $\mathbb{Q}_l$, we give discrete topology on $$\mathfrak{g}(K) \quad \mathrm{and} \quad \hat{J}^{ord}_{r}(K) \otimes \mathbb{Q}_p/\mathbb{Z}_p$$ including $r=\infty$ which makes these two modules continuous $\Lambda$-modules. They are also $\textbf{h}_{\alpha, \delta, \xi}(N)$-modules since the maps $P_{n, n+1}$ are Hecke-equivariant.
\end{remark}

\smallskip

We use the following convention hereafter.

\begin{conv}[Identification]\label{Id} We can find two elements $\gamma_{1}=(a, d)$ and $\gamma_{2}=(d, a)$ of $\Gamma \times \Gamma$ so that they generate $\Gamma \times \Gamma$, $\gamma_{1}=(a, d)$ generates $(\Gamma \times \Gamma) / \mathrm{Ker}(\pi_{\alpha, \delta})$, and $\gamma_{2}=(d, a)$ generates $(\Gamma \times \Gamma) / \mathrm{Ker}(\pi_{\delta, \alpha})$. We identify $\Lambda_{\alpha, \delta}$ with $\Lambda=W[\![T]\!]$ by $(a, d) \leftrightarrow 1+T$ and $\Lambda_{\delta, \alpha}$ with $\Lambda$ by $(d, a) \leftrightarrow 1+T$.
\end{conv}

\medskip

We first state the control results of sheaves $\hat{J}_{r /\mathbb{Q}}^{ord} $ and $\hat{J}_{r}^{ord}[p^{\infty}]_{/\mathbb{Q}}$ with respect to the $p$-power level. For the proof, see \cite[Section 3]{hida2015analytic}.

\begin{prop}\label{prop 4.2.9} For two positive integers $r \leq s$, we have the following isomorphisms of sheaves: 
\begin{align*}
\hat{J}_{r /\mathbb{Q}}^{ord} \simeq \hat{J}_{s}^{ord}[\gamma^{p^{r-1}}-1]_{/\mathbb{Q}} \quad &\mathrm{and} \quad \hat{J}_{r /\mathbb{Q}}^{ord} \simeq J_{\infty}^{ord}[\omega_r]_{/\mathbb{Q}} \\
\hat{J}^{ord}_{r}[p^{\infty}]_{/\mathbb{Q}} \simeq \hat{J}^{ord}_{s}[p^{\infty}][\gamma^{p^{r-1}}-1]_{/\mathbb{Q}} \quad &\mathrm{and} \quad  \hat{J}_{r}^{ord}[p^{\infty}]_{/\mathbb{Q}} \simeq \mathfrak{g}[\omega_r]_{/\mathbb{Q}}
\end{align*}
\end{prop}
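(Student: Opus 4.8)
The plan is to deduce Proposition \ref{prop 4.2.9} from the structural results on the big ordinary Hecke algebra, reducing everything to the ``vertical control'' theorem for $p$-divisible groups and Jacobians that Hida establishes in \cite[Section 3]{hida2015analytic}. First I would recall that by Theorem \ref{theorem 3.0.4}-(2) the Hecke algebra $\textbf{h}_{\alpha, \delta, \xi}(N)$ is $\Lambda_{\alpha, \delta}$-free of finite rank, and that $\hat{J}_{s}^{ord}$ (as a sheaf on the fppf or étale site, or equivalently as a functor on $\mathbb{Q}$-algebras valued in $W$-modules) carries a faithful action of $\textbf{h}_{\alpha, \delta, \xi, s}$, hence of $\Lambda = \Lambda_{\alpha, \delta}$ via the identification of Convention \ref{Id}. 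The group element $\gamma = \gamma_1 = (a,d)$ generating $(\Gamma\times\Gamma)/\mathrm{Ker}(\pi_{\alpha,\delta})$ corresponds to $1+T$, so $\gamma^{p^{r-1}} - 1$ corresponds to $\omega_r(T)$; this is the only place where the precise choice of generator matters, and it is recorded for later use.

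The core input is Hida's theorem that the trace/norm maps and the inclusions between the towers $\{X_{\alpha,\delta,\xi}(Np^r)\}$ realize $\hat{J}^{ord}_{r/\mathbb{Q}}$ as the $\gamma^{p^{r-1}}-1$-torsion of $\hat{J}^{ord}_{s/\mathbb{Q}}$ for $r \le s$. Concretely I would argue: the Picard-functoriality map $P$ and the dual Albanese map, composed appropriately, show that the natural map $\hat{J}^{ord}_{r/\mathbb{Q}} \to \hat{J}^{ord}_{s}[\gamma^{p^{r-1}}-1]_{/\mathbb{Q}}$ is an isomorphism of sheaves — this is precisely \cite[Corollary 3.\ldots]{hida2015analytic} (the vertical control theorem for the ordinary part, using that $U(p)$ is invertible on the ordinary part and that the relevant cokernels vanish after applying $e$). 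Then passing to the direct limit over $s$ and using that $\omega_r$ is, under the identification of Convention \ref{Id}, exactly the image of $\gamma^{p^{r-1}}-1$ in $\Lambda$, I get $\hat{J}^{ord}_{r/\mathbb{Q}} \simeq \varinjlim_s \hat{J}^{ord}_s[\gamma^{p^{r-1}}-1]_{/\mathbb{Q}} = J^{ord}_{\infty}[\omega_r]_{/\mathbb{Q}}$, where the last equality is because taking $\omega_r$-torsion commutes with the (filtered) direct limit defining $J^{ord}_\infty$.

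For the second line, concerning the Barsotti--Tate groups, I would apply the exact functor $[p^\infty]$ (i.e., $p$-power torsion) to the first line. Since $[p^\infty]$ is left-exact and commutes with the Hecke action, $\hat{J}^{ord}_s[\gamma^{p^{r-1}}-1]_{/\mathbb{Q}}[p^\infty] = \hat{J}^{ord}_s[p^\infty][\gamma^{p^{r-1}}-1]_{/\mathbb{Q}}$, giving $\hat{J}^{ord}_r[p^\infty]_{/\mathbb{Q}} \simeq \hat{J}^{ord}_s[p^\infty][\gamma^{p^{r-1}}-1]_{/\mathbb{Q}}$; and taking the direct limit over $s$ and commuting $[\omega_r]$ past the limit as before yields $\hat{J}^{ord}_r[p^\infty]_{/\mathbb{Q}} \simeq \mathfrak{g}[\omega_r]_{/\mathbb{Q}}$, where $\mathfrak{g} = G_{\alpha,\delta,\xi}$ by Notation \ref{No}.

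The main obstacle, and the step I would want to be most careful about, is justifying that the control map is an \emph{isomorphism of sheaves} rather than merely an isomorphism on $\overline{\mathbb{Q}}$-points: one must check that after applying the ordinary idempotent $e$ the relevant connecting maps have trivial kernel and cokernel \emph{functorially} in the base $\mathbb{Q}$-algebra, which is exactly the content of Hida's vertical control theorem and relies on $U(p)$ acting invertibly on the ordinary part together with a careful analysis of the degeneracy maps between the modular curves $X_{\alpha,\delta,\xi}(Np^r)$. Since this is established in \cite[Section 3]{hida2015analytic}, I would cite it directly rather than reprove it; the remaining work — commuting $[p^\infty]$ and $[\omega_r]$ past exact functors and direct limits, and matching $\gamma^{p^{r-1}}-1$ with $\omega_r$ via Convention \ref{Id} — is formal.
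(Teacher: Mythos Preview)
Your proposal is correct and matches the paper's approach exactly: the paper does not prove this proposition but simply cites \cite[Section 3]{hida2015analytic} for the control result, and your plan likewise reduces everything to Hida's vertical control theorem, with the remaining steps (passing to limits, taking $[p^\infty]$, matching $\gamma^{p^{r-1}}-1$ with $\omega_r$) being formal. If anything, you are giving more detail than the paper itself provides.
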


\smallskip

\begin{cor}\label{cor 4.2.10} For $K$ either a number field or a finite extension of $\mathbb{Q}_l$,
$\mathfrak{g}(K)$ is a discrete cofinitely generated cotorsion $\Lambda$-module.
\end{cor}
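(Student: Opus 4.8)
The plan is to read off the claim from the isomorphism $\hat{J}_{r}^{ord}[p^{\infty}]_{/\mathbb{Q}} \simeq \mathfrak{g}[\omega_r]_{/\mathbb{Q}}$ of Proposition \ref{prop 4.2.9} together with the fact that $\mathfrak{g}(K) = \varinjlim_r \hat{J}_r^{ord}[p^{\infty}](K)$. First I would observe that each $\hat{J}_r^{ord}[p^{\infty}](K)$ is a cofinitely generated $W$-module: for $K$ a number field this is because $\hat{J}_r^{ord}[p^{\infty}](K) \subseteq \hat{J}_r(K)[p^{\infty}] = J_r(K)[p^{\infty}] \otimes \mathbb{Z}_p$-ish, which is finite (the torsion subgroup of a finitely generated Mordell--Weil group), while for $K$ a finite extension of $\mathbb{Q}_l$ with $l \neq p$ it is finite by the remark after Definition \ref{definition 4.1.1}, and for $K$ a finite extension of $\mathbb{Q}_p$ it is cofinitely generated over $W$ since $\hat{J}_r(K)$ is a finitely generated $W$-module and $[p^{\infty}]$ of such has corank equal to $\dim J_r \cdot [K:\mathbb{Q}_p]$-bounded. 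Passing to the direct limit over $r$ with the discrete topology, $\mathfrak{g}(K)$ is a discrete $\Lambda$-module.

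Next, the key point is that $\mathfrak{g}(K)[\omega_r] = \mathfrak{g}[\omega_r](K) \simeq \hat{J}_r^{ord}[p^{\infty}](K)$, and since $\Lambda = \varprojlim \Lambda/\omega_r$ and every element of the discrete module $\mathfrak{g}(K)$ is killed by some power of $p$ and, being a limit of $\omega_r$-torsion pieces, is killed by some $\omega_r$, we get $\mathfrak{g}(K) = \bigcup_r \mathfrak{g}(K)[\omega_r]$. Thus $\mathfrak{g}(K)^{\vee} = \varprojlim_r \hat{J}_r^{ord}[p^{\infty}](K)^{\vee}$ is a finitely generated $\Lambda$-module: each transition map is surjective (Pontryagin dual of the injection $\hat{J}_r^{ord}[p^{\infty}](K) \hookrightarrow \hat{J}_{r+1}^{ord}[p^{\infty}](K)[\omega_r]$... more precisely of $\mathfrak{g}[\omega_r](K)\hookrightarrow\mathfrak{g}[\omega_{r+1}](K)$), so by the topological Nakayama lemma (the dual is a profinite $\Lambda$-module finitely generated over $\Lambda/\omega_1 = W$ modulo $\mathfrak{m}_\Lambda$, hence finitely generated over $\Lambda$). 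Equivalently, $\mathfrak{g}(K)$ is cofinitely generated as a $\Lambda$-module.

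Finally, for cotorsion: I must show $\mathfrak{g}(K)^{\vee}$ has zero $\Lambda$-rank, i.e. $\mathfrak{g}(K)^{\vee} \otimes_\Lambda Q(\Lambda) = 0$. This follows because $\mathfrak{g}(K)^{\vee}/\omega_r \mathfrak{g}(K)^{\vee}$ surjects onto (indeed is closely tied to) $(\mathfrak{g}(K)[\omega_r])^{\vee} = \hat{J}_r^{ord}[p^{\infty}](K)^{\vee}$, whose $W$-corank stays bounded independently of $r$ (it is bounded by $\dim_W \hat{\mathbf h}_{\alpha,\delta,\xi,r}$-times-something, but more simply: $\hat J_r^{ord}[p^\infty](K)$ has bounded corank since the $\Lambda$-adic object $J^{ord}_\infty$ is $\Lambda$-cofinitely generated); a finitely generated torsion-free $\Lambda$-module of positive rank would have $\mathrm{corank}_W(M/\omega_r M) \to \infty$, a contradiction. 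Hence $\mathrm{rank}_\Lambda \mathfrak{g}(K)^{\vee} = 0$. The main obstacle, and the step requiring the most care, is the last one: pinning down that the $W$-coranks of $\hat{J}_r^{ord}[p^{\infty}](K)$ are bounded in $r$ — for $K$ a $p$-adic field this needs the finiteness of $\Lambda$-rank of $\hat{J}^{ord}_{\infty}(K)$ (equivalently finite generation of $\mathbf h_{\alpha,\delta,\xi}(N)$ over $\Lambda_{\alpha,\delta}$, Theorem \ref{theorem 3.0.4}), and I would want to invoke the control isomorphisms of Proposition \ref{prop 4.2.9} carefully rather than argue corank bounds by hand.
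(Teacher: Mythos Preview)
Your strategy---use Proposition~\ref{prop 4.2.9} to identify $\mathfrak{g}(K)[\omega_r]\simeq \hat J_r^{ord}[p^\infty](K)$ and then apply Nakayama---is exactly the paper's. But you miss the one observation that makes the argument a two-liner, and this omission is what forces you into the circular corank-bounding at the end.

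The point you overlook is that $\hat J_r^{ord}[p^\infty](K)$ is \emph{finite}, not merely $W$-cofinitely generated, in every case under consideration. For $K$ a number field this is Mordell--Weil (the torsion of $J_r(K)$ is finite). For $K$ a finite extension of $\mathbb{Q}_l$---including $l=p$---the group $J_r(K)$ is a compact $l$-adic Lie group (Mattuck's theorem), and its $p$-primary torsion is finite; the paper refers to this loosely as Nagell--Lutz. Your sentence ``$[p^\infty]$ of such has corank equal to $\dim J_r\cdot[K:\mathbb{Q}_p]$-bounded'' confuses the $p^\infty$-torsion of the finitely generated $W$-module $\hat J_r(K)$ (which is its torsion submodule, hence finite) with the $W$-rank of $\hat J_r(K)$ itself.

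Once you know $\mathfrak{g}(K)[\omega_r]$ is finite for each $r$, a single application of Nakayama to the compact $\Lambda$-module $\mathfrak{g}(K)^\vee$ (using that $\mathfrak{g}(K)^\vee/\omega_1\mathfrak{g}(K)^\vee$ is finite) gives both finite generation and torsion at once. Your third paragraph---bounding $W$-coranks of $\hat J_r^{ord}[p^\infty](K)$ via the $\Lambda$-cofiniteness of $J_\infty^{ord}$ or via Theorem~\ref{theorem 3.0.4}---is therefore unnecessary, and as written it is circular (you invoke cofinite generation of the $\Lambda$-adic object to prove itself).
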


In the Section 6 and 7, we will study the structure of $\mathfrak{g}(K)^{\vee}$ as a finitely generated $\Lambda$-module in detail. From an exact sequence of sheaves $$\displaystyle 0 \rightarrow J_{\infty}^{ord}[\omega_r] \rightarrow J_{\infty}^{ord} \xrightarrow{\omega_{r}}  J_{\infty}^{ord},$$ we get an isomorphism $\displaystyle J_{\infty}^{ord}[\omega_r](K) \simeq J_{\infty}^{ord}(K) [\omega_r]$ where $K$ is either a number field or a finite extension of $\mathbb{Q}_l$, since the global section functor is left exact. Similarly we get $$\mathfrak{g}[\omega_r](K) \simeq \mathfrak{g}(K)[\omega_r].$$

\begin{proof} By Proposition \ref{prop 4.2.9} and the above remark, we have an isomorphism $$\displaystyle \hat{J}^{ord}_{r}[p^{\infty}](K) \simeq \mathfrak{g}(K)[\omega_r].$$ Since $\hat{J}^{ord}_{r}[p^{\infty}](K)$ has finite cardinality due to Mordell-Weil theorem (global case) and Nagell-Lutz theorem (local case), the Nakayama's lemma proves the assertion. 
\end{proof}

\smallskip

\subsection{Cofreeness of Barsotti-Tate groups} \label{sec 4.3}

We state the cofreeness result of Barsotti-Tate groups without proof. (For the proof, see \cite[Section 6]{hida1986galois}.)

\begin{theorem}\label{theorem 4.3.11} (1) $\mathfrak{g}(\overline{\mathbb{Q}})$ is a cofree $\Lambda$-module of finite rank. 

(2) For a complex conjugation $c$, we have $\corank_{\Lambda}(\mathfrak{g}(\overline{\mathbb{Q}})[c-1])=\corank_{\Lambda}(\mathfrak{g}(\overline{\mathbb{Q}})[c+1])$. Hence $\corank_{\Lambda}(\mathfrak{g}(\overline{\mathbb{Q}}))$ is even.
\end{theorem}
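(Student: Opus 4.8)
The plan is to deduce both statements from the cofreeness results already available modulo $\omega_r$ together with a good understanding of how the Barsotti--Tate group $\mathfrak{g}(\overline{\mathbb{Q}})$ behaves under the Galois action, and in particular under complex conjugation. For part (1), first I would observe that by Proposition \ref{prop 4.2.9} we have $\hat J_r^{ord}[p^\infty](\overline{\mathbb{Q}}) \simeq \mathfrak{g}(\overline{\mathbb{Q}})[\omega_r]$, and the left-hand side is, for each $r$, a cofree $W$-module whose corank is the dimension of the ordinary part of the $p$-adic Tate module of $J_r$ in degree one; by the control theorem for the big Hecke algebra (Theorem \ref{theorem 3.0.4}) these coranks grow in the predictable way dictated by the $\Lambda_{\alpha,\delta}$-freeness of $\mathbf{h}_{\alpha,\delta,\xi}(N)$. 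From the structure theory of $\Lambda$-modules, a cofinitely generated cotorsion-free $\Lambda$-module $X$ with $X[\omega_r]$ cofree over $W$ of corank exactly $d\cdot p^{r-1}$ (the expected value) for all $r$, and with $\mathfrak{g}(\overline{\mathbb{Q}})[\omega_1]$ cofree of corank $d$, is forced to be cofree of corank $d$: one checks that $X^\vee$ is $\Lambda$-free by verifying $X^\vee/\omega_r X^\vee$ is $W$-free of the right rank and passing to the limit, using that $\omega_r$ is a regular sequence exhausting the maximal ideal's relevant powers. The key input making the coranks come out exactly right (rather than merely being bounded above) is Hida's freeness of the $\Lambda$-adic ordinary Hecke module, so I would cite \cite{hida1986galois} precisely as the source for that count.

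For part (2), the point is that complex conjugation $c$ acts $\Lambda$-linearly on $\mathfrak{g}(\overline{\mathbb{Q}})$ (it commutes with the Hecke and diamond operators defining the $\Lambda$-structure), $c^2 = 1$, and $p$ is odd, so $\mathfrak{g}(\overline{\mathbb{Q}})$ decomposes as the direct sum of its $\pm 1$-eigenspaces for $c$, each a cofree $\Lambda$-module. I would then argue that these two eigenspaces have equal corank by exhibiting a $\Lambda$-linear (or $\Lambda$-semilinear, compatible with an automorphism of $\Lambda$ that preserves coranks) isomorphism between them. The natural candidate is the Weil/Atkin--Lehner involution $w_r$, which the paper stresses induces $J_r \xrightarrow{\sim} J_r'$ and which anticommutes appropriately with $c$; alternatively one uses the Weil pairing / autoduality of the Jacobian, which identifies $\mathfrak{g}(\overline{\mathbb{Q}})$ with a twist of the dual of $\mathfrak{g}'(\overline{\mathbb{Q}})$ and swaps the action of $c$ with $-c$ up to the cyclotomic twist. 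Either way, swapping the sign of $c$ while preserving $\Lambda$-corank gives $\corank_\Lambda \mathfrak{g}(\overline{\mathbb{Q}})[c-1] = \corank_\Lambda \mathfrak{g}(\overline{\mathbb{Q}})[c+1]$, and evenness of the total corank is then immediate.

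A subtlety worth flagging in the write-up: the $\pm$-eigenspace decomposition requires the coefficient ring to have $2$ invertible, which is fine since $W$ has residue characteristic $p \geq 5$, but when one later uses the Weil pairing the relevant involution on $\Lambda$ is $\iota$ (the one sending $1+T$ to $(1+T)^{-1}$), and one must check that $\iota$ preserves $\Lambda$-corank — which it does, being an automorphism of $\Lambda$.

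The main obstacle I anticipate is part (1): getting the coranks of $\mathfrak{g}(\overline{\mathbb{Q}})[\omega_r]$ to be \emph{exactly} $d \cdot p^{r-1}$ rather than merely $\leq d \cdot p^{r-1}$, since only with the exact count does the limiting argument force cofreeness rather than just cotorsion-freeness. This is precisely where one cannot avoid invoking Hida's theory — the vertical control isomorphisms $\hat J_r^{ord}[p^\infty] \simeq \hat J_s^{ord}[p^\infty][\gamma^{p^{r-1}}-1]$ combined with $\Lambda_{\alpha,\delta}$-freeness of the big Hecke algebra — so the honest statement is that (1) is genuinely quoted from \cite{hida1986galois} rather than reproved, and (2) is the part that is elementary once (1) is in hand. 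In the final text I would therefore present (1) as a citation and give the eigenspace-plus-involution argument for (2) in full.
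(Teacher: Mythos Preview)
Your proposal is correct in spirit, but you should know that the paper does not actually prove this theorem: it is stated explicitly \emph{without proof}, with only a reference to \cite[Section 6]{hida1986galois}. So your final assessment --- that (1) is genuinely a citation to Hida's freeness theorem rather than something to be reproved here, and that (2) follows by elementary eigenspace-plus-duality considerations once (1) is available --- matches exactly what the paper does, except that the paper does not even spell out the argument for (2) and simply folds it into the same citation. Your sketch for (2) is a reasonable elaboration of what is left implicit.
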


\smallskip

\begin{prop}\label{prop 4.3.12} For a prime number $l$, fix an embedding $\overline{\mathbb{Q}} \hookrightarrow \overline{\mathbb{Q}_l}$. Then we have an isomorphism $\mathfrak{g}(\overline{\mathbb{Q}}) \simeq \mathfrak{g}(\overline{\mathbb{Q}_l})$ of $\Lambda$-modules. Hence $\mathfrak{g}(\overline{\mathbb{Q}_l})$ is a cofree $\Lambda$-module.
\end{prop}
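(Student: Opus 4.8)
The plan is to reduce the statement to a comparison of $p$-divisible groups attached to the abelian varieties $\hat J_r^{ord}$ over $\mathbb{Q}$ and $\mathbb{Q}_l$, and then pass to the limit. First I would recall that for each fixed level $r$, the finite group schemes $\hat J_r^{ord}[p^m]$ are étale-locally constant away from the relevant bad primes, and more precisely that $\hat J_r^{ord}[p^\infty](\overline{\mathbb{Q}})$ and $\hat J_r^{ord}[p^\infty](\overline{\mathbb{Q}_l})$ are both identified, as $\text{Gal}$-ignoring abelian groups with Hecke action, with the geometric torsion of the same abelian variety; the point is that taking $\overline{\mathbb{Q}_l}$-points of a $p$-divisible group of an abelian variety over a field of characteristic $0$ recovers all geometric $p$-power torsion, exactly as over $\overline{\mathbb{Q}}$. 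Concretely, fixing the embedding $\overline{\mathbb{Q}} \hookrightarrow \overline{\mathbb{Q}_l}$ induces $\hat J_r^{ord}[p^\infty](\overline{\mathbb{Q}}) \xrightarrow{\ \sim\ } \hat J_r^{ord}[p^\infty](\overline{\mathbb{Q}_l})$, since both sides are just $\hat J_r^{ord}[p^\infty](\overline{\mathbb{Q}})$ viewed inside $\hat J_r^{ord}(\overline{\mathbb{Q}_l})_{tors}$, and this isomorphism is compatible with the Hecke operators $T(l'), U(l')$ because these are defined by algebraic correspondences over $\mathbb{Q}$. Applying the ordinary idempotent $e$ (which is defined over $W$ and commutes with base change) preserves this isomorphism.

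Next I would take the direct limit over $r$ with respect to the Picard-functoriality maps $P_{r,r+1}$, which are again algebraic over $\mathbb{Q}$ and hence commute with the chosen embedding. This yields
$$\mathfrak{g}(\overline{\mathbb{Q}}) = \varinjlim_r \hat J_r^{ord}[p^\infty](\overline{\mathbb{Q}}) \xrightarrow{\ \sim\ } \varinjlim_r \hat J_r^{ord}[p^\infty](\overline{\mathbb{Q}_l}) = \mathfrak{g}(\overline{\mathbb{Q}_l})$$
as $\Lambda$-modules, since the $\Lambda$-action on both sides comes from the geometric Galois action of $(\Gamma\times\Gamma)/\mathrm{Ker}(\pi_{\alpha,\delta})$ via Picard functoriality (Remark \ref{remark 4.2.8}), which is likewise algebraic over $\mathbb{Q}$. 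Once the isomorphism of $\Lambda$-modules is in hand, the cofreeness of $\mathfrak{g}(\overline{\mathbb{Q}_l})$ follows immediately from Theorem \ref{theorem 4.3.11}(1).

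The only real subtlety — and the step I expect to need the most care — is making sure that the $\Lambda$-module structure, not merely the underlying abelian group, transports across the embedding. This is where I would be explicit that the Hecke correspondences and the diamond-type action of $(\mathbb{Z}/N\mathbb{Z})^\times \times (\Gamma\times\Gamma)/\mathrm{Ker}(\pi_{\alpha,\delta})$ on the Jacobians $J_{\alpha,\delta,\xi,r}$ are induced from morphisms of the modular curves defined over $\mathbb{Q}$ (as set up in Section 2 and Section 3), so that they are compatible with any extension of scalars $\mathbb{Q} \hookrightarrow \overline{\mathbb{Q}_l}$ through $\overline{\mathbb{Q}}$. Given that compatibility, no further argument is needed: the bijection on $\overline{\mathbb{Q}}$-points is tautologically $\Lambda$-equivariant because $\Lambda$ acts through operators already defined at the level of $\overline{\mathbb{Q}}$-points.
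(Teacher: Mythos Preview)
The paper does not actually supply a proof of this proposition; it is stated immediately after Theorem \ref{theorem 4.3.11} (whose proof is deferred to \cite{hida1986galois}) and is left as an unproved remark. So there is no ``paper's own proof'' to compare against, and your argument is the natural one the author presumably had in mind.

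Your reasoning is correct. The essential point is elementary: for any abelian variety $A$ over $\mathbb{Q}$ and any algebraically closed field $\Omega$ of characteristic $0$ containing $\mathbb{Q}$, the group $A[p^m](\Omega)$ is the full $p^m$-torsion $(\mathbb{Z}/p^m\mathbb{Z})^{2\dim A}$, so the chosen embedding $\overline{\mathbb{Q}}\hookrightarrow\overline{\mathbb{Q}_l}$ induces a bijection $A[p^m](\overline{\mathbb{Q}})\xrightarrow{\sim}A[p^m](\overline{\mathbb{Q}_l})$ for each $m$. Applying this with $A=J_r$, passing to the limit in $m$, applying the idempotent $e$, and then taking the limit in $r$ gives the isomorphism of abelian groups. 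Your handling of the $\Lambda$-equivariance is also correct and is indeed the only point requiring comment: the diamond operators and the transition maps $P_{r,r+1}$ arise from morphisms of schemes over $\mathbb{Q}$, hence commute with any base change, so the bijection is automatically $\Lambda$-linear. The cofreeness conclusion then follows from Theorem \ref{theorem 4.3.11}(1) exactly as you say.
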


As a corollary of Theorem \ref{theorem 4.3.11}, we have following short exact sequences of Barsotti-Tate groups, which will be used frequently in later chapters.

\begin{cor}\label{cor 4.3.13} We have the following exact sequence of sheaves: $$0 \rightarrow \hat{J}^{ord}_{n}[p^{\infty}]_{/\mathbb{Q}} \rightarrow \mathfrak{g}_{/\mathbb{Q}} \xrightarrow{\gamma^{p^{n-1}}-1} \mathfrak{g}_{/\mathbb{Q}} \rightarrow 0.$$
\end{cor}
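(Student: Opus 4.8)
The plan is to deduce the exactness of
$$0 \rightarrow \hat{J}^{ord}_{n}[p^{\infty}]_{/\mathbb{Q}} \rightarrow \mathfrak{g}_{/\mathbb{Q}} \xrightarrow{\gamma^{p^{n-1}}-1} \mathfrak{g}_{/\mathbb{Q}} \rightarrow 0$$
from the cofreeness statement in Theorem \ref{theorem 4.3.11} together with the control isomorphism already recorded in Proposition \ref{prop 4.2.9}. Recall that, under Convention \ref{Id}, the element $\gamma^{p^{n-1}}-1$ corresponds to $\omega_n(T) = (1+T)^{p^{n-1}}-1 \in \Lambda$. Exactness is a statement about sheaves for the étale (or fppf) topology on $\mathrm{Spec}\,\mathbb{Q}$, so it suffices to check it on geometric points, i.e. on $\overline{\mathbb{Q}}$-points and, more precisely — since étale sheaves on $\mathrm{Spec}\,\mathbb{Q}$ are Galois modules — on the underlying $\Lambda[\mathrm{Gal}(\overline{\mathbb{Q}}/\mathbb{Q})]$-modules; injectivity of the left map and exactness in the middle already hold because the global-section functor is left exact, as noted just above, and multiplication by $\omega_n$ on $\mathfrak{g}$ has kernel $\mathfrak{g}[\omega_n] \cong \hat{J}^{ord}_n[p^\infty]$ by Proposition \ref{prop 4.2.9}. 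So the only real content is the \emph{surjectivity} of $\omega_n \colon \mathfrak{g}_{/\mathbb{Q}} \rightarrow \mathfrak{g}_{/\mathbb{Q}}$ as a map of sheaves.

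For surjectivity, I would argue on $\overline{\mathbb{Q}}$-points: by Theorem \ref{theorem 4.3.11}(1), $\mathfrak{g}(\overline{\mathbb{Q}})$ is a cofree $\Lambda$-module of finite rank, i.e. $\mathfrak{g}(\overline{\mathbb{Q}}) \cong (\Lambda^\vee)^{d}$ for some $d \geq 0$, where $\Lambda^\vee = \mathrm{Hom}_{cts}(\Lambda, Q(W)/W)$ is the Pontryagin dual of $\Lambda$. Dualizing, surjectivity of $\omega_n$ on $\mathfrak{g}(\overline{\mathbb{Q}})$ is equivalent to injectivity of $\omega_n$ on $\mathfrak{g}(\overline{\mathbb{Q}})^\vee \cong \Lambda^{d}$, which is immediate because $\omega_n$ is a nonzerodivisor in the domain $\Lambda$. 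Hence $\omega_n \colon \mathfrak{g}(\overline{\mathbb{Q}}) \rightarrow \mathfrak{g}(\overline{\mathbb{Q}})$ is surjective, and its kernel is $\mathfrak{g}(\overline{\mathbb{Q}})[\omega_n] \cong \hat{J}^{ord}_n[p^\infty](\overline{\mathbb{Q}})$ by Proposition \ref{prop 4.2.9}. This gives the claimed short exact sequence on the level of $\overline{\mathbb{Q}}$-points as $\mathrm{Gal}(\overline{\mathbb{Q}}/\mathbb{Q})$-modules, and since the maps involved ($P_{n,n+1}$-compatible transition maps and multiplication by the Hecke/Galois element $\gamma^{p^{n-1}}-1$) are all Galois-equivariant, this is exactly an exact sequence of sheaves on $\mathrm{Spec}\,\mathbb{Q}$.

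The one point deserving care is the identification of the kernel: Proposition \ref{prop 4.2.9} gives $\hat{J}^{ord}_r[p^\infty]_{/\mathbb{Q}} \cong \mathfrak{g}[\omega_r]_{/\mathbb{Q}}$, and one should make sure the sheaf-theoretic kernel $\mathfrak{g}[\omega_n]$ agrees with $\mathfrak{g}(\overline{\mathbb{Q}})[\omega_n]$ as Galois modules — this is immediate since taking $\omega_n$-torsion commutes with the functor ``underlying Galois module of an étale sheaf on $\mathrm{Spec}\,\mathbb{Q}$''. Then the left-exact-global-sections remark quoted before this corollary shows the left portion of the sequence is exact, and the surjectivity argument above closes the sequence on the right. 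I expect the main (and only genuine) obstacle to be organizational rather than mathematical: one must be slightly careful that ``cofree $\Lambda$-module of finite rank'' means $\mathfrak{g}(\overline{\mathbb{Q}})^\vee$ is \emph{free} of finite rank over $\Lambda$ — so that $\omega_n$, being a nonzerodivisor, acts injectively on it — and that Pontryagin duality is exact, converting this injectivity back into the desired surjectivity of $\omega_n$ on $\mathfrak{g}$ itself. Everything else is a direct invocation of Theorem \ref{theorem 4.3.11} and Proposition \ref{prop 4.2.9}.
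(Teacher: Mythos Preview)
Your proof is correct and follows the same approach as the paper: the paper's proof simply observes that Proposition \ref{prop 4.2.9} reduces the claim to surjectivity on the right, and then notes that $\mathfrak{g}(\overline{\mathbb{Q}})$ being $\Lambda$-cofree (Theorem \ref{theorem 4.3.11}) makes it $\omega_n$-divisible. Your argument unpacks this divisibility via Pontryagin duality and the fact that $\omega_n$ is a nonzerodivisor on the free module $\mathfrak{g}(\overline{\mathbb{Q}})^{\vee}$, which is exactly the content of the paper's one-line justification.
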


\begin{proof} By Proposition \ref{prop 4.2.9}, all we need to prove is the exactness at the rightmost term. By Theorem \ref{theorem 4.3.11}, $\mathfrak{g}(\overline{\mathbb{Q}})$ is a cofree $\Lambda$-module, so it is divisible by $\omega_n$. 
\end{proof}

\medskip

\section{Twisted Pairings}

\medskip

We define various pairings between \emph{ordinary} parts of the arithmetic cohomology groups. This section is important for our latter use.

\medskip

For an abelian variety $A$, we let $A^{t}:=\text{Pic}^{0}(A)$ be a dual abelian variety of $A$. If $f:A \rightarrow B$ is an isogeny between two abelian varities $A$ and $B$, we let $f^{t}:B^{t} \rightarrow A^{t}$ be a dual isogeny of $f$.\\

\medskip

We introduce various maps on modular Jacobians and state the relations among those maps. Recall the congruence subgroups 
\begin{align*}
\Gamma_{1, \xi}^{1}(Np^n)=\widehat{\Gamma}_{1, \xi}^{1}(Np^n)\cap \text{SL}_2(\mathbb{Q})\\
\Gamma_{\alpha, \delta, \xi}(Np^n)=\widehat{\Gamma}_{\alpha, \delta, \xi}(Np^n)\cap \text{SL}_2(\mathbb{Q})\\
\Gamma_{0}(Np^n)=\widehat{\Gamma}_{0}(Np^n) \cap \text{SL}_2(\mathbb{Q}).
\end{align*}

\medskip

\subsubsection{Polarization and the Weil involution}

\begin{definition} \label{definition 5.0.1} (1) We let $\lambda_{n}:J_{n} \rightarrow J_{n}^{t}$ be the map descented to $\mathbb{Q}$ from the canonical polarization of the modular Jacobian $J_{n}$.

(2) We let $w_n:J_{n} \rightarrow J'_{n}$ be an isomorphism induced by the Weil involution between $X_{\alpha, \delta, \xi}(Np^n)$ and $X_{\delta, \alpha, \xi'}(Np^n)$. 
\end{definition}

\smallskip

In terms of the double coset operator, we have $$w_n=[\Gamma_{\alpha, \delta, \xi}(Np^n)\backslash \Gamma_{\alpha, \delta, \xi}(Np^n) \begin{pmatrix}
0 & -1 \\
p^n & 0\end{pmatrix} \Gamma_{\delta,\alpha,\xi'}(Np^n)]: J_{n} \rightarrow J'_{n}$$ and the inverse of $w_n$ is given by $$\omega_n^{-1}=[\Gamma_{\delta, \alpha, \xi'}(Np^n)\backslash \Gamma_{\delta, \alpha, \xi'}(Np^n) \begin{pmatrix}
0 & \frac{1}{p^n} \\
-1 & 0\end{pmatrix} \Gamma_{\alpha, \delta, \xi}(Np^n)] : J'_{n} \rightarrow J_{n}.$$\\

\begin{definition} \label{definition 5.0.2}
(1) For a prime $l$, we let $T(l)_{n} $ be a Hecke operator acting on $J_{n}$ and let $T(l)_{n}' $ be a Hecke operator acting on $J_{n}'$. Here we regard $T(l)_{n}=U(l)_{n}$ and $T(l)_{n}'=U(l)_{n}'$ for $l \mid Np$.

(2) We let $T^{*}(l)_{n}$ be a Rosati involution image of $T(l)_{n}$ and $T^{*}(l)_{n}'$ be a Rosati involution image of $T(l)_{n}'$.
\end{definition}

\medskip

We have the following two relations:
\begin{align} \label{e_1} \tag{A}
\lambda_{n}\circ T^{*}(l)_{n}= T(l)_{n}^{t} \circ \lambda_{n}\\
\label{e_2} \tag{B}
 w_n \circ T^*(l)_{n}=T(l)_{n}' \circ w_n
\end{align}

\medskip

\subsubsection{Hecke operators}

\begin{definition}\label{definition 5.0.3} The natural projection map $X_{\alpha, \delta,\xi}(Np^{n+1}) \twoheadrightarrow X_{\alpha, \delta,\xi}(Np^{n})$ induces the following two maps:

\smallskip 

(1) $P_{n, n+1}: J_{n} \rightarrow J_{n+1}$ via contravariant Picard functoriality.

\smallskip

(2) $A_{n+1, n}:J_{n+1} \rightarrow J_{n}$ via Albanese functoriality. Note that this map is well-defined since the natural projection map $X_{\alpha, \delta,\xi}(Np^{n+1}) \twoheadrightarrow X_{\alpha, \delta,\xi}(Np^{n})$ has constant degree. (Remark \ref{remark 2.2.4}-(3).)
\end{definition}

\medskip

We have the following three properties:

\begin{align} \label{e_3} \tag{C}
P_{n, n+1} \circ T(l)_{n}&=T(l)_{n+1} \circ P_{n, n+1}\\
\label{e_4} \tag{D}
A_{n+1, n} \circ T^{*}(l)_{n+1}&=T^{*}(l)_{n} \circ A_{n+1, n}\\
\label{e_5} \tag{E} \lambda_{n} \circ A_{n+1, n}&=P_{n, n+1}^{t} \circ \lambda_{n+1}
\end{align}

\smallskip

\subsubsection{Twisted-Covariant map}

\begin{definition}\label{definition 5.0.4} We define the twisted-covariant map $V_{n+1, n}'=w_n \circ A_{n+1, n} \circ w_{n+1}^{-1}:J'_{n+1} \rightarrow J'_{n}$.
\end{definition}

By (\ref{e_2}) and (\ref{e_4}), this map satisfies :
\begin{align} \label{e_6} \tag{F}
V_{n+1, n}' \circ T(l)_{n+1}'=T(l)_{n}' \circ V_{n+1, n}'
\end{align}

\smallskip

By \cite[Section 4]{hida2013limit} and \cite[Section 4]{hida2013lambda}, we have:

\begin{align} \label{e_7} \tag{G} V_{n+1, n}'=1+\gamma+\gamma^{2}+\cdots+\gamma^{p-1}:J'_{n+1} \rightarrow J'_{n}
\end{align}

where $\gamma$ is a generator of geometric Galois group $\text{Gal}(X_{\delta, \alpha, \xi'}(Np^{n+1})/X_{\delta, \alpha, \xi'}(Np^n))$.

\smallskip

\subsubsection{Diamond operators}

Note that $\Gamma_{0}(Np^n)$ acts on $X_{\alpha, \delta, \xi}(Np^n)$ by the geometric Galois action. Action of $h \in \Gamma_{0}(Np^n)$ induces an automorphism of $J_n$ over $\mathbb{Q}$ via Picard functoriality which can be written in terms of the double coset operator as $$[\Gamma_{\alpha, \delta, \xi}(Np^n) \backslash \Gamma_{\alpha, \delta, \xi}(Np^n) h \Gamma_{\alpha, \delta, \xi}(Np^n)]:J_{n} \xrightarrow{\sim} J_{n}.$$ Similarly, same $h \in \Gamma_{0}(Np^n)$ induces another isomorphism $$[\Gamma_{\alpha, \delta, \xi}(Np^n) \backslash \Gamma_{\alpha, \delta, \xi}(Np^n) h^{-1} \Gamma_{\alpha, \delta, \xi}(Np^n)]:J_{n} \xrightarrow{\sim} J_{n}$$ defined over $\mathbb{Q}$ via Albanese functoriality.\\

By the same reason with (\ref{e_5}), we have the following identity for $g \in \Gamma_{0}(Np^n)$:
\begin{align} \label{e_8} \tag{H}
\lambda_{n} \circ [\Gamma_{\alpha, \delta, \xi}(Np^n) \backslash \Gamma_{\alpha, \delta, \xi}(Np^n) g^{-1} \Gamma_{\alpha, \delta, \xi}(Np^n)]
=[\Gamma_{\alpha, \delta, \xi}(Np^n) \backslash \Gamma_{\alpha, \delta, \xi}(Np^n) g \Gamma_{\alpha, \delta, \xi}(Np^n)]^{t} \circ \lambda_{n}
\end{align}

For $g \in \Gamma_{0}(Np^n)$, we let $\tilde{g}=\begin{pmatrix}
0 & -1 \\
p^n & 0\end{pmatrix} 
g^{-1} 
\begin{pmatrix}
0 & \frac{1}{p^n} \\
-1 & 0
\end{pmatrix}$ for the moment. Note that $\tilde{g} \in \Gamma_{0}(Np^n)$.

We then have the following identity which can be checked easily: (For the moment, we let $\Xi=\Gamma_{\alpha, \delta, \xi}(Np^n)$ and $\Xi'=\Gamma_{\delta, \alpha, \xi'}(Np^n)$)
\begin{align} \label{e_9} \tag{I}
[\Xi \backslash \Xi g^{-1} \Xi] \circ [\Xi' \backslash \Xi'\begin{pmatrix}
0 & \frac{1}{p^n} \\
-1 & 0
\end{pmatrix} \Xi]
=[\Xi' \backslash \Xi' 
\begin{pmatrix}
0 & \frac{1}{p^n} \\
-1 & 0 \end{pmatrix} \Xi] \circ 
[\Xi' \backslash \Xi'
\tilde{g}
\Xi']
\end{align}\\

We can rewrite the above identity as:
\begin{align} \label{e_10} \tag{J}
[\Gamma_{\alpha, \delta, \xi}(Np^n) \backslash \Gamma_{\alpha, \delta, \xi}(Np^n) g^{-1} \Gamma_{\alpha, \delta, \xi}(Np^n)] \circ w_n^{-1}
=w_n^{-1} \circ 
[\Gamma_{\delta, \alpha, \xi'}(Np^n) \backslash \Gamma_{\delta, \alpha, \xi'}(Np^n)
\tilde{g}
\Gamma_{\delta, \alpha, \xi'}(Np^n)]
\end{align}

\medskip

\subsection{Twisted Weil pairing} \label{sec 5.1}

\medskip

Let $W^{n}_{m}:J_{n}[p^m] \times J_{n}^{t}[p^m] \rightarrow \mu_{p^m}$ be the Weil pairing, which is perfect Galois-equivariant bilinear. We define a twisted Weil pairing $$H_{m}^{n}:J_{n}[p^m] \times J'_ {n}[p^m] \rightarrow \mu_{p^m}$$ by $$H_{m}^{n}(x, y)=W_{m}^{n}(x, \lambda_{n} \circ w_{n}^{-1}(y))$$ where $x, y$ are elements of $J_{n}[p^m], J'_ {n}[p^m]$, respectively. We record the basic properties of this twisted pairing.\\

\begin{prop}\label{prop 5.1.5} (1) $H_{m}^{n}$ is a perfect, Galois-equivariant bilinear pairing. 

\medskip

(2) $(\mathrm{Hecke-equivariance})$ $H_{m}^{n}\left(T(l)_{n}(x), y\right)=H_{m}^{n}\left(x, T(l)_{n}'(y)\right)$ where $x, y$ are elements of $J_{n}[p^m], J'_ {n}[p^m]$, respectively. Hence, $H_{m}^{n}$ induces a perfect pairing  $$H_{m}^{n}:\hat{J}_{n}^{ord}[p^m] \times \hat{J'}_ {n}^{ord}[p^m] \rightarrow \mu_{p^m}.$$

\medskip

(3) For $x \in \hat{J}^{ord}_{n}[p^m]$ and $y \in \hat{J'}^{ord}_{n}[p^{m+1}]$, $H_{m+1}^{n}(x, y)=H_{m}^{n}(x, py) $. Hence there is a perfect Galois-equivariant bilinear pairing $\hat{J}^{ord}_{n}[p^{\infty}] \times T_{p}J^{' ord}_{n}\rightarrow \mu_{p^{\infty}}$.

\medskip
\medskip

(4) For $x \in \hat{J}^{ord}_{n}[p^m]$ and $y \in \hat{J'}^{ord}_{n+1}[p^{m}]$, $H_{m}^{n+1}( P_{n, n+1}(x), y)=H_{m}^{n}(x, V_{n+1, n}'(y))$ holds.

\medskip
\medskip

(5) Take $g \in \Gamma_{0}(Np^n)$ and let $\tilde{g}=\begin{pmatrix}
0 & -1 \\
p^n & 0\end{pmatrix} 
g^{-1} 
\begin{pmatrix}
0 & \frac{1}{p^n} \\
-1 & 0
\end{pmatrix}$. If $x, y$ are elements of $\hat{J}^{ord}_{n}[p^m], \hat{J'}^{ord}_{n}[p^m]$, respectively, then we have
$$H_{m}^{n} \left( [\Gamma_{\alpha, \delta, \xi}(Np^n)\backslash\Gamma_{\alpha, \delta, \xi}(Np^n) g \Gamma_{\alpha, \delta, \xi}(Np^n)] x, y \right)=H_{m}^{n} \left( x,[\Gamma_{\delta, \alpha, \xi'}(Np^n) \backslash \Gamma_{\delta, \alpha, \xi'}(Np^n) \tilde{g} \Gamma_{\delta, \alpha, \xi'}(Np^n)]y \right).$$
\end{prop}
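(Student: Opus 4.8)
The plan is to deduce all five items from the single defining identity $H_m^n(x,y) = W_m^n(x, \lambda_n \circ w_n^{-1}(y))$ together with the relations $(\ref{e_1})$--$(\ref{e_10})$ already assembled above, pushing everything back to known facts about the Weil pairing $W_m^n$.

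For (1): perfectness and Galois-equivariance of $H_m^n$ follow immediately from the corresponding properties of $W_m^n$ (stated above) and the fact that $\lambda_n \circ w_n^{-1} : J'_n \to J_n^t$ is an isomorphism of abelian varieties defined over $\mathbb{Q}$ (so in particular Galois-equivariant), hence induces an isomorphism on $p^m$-torsion; bilinearity is clear. For (2): by definition $H_m^n(T(l)_n x, y) = W_m^n(T(l)_n x, \lambda_n \circ w_n^{-1}(y))$, and the Weil pairing satisfies $W_m^n(\phi(x), y) = W_m^n(x, \phi^t(y))$ for an isogeny $\phi$, so this equals $W_m^n(x, T(l)_n^t \circ \lambda_n \circ w_n^{-1}(y))$. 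Now apply $(\ref{e_1})$ in the form $T(l)_n^t \circ \lambda_n = \lambda_n \circ T^*(l)_n$ and then $(\ref{e_2})$ in the form $T^*(l)_n \circ w_n^{-1} = w_n^{-1} \circ T(l)'_n$ (the inverse of $(\ref{e_2})$); this rewrites the right-hand side as $W_m^n(x, \lambda_n \circ w_n^{-1}(T(l)'_n y)) = H_m^n(x, T(l)'_n y)$. Since $e$ is a limit of polynomials in $U(p)$, the Hecke-equivariance implies $H_m^n$ restricts to a pairing of $\hat J_n^{ord}[p^m]$ with $\hat{J'}_n^{ord}[p^m]$ that is the orthogonal complement pairing, hence still perfect (the ordinary projector on one side is the adjoint of the ordinary projector on the other).

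For (3): this is the standard compatibility of Weil pairings $W_{m+1}^n(x,y) = W_m^n(x, py)$ for $x \in J_n[p^m]$, transported through the isomorphism $\lambda_n \circ w_n^{-1}$, which commutes with multiplication by $p$; passing to the limit gives the pairing with the Tate module. For (4): write $H_m^{n+1}(P_{n,n+1}(x), y) = W_m^{n+1}(P_{n,n+1}(x), \lambda_{n+1} \circ w_{n+1}^{-1}(y))$ and move $P_{n,n+1}$ across the Weil pairing to get $W_m^n(x, P_{n,n+1}^t \circ \lambda_{n+1} \circ w_{n+1}^{-1}(y))$; apply $(\ref{e_5})$, $P_{n,n+1}^t \circ \lambda_{n+1} = \lambda_n \circ A_{n+1,n}$, so the argument becomes $\lambda_n \circ A_{n+1,n} \circ w_{n+1}^{-1}(y) = \lambda_n \circ w_n^{-1} \circ (w_n \circ A_{n+1,n} \circ w_{n+1}^{-1})(y) = \lambda_n \circ w_n^{-1}(V'_{n+1,n}(y))$ by Definition \ref{definition 5.0.4}, which is exactly $H_m^n(x, V'_{n+1,n}(y))$. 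For (5): the same move across $W_m^n$ turns $[\Xi \backslash \Xi g \Xi]$ on the left into its transpose $[\Xi \backslash \Xi g \Xi]^t$ acting on $\lambda_n \circ w_n^{-1}(y)$; now use $(\ref{e_8})$ to replace $[\Xi \backslash \Xi g \Xi]^t \circ \lambda_n$ by $\lambda_n \circ [\Xi \backslash \Xi g^{-1} \Xi]$, and then use $(\ref{e_10})$ to replace $[\Xi \backslash \Xi g^{-1} \Xi] \circ w_n^{-1}$ by $w_n^{-1} \circ [\Xi' \backslash \Xi' \tilde g \Xi']$, which reassembles into $H_m^n(x, [\Xi' \backslash \Xi' \tilde g \Xi'] y)$.

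I expect the only genuine obstacle to be item (2) — specifically making sure the chain of adjunctions is bookkept correctly (which Hecke operator is the Rosati/transpose of which, and that $(\ref{e_1})$ and $(\ref{e_2})$ are applied in the correct direction, including for the $U(l)$ with $l \mid Np$ where $T(l)_n$ and $T^*(l)_n$ genuinely differ), and then checking that ordinarity is preserved on \emph{both} factors simultaneously so that the restricted pairing is still perfect rather than merely well-defined. Items (1), (3), (4), (5) are then essentially formal consequences of the displayed relations and the standard adjunction property of the Weil pairing under isogenies.
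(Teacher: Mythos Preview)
Your proposal is correct and follows essentially the same route as the paper: the paper only writes out (2) and (4) explicitly, in exactly the chain you give (Weil-pairing adjunction, then $(\ref{e_1})$, then $(\ref{e_2})$ for (2); Weil-pairing adjunction, then $(\ref{e_5})$, then the definition of $V'_{n+1,n}$ for (4)), and dismisses (5) as immediate from $(\ref{e_10})$. Your account of (5) via $(\ref{e_8})$ followed by $(\ref{e_10})$ is in fact what is needed and is slightly more explicit than the paper's one-line remark; your handling of perfectness on the ordinary parts in (2) is likewise more careful than the paper, which simply asserts it.
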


\medskip

\begin{proof} Since (5) directly follows from \eqref{e_10},  we prove (2) and (4) only.

For (2), 
\begin{align*} H_{m}^{n}(T(l)_{n}(x), y)
&=W_{m}^{n}(T(l)_{n}(x), \lambda_{n} \circ w_{n}^{-1}(y))\tag{Definition}\\
&=W_{m}^{n}(x, (T(l)_{n})^{t} \circ \lambda_{n} \circ w_{n}^{-1}(y))\tag{Property of Weil pairing}\\
&=W_{m}^{n}(x, \lambda_{n} \circ T^{*}(l)_{n} \circ w_{n}^{-1}(y))\tag{By \eqref{e_2}}\\
&=W_{m}^{n}(x, \lambda_{n} \circ w_{n}^{-1} \circ T(l)_{n}'(y))\tag{By \eqref{e_3}}\\
&=H_{m}^{n}(x, T(l)_{n}'(y)) \tag{Definition}
\end{align*}

For (4), 
\begin{align*} H_{m}^{n+1}( P_{n, n+1}(x), y)&=W_{m}^{n+1}( P_{n, n+1}(x), \lambda_{n+1} \circ w_{n+1}^{-1}(y))\tag{Definition}\\
&=W_{m}^{n}(   x, P_{n, n+1}^{t}\circ \lambda_{n+1} \circ w_{n+1}^{-1}(y))\tag{Property of Weil pairing}\\
&=W_{m}^{n}(   x, \lambda_{n} \circ A_{n+1, n} \circ w_{n+1}^{-1}(y))\tag{By \eqref{e_6}} \\
&=W_{m}^{n}(   x, \lambda_{n} \circ w_{n}^{-1}  \circ V'_{n+1, n}(y)) \\
&=H_{m}^{n}(x, V_{n+1, n}'(y))\tag{Definition}
\end{align*}
\end{proof}

For $g=\begin{pmatrix}
a & b \\
cp^n & d\end{pmatrix} \in \Gamma_{0}(Np^n)$, $\tilde{g}=\begin{pmatrix}
0 & -1 \\
p^n & 0\end{pmatrix} 
g^{-1} 
\begin{pmatrix}
0 & \frac{1}{p^n} \\
-1 & 0
\end{pmatrix}=\frac{1}{ad-bcp^n}\begin{pmatrix}
a & c \\
bp^n & d\end{pmatrix}$ and this last matrix is congruent to $\begin{pmatrix}
\frac{1}{d} & \frac{c}{ad} \\
0 & \frac{1}{a}\end{pmatrix}$ modulo $p^n$. Hence we have the following corollary about the $\Lambda$-equivariance of our twisted pairing.\\

\begin{cor}\label{cor 5.1.6} The twisted Weil pairing $H_{m}^{n}:\hat{J}_{n}^{ord}[p^m] \times \hat{J'}_ {n}^{ord}[p^m] \rightarrow \mu_{p^m}$ satisfies $$H_{m}^{n}(f \cdot x, y)=H_{m}^{n}(x, \iota(f)\cdot y)$$ where $x, y$ are elements of $\hat{J}^{ord}_{n}[p^m], \hat{J'}^{ord}_ {n}[p^m]$, respectively, and $f \in W[\![T]\!]$. 

In particular, we have an isomorphism $\hat{J}_{n}^{ord}[p^m] \simeq \hat{J'}_ {n}^{ord}[p^m]^{\vee}$ of $\Lambda$-modules.
\end{cor}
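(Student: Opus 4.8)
The plan is to deduce Corollary \ref{cor 5.1.6} directly from Proposition \ref{prop 5.1.5} together with the explicit matrix computation already displayed just above the statement. First I would recall that the $\Lambda$-action on $\hat{J}_{n}^{ord}[p^m]$ comes from the geometric Galois action of $(\Gamma \times \Gamma)/\mathrm{Ker}(\pi_{\alpha,\delta})$, which by Convention \ref{Id} is identified with $\Lambda$ via $\gamma_1 = (a,d) \leftrightarrow 1+T$; similarly the $\Lambda$-action on $\hat{J'}_{n}^{ord}[p^m]$ comes from the Galois action of $(\Gamma \times \Gamma)/\mathrm{Ker}(\pi_{\delta,\alpha})$, identified with $\Lambda$ via $\gamma_2 = (d,a) \leftrightarrow 1+T$. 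Since $\Lambda = W[\![T]\!]$ is topologically generated over $W$ by $1+T$ and the pairing $H_m^n$ is $W$-bilinear and continuous (both sides are finite), it suffices to check the identity $H_m^n(f\cdot x, y) = H_m^n(x, \iota(f)\cdot y)$ for $f = 1+T$, i.e. to check that the Galois element $\gamma_1 = (a,d) \in \Gamma_0(Np^n)$ acting on the left corresponds under the pairing to the Galois element $\gamma_2 = (d,a)$ acting on the right (note $\iota(1+T) = \frac{1}{1+T}$, which is the class of $(a,d)^{-1} = (a^{-1}, d^{-1})$).

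The key step is then part (5) of Proposition \ref{prop 5.1.5}: applied to $g = (a,d) \in \Gamma_0(Np^n)$ (viewed as the appropriate diagonal matrix at $p$, trivial away from $p$, so that it lies in $\widehat{\Gamma}_0(Np^n)$ and its double coset acts as the diamond/Galois operator), it says that the action of $[\Gamma_{\alpha,\delta,\xi}(Np^n) \backslash \Gamma_{\alpha,\delta,\xi}(Np^n) g \Gamma_{\alpha,\delta,\xi}(Np^n)]$ on the first variable equals the action of $[\Gamma_{\delta,\alpha,\xi'}(Np^n) \backslash \Gamma_{\delta,\alpha,\xi'}(Np^n) \tilde{g} \Gamma_{\delta,\alpha,\xi'}(Np^n)]$ on the second. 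The displayed computation shows $\tilde{g}$ is congruent modulo $p^n$ to $\mathrm{diag}(1/d, 1/a)$ (the $c$-entry vanishes since $g$ is diagonal). Hence on the $J'_n$ side the relevant Galois element is $(1/d, 1/a) = (d,a)^{-1} = \gamma_2^{-1}$, which under the identification of Convention \ref{Id} is exactly $\iota(1+T)$. So $H_m^n(\gamma_1 \cdot x, y) = H_m^n(x, \gamma_2^{-1}\cdot y) = H_m^n(x, \iota(1+T)\cdot y)$, and extending $W$-bilinearly and by continuity over all $f \in W[\![T]\!]$ gives the first claim.

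For the second claim, the perfect $W$-bilinear pairing $H_m^n : \hat{J}_{n}^{ord}[p^m] \times \hat{J'}_{n}^{ord}[p^m] \to \mu_{p^m}$ induces a $W$-module isomorphism $\hat{J}_{n}^{ord}[p^m] \xrightarrow{\sim} \mathrm{Hom}_W(\hat{J'}_{n}^{ord}[p^m], \mu_{p^m}) = (\hat{J'}_{n}^{ord}[p^m])^{\vee}$, using $\mu_{p^m} \simeq \frac{1}{p^m}\mathbb{Z}/\mathbb{Z} \hookrightarrow Q(W)/W$ (and finiteness so continuity is automatic); the first claim says this map intertwines the $\Lambda$-action on the source with the $\iota$-twisted $\Lambda$-action on the target, and by the definition of the $\Lambda$-structure on $M^\vee$ in Subsection \ref{sec 1.4} — where $(f\cdot\phi)(m) := \phi(\iota(f)\cdot m)$ — that $\iota$-twist is precisely the built-in $\Lambda$-action on the Pontryagin dual. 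Thus the map is $\Lambda$-linear and an isomorphism. The only mild subtlety — and the place I would be most careful — is bookkeeping the two different identifications of group rings with $\Lambda$ in Convention \ref{Id} and confirming that $\tilde{g} \bmod p^n$ really is $\gamma_2^{-1}$ rather than $\gamma_2$ (equivalently, that the involution appearing is $\iota$ and not the identity); the displayed matrix computation $\tilde g \equiv \mathrm{diag}(1/d, 1/a) \bmod p^n$ is exactly what pins this down, so the argument is essentially a careful transcription of that line.
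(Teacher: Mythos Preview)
Your proposal is correct and follows exactly the approach the paper intends: the paper gives no explicit proof of this corollary, stating only the matrix computation $\tilde g \equiv \begin{pmatrix} 1/d & c/(ad) \\ 0 & 1/a \end{pmatrix} \pmod{p^n}$ and then immediately ``Hence we have the following corollary about the $\Lambda$-equivariance of our twisted pairing.'' You have simply unpacked this, correctly tracking Convention \ref{Id} to see that $\tilde g \bmod p^n$ is $\gamma_2^{-1}$ and hence corresponds to $\iota(1+T)$, and correctly noting that the built-in $\iota$-twist in the paper's definition of the $\Lambda$-action on $M^\vee$ is what makes the induced isomorphism $\Lambda$-linear.
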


\smallskip

\subsection{Local Tate duality} \label{sec 5.2}

We have the following local Tate duality induced from the twisted Weil pairing.

\smallskip

\begin{prop}[Local Tate duality]\label{prop 5.2.7} Let $L$ be a finite extension of $\mathbb{Q}_l$ for some prime $l$.

(1) For $i=0, 1, 2$, pairings $H^{i}(L, \hat{J}^{ord}_{n}[p^m] ) \times H^{2-i}(L,\hat{J'}^{ord}_{n}[p^{m}] )^{\iota} \rightarrow Q(W)/W$ are perfect $\Lambda$-equivariant bilinear pairing between two finite $p$-abelian groups.

\smallskip

(2) The images of Kummer maps $\frac{\hat{J}^{ord}_{n}(L)}{p^{m}\hat{J}^{ord}_{n}(K)}$ and $\frac{\hat{J'}^{ord}_{n}(L)}{p^{m}\hat{J'}^{ord}_{n}(L)}$ into cohomology groups are orthogonal complement to each other under the pairing $H^{1}(L, \hat{J}^{ord}_{n}[p^m] ) \times H^{1}(L,\hat{J'}^{ord}_{n}[p^{m}]) \rightarrow Q(W)/W$ of (1).
\end{prop}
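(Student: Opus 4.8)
The plan is to derive both statements from the classical local Tate duality for abelian varieties by transporting it through the isomorphism $\lambda_n \circ w_n^{-1} : J'_n \xrightarrow{\sim} J_n^t$, exactly as the twisted Weil pairing $H_m^n$ was built in subsection \ref{sec 5.1}. First I would recall that for a finite extension $L/\mathbb{Q}_l$ and the abelian variety $J_n$ over $L$, the local Tate pairings
\[
H^i(L, J_n[p^m]) \times H^{2-i}(L, J_n^t[p^m]) \rightarrow H^2(L, \mu_{p^m}) \simeq \mathbb{Z}/p^m\mathbb{Z}
\]
are perfect for $i=0,1,2$ (Milne, \emph{Arithmetic Duality Theorems}, I.2). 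Composing the second variable with the isomorphism $J'_n[p^m] \xrightarrow{\sim} J_n^t[p^m]$ induced by $\lambda_n \circ w_n^{-1}$ turns this into a perfect pairing
\[
H^i(L, J_n[p^m]) \times H^{2-i}(L, J'_n[p^m]) \rightarrow \mathbb{Z}/p^m\mathbb{Z},
\]
which is nothing but the pairing induced on cohomology by $H_m^n$. Passing to $p^m$-power limits and identifying $\mathbb{Z}/p^m\mathbb{Z} \hookrightarrow Q(W)/W$ (using part (3) of Proposition \ref{prop 5.1.5} for compatibility with the transition maps) gives the pairing valued in $Q(W)/W$; finiteness of all the groups involved is standard for finite Galois modules over a local field.

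Next I would extract the ordinary and $\Lambda$-equivariance refinements. Applying the idempotent $e$ to both factors, Hecke-equivariance of $H_m^n$ (Proposition \ref{prop 5.1.5}-(2)) shows that $e$ on the first variable is adjoint to $e$ on the second, so the pairing restricts to a perfect pairing between $H^i(L, \hat J^{ord}_n[p^m])$ and $H^{2-i}(L, \hat{J'}^{ord}_n[p^m])$ — here I use that $e$ is an idempotent in a finite $W$-algebra, so each cohomology group splits as ordinary part plus complement and the pairing is block-diagonal with respect to this splitting, forcing each block to be perfect. The $\Lambda$-equivariance statement, i.e. that the pairing factors through $H^{2-i}(L, \hat{J'}^{ord}_n[p^m])^{\iota}$, is exactly the cohomological shadow of Corollary \ref{cor 5.1.6}: the formula $H_m^n(f\cdot x, y) = H_m^n(x, \iota(f)\cdot y)$ for $f \in W[\![T]\!]$ is functorial in $L$-cohomology since the $\Lambda$-action comes from the geometric diamond operators, which act on cohomology by functoriality. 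This proves (1).

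For (2), I would invoke the general principle (Milne, \emph{ADT}, I.3.2, or Tate's local duality for abelian varieties) that under the perfect Tate pairing $H^1(L, J_n[p^m]) \times H^1(L, J_n^t[p^m]) \rightarrow \mathbb{Z}/p^m\mathbb{Z}$ the images of the Kummer maps $J_n(L)/p^m \hookrightarrow H^1(L, J_n[p^m])$ and $J_n^t(L)/p^m \hookrightarrow H^1(L, J_n^t[p^m])$ are exact annihilators of each other. Transporting through $\lambda_n \circ w_n^{-1}$ as above, and noting that this isomorphism carries the Kummer image of $J'_n(L)/p^m$ onto that of $J_n^t(L)/p^m$ (the Kummer sequence is functorial in the abelian variety), we get that the Kummer images of $\hat J^{ord}_n(L)/p^m$ and $\hat{J'}^{ord}_n(L)/p^m$ are orthogonal complements under the pairing of (1); taking ordinary parts preserves this because $e$ commutes with the Kummer maps and the pairing is block-diagonal. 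The main obstacle I anticipate is purely bookkeeping: making sure the identification $\lambda_n \circ w_n^{-1} : J'_n \to J_n^t$ is genuinely an isomorphism over $\mathbb{Q}$ (hence over $L$) rather than merely an isogeny — this is where the relations \eqref{e_1} and \eqref{e_2} together with $w_n$ being an \emph{isomorphism} (Definition \ref{definition 5.0.1}-(2)) are needed, since $\lambda_n$ alone is only a polarization — and then verifying the compatibility of the $p^m$-limit with the Hecke and diamond actions so that (1) and (2) hold at the level of $p^\infty$-torsion/Tate modules without sign or duality-twist errors.
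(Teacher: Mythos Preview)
Your argument is correct and is exactly the intended one: the paper states Proposition \ref{prop 5.2.7} without proof, treating it as an immediate consequence of classical local Tate duality together with the twisted Weil pairing of Proposition \ref{prop 5.1.5} and Corollary \ref{cor 5.1.6}. Your write-up supplies precisely that derivation---transport along $\lambda_n \circ w_n^{-1}$, block-diagonality under the idempotent $e$, and $\Lambda$-equivariance from Corollary \ref{cor 5.1.6}---so there is nothing to compare.

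Two small cleanups. First, the remark about ``passing to $p^m$-power limits'' is out of place here: Proposition \ref{prop 5.2.7} is a statement at fixed level $p^m$, and the passage to $p^\infty$ is what happens in Corollary \ref{cor 5.2.8}, not in the proposition itself. Second, your worry that $\lambda_n$ is ``only a polarization'' is unfounded in this setting: the canonical polarization of a Jacobian is principal, hence $\lambda_n$ is an isomorphism, and so $\lambda_n \circ w_n^{-1}$ is genuinely an isomorphism $J'_n \xrightarrow{\sim} J_n^t$ over $\mathbb{Q}$. With those edits the proof is complete.
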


\medskip

By this local Tate duality, we can construct the following perfect pairing between two $\Lambda$-modules.

\smallskip

\begin{cor}\label{cor 5.2.8} Let $L$ be a finite extension of $\mathbb{Q}_l$ for some prime $l$.

(1) There is a perfect bilinear $\Lambda$-equivariant pairing $\hat{J}^{ord}_{n}[p^{\infty}](L) \times H^{1}(L,\hat{J'}^{ord}_{n}[p^{\infty}] )^{\iota}_{/ div} \rightarrow Q(W)/W$ between finite $p$-abelian groups. 

\medskip

(2) The above pairing induces a perfect $\Lambda$-equivariant pairing $$\displaystyle \mathfrak{g}(L) \times \lim_{\substack{\longleftarrow \\n}}H^{1}(L,\hat{J'}^{ord}_{n}[p^{\infty}] )^{\iota}_{/ div} \rightarrow Q(W)/W$$ where the inverse limit is taken with respect to $V_{n+1, n}'$. 
\end{cor}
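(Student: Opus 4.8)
The plan is to deduce Corollary \ref{cor 5.2.8} from Proposition \ref{prop 5.2.7} by taking a suitable limit in $m$ for part (1), and a further limit in $n$ for part (2). First I would establish (1). For each $m$, Proposition \ref{prop 5.2.7}-(2) identifies the image of the Kummer map $\hat{J}^{ord}_n(L)/p^m\hat{J}^{ord}_n(L) \hookrightarrow H^1(L, \hat{J}^{ord}_n[p^m])$ with the orthogonal complement of the corresponding image for $\hat{J'}^{ord}_n$ under the perfect pairing of Proposition \ref{prop 5.2.7}-(1) in the case $i=1$. Passing to the quotient, one gets a perfect $\Lambda$-equivariant pairing
\[
\frac{\hat{J}^{ord}_n(L)}{p^m\hat{J}^{ord}_n(L)} \times \frac{H^1(L,\hat{J'}^{ord}_n[p^m])}{\mathrm{Im}(\text{Kummer})} \longrightarrow Q(W)/W .
\]
Now I would take the direct limit over $m$ on the left (via multiplication-by-$p$ transition maps, which are compatible with the pairings by a standard argument using $\hat{J'}^{ord}_n[p^{m}] \hookrightarrow \hat{J'}^{ord}_n[p^{m+1}]$ and $p: \hat{J'}^{ord}_n[p^{m+1}]\twoheadrightarrow \hat{J'}^{ord}_n[p^m]$, cf.\ Proposition \ref{prop 5.1.5}-(3)) to obtain $\hat{J}^{ord}_n(L)\otimes \mathbb{Q}_p/\mathbb{Z}_p \cong \hat{J}^{ord}_n[p^{\infty}](L)$ — the last identification because $\hat{J}^{ord}_n(L)$ is a finitely generated $W$-module, so tensoring with $\mathbb{Q}_p/\mathbb{Z}_p$ yields only torsion which is exactly $\hat{J}^{ord}_n[p^{\infty}](L)$; indeed over a local field $\hat{J}^{ord}_n(L)\otimes\mathbb{Q}_p/\mathbb{Z}_p$ is finite. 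Dually, the inverse limit over $m$ of $H^1(L,\hat{J'}^{ord}_n[p^m])/\mathrm{Im}(\text{Kummer})$ computes $H^1(L,\hat{J'}^{ord}_n[p^{\infty}])_{/div}$, using the exact sequence relating $H^1$ of $[p^m]$ to $H^1$ of $[p^\infty]$ and the remark in \ref{sec 1.4} that $X_{/div}\simeq \varprojlim X/p^nX$. This gives the perfect $\Lambda$-equivariant pairing in (1); all modules involved are finite $p$-groups because $\hat{J}^{ord}_n[p^{\infty}](L)$ is finite by Nagell--Lutz (cf.\ the proof of Corollary \ref{cor 4.2.10}).

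For part (2), I would pass to the limit over $n$. On the left-hand side the transition maps are $P_{n,n+1}: \hat{J}^{ord}_n[p^\infty] \to \hat{J}^{ord}_{n+1}[p^\infty]$, and by definition $\mathfrak{g}(L) = \varinjlim_n \hat{J}^{ord}_n[p^\infty](L)$. On the right-hand side I take the inverse limit with transition maps the adjoints of $P_{n,n+1}$, which by the functoriality statement Proposition \ref{prop 5.1.5}-(4), namely $H^{n+1}_m(P_{n,n+1}(x),y) = H^n_m(x, V'_{n+1,n}(y))$, are exactly the maps induced by $V'_{n+1,n}$ on $H^1(L,\hat{J'}^{ord}_n[p^\infty])^{\iota}_{/div}$. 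Since a direct limit paired against the corresponding inverse limit is perfect whenever each finite-level pairing is perfect (this is a routine compact-discrete duality argument: $\mathrm{Hom}(\varinjlim A_n, Q(W)/W) = \varprojlim \mathrm{Hom}(A_n, Q(W)/W)$), we obtain the desired perfect $\Lambda$-equivariant pairing
\[
\mathfrak{g}(L) \times \varprojlim_n H^1(L,\hat{J'}^{ord}_n[p^{\infty}])^{\iota}_{/div} \longrightarrow Q(W)/W .
\]
The $\iota$-twist propagates through the limits because it already appears at each finite level in Proposition \ref{prop 5.2.7}-(1), and it is preserved by $\Lambda$-linear transition maps.

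The main obstacle I anticipate is checking the compatibility of the finite-level pairings of Proposition \ref{prop 5.2.7} with the two systems of transition maps — i.e.\ verifying that multiplication-by-$p$ (for the limit in $m$) and $P_{n,n+1}$ versus $V'_{n+1,n}$ (for the limit in $n$) are genuinely adjoint under the pairings, so that the limit pairing is well-defined and remains perfect. The $n$-direction compatibility is precisely the content of Proposition \ref{prop 5.1.5}-(4) lifted to cohomology (via cup product functoriality), and the $m$-direction compatibility is Proposition \ref{prop 5.1.5}-(3); so the work is bookkeeping rather than anything deep, but one must also confirm the Kummer images are compatible across levels (they are, by naturality of the Kummer sequence), and that passing $_{/div}$ through the inverse limit is harmless (it is, since each $H^1(L,\hat{J'}^{ord}_n[p^\infty])$ is cofinitely generated over $W$). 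A minor point to handle cleanly is the finiteness claims at each stage, which follow as in Corollary \ref{cor 4.2.10}.
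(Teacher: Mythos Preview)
Your approach to part (1) contains a genuine error. You use the $i=1$ case of Proposition \ref{prop 5.2.7} (orthogonality of Kummer images), which after passing to the quotient yields a perfect pairing between $\hat{J}^{ord}_n(L)/p^m\hat{J}^{ord}_n(L)$ and $H^1(L,\hat{J'}^{ord}_n[p^m])/\mathrm{Im}(\text{Kummer})$. Taking the direct limit in $m$ on the left indeed gives $\hat{J}^{ord}_n(L)\otimes_{\mathbb{Z}_p}\mathbb{Q}_p/\mathbb{Z}_p$, but this is \emph{not} isomorphic to $\hat{J}^{ord}_n[p^\infty](L)$. For a finitely generated $W$-module $M$ one has $M\otimes\mathbb{Q}_p/\mathbb{Z}_p\simeq (Q(W)/W)^{\mathrm{rank}_W M}$: tensoring with a divisible group annihilates the torsion part and makes the free part divisible, so it is essentially the opposite of taking $p$-power torsion. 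In particular, when $l=p$ the module $\hat{J}^{ord}_n(L)$ has positive $W$-rank (it is a $p$-adic Lie group), so $\hat{J}^{ord}_n(L)\otimes\mathbb{Q}_p/\mathbb{Z}_p$ is an infinite divisible group, whereas $\hat{J}^{ord}_n[p^\infty](L)$ is finite; when $l\neq p$ the module $\hat{J}^{ord}_n(L)$ is finite, so the tensor product is zero while the torsion subgroup need not be. Either way your identification fails, and the pairing you obtain is not the one stated in the corollary.

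The paper takes a different and shorter route: it uses the $i=0$ case of Proposition \ref{prop 5.2.7}-(1), i.e.\ the duality between $H^0$ and $H^2$. Since $H^2(L,\hat{J'}^{ord}_n[p^\infty])=0$ over a local field, the long exact sequence associated to $0\to \hat{J'}^{ord}_n[p^t]\to \hat{J'}^{ord}_n[p^\infty]\xrightarrow{p^t}\hat{J'}^{ord}_n[p^\infty]\to 0$ gives
\[
\frac{H^1(L,\hat{J'}^{ord}_n[p^\infty])}{p^t H^1(L,\hat{J'}^{ord}_n[p^\infty])}\;\simeq\; H^2(L,\hat{J'}^{ord}_n[p^t])\;\simeq\; \hat{J}^{ord}_n[p^t](L)^{\vee},
\]
the last isomorphism being exactly local Tate duality for $H^0\times H^2$. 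Passing to the projective limit over $t$ yields (1) directly, with $\hat{J}^{ord}_n[p^\infty](L)$ appearing on the nose. Your treatment of part (2) via the adjointness of $P_{n,n+1}$ and $V'_{n+1,n}$ (Proposition \ref{prop 5.1.5}-(4)) is fine once (1) is fixed, and is in fact more explicit than what the paper writes out.
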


\smallskip

\begin{proof} Since $L$ is local, we have $H^{2}(L,\hat{J'}^{ord}_{n}[p^{\infty}] )=0$. Hence we get $$\frac{H^{1}(L,\hat{J'}^{ord}_{n}[p^{\infty}] )}{p^{t}H^{1}(L,\hat{J'}^{ord}_{n}[p^{\infty}] )} \simeq H^{2}(L,\hat{J'}^{ord}_{n}[p^{t}] ) \simeq \hat{J}^{ord}_{n}[p^{t}](L) ^{\vee}$$ for all $t$. For the last isomorphism, we used Proposition \ref{prop 5.2.7}-(1). Passing to the projective limit with respect to $t$ gives the desired assertion.
\end{proof}

\smallskip

\subsection{Poitou-Tate duality} \label{sec 5.3}

We have the (global) Poitou-Tate duality (See \cite[Theorem I.4.10]{milne2006arithmetic} for the reference) induced from the twisted Weil pairing. 

\begin{prop}[Poitou-Tate duality]\label{prop 5.3.9} There is a perfect $\Lambda$-equivariant pairing of finite $p$-abelian groups $$\sha ^{1}_{K}(\hat{J}_{n}^{ord}[p^m]) \times \sha^{2}_{K}(\hat{J'}_{n}^{ord}[p^m])^{\iota} \rightarrow Q(W)/W.$$
\end{prop}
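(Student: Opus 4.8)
The plan is to deduce the twisted Poitou-Tate duality from the classical Poitou-Tate nine-term exact sequence (or the perfect pairing $\sha^1_K(M) \times \sha^2_K(M^*) \to \mathbb{Q}/\mathbb{Z}$ of \cite[Theorem I.4.10]{milne2006arithmetic}) applied to the finite Galois module $M = \hat{J}_n^{ord}[p^m]$, and then transport the Cartier dual $M^*$ over to $\hat{J'}_n^{ord}[p^m]$ using the twisted Weil pairing of Proposition \ref{prop 5.1.5}. Concretely, I would first recall that for a finite $\mathrm{Gal}(K^S/K)$-module $M$ annihilated by $p^m$, with Cartier dual $M^* = \mathrm{Hom}(M, \mu_{p^m})$, Poitou-Tate gives a perfect pairing of finite abelian groups $\sha^1_K(M) \times \sha^2_K(M^*) \to \mathbb{Q}_p/\mathbb{Z}_p$. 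Taking $M = \hat{J}_n^{ord}[p^m]$, the module $M^*$ is identified, Galois-equivariantly, with $\hat{J'}_n^{ord}[p^m]$ via Corollary \ref{cor 5.1.6}: the twisted Weil pairing $H_m^n \colon \hat{J}_n^{ord}[p^m] \times \hat{J'}_n^{ord}[p^m] \to \mu_{p^m}$ is perfect and Galois-equivariant, hence furnishes an isomorphism of Galois modules $\hat{J'}_n^{ord}[p^m] \xrightarrow{\sim} (\hat{J}_n^{ord}[p^m])^*$.

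Next I would track the $\Lambda$-action. The ring $\Lambda = W[\![T]\!]$ acts on $\hat{J}_n^{ord}[p^m]$ through the Hecke/diamond operators, and the key input is that the pairing $H_m^n$ satisfies $H_m^n(f \cdot x, y) = H_m^n(x, \iota(f) \cdot y)$ for $f \in \Lambda$ (Corollary \ref{cor 5.1.6}). Therefore the Galois-module isomorphism $\hat{J'}_n^{ord}[p^m] \simeq (\hat{J}_n^{ord}[p^m])^*$ is $\Lambda$-equivariant once we give the Cartier dual the $\iota$-twisted $\Lambda$-structure; equivalently, $\hat{J'}_n^{ord}[p^m]$ with its natural $\Lambda$-structure corresponds to $(\hat{J}_n^{ord}[p^m])^{*,\iota}$. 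Since the formation of Selmer-type Shafarevich-Tate groups $\sha^i_K(-)$ is functorial in the module and commutes with the $\Lambda$-action (the restriction maps defining $\sha^i$ are $\Lambda$-linear because the local conditions are just "vanishing everywhere locally", which is stable under any module automorphism), the classical pairing
\[
\sha^1_K(\hat{J}_n^{ord}[p^m]) \times \sha^2_K((\hat{J}_n^{ord}[p^m])^*) \to \mathbb{Q}_p/\mathbb{Z}_p
\]
becomes, after substituting the identification above, a perfect pairing
\[
\sha^1_K(\hat{J}_n^{ord}[p^m]) \times \sha^2_K(\hat{J'}_n^{ord}[p^m])^{\iota} \to \mathbb{Q}_p/\mathbb{Z}_p \subseteq Q(W)/W,
\]
which is what is asserted. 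Both groups are finite $p$-primary abelian groups since $\sha^i_K$ of a finite module is finite.

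One technical point I would want to pin down carefully is the interaction between the idempotent $e$ and the duality: a priori Poitou-Tate applies to $J_n[p^m]$, not to its ordinary part, so I would argue that applying $e$ on one side matches applying $e^* = $ (the transpose idempotent, which on $\hat{J'}_n$ is again an ordinary-type projector built from $U(l)'$) on the other, precisely because of the Hecke-compatibility $H_m^n(T(l)_n x, y) = H_m^n(x, T(l)'_n y)$ in Proposition \ref{prop 5.1.5}(2). This is the same mechanism already used to pass from $J_n[p^m]$ to $\hat{J}_n^{ord}[p^m]$ in that proposition, so it should go through formally: $\sha^i_K$ is additive, the decomposition $J_n[p^m] = \hat{J}_n^{ord}[p^m] \oplus (\text{rest})$ is Hecke-stable, and the twisted pairing respects this decomposition with ordinary on one side paired against ordinary on the other. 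I expect this bookkeeping — confirming that "ordinary" on the $J_n$-side is dual to "ordinary" on the $J'_n$-side under the twist, rather than to some co-ordinary complement — to be the main obstacle, though Proposition \ref{prop 5.1.5}(2) essentially already contains it; everything else is a direct citation of \cite[Theorem I.4.10]{milne2006arithmetic} combined with Corollary \ref{cor 5.1.6}.
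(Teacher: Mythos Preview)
Your proposal is correct and is exactly the approach the paper takes: the paper does not give a detailed proof but simply states that the pairing is the classical Poitou-Tate duality of \cite[Theorem I.4.10]{milne2006arithmetic} induced from the twisted Weil pairing $H_m^n$ of Proposition \ref{prop 5.1.5}. Your write-up is in fact more explicit than the paper's, and the point you flag about the idempotent $e$ matching ordinary with ordinary under the twist is precisely Proposition \ref{prop 5.1.5}(2), so there is no obstacle.
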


Later in the subsection \ref{sec 7.2}, we will compare the $\Lambda$-corank of $\sha^{1}_{K}(\mathfrak{g}) $ and $\sha^{2}_{K}(\mathfrak{g}') $ by using this pairing. (See Proposition \ref{prop 7.2.8})

\subsection{Twisting pairing of Flach and Cassels-Tate} \label{sec 5.4}

\subsubsection{Description of the original pairings}
We briefly recall properties of pairing of Flach and Cassels-Tate. 

\medskip

Let $K$ be a number field, $A$ be an abelian variety defined over $K$, and let $S$ be a finite set of places of $K$ containing infinite places, places over $p$ and the places of bad reduction of $A$. We recall the Selmer group and the Tate-Shafarevich group associated to $\widehat{A}$ (Definition \ref{definition 4.1.3}):
\begin{align*}
\displaystyle \mathrm{Sel}_{K}(\widehat{A}):=\mathrm{Ker}(H^{1}(K^S/K, \widehat{A}[p^{\infty}]) \rightarrow \prod_{\substack{v \in S}}H^{1}(K_v, \widehat{A}))
\\
\displaystyle \sha_{K}^{1}(\widehat{A}):=\mathrm{Ker}(H^{1}(K^S/K, \widehat{A}) \rightarrow
\prod_{\substack{v \in S}}H^{1}(K_v, \widehat{A}))
\end{align*}

As we remarked before, these definitions are independent of the choice of $S$ as long as $S$ is finite and $S$ contains infinite places, places over $p$ and the places of bad reduction of $A$.

\begin{remark}\label{remark 5.4.10} If $A$ is a modular Jacobian $J_{n}$, then $\text{Sel}_{K}(\widehat{J_{n}})$ and $\sha^{1}_{K}(\widehat{J_{n}})$ are $\textbf{h}_{\alpha, \delta, \xi}$-modules and we have $\text{Sel}_{K}(\widehat{J_{n}})^{ord}=\text{Sel}_{K}(\hat{J}^{ord}_{n})$ and $\sha^{1}_{K}(\widehat{J_{n}})^{ord}=\sha^{1}_{K}(\hat{J}^{ord}_{n})$.
\end{remark}

\smallskip

Flach (\cite{flach1990generalisation}) constructed a bilinear pairing $F_{A}:\mathrm{Sel}_{K}(\widehat{A}) \times \mathrm{Sel}_{K}(\widehat{A^t}) \rightarrow Q(W)/W$ whose left kernel (resp. right kernel) is the maximal $p$-divisible subgroup of $\mathrm{Sel}_{K}(\widehat{A})$ (resp. $\mathrm{Sel}_{K}(\widehat{A^t})$). Moreover, for an isogeny $f:A \rightarrow B$ defined over $K$, we have $$F_{A}(x, f^{t}(y))=F_{B}(f(x), y)$$ for $x \in \mathrm{Sel}_{K}(\widehat{A})$ and $y \in \mathrm{Sel}_{K}(\widehat{B^t})$. Note that the Cassels-Tate pairing $C_{A}:\sha^{1}_{K}(\widehat{A}) \times \sha^{1}_{K}(\widehat{A^t})\rightarrow Q(W)/W$ also satisfies the same functorial property. (See \cite[Theorem I.6.26]{milne2006arithmetic})

\medskip

\subsubsection{Twisted pairing}

For the modular Jacobian $J_{n}$, we define the twisted Flach's pairing and the twisted Cassels-Tate pairing by
$$
S^{n}(x, y)=F_{J_{n}}(x, \lambda_{n} \circ w_{n}^{-1}(y) )
$$
for $x \in \mathrm{Sel}_{K}(\widehat{J_{n}})$ and $y \in \mathrm{Sel}_{K}(\widehat{J'_{n}})$ and
$$
CT^{n}(z, v)=C_{J_{n}}(z, \lambda_{n} \circ w_{n}^{-1}(v) )
$$

for $z \in \sha^{1}_{K}(\widehat{J_{n}})$ and $v \in \sha^{1}_{K}(\widehat{J'_{n}})$. We record properties of the pairing $S^n$ below. (Note that the twisted Cassels-Tate pairing $CT^n$ also satisfies the same properties.)

\medskip

\begin{prop}\label{prop 5.4.11} 
(1) $(\mathrm{Hecke-equivariance})$ $S^{n}(T(l)_{n}(x), y)=S^{n}(x, T(l)_{n}'(y))$ where $x, y$ are elements of $\mathrm{Sel}_{K}(\widehat{J_{n}}), \mathrm{Sel}_{K}(\widehat{J'_{n}})$, respectively. Hence, $S^{n}$ induces a bilinear pairing $$S^{n}:\mathrm{Sel}_{K}(\hat{J}^{ord}_{n}) \times \mathrm{Sel}_{K}(\hat{J'}^{ord}_{n}) \rightarrow Q(W)/W.$$

\medskip

(2) $S^{n}: \mathrm{Sel}_{K}(\hat{J}^{ord}_{n})_{/ div} \times \mathrm{Sel}_{K}(\hat{J'}^{ord}_{n})_{/ div} \rightarrow Q(W)/W$ is a perfect bilinear pairing.

\medskip

(3) For $x \in \mathrm{Sel}_{K}(\hat{J}^{ord}_{n})$ and $y \in \mathrm{Sel}_{K}(\hat{J'}^{ord}_{n+1})$, we have $$S^{n+1}\left( P_{n, n+1}  (x), y\right)=S^{n}\left(x, V_{n+1, n}'(y)\right).$$ 

\medskip

(4) The twisted Flach's pairing $S^{n}$ satisfies $$S^{n}(f \cdot x, y)=S^{n}(x, \iota(f)\cdot y)$$ where $x, y$ are elements of $\mathrm{Sel}_{K}(\hat{J}^{ord}_{n}), \mathrm{Sel}_{K}(\hat{J'}^{ord}_{n})$, respectively and $f \in W[\![T]\!]$.

In particular, we have an isomorphism $\mathrm{Sel}_{K}(\hat{J}^{ord}_{n})_{/ div} \simeq \left(\mathrm{Sel}_{K}(\hat{J'}^{ord}_{n})_{/ div}\right)^{\vee}$ of $\Lambda$-modules.
\end{prop}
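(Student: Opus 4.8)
The plan is to transfer each property from the corresponding property of Flach's pairing $F_{J_n}$ (and the Cassels–Tate pairing $C_{J_n}$) through the definition $S^n(x,y)=F_{J_n}(x,\lambda_n\circ w_n^{-1}(y))$, using the three relations \eqref{e_1}, \eqref{e_2}, \eqref{e_5} together with the functoriality of $F$. First, for (1), I would compute, for $x\in\mathrm{Sel}_K(\widehat{J_n})$ and $y\in\mathrm{Sel}_K(\widehat{J'_n})$,
\[
S^n(T(l)_n x,y)=F_{J_n}(T(l)_n x,\lambda_n w_n^{-1}y)=F_{J_n}\bigl(x,(T(l)_n)^t\lambda_n w_n^{-1}y\bigr),
\]
using that $F_{J_n}(f(a),b)=F_{J_n}(a,f^t(b))$ for an endomorphism $f$ of $J_n$ (the isogeny functoriality of $F$ applied with $A=B=J_n$; $T(l)_n$ is an isogeny up to the usual caveat, or one extends functoriality to the subring of $\mathrm{End}(J_n)$ generated by Hecke operators). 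Then \eqref{e_1} rewrites $(T(l)_n)^t\lambda_n=\lambda_n T^*(l)_n$, and \eqref{e_2} rewrites $w_n T^*(l)_n=T(l)'_n w_n$, i.e. $T^*(l)_n w_n^{-1}=w_n^{-1}T(l)'_n$, so the right-hand argument becomes $\lambda_n w_n^{-1}T(l)'_n y$, giving $S^n(x,T(l)'_n y)$. Hence $S^n$ is Hecke-equivariant; applying the ordinary idempotent $e=\lim U(p)^{n!}$ on both sides (it is a limit of Hecke operators, so the equivariance passes to it) shows $S^n$ descends to a pairing on the ordinary parts $\mathrm{Sel}_K(\hat J^{ord}_n)\times\mathrm{Sel}_K(\hat J'^{ord}_n)$, using Remark \ref{remark 5.4.10}.

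For (2), since $\lambda_n$ and $w_n$ are isomorphisms after we localize at the relevant primes (indeed $w_n$ is an isomorphism and $\lambda_n$ a polarization, so both are isogenies inducing isomorphisms on $\mathrm{Sel}\otimes\mathbb{Q}_p/\mathbb{Z}_p$ up to bounded kernel/cokernel; more cleanly, on the quotients $\mathrm{Sel}/\mathrm{div}$ the induced maps are isomorphisms because Flach's left/right kernels are exactly the divisible parts), the nondegeneracy of $F_{J_n}$ on $\mathrm{Sel}_K(\widehat{J_n})_{/div}\times\mathrm{Sel}_K(\widehat{J_n^t})_{/div}$ transports to nondegeneracy of $S^n$ on $\mathrm{Sel}_K(\hat J^{ord}_n)_{/div}\times\mathrm{Sel}_K(\hat J'^{ord}_n)_{/div}$ once we note the ordinary projector splits off a direct summand, so $(\,\cdot\,)^{ord}_{/div}=(\,\cdot\,)_{/div}^{ord}$ and the restriction of a perfect pairing to a direct summand paired with the complementary summand of the dual is perfect. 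For (3), I would use the functoriality of $F$ for the Picard map $P_{n,n+1}:J_n\to J_{n+1}$, whose dual (Albanese) is the transpose: $F_{J_{n+1}}(P_{n,n+1}x,z)=F_{J_n}(x,P_{n,n+1}^t z)$. Taking $z=\lambda_{n+1}w_{n+1}^{-1}y$ and applying \eqref{e_5} ($P_{n,n+1}^t\lambda_{n+1}=\lambda_n A_{n+1,n}$) followed by the definition $V'_{n+1,n}=w_n A_{n+1,n}w_{n+1}^{-1}$ turns the right argument into $\lambda_n w_n^{-1}V'_{n+1,n}y$, yielding $S^{n+1}(P_{n,n+1}x,y)=S^n(x,V'_{n+1,n}y)$. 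Finally (4): the $\Lambda$-action on $J_n$ is through diamond-type operators in $\Gamma_0(Np^n)$, so it suffices to check $S^n(\langle g\rangle x,y)=S^n(x,\langle\tilde g\rangle y)$ for $g\in\Gamma_0(Np^n)$, which is the Selmer-group analogue of Proposition \ref{prop 5.1.5}(5): functoriality of $F$ for the automorphism $[\Xi\backslash\Xi g\Xi]$ moves it to its transpose, \eqref{e_8} converts that transpose-after-$\lambda_n$ into $\lambda_n$-after-$[\Xi\backslash\Xi g^{-1}\Xi]$, and \eqref{e_10} commutes $[\Xi\backslash\Xi g^{-1}\Xi]$ past $w_n^{-1}$ at the cost of replacing $g$ by $\tilde g$ in the $J'_n$-side operator; tracking the congruence $\tilde g\equiv\mathrm{diag}(1/d,1/a)\bmod p^n$ identifies the resulting action with multiplication by $\iota(f)$, exactly as in Corollary \ref{cor 5.1.6}. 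Combining (4) with the perfectness in (2) gives the claimed $\Lambda$-module isomorphism $\mathrm{Sel}_K(\hat J^{ord}_n)_{/div}\simeq\bigl(\mathrm{Sel}_K(\hat J'^{ord}_n)_{/div}\bigr)^{\vee}$.

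The main obstacle I anticipate is technical bookkeeping rather than a conceptual difficulty: making sure the functoriality statement "$F_A(f(x),y)=F_A(x,f^t(y))$" is legitimately applied when $f$ is a Hecke operator or a diamond operator (not literally an isogeny if it has a kernel of positive dimension, though $T(l)$ and the diamonds are in fact isogenies here), and correctly identifying the transpose $[\Xi\backslash\Xi g\Xi]^t$ with $[\Xi\backslash\Xi g^{-1}\Xi]$ via \eqref{e_8}. One must also be careful that passing to ordinary parts and to the $(\,\cdot\,)_{/div}$ quotient commutes, which is where the idempotent $e$ and the fact that $\mathrm{Sel}_K(\widehat{J_n})$ is a cofinitely generated $W$-module (Remark \ref{remark 4.1.4}) are used; none of this is deep, but it is where sign/duality errors would creep in. The argument for $CT^n$ is word-for-word the same using $C_{J_n}$ in place of $F_{J_n}$ and \cite[Theorem I.6.26]{milne2006arithmetic}, so I would simply remark that it carries over verbatim.
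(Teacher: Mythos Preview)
Your proposal is correct and is exactly the approach the paper intends: the paper does not write out a proof of Proposition \ref{prop 5.4.11} at all, but simply notes that it is ``almost identical'' to the proofs of Proposition \ref{prop 5.1.5} and Corollary \ref{cor 5.1.6}, replacing the Weil pairing by Flach's pairing and using the same relations \eqref{e_1}, \eqref{e_2}, \eqref{e_5}, \eqref{e_8}, \eqref{e_10} together with the isogeny functoriality of $F$. Your write-up is in fact more detailed than what the paper provides.
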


\medskip

Since the proof of Proposition \ref{prop 5.4.11} is almost identical with those of Proposition \ref{prop 5.1.5} and Corollary \ref{cor 5.1.6}, we omit the proof. 

\smallskip

\section{$\Lambda$-adic local cohomology groups}

\medskip

In this section, we let $L$ be a finite extension of $\mathbb{Q}_l$. (Here $l$ can be same with $p$ unless we mention that $l \neq p$.) We keep Notation \ref{No} and Convention \ref{Id}.

\smallskip

\subsection{Basic facts} \label{sec 6.1}

Note that $H^{2}(L, A)=0$ for any abelian variety defined over $L$ due to \cite[Theorem I.3.2]{milne2006arithmetic}. By this fact, we get $H^{2}(L, A[p^{\infty}])=0$.

\begin{lemma}\label{lemma 6.1.1} We have the following assertions:

(1) $\mathfrak{g}(L)^{\vee}$ is a finitely generated torsion $\Lambda$-module whose characteristic ideal is prime to $\omega_{n}$ for all $n$. In particular, $\frac{\mathfrak{g}(L)}{\omega_{n}\mathfrak{g}(L)}$ has finite bounded order independent of $n$. We have $\displaystyle \lim_{\substack{\longrightarrow \\n}}\frac{\mathfrak{g}(L)}{\omega_{n}\mathfrak{g}(L)}=0$. Moreover $\displaystyle \lim_{\substack{\longleftarrow \\n}}\frac{\mathfrak{g}(L)}{\omega_{n}\mathfrak{g}(L)}$ is isomorphic to Pontryagin dual of the maximal finite submodule of $\mathfrak{g}(L)^{\vee}$ as $\Lambda$-modules.

(2) $H^{2}(L, \mathfrak{g})=0$.

(3) $H^{1}(L, \mathfrak{g})^{\vee}[\omega_{n}]=0$. Hence $H^{1}(L, \mathfrak{g})^{\vee}$ has no non-trivial finite $\Lambda$-submodules and the characteristic ideal of $(H^{1}(L, \mathfrak{g})^{\vee})_{\Lambda-\mathrm{tor}}$ is coprime to $\omega_{n}$ for all $n$.
\end{lemma}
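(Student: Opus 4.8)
The plan is to treat the three items via the single structural input that $\mathfrak{g}(\overline{\mathbb{Q}_l})$ is a cofree $\Lambda$-module of finite rank (Proposition \ref{prop 4.3.12}) together with the control isomorphism $\hat{J}^{ord}_n[p^\infty](L)\simeq \mathfrak{g}(L)[\omega_n]$ and the finiteness of $\hat{J}^{ord}_n[p^\infty](L)$ (Corollary \ref{cor 4.2.10} and its proof, via Nagell--Lutz). For (1), the finiteness and boundedness of $\mathfrak{g}(L)[\omega_n]$ forces $\mathfrak{g}(L)^\vee$ to be $\Lambda$-torsion with characteristic ideal prime to every $\omega_n$: if some $\omega_n$ divided a characteristic power series, then $\mathfrak{g}(L)^\vee/\omega_n$ — hence $\mathfrak{g}(L)[\omega_n]$ — would be infinite, a contradiction. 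Dualizing the exact sequence $0\to\mathfrak{g}(L)[\omega_n]\to\mathfrak{g}(L)\xrightarrow{\omega_n}\mathfrak{g}(L)$ and passing to limits then identifies $\varprojlim \mathfrak{g}(L)/\omega_n\mathfrak{g}(L)$ with $\varprojlim \mathfrak{g}(L)^\vee[\omega_n]$; since the characteristic ideal is coprime to all $\omega_n$, the submodules $\mathfrak{g}(L)^\vee[\omega_n]$ stabilize to the maximal finite $\Lambda$-submodule of $\mathfrak{g}(L)^\vee$, and the direct limit $\varinjlim \mathfrak{g}(L)/\omega_n\mathfrak{g}(L)$ vanishes because the transition maps are eventually multiplication by a unit on this bounded module (standard $\Lambda$-module bookkeeping, Appendix lemmas).

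For (2), I would use the cohomology of the short exact sequence of sheaves $0\to\hat{J}^{ord}_n[p^\infty]\to\mathfrak{g}\xrightarrow{\gamma^{p^{n-1}}-1}\mathfrak{g}\to 0$ of Corollary \ref{cor 4.3.13} over $L$. Since $L$ is local and $H^2(L,A[p^\infty])=0$ for an abelian variety $A/L$ (recalled at the start of the section), the long exact sequence gives that $\gamma^{p^{n-1}}-1$ acts surjectively on $H^2(L,\mathfrak{g})$, i.e. $H^2(L,\mathfrak{g})=\omega_n H^2(L,\mathfrak{g})$ for all $n$; combined with the fact that $H^2(L,\mathfrak{g})=\varinjlim H^2(L,\hat{J}^{ord}_n[p^\infty])=0$ directly (each term already vanishes), this is immediate — in fact the cleanest route is just to observe $H^2(L,\mathfrak{g})=\varinjlim_n H^2(L,\hat{J}^{ord}_n[p^\infty])=0$ by commuting cohomology with the direct limit of sheaves.

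For (3), the key computation is the long exact cohomology sequence attached to Corollary \ref{cor 4.3.13}: using $H^2(L,\mathfrak{g})=0$ from (2) and $H^2(L,\hat{J}^{ord}_n[p^\infty])\simeq \hat{J}^{ord}_n[p^\infty](L)^\vee$ from local Tate duality (Proposition \ref{prop 5.2.7}, applied to the dual tower), one gets an exact sequence exhibiting $H^1(L,\mathfrak{g})[\omega_n]$ and $H^1(L,\mathfrak{g})/\omega_n$ in terms of $H^1(L,\hat{J}^{ord}_n[p^\infty])$ and the finite groups $\hat{J}^{ord}_n[p^\infty](L)$; dualizing, $H^1(L,\mathfrak{g})^\vee[\omega_n]$ is squeezed between duals that force it to vanish once one knows that the relevant connecting map is injective — which follows because $\hat{J}^{ord}_n[p^\infty](L)$ is exactly the image of the Kummer map and the Euler-characteristic/orthogonality statements of Proposition \ref{prop 5.2.7} pin down the ranks. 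From $H^1(L,\mathfrak{g})^\vee[\omega_n]=0$ for all $n$ it follows formally that $H^1(L,\mathfrak{g})^\vee$ has no nonzero finite $\Lambda$-submodule (any such would be killed by some $\omega_n$, as the $\omega_n$ are coprime distinguished polynomials cofinal among generators of the maximal ideal's powers) and that its torsion part has characteristic ideal coprime to every $\omega_n$ (same argument as in (1)).

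The main obstacle I anticipate is item (3): making the vanishing $H^1(L,\mathfrak{g})^\vee[\omega_n]=0$ genuinely precise requires carefully bookkeeping the long exact sequence from Corollary \ref{cor 4.3.13} against the local duality isomorphisms, checking that the map $\hat{J}^{ord}_n[p^\infty](L)\to H^1(L,\mathfrak{g})[\omega_n]$ coming from the connecting homomorphism is zero (equivalently that $H^1(L,\hat{J}^{ord}_n[p^\infty])\to H^1(L,\mathfrak{g})$ is surjective onto the $\omega_n$-torsion), which is where the finiteness of $\hat{J}^{ord}_n[p^\infty](L)$ and the cofreeness of $\mathfrak{g}$ must be combined with care; everything else is routine $\Lambda$-module theory from the Appendix.
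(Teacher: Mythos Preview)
Your treatment of (1) and (2) is essentially the paper's: the control isomorphism $\mathfrak{g}(L)[\omega_n]\simeq\hat J^{ord}_n[p^\infty](L)$ plus Nagell--Lutz feeds into the appendix lemmas for (1), and $H^2(L,\mathfrak{g})=\varinjlim_n H^2(L,\hat J^{ord}_n[p^\infty])=0$ is exactly the argument for (2).

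For (3), however, you are overcomplicating the argument and in the process introduce an error. You write that local Tate duality gives $H^2(L,\hat J^{ord}_n[p^\infty])\simeq \hat J^{ord}_n[p^\infty](L)^\vee$, but this is false: you yourself noted in (2) that each $H^2(L,\hat J^{ord}_n[p^\infty])$ vanishes (this is the observation recorded just before the lemma, coming from $H^2(L,A)=0$). Local Tate duality for the full divisible module $A[p^\infty]$ pairs $H^2(L,A[p^\infty])$ with the Tate module $T_p(A^t(L))$, which is zero since $A^t(L)[p^\infty]$ is finite; it does \emph{not} pair it with $A^t[p^\infty](L)$ itself. So the whole apparatus of orthogonality, Kummer images, and rank-matching you propose is built on a group that is already zero.

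Once you remember $H^2(L,\hat J^{ord}_n[p^\infty])=0$, the paper's route to (3) is a one-liner. The long exact sequence of Corollary~\ref{cor 4.3.13} over $L$ reads
\[
\cdots \to H^1(L,\mathfrak{g}) \xrightarrow{\omega_n} H^1(L,\mathfrak{g}) \to H^2(L,\hat J^{ord}_n[p^\infty])=0,
\]
so $H^1(L,\mathfrak{g})/\omega_n H^1(L,\mathfrak{g})=0$, and dualizing gives $H^1(L,\mathfrak{g})^\vee[\omega_n]=0$. The remaining claims about finite submodules and the characteristic ideal of the torsion part then follow from Lemma~\ref{lemma A.0.1}(3), exactly as you outline at the end. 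There is no delicate bookkeeping of connecting maps or Euler characteristics needed; the obstacle you anticipated simply isn't there.
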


\begin{proof} By the control sequence (Proposition \ref{prop 4.2.9}), we have $\hat{J}_{n}^{ord}[p^{\infty}](L)=\mathfrak{g}(L)[\omega_{n}]$ and this group is finite since $L$ is a finite extension of $\mathbb{Q}_l$ (Nagell-Lutz). So the first assertion follows from Lemma \ref{lemma A.0.1}. For (2), $H^{2}(L, \hat{J}_{n}^{ord}[p^{\infty}])=0$ as we remarked earlier, so we have $\displaystyle H^{2}(L, \mathfrak{g})=\lim_{\substack{\longrightarrow \\ n}}H^{2}(L, \hat{J}_{n}^{ord}[p^{\infty}])=0$.

For (3), $\frac{H^{1}(L, \mathfrak{g})}{\omega_{n}H^{1}(L, \mathfrak{g})}$ injects into $H^{2}(L, \hat{J}_{n}^{ord}[p^{\infty}])=0$  so $H^{1}(L, \mathfrak{g})^{\vee}[\omega_{n}]=0$. Now Lemma \ref{lemma A.0.1}-(3) proves the remaining assertions of (3).
\end{proof}

Next we calculate the Euler characteristic of the cohomology groups above. 

\medskip

\begin{lemma}[Local Euler Characteristic Formula]\label{lemma 6.1.2}

\begin{numcases}{(1) \quad  \corank_{W}H^{1}(L, \hat{J}_{n}^{ord}[p^{\infty}])=}
p^{n-1}\cdot[L:\mathbb{Q}_l]\cdot \corank_{\Lambda}\mathfrak{g}(\overline{L}) & $l=p$  \nonumber\\   
   0 & $l\neq p$  \nonumber
   \end{numcases}

\begin{numcases}
{(2) \quad   \corank_{\Lambda}H^{1}(L, \mathfrak{g})=}
[L:\mathbb{Q}_l]\cdot \corank_{\Lambda}\mathfrak{g}(\overline{L}) & $l=p$  \nonumber\\
   0 & $l\neq p$  \nonumber
\end{numcases}
\end{lemma}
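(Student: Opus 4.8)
The plan is to compute the two coranks by reducing the computation to the already-known structure of $\mathfrak{g}(\overline{L})$ via the control sequence of Corollary \ref{cor 4.3.13}, together with the standard local Euler characteristic formula for finite Galois modules. First I would treat part (1). By Proposition \ref{prop 4.2.9} we have $\hat{J}_{n}^{ord}[p^{\infty}](L)=\mathfrak{g}[\omega_n](L)$, which is finite (Nagell--Lutz), and by Lemma \ref{lemma 6.1.1}(2) we already know $H^2(L,\hat{J}_n^{ord}[p^\infty])=0$. So the only potentially infinite cohomology in degrees $0,1,2$ is $H^1$. For $m\geq 1$ apply the classical local Euler characteristic formula to the finite module $\hat{J}_n^{ord}[p^m]$: when $l\neq p$ all local Euler factors are trivial so $\#H^0\cdot\#H^2=\#H^1$ after dividing out, forcing $H^1(L,\hat{J}_n^{ord}[p^\infty])$ to be cofinitely generated of corank $0$ over $W$; when $l=p$ the Euler characteristic formula contributes the factor $\#(\hat{J}_n^{ord}[p^m])^{[L:\mathbb{Q}_p]}=(p^{m\cdot d_n})^{[L:\mathbb{Q}_p]}$ where $d_n=\corank_W \hat{J}_n^{ord}[p^\infty]=\rank_W(T_p\hat{J}_n^{ord})$. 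Since $\hat{J}_n^{ord}[p^\infty]\simeq \mathfrak{g}[\omega_n]$ and $\mathfrak{g}(\overline{L})$ is $\Lambda$-cofree (Proposition \ref{prop 4.3.12}), we get $d_n=p^{n-1}\corank_\Lambda\mathfrak{g}(\overline{L})$ because $\omega_n$ has degree $p^{n-1}$. Passing to the limit over $m$ (the $W$-coranks of $H^1(L,\hat{J}_n^{ord}[p^m])$ stabilize to that of $H^1(L,\hat{J}_n^{ord}[p^\infty])$ since $H^0$ and $H^2$ contributions are bounded), this yields the displayed formula in (1).

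For part (2), I would pass from the $\hat{J}_n^{ord}[p^\infty]$-cohomology to the $\mathfrak{g}$-cohomology using the short exact sequence of sheaves $0\to \hat{J}_n^{ord}[p^\infty]\to \mathfrak{g}\xrightarrow{\omega_n}\mathfrak{g}\to 0$ from Corollary \ref{cor 4.3.13} (with $\gamma^{p^{n-1}}-1$ identified with $\omega_n$ via Convention \ref{Id}). Taking Galois cohomology over $L$ gives an exact sequence
\begin{align*}
H^1(L,\hat{J}_n^{ord}[p^\infty])\to H^1(L,\mathfrak{g})\xrightarrow{\omega_n}H^1(L,\mathfrak{g})\to H^2(L,\hat{J}_n^{ord}[p^\infty])=0,
\end{align*}
so $H^1(L,\mathfrak{g})/\omega_n H^1(L,\mathfrak{g})$ is a quotient of $H^1(L,\hat{J}_n^{ord}[p^\infty])$, hence finite of $W$-corank at most $d_n$; combined with the injection $H^1(L,\mathfrak{g})[\omega_n]\hookleftarrow\mathrm{coker}(H^0(L,\mathfrak{g})\xrightarrow{\omega_n}H^0(L,\mathfrak{g}))$ being bounded (Lemma \ref{lemma 6.1.1}(1)), a standard $\Lambda$-module argument (Nakayama together with the structure theorem, as in Lemma \ref{lemma A.0.1}) shows $\corank_\Lambda H^1(L,\mathfrak{g})=\corank_W\big(H^1(L,\mathfrak{g})/\omega_n H^1(L,\mathfrak{g})\big)/p^{n-1}$ for $n\gg0$. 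Since the dual of $H^1(L,\mathfrak{g})$ has no nonzero finite submodule and torsion part coprime to $\omega_n$ (Lemma \ref{lemma 6.1.1}(3)), the quotient $H^1(L,\mathfrak{g})/\omega_n H^1(L,\mathfrak{g})$ has $W$-corank exactly $p^{n-1}\corank_\Lambda H^1(L,\mathfrak{g})$ plus a bounded error; comparing with $d_n=p^{n-1}\corank_\Lambda\mathfrak{g}(\overline{L})\cdot[L:\mathbb{Q}_l]$ from part (1) and dividing by $p^{n-1}$ gives $\corank_\Lambda H^1(L,\mathfrak{g})=[L:\mathbb{Q}_l]\corank_\Lambda\mathfrak{g}(\overline{L})$ for $l=p$ and $0$ for $l\neq p$.

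The main obstacle I anticipate is the bookkeeping needed to pass cleanly between the $p^m$-torsion level, the $p^\infty$-level, and the $\Lambda$-level without losing track of the bounded error terms: one must verify that the $W$-coranks genuinely stabilize (not merely that they are bounded), which requires knowing that the relevant $H^0$'s and the cokernel/kernel of multiplication by $\omega_n$ stay finite and bounded uniformly in $n$ — this is exactly what Lemma \ref{lemma 6.1.1} supplies, so the argument is really an exercise in invoking it correctly. The other minor subtlety is that in the $l\neq p$ case one must note that $\mathfrak{g}(\overline{L})$ being cofree over $\Lambda$ plays no role in the vanishing; the vanishing is forced purely by the triviality of the local Euler factor at $l\neq p$ together with $H^0$ and $H^2$ being finite, so the formula degenerates to $0$ automatically.
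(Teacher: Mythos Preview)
Your approach to part (1) is essentially the same as the paper's: identify $\hat{J}_n^{ord}[p^\infty](\overline{L})\simeq\mathfrak{g}(\overline{L})[\omega_n]$ via Proposition~\ref{prop 4.2.9}, use $\Lambda$-cofreeness of $\mathfrak{g}(\overline{L})$ to get $W$-corank $p^{n-1}\corank_\Lambda\mathfrak{g}(\overline{L})$, then invoke the classical local Euler characteristic formula (with $H^0$ finite and $H^2=0$). The paper does this in one line without passing through the $p^m$-level, but the content is identical.

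In part (2), however, you have systematically swapped the kernel and cokernel of multiplication by $\omega_n$. From the long exact sequence you wrote,
\[
H^1(L,\hat{J}_n^{ord}[p^\infty])\to H^1(L,\mathfrak{g})\xrightarrow{\omega_n}H^1(L,\mathfrak{g})\to H^2(L,\hat{J}_n^{ord}[p^\infty])=0,
\]
the cokernel $H^1(L,\mathfrak{g})/\omega_n H^1(L,\mathfrak{g})$ injects into $H^2(L,\hat{J}_n^{ord}[p^\infty])=0$, so it is \emph{zero} (this is exactly Lemma~\ref{lemma 6.1.1}(3)). It is therefore not a quotient of $H^1(L,\hat{J}_n^{ord}[p^\infty])$, and the formula $\corank_\Lambda H^1(L,\mathfrak{g})=\corank_W\big(H^1(L,\mathfrak{g})/\omega_n\big)/p^{n-1}$ is false: the right-hand side vanishes identically. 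The object you want is $H^1(L,\mathfrak{g})[\omega_n]$, which \emph{is} a quotient of $H^1(L,\hat{J}_n^{ord}[p^\infty])$ with finite kernel $\mathfrak{g}(L)/\omega_n\mathfrak{g}(L)$ (Lemma~\ref{lemma 6.1.1}(1)). This is precisely what the paper does: it writes the short exact sequence
\[
0\to \frac{\mathfrak{g}(L)}{\omega_n\mathfrak{g}(L)}\to H^1(L,\hat{J}_n^{ord}[p^\infty])\to H^1(L,\mathfrak{g})[\omega_n]\to 0,
\]
reads off $\corank_W H^1(L,\mathfrak{g})[\omega_n]=\corank_W H^1(L,\hat{J}_n^{ord}[p^\infty])$, and then uses the growth $\corank_W X[\omega_n]=p^{n-1}\corank_\Lambda X+O(1)$ for a cofinitely generated $\Lambda$-module $X$. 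Once you replace ``$/\omega_n$'' by ``$[\omega_n]$'' throughout, your argument coincides with the paper's.
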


\begin{proof} By Proposition \ref{prop 4.2.9}, we have an isomorphism $\hat{J}_{n}^{ord}[p^{\infty}](\overline{L})\simeq \mathfrak{g}(\overline{L})[\omega_{n}]$. Hence $\hat{J}_{n}^{ord}[p^{\infty}](\bar{L})$ is a cofree $W$-module of corank $p^{n-1}\cdot \corank_{\Lambda}\mathfrak{g}(\overline{L})$ by Theorem \ref{theorem 4.3.11}-(1). Now the usual Euler characteristic formula shows (1). (Note that $\hat{J}_{n}^{ord}[p^{\infty}](L)$ is finite.)

For (2), we have the following short exact sequence by Corollary \ref{cor 4.3.13}:
$$0 \rightarrow \frac{\mathfrak{g}(L)}{\omega_{n}\mathfrak{g}(L)} \rightarrow H^{1}(L, \hat{J}_{n}^{ord}[p^{\infty}]) \rightarrow H^{1}(L, \mathfrak{g})[\omega_{n}] \rightarrow 0.$$ Since $\frac{\mathfrak{g}(L)}{\omega_{n}\mathfrak{g}(L)}$ is a finite group by Lemma \ref{lemma 6.1.1}-(1), comparing $W$-coranks proves the second assertion. 
\end{proof}

\medskip

\subsection{Functors $\mathfrak{F}$ and $\mathfrak{G}$} \label{sec 6.2}

We record the definitions and properties of the functors $\mathfrak{F}$ and $\mathfrak{G}$ defined in \cite{Lee2018}.

\begin{definition}\label{definition 6.2.3} For a finitely generated $\Lambda$-module $X$, we define \begin{align*}
\mathfrak{F}(X)&:=\left(\lim_{\substack{\longrightarrow \\n}}\frac{X}{\omega_nX}[p^{\infty}]\right)^{\vee}\\
\mathfrak{G}(X)&:=\lim_{\substack{\longleftarrow \\n}}\left(\frac{X}{\omega_nX}[p^{\infty}] \right).
\end{align*}
\end{definition}

Here the direct limit is taken with respect to the norm maps $\frac{X}{\omega_{n}X} \xrightarrow{\times\frac{\omega_{n+1}}{\omega_{n}}} \frac{X}{\omega_{n+1}X}$, and the inverse limit is taken with respect to the natural projections. We have the following proposition about the explicit description of two functors $\mathfrak{F}$ and $\mathfrak{G}$. For the proof, see \cite[Proposition A.1.6, Proposition A.2.12]{Lee2018}.

\medskip

\begin{prop}\label{prop 6.2.4}
Let $X$ be a finitely generated $\Lambda$-module with $$\displaystyle E(X)  \simeq \Lambda^{r} 
\oplus \left( \bigoplus_{i=1}^{d} \frac{\Lambda}{g_{i}^{l_{i}}} \right) 
\oplus \left( \bigoplus_{\substack{m=1 \\ e_{1} \cdots e_{f} \geq 2}}^{f} \frac{\Lambda}{\omega_{a_{m}+1, a_{m}}^{e_{m}}} \right) 
\oplus \left( \bigoplus_{n=1}^{t} \frac{\Lambda}{\omega_{b_{n}+1, b_{n}}} \right) $$ where $r \geq 0$, $g_{1}, \cdots g_{d}$ are prime elements of $\Lambda$ which are coprime to $\omega_{n}$ for all $n$, $d \geq 0$, $l_{1}, \cdots, l_{d} \geq 1$, $f \geq 0$, $e_{1}, \cdots, e_{f} \geq 2$ and $t \geq 0$.

\medskip

(1) We have an injection $$\displaystyle \left(\bigoplus_{i=1}^{d} \frac{\Lambda}{\iota(g_{i})^{l_{i}}}\right)\oplus \left(\bigoplus_{\substack{m=1 \\ e_{1} \cdots e_{f} \geq 2}}^{f} \frac{\Lambda}{\iota(\omega_{a_{m}+1, a_{m}})^{e_{m}-1}} \right) \hookrightarrow \mathfrak{F}(X)$$ with finite cokernel. In particular, $\mathfrak{F}(X)$ is a finitely generated $\Lambda$-torsion module.

\medskip

(2) We have a pseudo-isomorphism $$\displaystyle \mathfrak{G}(X) \xrightarrow{\mathfrak{G}(\phi)} \left(\bigoplus_{i=1}^{d} \frac{\Lambda}{g_{i}^{l_{i}}}\right)\oplus \left(\bigoplus_{\substack{m=1 \\ e_{1} \cdots e_{f} \geq 2}}^{f} \frac{\Lambda}{\omega_{a_{m}+1, a_{m}}^{e_{m}-1}} \right).$$ In particular, $\mathfrak{G}(X)$ is a finitely generated $\Lambda$-torsion module.

\medskip

(3) Two $\Lambda$-torsion modules $\mathfrak{F}(X)^{\iota}$ and $\mathfrak{G}(X)$ are pseudo-isomorphic.
\end{prop}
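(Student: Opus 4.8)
\textbf{Proof proposal for Proposition \ref{prop 6.2.4}(3).} The plan is to deduce statement (3) directly from statements (1) and (2), which give concrete pseudo-isomorphism-level descriptions of $\mathfrak{F}(X)$ and $\mathfrak{G}(X)$ for an arbitrary finitely generated $\Lambda$-module $X$ with elementary module as displayed. Since pseudo-isomorphism of finitely generated torsion $\Lambda$-modules is detected by the associated elementary modules (equivalently, by the multiset of primary components $\Lambda/g^{e}$ with $g$ a prime element of $\Lambda$), it suffices to compare the elementary modules of $\mathfrak{F}(X)^{\iota}$ and $\mathfrak{G}(X)$.

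First I would record, from part (2), that
\[
E\bigl(\mathfrak{G}(X)\bigr)\;\simeq\;\Bigl(\bigoplus_{i=1}^{d}\frac{\Lambda}{g_i^{l_i}}\Bigr)\oplus\Bigl(\bigoplus_{m=1}^{f}\frac{\Lambda}{\omega_{a_m+1,a_m}^{\,e_m-1}}\Bigr),
\]
where by convention the terms with $e_m-1=0$ simply drop out; this is immediate because the target in (2) is already an elementary module and $\mathfrak{G}(X)$ is pseudo-isomorphic to it. Next, from part (1), $\mathfrak{F}(X)$ admits an injection with finite cokernel from
\[
\Bigl(\bigoplus_{i=1}^{d}\frac{\Lambda}{\iota(g_i)^{l_i}}\Bigr)\oplus\Bigl(\bigoplus_{m=1}^{f}\frac{\Lambda}{\iota(\omega_{a_m+1,a_m})^{\,e_m-1}}\Bigr),
\]
so this latter module, being elementary, is $E(\mathfrak{F}(X))$. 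Applying the involution $\iota$ to $\Lambda$ and using that $\iota$ is a ring automorphism (hence sends prime elements to prime elements and commutes with forming $E(-)$), I get
\[
E\bigl(\mathfrak{F}(X)^{\iota}\bigr)\;\simeq\;\Bigl(\bigoplus_{i=1}^{d}\frac{\Lambda}{\iota(\iota(g_i))^{l_i}}\Bigr)\oplus\Bigl(\bigoplus_{m=1}^{f}\frac{\Lambda}{\iota(\iota(\omega_{a_m+1,a_m}))^{\,e_m-1}}\Bigr)\;\simeq\;\Bigl(\bigoplus_{i=1}^{d}\frac{\Lambda}{g_i^{l_i}}\Bigr)\oplus\Bigl(\bigoplus_{m=1}^{f}\frac{\Lambda}{\omega_{a_m+1,a_m}^{\,e_m-1}}\Bigr),
\]
since $\iota^2=\mathrm{id}$. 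Thus $E(\mathfrak{F}(X)^{\iota})\simeq E(\mathfrak{G}(X))$, and two finitely generated torsion $\Lambda$-modules with isomorphic elementary modules are pseudo-isomorphic, which is the claim.

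The only genuinely substantive points, and where I would be most careful, are bookkeeping rather than deep: (a) checking that $\iota$ genuinely commutes with the operation $E(-)$ — this follows because $\iota$ induces an auto-equivalence of the category of finitely generated $\Lambda$-modules preserving pseudo-null modules, so it carries a pseudo-isomorphism $M\to E(M)$ to a pseudo-isomorphism $M^{\iota}\to E(M)^{\iota}$, and $E(M)^{\iota}$ is again elementary because $\iota$ permutes the distinguished irreducible polynomials (up to units); and (b) the harmless but necessary observation that the ``missing'' summands in (1) and (2) match up: (1) only asserts an injection with finite cokernel (not necessarily that $\mathfrak{F}(X)$ has no extra torsion), but since $E(\mathfrak{F}(X))$ is well-defined and a submodule-with-finite-cokernel has the same elementary module, the description is exact at the level of $E(-)$. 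With these in hand the proof is a one-line comparison. I expect no real obstacle; the content has been front-loaded into parts (1) and (2).
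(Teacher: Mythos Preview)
Your deduction of (3) from (1) and (2) is correct and is exactly the natural argument: once (1) and (2) pin down the elementary modules of $\mathfrak{F}(X)$ and $\mathfrak{G}(X)$, comparing them after twisting by the involution $\iota$ (using $\iota^2=\mathrm{id}$) gives (3) immediately. The paper itself does not supply a proof of this proposition at all; it simply refers the reader to \cite[Proposition A.1.6, Proposition A.2.12]{Lee2018}, so there is nothing to compare against beyond noting that your approach to (3) is the expected one.
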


\medskip

We mention one corollary which will be used in Corollary \ref{cor 6.3.7}.

\medskip

\begin{cor}\label{cor 6.2.5} Let $X$ be a finitely generated $\Lambda$-module and denote $X_{\Lambda-\mathrm{tor}}$ as the maximal $\Lambda$-torsion submodule of $X$. If the characteristic ideal of $X_{\Lambda-\mathrm{tor}}$ is coprime to $\omega_n$ for all $n$, then there is a pseudo-isomorphism $\phi:X \rightarrow \Lambda^{\rank_{\Lambda} X} \oplus \mathfrak{F}(X)^{\iota}$. If we assume additionally that $X$ does not have any non-trivial finite $\Lambda$-submodules, then $\phi$ is an injection.
\end{cor}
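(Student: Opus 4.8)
The plan is to deduce both assertions directly from Proposition \ref{prop 6.2.4}. First I would apply the structure theory: choose a pseudo-isomorphism $\phi_0 \colon X \to E(X)$, where $E(X)$ has the shape written in Proposition \ref{prop 6.2.4}. The hypothesis that the characteristic ideal of $X_{\Lambda-\mathrm{tor}}$ is coprime to $\omega_n$ for all $n$ forces the summands of the form $\Lambda/\omega_{a_m+1,a_m}^{e_m}$ (with $e_m \geq 2$) and $\Lambda/\omega_{b_n+1,b_n}$ to be absent, since $\omega_{a_m+1,a_m}$ and $\omega_{b_n+1,b_n}$ divide $\omega_{a_m+1}$ and $\omega_{b_n+1}$ respectively. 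Thus $E(X) \simeq \Lambda^{r} \oplus \left(\bigoplus_{i=1}^{d}\Lambda/g_i^{l_i}\right)$ with $r = \rank_\Lambda X$ and each $g_i$ coprime to all $\omega_n$. Now Proposition \ref{prop 6.2.4}(1), under this same hypothesis (the $f=0$ case), gives an injection with finite cokernel $\bigoplus_{i=1}^{d}\Lambda/\iota(g_i)^{l_i} \hookrightarrow \mathfrak{F}(X)$; applying $\iota$, which is an automorphism of $\Lambda$ and hence preserves pseudo-isomorphism, we get that $\mathfrak{F}(X)^{\iota}$ is pseudo-isomorphic to $\bigoplus_{i=1}^{d}\Lambda/g_i^{l_i}$, i.e.\ to $E(X)_{\Lambda-\mathrm{tor}}$. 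Composing with $\phi_0$ (after correcting the torsion-free part by the identity on $\Lambda^{r}$) yields a pseudo-isomorphism $\phi \colon X \to \Lambda^{\rank_\Lambda X} \oplus \mathfrak{F}(X)^{\iota}$.

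For the second assertion, recall that a pseudo-isomorphism from $X$ to an elementary module $E(X)$ is automatically injective precisely when $X$ has no nonzero finite $\Lambda$-submodule: the kernel of any $\Lambda$-homomorphism into $\Lambda^{r}\oplus(\text{torsion with no finite submodules})$ is a pseudo-null, hence finite, submodule of $X$, so it must vanish. Since $\Lambda/g_i^{l_i}$ has no nonzero finite submodule (the $g_i$ being prime), the target $\Lambda^{r}\oplus\bigoplus\Lambda/g_i^{l_i}$ has no nonzero finite submodule, and the argument applies to $\phi$. This gives injectivity under the extra hypothesis.

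The only mild subtlety — and the step I would be most careful about — is the bookkeeping that identifies $\mathfrak{F}(X)^{\iota}$ with the torsion part of $E(X)$ rather than merely with some abstract pseudo-isomorphic module: I want the map $\phi$ literally to land in $\Lambda^{\rank_\Lambda X}\oplus\mathfrak{F}(X)^{\iota}$, so I should either invoke that pseudo-isomorphic torsion modules without finite submodules and with the same characteristic ideal need not be isomorphic in general, and instead route through Proposition \ref{prop 6.2.4}(1) to produce an explicit finite-cokernel injection $E(X)_{\Lambda-\mathrm{tor}}^{\iota}\hookrightarrow \mathfrak{F}(X)$, or note that in the relevant application it is the existence of a pseudo-isomorphism $X \to \Lambda^{r}\oplus\mathfrak{F}(X)^{\iota}$ (not a specific one) that is used. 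Everything else is a routine assembly of the structure theorem and Proposition \ref{prop 6.2.4}.
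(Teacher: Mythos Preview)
Your proposal is correct and is exactly the intended argument: the paper states Corollary \ref{cor 6.2.5} without proof, immediately after Proposition \ref{prop 6.2.4}, and your deduction---eliminate the $\omega_{a+1,a}$-summands using the coprimality hypothesis, then invoke Proposition \ref{prop 6.2.4}(1) with $f=t=0$ to identify $\mathfrak{F}(X)^{\iota}$ with $E(X)_{\Lambda\text{-tor}}$ up to finite cokernel, and compose---is precisely what the corollary is meant to encapsulate. One small wording issue in your injectivity argument: it is not true that the kernel of \emph{any} $\Lambda$-homomorphism into a module without finite submodules is pseudo-null; what you actually use (and what suffices) is that $\phi$ is already known to be a pseudo-isomorphism, so $\ker\phi$ is pseudo-null, hence finite, hence zero under the extra hypothesis on $X$.
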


\smallskip

\medskip

\subsection{Application of the local Tate duality} \label{sec 6.3}

\medskip

By applying the functor $\mathfrak{F}$ to $H^{1}(L, \mathfrak{g})^{\vee}$, we can compare the $\Lambda$-module structure of the local cohomology groups of the Barsotti-Tate groups of two different towers. This can be regarded as an analogue of \cite[Proposition 1, 2]{greenberg1989iwasawa}. 

\medskip

\begin{prop}\label{prop 6.3.6} There is a surjection $\phi: \mathfrak{g}'(L)^{\vee} \twoheadrightarrow \mathfrak{F}\left(H^{1}(L, \mathfrak{g})^{\vee}\right)$ with finite kernel. In particular, two $\Lambda$-torsion modules $\mathfrak{g} '(L)^{\vee}$ and $\mathfrak{F}\left(H^{1}(L, \mathfrak{g})^{\vee}\right)$ are pseudo-isomorphic.
\end{prop}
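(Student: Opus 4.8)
The plan is to combine the control sequence for $\mathfrak{g}$ with the twisted local Tate duality of Corollary \ref{cor 5.2.8}, and then recognize the resulting inverse limit as a value of the functor $\mathfrak{F}$. First I would recall from Corollary \ref{cor 5.2.8}-(2) that there is a perfect $\Lambda$-equivariant pairing
\[
\mathfrak{g}(L) \times \varprojlim_n H^{1}(L,\hat{J'}^{ord}_{n}[p^{\infty}])^{\iota}_{/div} \longrightarrow Q(W)/W,
\]
where the transition maps on the right are the twisted-covariant maps $V'_{n+1,n}$. Dualizing, this says $\mathfrak{g}(L)^{\vee}$ is isomorphic (as a $\Lambda$-module, after untwisting by $\iota$) to $\varprojlim_n H^{1}(L,\hat{J'}^{ord}_{n}[p^{\infty}])_{/div}$. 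Replacing the roles of $\mathfrak{g}$ and $\mathfrak{g}'$ (the setup is symmetric in the pair $(\alpha,\delta,\xi)\leftrightarrow(\delta,\alpha,\xi')$), we get
\[
\mathfrak{g}'(L)^{\vee} \simeq \left(\varprojlim_n H^{1}(L,\hat{J}^{ord}_{n}[p^{\infty}])_{/div}\right)^{\iota},
\]
with transition maps $V_{n+1,n}$ on the right.

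Next I would identify the right-hand side with $\mathfrak{F}(H^1(L,\mathfrak{g})^\vee)$. By Corollary \ref{cor 4.3.13} and the associated long exact cohomology sequence (used already in the proof of Lemma \ref{lemma 6.1.2}) there is a short exact sequence
\[
0 \to \frac{\mathfrak{g}(L)}{\omega_n \mathfrak{g}(L)} \to H^{1}(L,\hat{J}^{ord}_{n}[p^{\infty}]) \to H^{1}(L,\mathfrak{g})[\omega_n] \to 0.
\]
Dualizing the defining relation $H^1(L,\mathfrak{g})^\vee / \omega_n H^1(L,\mathfrak{g})^\vee \simeq (H^1(L,\mathfrak{g})[\omega_n])^\vee$, and using that $\mathfrak{g}(L)/\omega_n\mathfrak{g}(L)$ is finite and bounded (Lemma \ref{lemma 6.1.1}-(1)), one sees that $H^{1}(L,\hat{J}^{ord}_{n}[p^{\infty}])_{/div}$ and $\dfrac{H^1(L,\mathfrak{g})}{\omega_n H^1(L,\mathfrak{g})}[p^\infty]$ differ only by finite bounded pieces, compatibly in $n$. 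Passing to the inverse limit (and checking the transition maps $V_{n+1,n}$ correspond, up to the identification of Convention \ref{Id}, to the maps $\times\frac{\omega_{n+1}}{\omega_n}$ dual to the natural projections — this is exactly the content needed for $\mathfrak{F}$, cf. \cite[Appendix]{Lee2018}) yields a map $\mathfrak{g}'(L)^\vee \to \mathfrak{F}(H^1(L,\mathfrak{g})^\vee)$ with finite kernel and cokernel; upgrading ``finite cokernel'' to surjectivity will follow since $\mathfrak{F}(H^1(L,\mathfrak{g})^\vee)$, by Proposition \ref{prop 6.2.4}-(1) applied to $X = H^1(L,\mathfrak{g})^\vee$ (which by Lemma \ref{lemma 6.1.1}-(3) has no finite $\Lambda$-submodules and torsion part coprime to all $\omega_n$), is exactly cut out as a sub-object with finite cokernel of the relevant elementary module, so the composite lands surjectively after a diagram chase.

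The main obstacle I expect is the bookkeeping of transition maps: one must verify that the maps $V_{n+1,n}$ (equivalently, by \eqref{e_7}, multiplication by $1+\gamma+\cdots+\gamma^{p-1}$, i.e. by $\omega_{n+1}/\omega_n$ under Convention \ref{Id}) used to form $\varprojlim_n H^{1}(L,\hat{J}^{ord}_{n}[p^{\infty}])_{/div}$ are precisely the maps appearing in the definition of $\mathfrak{F}$ in \cite{Lee2018}, so that the inverse limit genuinely computes $\mathfrak{F}(H^1(L,\mathfrak{g})^\vee)$ and not some twist of it. A secondary technical point is keeping track of the involution $\iota$ through both the duality of Corollary \ref{cor 5.2.8} and the swap $(\alpha,\delta,\xi)\leftrightarrow(\delta,\alpha,\xi')$; since $\iota^2 = \mathrm{id}$ these should cancel to give an untwisted statement, but this needs to be stated carefully. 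Once these compatibilities are pinned down, the finiteness of kernel and the pseudo-isomorphism assertion are formal consequences of Lemma \ref{lemma 6.1.1} and Proposition \ref{prop 6.2.4}.
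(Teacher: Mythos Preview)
Your approach is essentially the same as the paper's: both combine the control short exact sequence
\[
0 \to \frac{\mathfrak{g}(L)}{\omega_n\mathfrak{g}(L)} \to H^{1}(L,\hat{J}^{ord}_{n}[p^{\infty}]) \to H^{1}(L,\mathfrak{g})[\omega_n] \to 0
\]
with the twisted local duality of Corollary \ref{cor 5.2.8}-(2) and then identify the resulting inverse limit with $\mathfrak{F}(H^1(L,\mathfrak{g})^\vee)$.

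The one place where you make life harder than necessary is surjectivity. You propose to first obtain a map with ``finite kernel and cokernel'' and then ``upgrade'' to surjectivity via Proposition \ref{prop 6.2.4} and a diagram chase; this last step is vague and not obviously correct as stated. The paper avoids this entirely: since $\mathfrak{g}(L)/\omega_n\mathfrak{g}(L)$ is finite, Lemma \ref{lemma A.0.2}-(3) applied to the control sequence gives a \emph{right-exact} sequence
\[
\frac{\mathfrak{g}(L)}{\omega_n\mathfrak{g}(L)} \to H^{1}(L,\hat{J}^{ord}_{n}[p^{\infty}])_{/div} \to \left(H^{1}(L,\mathfrak{g})[\omega_n]\right)_{/div} \to 0
\]
at each finite level. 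All three terms are finite, so the projective limit (over the maps $V_{n+1,n}$, which by \eqref{e_7} are exactly the norm maps $\omega_{n+1}/\omega_n$ needed for $\mathfrak{F}$) remains right-exact. The middle term is $\mathfrak{g}'(L)^\vee$ by Corollary \ref{cor 5.2.8}-(2), the right term is $\mathfrak{F}(H^1(L,\mathfrak{g})^\vee)$ by definition, and the left term is finite by Lemma \ref{lemma 6.1.1}-(1). Surjectivity and finiteness of the kernel are then immediate, with no further argument required. Your concerns about the transition maps and the involution $\iota$ are legitimate bookkeeping, but they are handled by \eqref{e_7} and the paper's convention on the $\Lambda$-action on Pontryagin duals, respectively.
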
 

\smallskip

\begin{proof} By Corollary \ref{cor 4.3.13}, we have a short exact sequence $$0 \rightarrow \frac{\mathfrak{g}(L)}{\omega_{n}\mathfrak{g}(L)} \rightarrow H^{1}(L, \hat{J}_{n}^{ord}[p^{\infty}]) \rightarrow H^{1}(L, \mathfrak{g})[\omega_{n}] \rightarrow 0.$$ Since $\frac{\mathfrak{g}(L)}{\omega_{n}\mathfrak{g}(L)}$ is finite, we get $\frac{\mathfrak{g}(L)}{\omega_{n}\mathfrak{g}(L)} \rightarrow H^{1}(L, \hat{J}_{n}^{ord}[p^{\infty}])_{/ div} \rightarrow \left(H^{1}(L, \mathfrak{g})[\omega_{n}]\right)_{/ div} \rightarrow 0$ by Lemma \ref{lemma A.0.2}-(3). Note that all terms in this sequence are finite. Taking the projective limit gives the following exact sequence: 

$$\displaystyle \lim_{\substack{\longleftarrow \\n}}\frac{\mathfrak{g}(L)}{\omega_{n}\mathfrak{g}(L)} \rightarrow \lim_{\substack{\longleftarrow \\n}}H^{1}(L, \hat{J}_{n}^{ord}[p^{\infty}])_{/ div} \rightarrow \lim_{\substack{\longleftarrow \\n}}\left(H^{1}(L, \mathfrak{g})[\omega_{n}]\right)_{/ div} \rightarrow 0.$$ 

\smallskip

By Lemma \ref{lemma 6.1.1}-(1), the first term is isomorphic to Pontryagin dual of the maximal finite submodule of $\mathfrak{g}(L)^{\vee}$. For the second term, we have an isomorphism  $\displaystyle \lim_{\substack{\longleftarrow \\n}}H^{1}(L, \hat{J}_{n}^{ord}[p^{\infty}])_{/ div} \simeq \mathfrak{g}'(L)^{\vee}$ by Corollary \ref{cor 5.2.8}-(2). The last term is $\mathfrak{F}\left(H^{1}(L, \mathfrak{g})^{\vee}\right)$ by definition.
\end{proof}

\medskip

\begin{cor}\label{cor 6.3.7} (1) If $l \neq p$, then there is an injective map $H^{1}(L, \mathfrak{g})^{\vee}\hookrightarrow \left(\mathfrak{g}'(L)^{\vee}\right)^{\iota}$ with finite cokernel.

\medskip

(2) If $l=p$, then there is an injective map $H^{1}(L, \mathfrak{g})^{\vee}\hookrightarrow {\Lambda}^{[L:\mathbb{Q}_p]\cdot \corank_{\Lambda}\mathfrak{g}(\overline{L}) } \oplus \left(\mathfrak{g}'(L)^{\vee}\right)^{\iota}$ with finite cokernel.
\end{cor}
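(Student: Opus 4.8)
The plan is to deduce this corollary from Proposition \ref{prop 6.3.6} together with the structural information about $H^1(L,\mathfrak{g})^{\vee}$ collected in Lemma \ref{lemma 6.1.1} and the local Euler characteristic computation in Lemma \ref{lemma 6.1.2}, applying the general machinery of Corollary \ref{cor 6.2.5}. The key observation is that Lemma \ref{lemma 6.1.1}-(3) tells us $H^1(L,\mathfrak{g})^{\vee}$ has no non-trivial finite $\Lambda$-submodules and that the characteristic ideal of its $\Lambda$-torsion part is coprime to $\omega_n$ for all $n$. Thus Corollary \ref{cor 6.2.5} applies directly to $X = H^1(L,\mathfrak{g})^{\vee}$, giving an \emph{injection} $$H^1(L,\mathfrak{g})^{\vee} \hookrightarrow \Lambda^{\rank_{\Lambda} H^1(L,\mathfrak{g})^{\vee}} \oplus \mathfrak{F}\left(H^1(L,\mathfrak{g})^{\vee}\right)^{\iota}$$ with finite cokernel.

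Next I would combine this with Proposition \ref{prop 6.3.6}, which gives a surjection $\mathfrak{g}'(L)^{\vee} \twoheadrightarrow \mathfrak{F}\left(H^1(L,\mathfrak{g})^{\vee}\right)$ with finite kernel; dualizing appropriately (or simply noting that this is a pseudo-isomorphism between torsion modules), and applying the involution $\iota$, one gets a pseudo-isomorphism $\mathfrak{F}\left(H^1(L,\mathfrak{g})^{\vee}\right)^{\iota}$ and $\left(\mathfrak{g}'(L)^{\vee}\right)^{\iota}$. One needs a little care here to upgrade the pseudo-isomorphism into an actual map in the right direction with finite cokernel: since $\mathfrak{F}\left(H^1(L,\mathfrak{g})^{\vee}\right)$ is a quotient of $\mathfrak{g}'(L)^{\vee}$ up to finite kernel, one can splice the surjection from Proposition \ref{prop 6.3.6} into the injection above, so that the composite $H^1(L,\mathfrak{g})^{\vee} \hookrightarrow \Lambda^{s} \oplus \left(\mathfrak{g}'(L)^{\vee}\right)^{\iota}$ still has finite cokernel, where $s = \rank_{\Lambda} H^1(L,\mathfrak{g})^{\vee}$.

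The final step is to identify the free rank $s$. When $l \neq p$, Lemma \ref{lemma 6.1.2}-(2) gives $\corank_{\Lambda} H^1(L,\mathfrak{g}) = 0$, so $s = 0$ and the $\Lambda^{s}$ summand disappears, yielding part (1). When $l = p$, Lemma \ref{lemma 6.1.2}-(2) gives $\corank_{\Lambda} H^1(L,\mathfrak{g}) = [L:\mathbb{Q}_p]\cdot \corank_{\Lambda}\mathfrak{g}(\overline{L})$, which is exactly the exponent appearing in part (2). Since $\Lambda^{s}$ is its own $\iota$-twist, the statement follows as written.

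The main obstacle I anticipate is purely bookkeeping rather than conceptual: carefully tracking the involution $\iota$ through the local Tate duality pairing (which introduces the $\iota$-twist, as recorded in Corollary \ref{cor 5.2.8}) and verifying that composing the map of Proposition \ref{prop 6.3.6} with the map of Corollary \ref{cor 6.2.5} preserves \emph{finiteness} of the cokernel — this requires knowing that the kernel of the surjection in Proposition \ref{prop 6.3.6} is finite (which it is) and that $\mathfrak{g}'(L)^{\vee}$ has no issue with finite submodules obstructing the lift. Everything else is a direct application of results already established in the excerpt.
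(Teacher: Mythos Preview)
Your proposal is correct and follows exactly the approach of the paper, which simply cites Lemma \ref{lemma 6.1.1}-(3), Proposition \ref{prop 6.3.6}, and Corollary \ref{cor 6.2.5}. The ``little care'' you flag is resolved precisely by Lemma \ref{lemma 6.1.1}-(3): since $H^{1}(L,\mathfrak{g})^{\vee}$ has no non-trivial finite $\Lambda$-submodules, any composite map with finite kernel is automatically injective, so splicing the surjection of Proposition \ref{prop 6.3.6} into the injection from Corollary \ref{cor 6.2.5} yields the desired injection with finite cokernel.
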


\smallskip

\begin{proof} (1) and (2) follows from Lemma \ref{lemma 6.1.1}-(3), Proposition \ref{prop 6.3.6} and Corollary \ref{cor 6.2.5}.
\end{proof}

\medskip

\begin{cor}\label{cor 6.3.8} (1) If $l \neq p$, $\mathfrak{g}'(L)$ is finite if and only if $H^{1}(L, \mathfrak{g})^{\vee}=0$.

\medskip

(2) If $l=p$, $\mathfrak{g}'(L)$ is finite if and only if $H^{1}(L, \mathfrak{g})^{\vee}$ is $\Lambda$-torsion-free module if and only if $H^{1}(L, \mathfrak{g})^{\vee}$ is a $\Lambda$-submodule of ${\Lambda}^{[L:\mathbb{Q}_p]\cdot \corank_{\Lambda}\mathfrak{g}(\overline{L})}$ of finite index. This finite index is bounded by size of $\mathfrak{g}'(L)$. 
\end{cor}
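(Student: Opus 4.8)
The statement follows almost immediately from the structural results just proved, so the proof is a short deduction. Write $X := H^{1}(L, \mathfrak{g})^{\vee}$. By Lemma \ref{lemma 6.1.1}-(3), $X$ has no non-trivial finite $\Lambda$-submodule and $X_{\Lambda-\mathrm{tor}}$ has characteristic ideal coprime to $\omega_{n}$ for all $n$; hence by Corollary \ref{cor 6.2.5} the pseudo-isomorphism $\phi : X \to \Lambda^{\rank_{\Lambda}X} \oplus \mathfrak{F}(X)^{\iota}$ is actually \emph{injective}. Meanwhile Proposition \ref{prop 6.3.6} gives a surjection $\mathfrak{g}'(L)^{\vee} \twoheadrightarrow \mathfrak{F}(X)$ with finite kernel, so $\mathfrak{F}(X)$ is finite exactly when $\mathfrak{g}'(L)^{\vee}$ is finite, i.e. when $\mathfrak{g}'(L)$ is finite. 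Finally, Lemma \ref{lemma 6.1.2}-(2) identifies $\rank_{\Lambda}X = \corank_{\Lambda}H^{1}(L,\mathfrak{g})$ with $[L:\mathbb{Q}_p]\cdot\corank_{\Lambda}\mathfrak{g}(\overline{L})$ when $l=p$ (and with $0$ when $l\neq p$, which is the content of part (1)).

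\textbf{Part (1), $l \neq p$.} Here $\rank_{\Lambda}X = 0$ by Lemma \ref{lemma 6.1.2}-(2), so $X$ is $\Lambda$-torsion and the injection of Corollary \ref{cor 6.2.5} reads $X \hookrightarrow \mathfrak{F}(X)^{\iota}$ with finite cokernel; combined with Corollary \ref{cor 6.3.7}-(1) (or directly with Proposition \ref{prop 6.3.6} after applying $\iota$) we see $X$ is finite iff $\mathfrak{g}'(L)^{\vee}$ is finite iff $\mathfrak{g}'(L)$ is finite. But a finitely generated $\Lambda$-module with no non-trivial finite submodule that is itself finite must be $0$; hence $\mathfrak{g}'(L)$ finite $\iff X = 0$, which is the claim.

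\textbf{Part (2), $l = p$.} Set $s := [L:\mathbb{Q}_p]\cdot\corank_{\Lambda}\mathfrak{g}(\overline{L})$. By Corollary \ref{cor 6.3.7}-(2) there is an injection $X \hookrightarrow \Lambda^{s} \oplus (\mathfrak{g}'(L)^{\vee})^{\iota}$ with finite cokernel. If $\mathfrak{g}'(L)$ is finite, then $(\mathfrak{g}'(L)^{\vee})^{\iota}$ is finite, and since $X$ has no non-trivial finite $\Lambda$-submodule the composite $X \hookrightarrow \Lambda^{s} \oplus (\mathfrak{g}'(L)^{\vee})^{\iota} \twoheadrightarrow \Lambda^{s}$ must already be injective; its cokernel, being a quotient of the finite cokernel of the original injection by (the image of) the finite module $(\mathfrak{g}'(L)^{\vee})^{\iota}$, is finite, and is bounded by $|\mathfrak{g}'(L)^{\vee}| = |\mathfrak{g}'(L)|$. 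This forces $X$ to be $\Lambda$-torsion-free of rank $s$, hence a finite-index $\Lambda$-submodule of $\Lambda^{s}$. Conversely, if $X$ is a finite-index submodule of $\Lambda^{s}$ (in particular $\Lambda$-torsion-free), then $\mathfrak{F}(X)^{\iota}$, being a torsion module pseudo-isomorphic to $X_{\Lambda-\mathrm{tor}} = 0$ and (via $\mathfrak{g}'(L)^{\vee} \twoheadrightarrow \mathfrak{F}(X)$) a quotient-up-to-finite of $\mathfrak{g}'(L)^{\vee}$, is finite; and $\mathfrak{F}(X) = 0$ since $\mathfrak{F}(X)$ embeds with finite cokernel into a torsion-free module by Corollary \ref{cor 6.2.5} — wait, more directly: $\mathfrak{F}(X) = \mathfrak{F}(\Lambda^{s}) = 0$ up to finite, and Proposition \ref{prop 6.3.6} then shows $\mathfrak{g}'(L)^{\vee}$ is finite. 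The remaining implication ($H^{1}(L,\mathfrak{g})^{\vee}$ torsion-free $\iff$ it is a finite-index submodule of $\Lambda^{s}$) is immediate from Lemma \ref{lemma 6.1.2}-(2) together with the injection of Corollary \ref{cor 6.3.7}-(2): torsion-freeness kills the $(\mathfrak{g}'(L)^{\vee})^{\iota}$-torsion-part contribution.

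\textbf{Main obstacle.} There is essentially no hard step; the only thing requiring care is bookkeeping of the "finite kernel/cokernel" decorations — in particular checking that when $\mathfrak{g}'(L)$ is finite the projection $X \to \Lambda^{s}$ is genuinely injective (using that $X$ has no non-trivial finite submodule, Lemma \ref{lemma 6.1.1}-(3)) and that the index $[\Lambda^{s} : X]$ is controlled by $|\mathfrak{g}'(L)|$ rather than merely by some unspecified finite constant. This is where the sharper injectivity statement of Corollary \ref{cor 6.2.5} (not just pseudo-isomorphism) and the explicit bound in Proposition \ref{prop 6.3.6} are used.
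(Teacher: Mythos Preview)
Your argument for the equivalences in both (1) and (2) is correct and matches the paper's: deduce them from Corollary~\ref{cor 6.3.7} together with Lemma~\ref{lemma 6.1.1}-(3) (no non-trivial finite $\Lambda$-submodule). The converse direction in (2), via $\mathfrak{F}(X)$ being finite when $X$ is torsion-free and then invoking Proposition~\ref{prop 6.3.6}, is also fine despite the slightly meandering write-up.

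The gap is in the bound on the index. You take the injection $\psi: X \hookrightarrow \Lambda^{s} \oplus F$ from Corollary~\ref{cor 6.3.7}-(2), where $F=(\mathfrak{g}'(L)^{\vee})^{\iota}$, with finite cokernel $D$, and then project to $\Lambda^{s}$. You correctly observe that the resulting cokernel is $D/\mathrm{im}(F)$; but since $F \cap \psi(X)=0$, the image of $F$ in $D$ has size exactly $|F|$, so $|\mathrm{coker}(X\to\Lambda^{s})|=|D|/|F|$, not something bounded by $|F|$. Corollary~\ref{cor 6.3.7} gives no control on $|D|$, so this does not yield the claimed bound by $|\mathfrak{g}'(L)|$. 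Your closing remark that ``the explicit bound in Proposition~\ref{prop 6.3.6} is used'' does not help either: Proposition~\ref{prop 6.3.6} bounds $|\mathfrak{F}(X)|$ by $|\mathfrak{g}'(L)|$, but the passage from $|\mathfrak{F}(X)|$ to the index $[\Lambda^{s}:X]$ is exactly what is missing.

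The paper proves the bound by a different and sharper route: once $X\hookrightarrow\Lambda^{s}$ with finite cokernel $C$ is established, apply the snake lemma to multiplication by $\omega_{n}$ to get $C\simeq (X/\omega_{n}X)[p^{\infty}]$ for $n\gg 0$; dually this is $\bigl(H^{1}(L,\mathfrak{g})[\omega_{n}]\bigr)_{/\mathrm{div}}$, which via the control sequence is a quotient of $H^{1}(L,\hat{J}_{n}^{ord}[p^{\infty}])_{/\mathrm{div}}$, and the latter is exactly $\hat{J'}_{n}^{ord}[p^{\infty}](L)^{\vee}$ by the twisted local Tate duality (Corollary~\ref{cor 5.2.8}-(1)). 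Hence $|C|\le |\hat{J'}_{n}^{ord}[p^{\infty}](L)|\le |\mathfrak{g}'(L)|$. This step genuinely uses the duality at finite level rather than just the coarse pseudo-isomorphism data encoded in Corollary~\ref{cor 6.3.7}.
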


\smallskip

\begin{proof} (1) is a direct consequence of Corollary \ref{cor 6.3.7}-(1). The equivalence of the three statements in (2) follows from Lemma \ref{lemma 6.1.1}-(3) and Corollary \ref{cor 6.3.7}-(2).

\medskip

Now we prove that if $l=p$ and $H^{1}(L, \mathfrak{g})^{\vee}$ is a $\Lambda$-submodule of ${\Lambda}^{[L:\mathbb{Q}_p]\cdot \corank_{\Lambda}\mathfrak{g}(\overline{L})}$ of finite index, then this index is bounded by the size of $\mathfrak{g}'(L)$. 

Let $C$ be the kernel of inclusion $H^{1}(L, \mathfrak{g})^{\vee} \hookrightarrow {\Lambda}^{2[L:\mathbb{Q}_p]\cdot \corank_{\Lambda}\mathfrak{g}(\overline{L})}$. For the large enough $n$, we have an exact sequence $0 \rightarrow C \hookrightarrow \frac{H^{1}(L, \mathfrak{g})^{\vee}}{\omega_nH^{1}(L, \mathfrak{g})^{\vee}} \rightarrow \frac{\Lambda}{\omega_n \Lambda}^{2[L:\mathbb{Q}_p]\cdot \corank_{\Lambda}\mathfrak{g}(\overline{L})}$ by the snake lemma. Hence we get an isomorphism $$ C \simeq \frac{H^{1}(L, \mathfrak{g})^{\vee}}{\omega_n H^{1}(L, \mathfrak{g})^{\vee}}[p^{\infty}] $$ for large enough $n$. The last group is the Pontryagin dual of $\left(H^1(L, \mathfrak{g})[\omega_n]\right)_{/div}$.

On the other hand, from a natural exact sequence $$0 \rightarrow \frac{\mathfrak{g}(L)}{\omega_{n}\mathfrak{g}(L)} \rightarrow H^{1}(L, \hat{J}_{n}^{ord}[p^{\infty}]) \rightarrow H^{1}(L, \mathfrak{g})[\omega_{n}] \rightarrow 0,$$ $\left(H^1(L, \mathfrak{g})[\omega_n]\right)_{/div}$ is a homomorphic image of the group $H^{1}(L, \hat{J}_{n}^{ord}[p^{\infty}])_{/div}$, which is the Pontryagin dual of $\hat{J'}_{n}^{ord}[p^{\infty}](L)$ by Corollary \ref{cor 5.2.8}-(1). Hence $|C|$ is bounded by $|\hat{J'}_{n}^{ord}[p^{\infty}](L)|$ for all sufficiently large $n$.
\end{proof}

\smallskip

\begin{cor}\label{cor 6.3.9} If $l=p$ and $\mathfrak{g}'(L)=0$ holds, then we have:
\begin{itemize}
\item $H^{1}(L, \hat{J}_{n}^{ord}[p^{\infty}])$ is a $W$-cofree module of corank $p^{n-1}\cdot[L:\mathbb{Q}_p]\cdot \corank_{\Lambda}\mathfrak{g}(\overline{L})$.
\item $H^{1}(L, \mathfrak{g})$ is a $\Lambda$-cofree module of corank $[L:\mathbb{Q}_p]\cdot \corank_{\Lambda}\mathfrak{g}(\overline{L})$. 
\end{itemize}
\end{cor}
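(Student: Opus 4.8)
The plan is to feed the two structural facts established just above into the known description of $H^1(L,\mathfrak{g})^{\vee}$. By Corollary \ref{cor 6.3.8}-(2), the hypothesis $\mathfrak{g}'(L)=0$ forces the inclusion $H^{1}(L, \mathfrak{g})^{\vee}\hookrightarrow {\Lambda}^{[L:\mathbb{Q}_p]\cdot \corank_{\Lambda}\mathfrak{g}(\overline{L})}$ from Corollary \ref{cor 6.3.7}-(2) to have finite cokernel \emph{bounded by} $|\mathfrak{g}'(L)|=|0|=1$, i.e.\ to be an isomorphism. Thus $H^{1}(L, \mathfrak{g})^{\vee}\simeq {\Lambda}^{[L:\mathbb{Q}_p]\cdot \corank_{\Lambda}\mathfrak{g}(\overline{L})}$ as $\Lambda$-modules, and dualizing gives that $H^{1}(L, \mathfrak{g})$ is $\Lambda$-cofree of the asserted corank. (I would phrase this carefully: the index being bounded by $|\mathfrak{g}'(L)|$ yields index $1$, hence equality; alternatively, invoke Corollary \ref{cor 6.3.8}-(2) to see $H^1(L,\mathfrak g)^\vee$ is torsion-free, and then Proposition \ref{prop 6.3.6} together with the vanishing $\mathfrak F(H^1(L,\mathfrak g)^\vee)\sim \mathfrak g'(L)^\vee=0$ plus Corollary \ref{cor 6.2.5} forces it to be free — but the cleanest route is the bounded-index remark already proved.)

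Next I would descend this to finite level. Corollary \ref{cor 4.3.13} gives the exact sequence of sheaves $0 \to \hat{J}^{ord}_{n}[p^{\infty}] \to \mathfrak{g} \xrightarrow{\omega_n} \mathfrak{g} \to 0$, hence a cohomology exact sequence
$$
0 \to \frac{\mathfrak{g}(L)}{\omega_{n}\mathfrak{g}(L)} \to H^{1}(L, \hat{J}_{n}^{ord}[p^{\infty}]) \to H^{1}(L, \mathfrak{g})[\omega_{n}] \to 0.
$$
Since $\mathfrak g'(L)=0$, Corollary \ref{cor 6.3.8}-(1)–(2) and the $l=p$ analysis show the characteristic-ideal/finite-submodule obstructions vanish; more directly, $\mathfrak g(L)$ is finite (Lemma \ref{lemma 6.1.1}-(1) says $\mathfrak{g}(L)/\omega_n\mathfrak g(L)$ is finite, and in fact one checks $\mathfrak g(L)$ itself is finite here, being a submodule of the Pontryagin dual of the torsion-free $H^1(L,\mathfrak g)^\vee$-picture). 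Because $H^{1}(L, \mathfrak{g})$ is $\Lambda$-cofree, $H^{1}(L, \mathfrak{g})[\omega_{n}]$ is $W$-cofree of corank $p^{n-1}\cdot[L:\mathbb{Q}_p]\cdot \corank_{\Lambda}\mathfrak{g}(\overline{L})$. I must still remove the contribution of $\mathfrak{g}(L)/\omega_n\mathfrak g(L)$: comparing with the Local Euler Characteristic Formula (Lemma \ref{lemma 6.1.2}-(1)), which says $\corank_W H^{1}(L, \hat{J}_{n}^{ord}[p^{\infty}])$ equals exactly $p^{n-1}\cdot[L:\mathbb{Q}_p]\cdot \corank_{\Lambda}\mathfrak{g}(\overline{L})$, the corank of the middle term matches that of the quotient term, so the finite group $\mathfrak{g}(L)/\omega_n\mathfrak g(L)$ contributes nothing to the $p$-divisible part and $H^{1}(L, \hat{J}_{n}^{ord}[p^{\infty}])$ is $W$-cofree of the stated corank.

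The main obstacle I anticipate is the bookkeeping at finite level: the sequence above only shows $H^{1}(L, \hat{J}_{n}^{ord}[p^{\infty}])$ is an extension of a $W$-cofree module by the \emph{finite} group $\mathfrak{g}(L)/\omega_n\mathfrak g(L)$, and one needs that this finite group actually vanishes (or at least does not obstruct $W$-cofreeness) — a priori an extension of a cofree module by a finite module need not be cofree. Here the Euler characteristic count pins down the corank, but to conclude $W$-cofreeness one should argue that $H^1(L,\hat J_n^{ord}[p^\infty])$ has no finite part: this follows because $H^1(L,\mathfrak g)$ being $\Lambda$-cofree (hence $H^1(L,\mathfrak g)[\omega_n]$ being $W$-\emph{cofree}, so $W$-divisible) together with surjectivity onto it and the snake/long-exact argument forces the finite submodule $\mathfrak g(L)/\omega_n\mathfrak g(L)$ to inject into a divisible group compatibly, and a careful diagram chase (or: taking $W$-divisible parts and using that $\operatorname{Ext}^1_W(\mathbb Q_p/\mathbb Z_p,\text{finite})$ considerations force the extension to split up to finite index, then Euler characteristic forces splitting) yields cofreeness. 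I would spell this out via the snake lemma applied to multiplication by $p$ on the three-term sequence, deducing $H^1(L,\hat J_n^{ord}[p^\infty])[p]$ has the expected $W/pW$-dimension.
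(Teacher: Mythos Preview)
Your argument for the second bullet is exactly the paper's: Corollary \ref{cor 6.3.8}-(2) bounds the index of $H^{1}(L,\mathfrak{g})^{\vee}$ in $\Lambda^{[L:\mathbb{Q}_p]\cdot \corank_{\Lambda}\mathfrak{g}(\overline{L})}$ by $|\mathfrak{g}'(L)|=1$, so the inclusion is an equality.

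For the first bullet, however, your route leaves a genuine gap that you yourself flag. Knowing that $H^{1}(L,\hat{J}_{n}^{ord}[p^{\infty}])$ sits in an extension
\[
0 \to \mathfrak{g}(L)/\omega_{n}\mathfrak{g}(L) \to H^{1}(L,\hat{J}_{n}^{ord}[p^{\infty}]) \to H^{1}(L,\mathfrak{g})[\omega_{n}] \to 0
\]
with finite kernel and $W$-cofree quotient does \emph{not} by itself force the middle term to be $W$-cofree: the split extension $0\to \mathbb{Z}/p \to \mathbb{Z}/p \oplus \mathbb{Q}_p/\mathbb{Z}_p \to \mathbb{Q}_p/\mathbb{Z}_p \to 0$ already shows this. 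The snake lemma for multiplication by $p$ on this sequence only yields that $H^{1}(L,\hat{J}_{n}^{ord}[p^{\infty}])/p$ is a quotient of $(\mathfrak{g}(L)/\omega_n\mathfrak{g}(L))/p$, and nothing in your setup forces the latter to vanish (the hypothesis is on $\mathfrak{g}'$, not $\mathfrak{g}$). To pin down the size of $H^{1}(L,\hat{J}_{n}^{ord}[p^{\infty}])[p]$ as you propose, you would in the end need the local Euler characteristic of $\hat{J}_{n}^{ord}[p]$ together with $H^{2}(L,\hat{J}_{n}^{ord}[p])=0$ --- but once you invoke that, you are already doing the paper's argument in disguise.

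The paper bypasses the extension problem entirely with a one-line direct computation: from $0\to \hat{J}_{n}^{ord}[p]\to \hat{J}_{n}^{ord}[p^{\infty}]\xrightarrow{p}\hat{J}_{n}^{ord}[p^{\infty}]\to 0$ and the vanishing of $H^{2}(L,\hat{J}_{n}^{ord}[p^{\infty}])$ one gets
\[
\frac{H^{1}(L,\hat{J}_{n}^{ord}[p^{\infty}])}{p\,H^{1}(L,\hat{J}_{n}^{ord}[p^{\infty}])}\;\simeq\; H^{2}(L,\hat{J}_{n}^{ord}[p])\;\simeq\; \bigl(\hat{J'}_{n}^{ord}[p](L)\bigr)^{\vee}
\]
by the twisted local Tate duality (Proposition \ref{prop 5.2.7}-(1)); and $\hat{J'}_{n}^{ord}[p](L)\subset \mathfrak{g}'(L)[\omega_n]=0$. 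Hence $H^{1}(L,\hat{J}_{n}^{ord}[p^{\infty}])$ is $p$-divisible and therefore $W$-cofree, with the corank read off from Lemma \ref{lemma 6.1.2}-(1). This is both shorter and avoids the obstacle you identified.
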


\begin{proof} Since $ \frac{H^{1}(L, \hat{J}_{n}^{ord}[p^{\infty}])}{pH^{1}(L, \hat{J}_{n}^{ord}[p^{\infty}])} \simeq H^{2}(L, \hat{J}_{n}^{ord}[p]) \simeq \hat{J'}_{n}^{ord}[p](L)^{\vee}=0$ by the assumption, $H^{1}(L, \hat{J}_{n}^{ord}[p^{\infty}])$ is a $W$-cofree. The second statement follows from Corollary \ref{cor 6.3.8}-(2). 
\end{proof}

\smallskip

\subsection{$\Lambda$-adic Mordell-Weil groups over $p$-adic fields} \label{sec 6.4}

We study the $\Lambda$-module structure of the Mordell-Weil groups over $p$-adic fields in this subsection. Recall that $\displaystyle \mathfrak{G}(X)=\lim_{\substack{\longleftarrow \\n}}(\frac{X}{\omega_n X}[p^{\infty}])$ for a finitely generated $\Lambda$-module $X$.

\medskip

\begin{prop}\label{prop 6.4.10} Let $L$ be a finite extension of $\mathbb{Q}_p$.

(1) The natural map $\hat{J}^{ord}_{n}(L) \otimes_{\mathbb{Z}_p} \mathbb{Q}_p/\mathbb{Z}_p \rightarrow J^{ord}_{\infty}(L) \otimes_{\mathbb{Z}_p} \mathbb{Q}_p/\mathbb{Z}_p[\omega_n]$ has finite kernel for all $n$ whose orders are bounded as $n$ varies. 

\smallskip

(2) $(J^{ord}_{\infty}(L) \otimes_{\mathbb{Z}_p} \mathbb{Q}_p/\mathbb{Z}_p)^{\vee}$ is $\mathbb{Z}_p$-torsion-free. In particular, $(J^{ord}_{\infty}(L) \otimes_{\mathbb{Z}_p} \mathbb{Q}_p/\mathbb{Z}_p)^{\vee}$ has no non-trivial finite $\Lambda$-submodules.

\smallskip

(3) $\mathfrak{G} \left((J^{ord}_{\infty}(L) \otimes_{\mathbb{Z}_p} \mathbb{Q}_p/\mathbb{Z}_p)^{\vee} \right)=0.$

\smallskip

(4) We have a $\Lambda$-linear injection $$\displaystyle (J^{ord}_{\infty}(L) \otimes_{\mathbb{Z}_p} \mathbb{Q}_p/\mathbb{Z}_p)^{\vee} \hookrightarrow \Lambda^{r} 
\oplus \left(\bigoplus_{n=1}^{t} \frac{\Lambda}{\omega_{b_{n}+1, b_{n}}}\right)$$ with finite cokernel for some integers $r, b_1, \cdots b_n$. 
\end{prop}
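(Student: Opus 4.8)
The plan is to put $M:=J^{ord}_{\infty}(L)$ and $D:=M\otimes_{\mathbb{Z}_p}\mathbb{Q}_p/\mathbb{Z}_p$, so that the two groups in the statement are $D$ and $D[\omega_n]$, and to reduce everything to three structural facts about $M$. First, since every integer prime to $p$ is a unit in $W$, each $\hat{J}^{ord}_{n}(L)$ has only $p$-power torsion, hence so does $M$, and $M[p^{\infty}]$ is exactly the $\mathbb{Z}_p$-torsion submodule of $M$. Second, $\mathfrak{g}(L)=M[p^{\infty}]$, because $\mathfrak{g}=\varinjlim_n\hat{J}^{ord}_{n}[p^{\infty}]$ and filtered colimits are exact. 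Third, $N:=M/\mathfrak{g}(L)$ is $\mathbb{Z}_p$-torsion-free, and tensoring $0\to\mathfrak{g}(L)\to M\to N\to 0$ with $\mathbb{Q}_p/\mathbb{Z}_p$ gives $D\simeq N\otimes_{\mathbb{Z}_p}\mathbb{Q}_p/\mathbb{Z}_p$. Throughout I will use the control isomorphism $\hat{J}^{ord}_{n}(L)\simeq M[\omega_n]$ supplied by Proposition \ref{prop 4.2.9} together with left-exactness of sections, and the trivial but crucial fact that $D$ is a $p$-divisible $p$-torsion group, being a tensor product with $\mathbb{Q}_p/\mathbb{Z}_p$.

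For (1) I would feed $0\to\mathfrak{g}(L)\to M\to N\to 0$ into the snake lemma against multiplication by $\omega_n$. This produces an injection $M[\omega_n]/\mathfrak{g}(L)[\omega_n]\hookrightarrow N[\omega_n]$ whose cokernel embeds into $\mathfrak{g}(L)/\omega_n\mathfrak{g}(L)$. Tensoring with $\mathbb{Q}_p/\mathbb{Z}_p$ annihilates the finite group $\mathfrak{g}(L)[\omega_n]$ and converts that finite cokernel into a finite kernel, so $\hat{J}^{ord}_{n}(L)\otimes\mathbb{Q}_p/\mathbb{Z}_p\to N[\omega_n]\otimes\mathbb{Q}_p/\mathbb{Z}_p$ has finite kernel of order at most $|\mathfrak{g}(L)/\omega_n\mathfrak{g}(L)|$, a quantity bounded independently of $n$ by Lemma \ref{lemma 6.1.1}(1). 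It then suffices to observe that $N[\omega_n]\otimes\mathbb{Q}_p/\mathbb{Z}_p\hookrightarrow(N\otimes\mathbb{Q}_p/\mathbb{Z}_p)[\omega_n]=D[\omega_n]$ is injective, since $N/N[\omega_n]\simeq\omega_nN$ is $\mathbb{Z}_p$-torsion-free and hence $\mathrm{Tor}^{\mathbb{Z}_p}_1(\omega_nN,\mathbb{Q}_p/\mathbb{Z}_p)=0$; composing yields (1). This is the computational core of the argument: the one thing that needs care is keeping straight which finite groups are killed, which become kernels, and which become cokernels under $\otimes\,\mathbb{Q}_p/\mathbb{Z}_p$.

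Part (2) is then immediate: $D$ is $p$-divisible, so $D^{\vee}$ has no $p$-torsion, hence is $\mathbb{Z}_p$-torsion-free, and a nonzero finite $\Lambda$-submodule of $D^{\vee}$ would be $\mathbb{Z}_p$-torsion. For the finite generation of $D^{\vee}$ over $\Lambda$ (needed in (4)) I would pass to the colimit in the Kummer sequences $0\to\hat{J}^{ord}_{n}(L)\otimes\mathbb{Q}_p/\mathbb{Z}_p\to H^{1}(L,\hat{J}^{ord}_{n}[p^{\infty}])\to H^{1}(L,\hat{J}^{ord}_{n})[p^{\infty}]\to 0$, using $H^{1}(L,\mathfrak{g})=\varinjlim_n H^{1}(L,\hat{J}^{ord}_{n}[p^{\infty}])$, to obtain an injection $D\hookrightarrow H^{1}(L,\mathfrak{g})$; dualizing exhibits $D^{\vee}$ as a quotient of $H^{1}(L,\mathfrak{g})^{\vee}$, which is finitely generated over $\Lambda$ by Lemma \ref{lemma 6.1.1}(3). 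For (3), the description of $\mathfrak{G}$ together with Pontryagin duality gives $\mathfrak{G}(D^{\vee})=\varprojlim_n(D^{\vee}/\omega_nD^{\vee})[p^{\infty}]\simeq\bigl(\varinjlim_n(D[\omega_n])_{/div}\bigr)^{\vee}$, the colimit being along the inclusions $D[\omega_n]\hookrightarrow D[\omega_{n+1}]$. The key observation is that any $x\in D$ lies in the image of some $\hat{J}^{ord}_{m}(L)\otimes\mathbb{Q}_p/\mathbb{Z}_p$; since $\hat{J}^{ord}_{m}(L)=M[\omega_m]$ is killed by $\omega_m$, this image is a $p$-divisible subgroup of $D[\omega_m]$, hence lies in $(D[\omega_m])_{div}$. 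Thus $D=\bigcup_m(D[\omega_m])_{div}$, the colimit of the $(D[\omega_n])_{/div}$ vanishes, and $\mathfrak{G}(D^{\vee})=0$.

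Finally, (4) is formal given (2) and (3): $D^{\vee}$ is a finitely generated $\Lambda$-module with no nonzero finite submodule and $\mathfrak{G}(D^{\vee})=0$. By Proposition \ref{prop 6.2.4}(2), $\mathfrak{G}(D^{\vee})$ is pseudo-isomorphic to the sum of the $\Lambda/g_i^{l_i}$ over the torsion prime divisors of $D^{\vee}$ that are coprime to every $\omega_n$ (in particular the $\Lambda/p^{f}$-part), together with the terms $\Lambda/\omega^{e_m-1}_{a_m+1,a_m}$ coming from repeated $\omega$-factors; its vanishing forces all of these to be absent, i.e.\ the torsion part of $D^{\vee}$ consists only of distinct factors $\Lambda/\omega_{b_n+1,b_n}$, so $E(D^{\vee})\simeq\Lambda^{r}\oplus\bigoplus_n\Lambda/\omega_{b_n+1,b_n}$. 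The canonical pseudo-isomorphism $D^{\vee}\to E(D^{\vee})$ has finite kernel, which must vanish by (2), and therefore is the desired injection with finite cokernel. The main obstacle is thus the control computation in (1) on which everything downstream rests; the conceptual heart is the single observation in (3) that an element of $D$ is already $p$-divisible within some finite layer $D[\omega_m]$, and (2) and (4) are then soft consequences of local duality and of Proposition \ref{prop 6.2.4}.
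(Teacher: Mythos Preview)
Your proof is correct, and since the paper itself only cites \cite[Theorem 2.1.2]{Lee2018} without reproducing the argument, your write-up is in fact more detailed than what appears here. The overall strategy---passing to the torsion-free quotient $N=M/\mathfrak{g}(L)$, controlling the defect via $\mathfrak{g}(L)/\omega_n\mathfrak{g}(L)$, and then reading off (2)--(4) from $p$-divisibility of $D$ together with Proposition \ref{prop 6.2.4}---is the expected one and matches the approach of \cite{Lee2018}.

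Two very minor remarks. In your justification of finite generation of $D^{\vee}$ over $\Lambda$, the reference should be to the control computation behind Lemma \ref{lemma 6.1.2}(2) (which shows $H^{1}(L,\mathfrak{g})[\omega_n]$ is $W$-cofinitely generated, hence $H^{1}(L,\mathfrak{g})^{\vee}$ is $\Lambda$-finitely generated by Nakayama) rather than Lemma \ref{lemma 6.1.1}(3), which only records the absence of finite submodules. Second, in (1) you might state explicitly that the composite you build coincides with the ``natural map'' of the statement; this follows at once from the commutative square
\[
\begin{array}{ccc}
M[\omega_n] & \longrightarrow & M \\
\downarrow & & \downarrow \\
N[\omega_n] & \longrightarrow & N
\end{array}
\]
after tensoring with $\mathbb{Q}_p/\mathbb{Z}_p$ and using $D\simeq N\otimes\mathbb{Q}_p/\mathbb{Z}_p$, but it is worth a sentence.
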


\begin{proof} The proof is essentially same as that of \cite[Theorem 2.1.2]{Lee2018}
\end{proof}

\smallskip

\begin{lemma}\label{lemma 6.4.11} Let $0 \rightarrow X \rightarrow Y \rightarrow Z \rightarrow 0$ be an exact sequence of finitely generated $\Lambda$-modules. If $char_{\Lambda}(Z_{\Lambda-\mathrm{tor}})$ is coprime to $char_{\Lambda}(Y_{\Lambda-\mathrm{tor}})$, then we have the following statements: 

\smallskip

(1) The narutal injection $X_{\Lambda-\mathrm{tor}} \hookrightarrow Y_{\Lambda-\mathrm{tor}}$ has finite cokernel. 

\smallskip

(2) If we assume furthermore that $Z$ does not have any non-trivial finite $\Lambda$-submodule, then we have an isomorphism $X_{\Lambda-\mathrm{tor}} \simeq Y_{\Lambda-\mathrm{tor}}$.  Hence $X$ is $\Lambda$-torsion-free if and only if $Y$ is $\Lambda$-torsion-free. 

(3) Under the same assumption of (2), if $X$ is a torsion $\Lambda$-module, then $Z$ is a torsion-free $\Lambda$-module with $\rank_{\Lambda}Y=\rank_{\Lambda}Z$. 

(4) Under the same assumption of (2), if $Y$ is a $\Lambda$-free module, then $X$ is also a $\Lambda$-free module.
\end{lemma}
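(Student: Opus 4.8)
The plan is to reduce the whole statement to two elementary facts about $\Lambda=W[\![T]\!]$: it is a two-dimensional regular local domain whose unique height-two prime is $\mathfrak{m}=(p,T)$ and whose residue field $\Lambda/\mathfrak{m}=W/pW$ is finite, so that every finitely generated pseudo-null $\Lambda$-module is finite; and every finitely generated reflexive $\Lambda$-module is free. I will also use repeatedly that the characteristic ideal is multiplicative in short exact sequences of finitely generated torsion $\Lambda$-modules.

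For (1), I would first note that $X_{\Lambda-\mathrm{tor}}=X\cap Y_{\Lambda-\mathrm{tor}}$ as submodules of $Y$ (an element of $X$ is $\Lambda$-torsion in $X$ iff it is $\Lambda$-torsion in $Y$, having the same annihilator), and that the image of $Y_{\Lambda-\mathrm{tor}}$ under $Y\twoheadrightarrow Z$ is a torsion submodule of $Z$, hence contained in $Z_{\Lambda-\mathrm{tor}}$; call it $Q$. This produces an exact sequence $0\to X_{\Lambda-\mathrm{tor}}\to Y_{\Lambda-\mathrm{tor}}\to Q\to 0$ with $Q\subseteq Z_{\Lambda-\mathrm{tor}}$. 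Multiplicativity of characteristic ideals then shows that $\operatorname{char}_\Lambda(Q)$ divides both $\operatorname{char}_\Lambda(Y_{\Lambda-\mathrm{tor}})$ and $\operatorname{char}_\Lambda(Z_{\Lambda-\mathrm{tor}})$, which are coprime by hypothesis; so $\operatorname{char}_\Lambda(Q)=\Lambda$, i.e. $Q$ is pseudo-null, hence finite, and $Q$ is exactly the cokernel of $X_{\Lambda-\mathrm{tor}}\hookrightarrow Y_{\Lambda-\mathrm{tor}}$.

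For (2), the module $Q$ from the previous step is now a finite $\Lambda$-submodule of $Z$, so the added hypothesis forces $Q=0$, whence $X_{\Lambda-\mathrm{tor}}\xrightarrow{\ \sim\ }Y_{\Lambda-\mathrm{tor}}$; the equivalence of torsion-freeness follows by specializing these to $0$. For (3), if $X$ is torsion then $X=X_{\Lambda-\mathrm{tor}}$, so by (2) the image of $X$ in $Y$ is precisely $Y_{\Lambda-\mathrm{tor}}$; hence $Z=Y/X=Y/Y_{\Lambda-\mathrm{tor}}$, which is torsion-free over the domain $\Lambda$. Tensoring $0\to X\to Y\to Z\to 0$ with the quotient field $Q(\Lambda)$ kills $X$ and yields $\operatorname{rank}_\Lambda Y=\operatorname{rank}_\Lambda Z$.

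For (4), freeness of $Y$ forces $Y_{\Lambda-\mathrm{tor}}=0$ and makes the coprimality hypothesis automatic, so by (2) the module $X$ is torsion-free; the only step that is not purely formal is upgrading ``torsion-free'' to ``free''. I would embed $X$ into its double dual $X^{**}$, which is reflexive, hence a free $\Lambda$-module, and for which $X^{**}/X$ is pseudo-null (since $\Lambda$ is normal, $X\to X^{**}$ becomes an isomorphism after localizing at every height-one prime, where $\Lambda_\mathfrak{p}$ is a discrete valuation ring), hence finite. Applying $(-)^{**}$ to the inclusion $X\hookrightarrow Y=Y^{**}$ gives a map $X^{**}\to Y$ whose kernel is torsion-free, meets the image of $X$ trivially, and therefore embeds into the finite module $X^{**}/X$; so the kernel is $0$ and $X^{**}\hookrightarrow Y$. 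Consequently $X^{**}/X$ is a finite $\Lambda$-submodule of $Y/X=Z$, and the hypothesis of (2) forces $X^{**}/X=0$, i.e. $X=X^{**}$ is free. The main obstacle is this last paragraph --- in particular verifying that the double-dual map remains injective once mapped into $Y$, and having the input ``reflexive implies free over $\Lambda$'' on hand; parts (1)--(3) are bookkeeping with characteristic ideals, using that a finitely generated $\Lambda$-module has no nontrivial finite submodule exactly when its depth is at least $1$.
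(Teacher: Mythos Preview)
Your proof is correct. Parts (1)--(3) follow essentially the same route as the paper: form the exact sequence $0\to X_{\Lambda\text{-tor}}\to Y_{\Lambda\text{-tor}}\to Q\subseteq Z_{\Lambda\text{-tor}}$, observe that $Q$ has trivial characteristic ideal by coprimality, hence is finite, and then kill it under the ``no finite submodule'' hypothesis. Your argument for (3) is in fact slightly slicker than the paper's (which appeals to Lemma~\ref{lemma A.0.2}-(1) and multiplicativity again), since you note directly that $Z=Y/Y_{\Lambda\text{-tor}}$.

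The genuine divergence is in (4). The paper argues via the $\omega_n$-filtration: from $0\to X\to Y\to Z\to 0$ one gets $Z[\omega_n]\hookrightarrow X/\omega_nX\to Y/\omega_nY$, and since $Z$ has no finite $\Lambda$-submodule the kernel $Z[\omega_n]$ is $\mathbb{Z}_p$-free, while $Y/\omega_nY$ is $\mathbb{Z}_p$-free by freeness of $Y$; hence $X/\omega_nX$ is $\mathbb{Z}_p$-free for all $n$, and one invokes the standard criterion \cite[Proposition 5.3.19]{neukirch2000cohomology} that this forces $X$ to be $\Lambda$-free. Your approach instead goes through reflexivity: embed $X\hookrightarrow X^{**}$ with finite cokernel, push $X^{**}$ into $Y$, and identify $X^{**}/X$ as a finite submodule of $Z$, which must vanish. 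Both are valid; the paper's argument stays entirely within the Iwasawa-theoretic toolkit already in use (and the same freeness criterion reappears in Corollary~\ref{cor 7.1.4}), while yours is more structural and would work verbatim over any two-dimensional regular local ring with finite residue field, at the cost of importing the fact that reflexive modules over such rings are free.
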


\begin{proof} From an exact sequence $0 \rightarrow X_{\Lambda-\mathrm{tor}} \rightarrow Y_{\Lambda-\mathrm{tor}} \rightarrow Z_{\Lambda-\mathrm{tor}}$ and the assumption about characteristic ideals, the image of $Y_{\Lambda-\mathrm{tor}} \rightarrow Z_{\Lambda-\mathrm{tor}}$ should be finite, which shows (1). Note that (2) is a direct consequence of (1).

For (3), if $X$ is a torsion $\Lambda$-module, then we have the following exact sequence $$0 \rightarrow X_{\Lambda-\mathrm{tor}} \rightarrow Y_{\Lambda-\mathrm{tor}} \rightarrow Z_{\Lambda-\mathrm{tor}} \rightarrow 0$$ by Lemma \ref{lemma A.0.2}-(1). Since $char_{\Lambda}(Z_{\Lambda-\mathrm{tor}})$ is coprime to $char_{\Lambda}(Y_{\Lambda-\mathrm{tor}})$, we have $char_{\Lambda}(Z_{\Lambda-\mathrm{tor}})=\Lambda$ due to multiplicative property of characteristic ideals. Since $Z$ does not contain any non-trivial finite $\Lambda$-submodule, we have $Z_{\Lambda-\mathrm{tor}}=0$.

Now we prove (4). By the assumptions on $Y$ and $Z$, we have $X[\omega_n]=Y[\omega_n]=0$ and an exact sequence $Z[\omega_n] \hookrightarrow \frac{X}{\omega_nX} \rightarrow \frac{Y}{\omega_nY} $. Here $Z[\omega_n]$ is a free $\mathbb{Z}_p$-module since $Z$ does not have any non-trivial finite $\Lambda$-submodule, and $\frac{Y}{\omega_nY}$ is a $\mathbb{Z}_p$-free module since $Y$ is a $\Lambda$-free. Therefore $\frac{X}{\omega_nX}$ is also a free $\mathbb{Z}_p$-module and this shows that $X$ is a $\Lambda$-free by \cite[Proposition 5.3.19]{neukirch2000cohomology}.
\end{proof}

\medskip

By applying the above lemma to the exact sequence $$0 \rightarrow \left(\frac{H^{1}(L, \mathfrak{g})}{J^{ord}_{\infty}(L) \otimes_{\mathbb{Z}_p} \mathbb{Q}_p/\mathbb{Z}_p}\right)^{\vee} \rightarrow (H^{1}(L, \mathfrak{g}))^{\vee} \rightarrow (J^{ord}_{\infty}(L) \otimes_{\mathbb{Z}_p} \mathbb{Q}_p/\mathbb{Z}_p)^{\vee} \rightarrow 0 ,$$

we obtain the following proposition.

\medskip

\begin{prop}\label{prop 6.4.12} Let $L$ be a finite extension of $\mathbb{Q}_p$.

\smallskip

(1) $ \corank_{\Lambda}\left(J^{ord}_{\infty}(L) \otimes_{\mathbb{Z}_p} \mathbb{Q}_p/\mathbb{Z}_p \right) \geq \frac{1}{2}[L:\mathbb{Q}_p]\cdot \corank_{\Lambda}\mathfrak{g}(\overline{L}) \geq \corank_{\Lambda}\left(\frac{H^{1}(L, \mathfrak{g})}{J^{ord}_{\infty}(L) \otimes_{\mathbb{Z}_p} \mathbb{Q}_p/\mathbb{Z}_p}\right) $.

\smallskip

(2) We have an isomorphism $\left(\frac{H^{1}(L, \mathfrak{g})}{J^{ord}_{\infty}(L) \otimes_{\mathbb{Z}_p} \mathbb{Q}_p/\mathbb{Z}_p}\right)^{\vee}_{\Lambda-\mathrm{tor}} \simeq H^{1}(L, \mathfrak{g})^{\vee}_{\Lambda-\mathrm{tor}}$. 

\smallskip

(3) We have an $\Lambda$-linear injection $\left(\frac{H^{1}(L, \mathfrak{g})}{J^{ord}_{\infty}(L) \otimes_{\mathbb{Z}_p} \mathbb{Q}_p/\mathbb{Z}_p}\right)^{\vee} \hookrightarrow {\Lambda}^{r} \oplus \left(\mathfrak{g}'(L)^{\vee}\right)^{\iota}$ with finite cokernel where $r \leq \frac{1}{2}[L:\mathbb{Q}_p]\cdot \corank_{\Lambda}\mathfrak{g}(\overline{L})$.

\smallskip

(4) $\left(\frac{H^{1}(L, \mathfrak{g})}{J^{ord}_{\infty}(L) \otimes_{\mathbb{Z}_p} \mathbb{Q}_p/\mathbb{Z}_p}\right)^{\vee}$ is $\Lambda$-torsion-free if and only if $H^{1}(L, \mathfrak{g})$ is $\Lambda$-torsion-free if and only if $\mathfrak{g}'(L)$ is finite.

\smallskip

(5) Assume that $H^{1}(L, \mathfrak{g})$ is $\Lambda$-cofree $($For instance, under the assumption of $\mathrm{Corollary}$  \ref{cor 6.3.9}$).$ Then $\frac{H^{1}(L, \mathfrak{g})}{J^{ord}_{\infty}(L) \otimes_{\mathbb{Z}_p} \mathbb{Q}_p/\mathbb{Z}_p}$ is a cofree $\Lambda$-module with $$\mathrm{corank}_{\Lambda}\left(\frac{H^{1}(L, \mathfrak{g})}{J^{ord}_{\infty}(L) \otimes_{\mathbb{Z}_p} \mathbb{Q}_p/\mathbb{Z}_p}\right) \leq \frac{1}{2}[L:\mathbb{Q}_p] \cdot \corank_{\Lambda}\mathfrak{g}(\overline{L}).$$
\end{prop}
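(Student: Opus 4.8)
The plan is to read all five assertions off the displayed four–term exact sequence
$$0 \longrightarrow X \longrightarrow Y \longrightarrow Z \longrightarrow 0, \qquad X:=\Bigl(\tfrac{H^{1}(L,\mathfrak{g})}{M}\Bigr)^{\vee},\quad Y:=H^{1}(L,\mathfrak{g})^{\vee},\quad Z:=M^{\vee},$$
where $M:=J^{ord}_{\infty}(L)\otimes_{\mathbb{Z}_p}\mathbb{Q}_p/\mathbb{Z}_p$, by feeding it into Lemma \ref{lemma 6.4.11}; the only non-formal extra ingredient is a corank computation. First I would check that Lemma \ref{lemma 6.4.11} applies: by Proposition \ref{prop 6.4.10}-(4), $Z_{\Lambda-\mathrm{tor}}$ is pseudo-isomorphic to a finite direct sum of modules $\Lambda/\omega_{b+1,b}$, so $char_{\Lambda}(Z_{\Lambda-\mathrm{tor}})$ is a product of distinguished irreducibles $\omega_{b+1,b}$, whereas $char_{\Lambda}(Y_{\Lambda-\mathrm{tor}})$ is prime to every $\omega_n$ by Lemma \ref{lemma 6.1.1}-(3); hence the two characteristic ideals are coprime. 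Moreover $Z=M^{\vee}$ has no nontrivial finite $\Lambda$-submodule (Proposition \ref{prop 6.4.10}-(2)) and $Y$ has none (Lemma \ref{lemma 6.1.1}-(3)), hence neither does $X\subseteq Y$. Assertion (2) is now exactly Lemma \ref{lemma 6.4.11}-(2), which also yields the equivalence ``$X$ is $\Lambda$-torsion-free $\iff$ $Y$ is'' used below.

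For (1) I would combine additivity of $\Lambda$-rank in the exact sequence, which gives
$$\corank_{\Lambda}\Bigl(\tfrac{H^{1}(L,\mathfrak{g})}{M}\Bigr)+\corank_{\Lambda}M=\corank_{\Lambda}H^{1}(L,\mathfrak{g})=[L:\mathbb{Q}_p]\cdot\corank_{\Lambda}\mathfrak{g}(\overline{L})$$
(the last equality being Lemma \ref{lemma 6.1.2}-(2)), with the identity $\corank_{\Lambda}M=\tfrac12[L:\mathbb{Q}_p]\corank_{\Lambda}\mathfrak{g}(\overline{L})$. I would prove the latter level by level: the logarithm identifies $\hat{J}^{ord}_{n}(L)\otimes\mathbb{Q}_p/\mathbb{Z}_p$, which is $M[\omega_n]$ up to the bounded finite kernel of Proposition \ref{prop 6.4.10}-(1), with a $W$-module of corank $[L:\mathbb{Q}_p]\cdot\dim(e\,\mathrm{Lie}(J_{n}))$, while $\corank_W\hat{J}^{ord}_{n}[p^{\infty}](\overline{L})$ — which equals $p^{n-1}\corank_{\Lambda}\mathfrak{g}(\overline{L})$ — is $2\dim(e\,\mathrm{Lie}(J_{n}))$, because Hida's freeness theorems make $e\,H^{1}\bigl(X_{\alpha,\delta,\xi}(Np^{n})_{\overline{\mathbb{Q}}},W\bigr)$ free of rank two, and $e\,\mathrm{Lie}(J_n)$ free of rank one, over the ordinary Hecke algebra; dividing by $p^{n-1}$ and letting $n\to\infty$ gives the identity, hence (1). (In the spirit of the paper one can argue instead with the twisted local Tate duality of Proposition \ref{prop 5.2.7}-(2), which makes the Kummer images of $\hat J^{ord}_n$ and $\hat{J'}^{ord}_n$ mutual orthogonal complements, so $\corank_{\Lambda}(H^{1}(L,\mathfrak{g})/M)$ equals the corresponding corank for the dual tower $J'_{\infty}$, which in turn equals $\corank_{\Lambda}M$ by the Weil isomorphism $w_{n}\colon J_{n}\xrightarrow{\sim}J'_{n}$ of Definition \ref{definition 5.0.1} together with relation \eqref{e_2}.)

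The remaining parts are short. For (4), chain Lemma \ref{lemma 6.4.11}-(2) (``$X$ is $\Lambda$-torsion-free $\iff$ $Y$ is'') with Corollary \ref{cor 6.3.8}-(2) (``$Y=H^{1}(L,\mathfrak{g})^{\vee}$ is $\Lambda$-torsion-free $\iff$ $\mathfrak{g}'(L)$ is finite''). For (5), if $H^{1}(L,\mathfrak{g})$ is $\Lambda$-cofree then $Y$ is $\Lambda$-free, so $X$ is $\Lambda$-free by Lemma \ref{lemma 6.4.11}-(4), i.e.\ $H^{1}(L,\mathfrak{g})/M$ is $\Lambda$-cofree, of corank $\rank_{\Lambda}X=\corank_{\Lambda}(H^{1}(L,\mathfrak{g})/M)\le\tfrac12[L:\mathbb{Q}_p]\corank_{\Lambda}\mathfrak{g}(\overline{L})$ by (1). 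For (3), since $X$ has no nontrivial finite $\Lambda$-submodule and, by (2), $X_{\Lambda-\mathrm{tor}}\simeq Y_{\Lambda-\mathrm{tor}}$ has characteristic ideal prime to every $\omega_n$, Corollary \ref{cor 6.2.5} gives a $\Lambda$-linear injection $X\hookrightarrow\Lambda^{\rank_{\Lambda}X}\oplus\mathfrak{F}(X)^{\iota}$ with finite cokernel; then one uses that $\mathfrak{F}$ depends (up to pseudo-isomorphism) only on the torsion part, so $\mathfrak{F}(X)\sim\mathfrak{F}(Y)=\mathfrak{F}(H^{1}(L,\mathfrak{g})^{\vee})$, which Proposition \ref{prop 6.3.6} identifies with $\mathfrak{g}'(L)^{\vee}$ up to pseudo-isomorphism, together with $\rank_{\Lambda}X\le\tfrac12[L:\mathbb{Q}_p]\corank_{\Lambda}\mathfrak{g}(\overline{L})$ from (1); upgrading the pseudo-isomorphism to an honest injection into $\Lambda^{r}\oplus(\mathfrak{g}'(L)^{\vee})^{\iota}$ with finite cokernel is done by taking the inverse limit directly, as in the proof of Proposition \ref{prop 6.4.10}-(4).

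The step I expect to be the main obstacle is the corank identity $\corank_{\Lambda}M=\tfrac12[L:\mathbb{Q}_p]\corank_{\Lambda}\mathfrak{g}(\overline{L})$ feeding into (1): in the direct approach it rests on Hida's structural freeness results (ordinary $H^{1}$ of rank two, ordinary $H^{0}$ of rank one over the big Hecke algebra) rather than on anything quoted in the excerpt, and in the duality approach one must instead make the finite-level orthogonal-complement statement survive the passage through the direct limits — keeping track of the multiplication-by-$p$ transition maps on Kummer images and of the twisted norm maps $V'_{n+1,n}$ along the tower — and supply the symmetry $\corank_{\Lambda}M=\corank_{\Lambda}M'$ via the Weil involution. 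Everything else is a formal consequence of Lemma \ref{lemma 6.4.11} and of Corollaries \ref{cor 6.2.5}, \ref{cor 6.3.8} applied to the exact sequence.
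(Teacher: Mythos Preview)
Your overall architecture is the same as the paper's: apply Lemma \ref{lemma 6.4.11} to the short exact sequence $0\to X\to Y\to Z\to 0$, after checking coprimality of characteristic ideals via Proposition \ref{prop 6.4.10}-(2),(4) and Lemma \ref{lemma 6.1.1}-(3). Parts (2), (4), (5) of your argument are identical to the paper's.

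The one substantive difference is in (1). You aim for the \emph{equality} $\corank_{\Lambda}M=\tfrac12[L:\mathbb{Q}_p]\corank_{\Lambda}\mathfrak{g}(\overline{L})$, but the statement only asserts two inequalities, and the paper only proves the first one: it cites \cite[Lemma 5.5]{hida2015analytic} for the dimension of the $p$-adic Lie group $\hat{J}_{n}^{ord}(L)$ (this is exactly the ``ordinary Lie algebra has half the rank'' input you invoke), and then Proposition \ref{prop 6.4.10}-(1) gives $\corank_{W}(M[\omega_n])\ge\tfrac{p^{n-1}}{2}[L:\mathbb{Q}_p]\corank_{\Lambda}\mathfrak{g}(\overline{L})$, hence $\corank_{\Lambda}M\ge\tfrac12[L:\mathbb{Q}_p]\corank_{\Lambda}\mathfrak{g}(\overline{L})$. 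The second inequality then follows from additivity and Lemma \ref{lemma 6.1.2}-(2). Your equality claim actually has a gap: Proposition \ref{prop 6.4.10}-(1) bounds only the \emph{kernel} of $\hat{J}_{n}^{ord}(L)\otimes\mathbb{Q}_p/\mathbb{Z}_p\to M[\omega_n]$, not the cokernel, so the identification ``which is $M[\omega_n]$ up to the bounded finite kernel'' does not yield equality of coranks. (The paper in fact only asserts the equality later, in Proposition \ref{prop 8.2.7}-(4), under the extra hypothesis that the Selmer group is $\Lambda$-cotorsion.) Since only the inequality is needed here, your proof still succeeds; you have simply worked harder than necessary and identified as ``the main obstacle'' a step that the paper sidesteps entirely.

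For (3) the paper is terser than you: it says ``follows from (1) and Corollary \ref{cor 6.3.7}-(2)'', i.e.\ it uses the already-established injection for $Y=H^{1}(L,\mathfrak{g})^{\vee}$ rather than re-running Corollary \ref{cor 6.2.5} on $X$ and then identifying $\mathfrak{F}(X)$ with $\mathfrak{F}(Y)$ via (2) and Proposition \ref{prop 6.3.6} as you do. Both routes work; yours is a bit longer but makes the dependence on the torsion isomorphism of (2) more explicit.
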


\medskip

\begin{proof} For (1), by Lemma \ref{lemma 6.1.2}-(2), it is sufficient to prove the first inequality only. Since $ \hat{J}_{n}^{ord}(L) $ is a $p$-adic Lie group of dimension $\frac{p^{n-1}}{2}\cdot [L:\mathbb{Q}_p]\cdot \corank_{\Lambda}\mathfrak{g}(\overline{L})$ by \cite[Lemma 5.5]{hida2015analytic}, we get (1) from Proposition \ref{prop 6.4.10}-(1).

\medskip
By Proposition \ref{prop 6.4.10}-(2), $(J^{ord}_{\infty}(L) \otimes_{\mathbb{Z}_p} \mathbb{Q}_p/\mathbb{Z}_p)^{\vee}$ has no non-trivial pseudo-null $\Lambda$-submodule. This shows (2) by Lemma \ref{lemma 6.4.11}-(2). (3) follows from (1) and Corollary \ref{cor 6.3.7}-(2). (4) follows from Lemma \ref{lemma 6.4.11}-(2) and Corollary \ref{cor 6.3.8}-(2). (5) is a direct consequence of Lemma \ref{lemma 6.4.11}-(4).
\end{proof}

\medskip

\section{$\Lambda$-adic global cohomology groups}

\smallskip

In this section, in addition to Notation \ref{No} and Convention \ref{Id}, we also assume the following.

\begin{conv}[Global]\label{Gl} We let $K$ be a number field and $S$ be a finite set of places of $K$ containing infinite places and places dividing $Np$. 
\end{conv}

\smallskip

We first mention one basic lemma on $\Lambda$-cotorsionness of $H^{0}$ for our later use. They are analogues of Lemma \ref{lemma 6.1.1}-(1) and hence we omit the proof. (The proof of Lemma \ref{lemma 6.1.1}-(1) still works if we use Mordell-Weil theorem instead of Nagell-Lutz.)

\medskip

\begin{lemma}\label{lemma 7.0.2} $\mathfrak{g}(K)^{\vee}$ is a finitely generated torsion $\Lambda$-module whose characteristic ideal is prime to $\omega_{n}$ for all $n$. In particular, $\frac{\mathfrak{g}(K)}{\omega_{n}\mathfrak{g}(K)}$ has finite bounded order independent of $n$. We have $\displaystyle \lim_{\substack{\longrightarrow \\n}}\frac{\mathfrak{g}(K)}{\omega_{n}\mathfrak{g}(K)}=0$. Moreover $\displaystyle \lim_{\substack{\longleftarrow \\n}}\frac{\mathfrak{g}(K)}{\omega_{n}\mathfrak{g}(K)}$ is isomorphic to the Pontryagin dual of the maximal finite submodule of $\mathfrak{g}(K)^{\vee}$ as $\Lambda$-modules.
\end{lemma}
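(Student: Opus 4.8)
The plan is to mimic the proof of Lemma \ref{lemma 6.1.1}-(1), replacing the Nagell--Lutz input by the Mordell--Weil theorem. First I would invoke the control sequence of Proposition \ref{prop 4.2.9}, which gives an isomorphism of sheaves $\hat{J}^{ord}_{n}[p^{\infty}] \simeq \mathfrak{g}[\omega_n]$, and hence, taking $K$-points (the global section functor being left exact), an isomorphism of $\Lambda$-modules $\hat{J}^{ord}_{n}[p^{\infty}](K) \simeq \mathfrak{g}(K)[\omega_n]$. By the Mordell--Weil theorem, $\hat{J}^{ord}_{n}(K) = J^{ord}_{n}(K)\otimes_{\mathbb Z}W$ is a finitely generated $W$-module, so its torsion part $\hat{J}^{ord}_{n}[p^{\infty}](K)$ is finite; thus $\mathfrak{g}(K)[\omega_n]$ is finite for every $n$.

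Next I would feed this finiteness into the commutative algebra lemma cited for Lemma \ref{lemma 6.1.1}-(1), namely Lemma \ref{lemma A.0.1}. Since $\mathfrak{g}(K)^{\vee}$ is a finitely generated $\Lambda$-module (Corollary \ref{cor 4.2.10} gives that $\mathfrak{g}(K)$ is cofinitely generated) and $\mathfrak{g}(K)[\omega_n]$ finite for all $n$ translates via Pontryagin duality to $\mathfrak{g}(K)^{\vee}/\omega_n\mathfrak{g}(K)^{\vee}$ finite for all $n$, the lemma yields at once: $\mathfrak{g}(K)^{\vee}$ is $\Lambda$-torsion with characteristic ideal prime to every $\omega_n$; $\mathfrak{g}(K)/\omega_n\mathfrak{g}(K)$ has bounded finite order; the direct limit over the norm/transition maps vanishes; and the inverse limit $\varprojlim_n \mathfrak{g}(K)/\omega_n\mathfrak{g}(K)$ is the Pontryagin dual of the maximal finite $\Lambda$-submodule of $\mathfrak{g}(K)^{\vee}$. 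Each of these is exactly the statement of Lemma \ref{lemma A.0.1}-(1),(2),(3), so no independent argument is needed beyond translating between $\mathfrak{g}(K)$ and its dual.

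There is essentially no obstacle here: the only nontrivial ingredient is that the argument of Lemma \ref{lemma 6.1.1}-(1) is purely formal once one has (a) the control isomorphism and (b) finiteness of $\hat{J}^{ord}_{n}[p^{\infty}](K)$, and (b) is precisely where Mordell--Weil replaces Nagell--Lutz. If anything deserves a word of care, it is checking that the transition maps in $\varprojlim_n \mathfrak{g}(K)/\omega_n\mathfrak{g}(K)$ and in the identification with $\mathfrak{g}(K)^{\vee}$'s finite submodule match up with the conventions of Lemma \ref{lemma A.0.1}; but this is already handled there. Accordingly the proof is one line: ``The proof of Lemma \ref{lemma 6.1.1}-(1) applies verbatim, using the Mordell--Weil theorem in place of the Nagell--Lutz theorem to see that $\hat{J}^{ord}_{n}[p^{\infty}](K)$ is finite.''
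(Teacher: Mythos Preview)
Your proposal is correct and matches the paper's own treatment exactly: the paper omits the proof of Lemma~\ref{lemma 7.0.2} and simply remarks that the proof of Lemma~\ref{lemma 6.1.1}-(1) still works if one uses the Mordell--Weil theorem instead of Nagell--Lutz. Your expanded outline (control sequence $\Rightarrow$ finiteness of $\mathfrak{g}(K)[\omega_n]$ via Mordell--Weil $\Rightarrow$ Lemma~\ref{lemma A.0.1}) is precisely this argument.
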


\smallskip

\subsection{Cofreeness of $H^{2}$} \label{sec 7.1}

\begin{lemma} \label{lemma 7.1.3} Let $A$ be an abelian variety over a number field $K$, and $S$ be a finite set of places of $K$ containing infinite places, places over $p$, and places of bad reductions of $A$. Then we have:

(1) $H^{i}(K^{S}/K, A[p^{\infty}])=0$ for all $i \geq 3$.

(2) $H^2(K^{S}/K, A[p^{\infty}])$ is a $W$-cofree module. In particular, $H^2(K^{S}/K, A[p^{\infty}])=0$ if and only if $H^2(K^{S}/K, A[p^{\infty}])$ has trivial $W$-corank.
\end{lemma}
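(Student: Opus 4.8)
The plan is to treat the two parts separately, starting from standard facts about the cohomological dimension of $\mathrm{Gal}(K^S/K)$. For part (1), since $p$ is odd, $\mathrm{Gal}(K^S/K)$ has $p$-cohomological dimension $2$ away from the real places, so the only possible obstruction to vanishing in degrees $\geq 3$ comes from the archimedean contributions. I would invoke the usual decomposition (for instance \cite[Theorem I.4.10, Corollary I.4.15]{milne2006arithmetic}) expressing $H^i(K^S/K, A[p^{\infty}])$ for $i \geq 3$ in terms of $\bigoplus_{v \text{ real}} \hat{H}^i(K_v, A[p^{\infty}])$; because $p$ is odd and $A[p^{\infty}]$ is a $p$-group, the Tate cohomology of the group of order $2$ with coefficients in $A[p^{\infty}]$ vanishes, giving $H^i(K^S/K, A[p^{\infty}]) = 0$ for $i \geq 3$. (Equivalently, one notes $\mathrm{cd}_p(K^S/K) \le 2$ directly.)

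For part (2), the key point is to show $H^2(K^S/K, A[p^{\infty}])$ is $p$-divisible; cofree over $W$ then follows because it is already a cofinitely generated $W$-module (being a subquotient of continuous cochains with values in a cofinitely generated module), and a $p$-divisible cofinitely generated $W$-module is cofree. To get $p$-divisibility I would use the Kummer sequence $0 \to A[p] \to A[p^{\infty}] \xrightarrow{p} A[p^{\infty}] \to 0$, which yields the exact piece
\[
H^2(K^S/K, A[p^{\infty}]) \xrightarrow{p} H^2(K^S/K, A[p^{\infty}]) \to H^3(K^S/K, A[p]).
\]
By part (1) applied to $A[p]$ (or directly, since $\mathrm{cd}_p \le 2$), $H^3(K^S/K, A[p]) = 0$, so multiplication by $p$ is surjective on $H^2(K^S/K, A[p^{\infty}])$, which is exactly $p$-divisibility. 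The final ``in particular'' is then immediate: a cofree $W$-module is zero iff its $W$-corank is zero.

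The steps in order: first record $\mathrm{cd}_p(\mathrm{Gal}(K^S/K)) \le 2$ and deduce part (1) (handling real places via the oddness of $p$); second, run the Kummer sequence in degree $2$–$3$ and use part (1) to conclude $p$-divisibility of $H^2$; third, combine $p$-divisibility with cofinite generation over $W$ to get $W$-cofreeness, and read off the last sentence. I do not expect a serious obstacle here — this is essentially bookkeeping with the cohomological dimension and the Kummer sequence — but the one point requiring mild care is the archimedean contribution in part (1): one must make sure that the (potentially nonzero) groups $H^i(K_v, A[p^{\infty}])$ for $v$ real and $i \ge 3$ really vanish, which they do precisely because $p \ne 2$.
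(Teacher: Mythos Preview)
Your proposal is correct and follows essentially the same approach as the paper: both use $\mathrm{cd}_p(\mathrm{Gal}(K^S/K)) \le 2$ (with $p$ odd) for part (1), and for part (2) both deduce $p$-divisibility of $H^2$ from the vanishing of $H^3(K^S/K, A[p])$ via the Kummer sequence, then conclude $W$-cofreeness from $p$-divisibility plus cofinite generation. The only cosmetic difference is that the paper states (1) for the finite layers $A[p^m]$ and passes to the direct limit, whereas you argue directly with $A[p^\infty]$ and spell out the archimedean vanishing; both are equivalent.
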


\smallskip

\begin{proof} Since $\text{Gal}(K^{S}/K)$ has $p$-cohomological dimension 2 ($p$ is odd), we get $H^{i}(K^{S}/K, A[p^{m}])=0$ for all $m$ and for all $i \geq 3$. Passing to the direct limit gives (1). For (2), note that we have an injection $$\frac{H^2(K^{S}/K, A[p^{\infty}])}{p H^2(K^{S}/K, A[p^{\infty}])} \hookrightarrow H^3(K^{S}/K, A[p])=0.$$ Hence $H^2(K^{S}/K, A[p^{\infty}])$ is a $p$-divisible cofinitely generated $W$-module, so $W$-cofree.
\end{proof}

\smallskip

\begin{cor}\label{cor 7.1.4} (1) $H^{i}(K^{S}/K, \hat{J}_{n}^{ord}[p^{\infty}])=H^{i}(K^{S}/K, \mathfrak{g})=0$ for all $i \geq 3$.

(2) $H^{2}(K^{S}/K, \mathfrak{g})$ is a $\Lambda$-cofree module.
\end{cor}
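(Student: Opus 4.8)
The plan is to derive both assertions as essentially immediate consequences of Lemma \ref{lemma 7.1.3}, the control sequence of Corollary \ref{cor 4.3.13}, and the freeness criterion \cite[Proposition 5.3.19]{neukirch2000cohomology} that was already invoked in the proof of Lemma \ref{lemma 6.4.11}-(4); there is no serious obstacle, only some bookkeeping with Pontryagin duals.

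First, for (1), I would observe that $\widehat{J_n}[p^{\infty}]\simeq J_n[p^{\infty}]$ by Remark \ref{remark 4.1.4}-(3), so $\hat{J}_n^{ord}[p^{\infty}]=e\cdot\widehat{J_n}[p^{\infty}]$ is identified with a $\mathrm{Gal}(K^S/K)$-equivariant direct summand of $J_n[p^{\infty}]$. Hence $H^i(K^S/K,\hat{J}_n^{ord}[p^{\infty}])$ is a direct summand of $H^i(K^S/K,J_n[p^{\infty}])$, which vanishes for $i\geq 3$ by Lemma \ref{lemma 7.1.3}-(1) applied to the abelian variety $A=J_n$. Since $\mathfrak{g}=\varinjlim_n \hat{J}_n^{ord}[p^{\infty}]$ and Galois cohomology over $K^S/K$ commutes with filtered direct limits of discrete modules (a move already made, e.g., in the proof of Lemma \ref{lemma 6.1.1}), passing to the limit over $n$ gives $H^i(K^S/K,\mathfrak{g})=0$ for $i\geq 3$.

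For (2), the plan is to feed the control sequence $0\to \hat{J}_n^{ord}[p^{\infty}]\to \mathfrak{g}\xrightarrow{\omega_n}\mathfrak{g}\to 0$ of Corollary \ref{cor 4.3.13} (identifying $\gamma^{p^{n-1}}-1$ with $\omega_n$ via Convention \ref{Id}) into the long exact cohomology sequence. Because $H^3(K^S/K,\hat{J}_n^{ord}[p^{\infty}])=0$ by part (1), multiplication by $\omega_n$ on $H^2(K^S/K,\mathfrak{g})$ is surjective, and the sequence yields
\[
0\to H^1(K^S/K,\mathfrak{g})/\omega_n H^1(K^S/K,\mathfrak{g})\to H^2(K^S/K,\hat{J}_n^{ord}[p^{\infty}])\to H^2(K^S/K,\mathfrak{g})[\omega_n]\to 0.
\]
Writing $N:=H^2(K^S/K,\mathfrak{g})^{\vee}$, dualizing shows that $N$ has no $\omega_n$-torsion (surjectivity of $\omega_n$) and that $N/\omega_n N\cong \big(H^2(K^S/K,\mathfrak{g})[\omega_n]\big)^{\vee}$ embeds into $H^2(K^S/K,\hat{J}_n^{ord}[p^{\infty}])^{\vee}$; the latter is the dual of a direct summand of $H^2(K^S/K,J_n[p^{\infty}])$, hence finitely generated and free over $W$ by Lemma \ref{lemma 7.1.3}-(2). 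Being a submodule of a finitely generated free module over the principal ideal domain $W$, $N/\omega_n N$ is $W$-free; in particular $N/\omega_1 N$ is finitely generated over $W$, so $N$ is a finitely generated $\Lambda$-module by topological Nakayama. Taking $n=1$, so that $N[T]=0$ and $N/TN$ is $W$-free, \cite[Proposition 5.3.19]{neukirch2000cohomology} — exactly as in the proof of Lemma \ref{lemma 6.4.11}-(4) — shows that $N$ is a free $\Lambda$-module, i.e.\ $H^2(K^S/K,\mathfrak{g})$ is $\Lambda$-cofree.

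The only point needing care is the input to the freeness criterion: one must have $N[T]=0$ and $N/TN$ $W$-free simultaneously, and both fall out of the single displayed exact sequence with $n=1$ together with the $W$-cofreeness of $H^2(K^S/K,\hat{J}_1^{ord}[p^{\infty}])$. The bookkeeping subtlety that the dual $\Lambda$-action is $\iota$-twisted is harmless, since $\iota(\omega_n)$ generates the same ideal of $\Lambda$ as $\omega_n$, so the phrases "$N$ is $\omega_n$-torsion-free" and "$N/\omega_n N$" are unaffected by the twist.
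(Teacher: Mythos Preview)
Your proof is correct and follows essentially the same route as the paper: part (1) is Lemma~\ref{lemma 7.1.3}-(1) plus passage to the direct limit, and part (2) feeds the control sequence of Corollary~\ref{cor 4.3.13} into the long exact sequence, uses $H^3=0$ to get $\omega_n$-divisibility of $H^2(K^S/K,\mathfrak{g})$, uses $W$-cofreeness of $H^2(K^S/K,\hat{J}_n^{ord}[p^\infty])$ to handle $H^2(K^S/K,\mathfrak{g})[\omega_n]$, and concludes via \cite[Proposition 5.3.19]{neukirch2000cohomology}. The only cosmetic difference is that you dualize first and verify $\Lambda$-freeness of $N=H^2(K^S/K,\mathfrak{g})^\vee$ (adding a Nakayama step for finite generation that the paper leaves implicit), whereas the paper argues the $\Lambda$-cofreeness directly on $H^2(K^S/K,\mathfrak{g})$.
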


\begin{proof} The first assertion of (1) follows from Lemma \ref{lemma 7.1.3}-(1). Taking the direct limit with respect to $n$ proves the assertion for $\mathfrak{g}$.

For (2), we have an injection $\frac{H^{2}(K^{S}/K, \mathfrak{g})}{\omega_{n}H^{2}(K^{S}/K, \mathfrak{g})} \hookrightarrow H^{3}(K^{S}/K, \hat{J}_{n}^{ord}[p^{\infty}])$ which implies that $\frac{H^{2}(K^{S}/K, \mathfrak{g})}{\omega_{n}H^{2}(K^{S}/K, \mathfrak{g})}=0$ by the previous Lemma \ref{lemma 7.1.3}-(1). On the other hand, ${H^{2}(K^{S}/K, \mathfrak{g})}[\omega_{n}]$ is $W$-cofree since it is a homomorphic image of a $W$-cofree module $H^{2}(K^{S}/K, \hat{J}_{n}^{ord}[p^{\infty}])$. By \cite[Proposition 5.3.19]{neukirch2000cohomology}, $H^{2}(K^{S}/K, \mathfrak{g})$ is $\Lambda$-cofree. 
\end{proof}

\begin{cor}\label{cor 7.1.5} If $H^{2}(K^{S}/K, \mathfrak{g})=0$, then $H^{1}(K^{S}/K, \mathfrak{g})^{\vee}$ has no non-trivial finite $\Lambda$-submodules.
\end{cor}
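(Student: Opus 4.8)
The plan is to mimic the proof of Lemma~\ref{lemma 6.1.1}-(3), replacing the outright vanishing of the local $H^2$ used there by the $p$-divisibility that the hypothesis $H^2(K^S/K,\mathfrak{g})=0$ forces on the relevant cokernel.

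First I would plug the control sequence of Corollary~\ref{cor 4.3.13} — which, after identifying $\gamma^{p^{n-1}}-1$ with $\omega_n$ via Convention~\ref{Id}, reads $0\to \hat{J}^{ord}_n[p^\infty]_{/\mathbb{Q}}\to\mathfrak{g}_{/\mathbb{Q}}\xrightarrow{\omega_n}\mathfrak{g}_{/\mathbb{Q}}\to 0$ — into the long exact cohomology sequence for $H^{\bullet}(K^S/K,-)$. The four-term piece
$$H^1(K^S/K,\mathfrak{g})\xrightarrow{\ \omega_n\ }H^1(K^S/K,\mathfrak{g})\xrightarrow{\ \delta\ }H^2(K^S/K,\hat{J}^{ord}_n[p^\infty])\longrightarrow H^2(K^S/K,\mathfrak{g})$$
together with the hypothesis $H^2(K^S/K,\mathfrak{g})=0$ shows that $\delta$ is surjective, whence
$$\frac{H^1(K^S/K,\mathfrak{g})}{\omega_n H^1(K^S/K,\mathfrak{g})}\ \simeq\ H^2(K^S/K,\hat{J}^{ord}_n[p^\infty]).$$

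Next I would note that the right-hand group is $p$-divisible: since $\mathrm{Gal}(K^S/K)$ has $p$-cohomological dimension $2$ we have $H^3(K^S/K,\hat{J}^{ord}_n[p])=0$ by Corollary~\ref{cor 7.1.4}-(1), so the natural injection $H^2(K^S/K,\hat{J}^{ord}_n[p^\infty])/p\hookrightarrow H^3(K^S/K,\hat{J}^{ord}_n[p])$ forces its source to vanish (this is exactly the argument of Lemma~\ref{lemma 7.1.3}-(2), where the group is even seen to be $W$-cofree). Dualizing the displayed isomorphism and recalling that $\iota(\omega_n)$ and $\omega_n$ are associates in $\Lambda$, I obtain that
$$H^1(K^S/K,\mathfrak{g})^{\vee}[\omega_n]\ \simeq\ \Bigl(\tfrac{H^1(K^S/K,\mathfrak{g})}{\omega_n H^1(K^S/K,\mathfrak{g})}\Bigr)^{\vee}$$
is a torsion-free $W$-module for every $n\geq 1$.

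To finish, suppose $M$ is a nonzero finite $\Lambda$-submodule of $H^1(K^S/K,\mathfrak{g})^{\vee}$. A finite $\Lambda$-module is annihilated by $\omega_n$ once $n$ is large (cf.\ Lemma~\ref{lemma A.0.1}), so $M\hookrightarrow H^1(K^S/K,\mathfrak{g})^{\vee}[\omega_n]$ for such $n$; but this target is $p$-torsion-free, while a nonzero finite $\Lambda$-module is a nonzero $p$-group and hence contains an element of order $p$. This contradiction forces $M=0$, which is the claim. The only place where the hypothesis is genuinely used — and the one step worth singling out — is the surjectivity of $\delta$: without $H^2(K^S/K,\mathfrak{g})=0$ one merely lands $\frac{H^1}{\omega_n H^1}$ inside the $p$-divisible group $H^2(K^S/K,\hat{J}^{ord}_n[p^\infty])$ as a possibly non-divisible subgroup, which would not be enough; the rest is formal manipulation of the long exact sequence and standard facts about modules over $\Lambda$.
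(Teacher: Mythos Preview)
Your proof is correct and follows essentially the same route as the paper: the paper asserts the isomorphism $H^{1}(K^{S}/K,\mathfrak{g})^{\vee}[\omega_n]\simeq H^{2}(K^{S}/K,\hat{J}^{ord}_n[p^\infty])^{\vee}$ (which is exactly your dualized identification, using the hypothesis to make $\delta$ surjective), invokes Lemma~\ref{lemma 7.1.3}-(2) for $W$-freeness, and concludes by the same ``finite submodule sits in some $[\omega_n]$'' argument. Your write-up simply makes explicit the derivation of the isomorphism via the long exact sequence and the associate relation $\iota(\omega_n)\sim\omega_n$, and your closing remark on why the hypothesis is genuinely needed is a nice clarification.
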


\begin{proof} 
We have an isomorphism $ H^{1}(K^{S}/K, \mathfrak{g})^{\vee}[\omega_{n}] \simeq H^{2}(K^{S}/K, \hat{J}_{n}^{ord}[p^{\infty}])^{\vee}$ where the last group is $W$-free by Lemma \ref{lemma 7.1.3}-(2). For the large enough $n$, the maximal finite submodule of $ H^{1}(K^{S}/K, \mathfrak{g})^{\vee}$ is contained in $ H^{1}(K^{S}/K, \mathfrak{g})^{\vee}[\omega_{n}]$. Hence it has to vanish since any $W$-free module does not have non-trivial finite submodules. 
\end{proof}

We have the following global Euler characteristic formula of cohomology of Barsotti-Tate groups.

\begin{prop}[Global Euler Characteristic Formula] \label{prop 7.1.6} We have the following two identities:

\begin{itemize}
\item $\corank_{W}H^{1}(K^{S}/K, \hat{J}_{n}^{ord}[p^{\infty}])-\corank_{W}H^{2}(K^{S}/K, \hat{J}_{n}^{ord}[p^{\infty}])=\frac{1}{2}[K:\mathbb{Q}]\cdot p^{n-1}\cdot \corank_{\Lambda}\mathfrak{g}(\overline{\mathbb{Q}}).$
\item $\corank_{\Lambda}H^{1}(K^{S}/K, \mathfrak{g})-\corank_{\Lambda}H^{2}(K^{S}/K, \mathfrak{g})=\frac{1}{2}[K:\mathbb{Q}]\cdot \corank_{\Lambda}\mathfrak{g}(\overline{\mathbb{Q}}).$
\end{itemize}

\smallskip

In particular, $$\corank_{\Lambda}H^{1}(K^{S}/K, \mathfrak{g})=\frac{1}{2}[K:\mathbb{Q}]\cdot \corank_{\Lambda}\mathfrak{g}(\overline{\mathbb{Q}})$$ if and only if $$H^{2}(K^{S}/K, \mathfrak{g})=0.$$
\end{prop}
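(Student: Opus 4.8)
The plan is to establish the first (finite-level) identity via the classical global Euler characteristic formula, then pass to the limit to obtain the $\Lambda$-adic version, and finally read off the ``in particular'' clause from Corollary \ref{cor 7.1.4}. First I would apply the usual global Euler characteristic formula of Tate to the finite module $\hat{J}_{n}^{ord}[p^{m}]$ over $K^{S}/K$: since $\text{Gal}(K^{S}/K)$ has $p$-cohomological dimension $2$ and $p$ is odd, the alternating sum of the $\mathbb{F}_p$-dimensions of $H^{i}(K^{S}/K, \hat{J}_{n}^{ord}[p^{m}])$ for $i=0,1,2$ equals $-\sum_{v \mid \infty}\dim_{\mathbb{F}_{p}} H^{0}(K_{v}, \hat{J}_{n}^{ord}[p^{m}])$ plus a local factor at the places over $p$ (coming from the term $[K:\mathbb{Q}]\cdot \operatorname{ord}_{p}(\#\hat{J}_{n}^{ord}[p^{m}])$ in Tate's formula). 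By Theorem \ref{theorem 4.3.11} and Proposition \ref{prop 4.2.9}, $\hat{J}_{n}^{ord}[p^{\infty}](\overline{\mathbb{Q}}) \simeq \mathfrak{g}(\overline{\mathbb{Q}})[\omega_{n}]$ is $W$-cofree of corank $p^{n-1}\cdot\operatorname{corank}_{\Lambda}\mathfrak{g}(\overline{\mathbb{Q}})$; writing $d = [K:\mathbb{Q}]$ and using that $\hat{J}_{n}^{ord}[p^{\infty}]$ has $W$-rank (as a free $W/p^m$-module mod $p^m$) equal to that corank times $[W:\mathbb{Z}_p]$, the $p$-part contribution is $d \cdot p^{n-1}\cdot\operatorname{corank}_{\Lambda}\mathfrak{g}(\overline{\mathbb{Q}})$ copies of $[W:\mathbb{Z}_p]$, while the archimedean contribution from a complex conjugation $c$ is governed by $\operatorname{corank}_{\Lambda}\mathfrak{g}(\overline{\mathbb{Q}})[c-1]$, which by Theorem \ref{theorem 4.3.11}-(2) is exactly half of $\operatorname{corank}_{\Lambda}\mathfrak{g}(\overline{\mathbb{Q}})$. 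Taking $\operatorname{corank}_{W}H^{0}(K^{S}/K,\hat{J}_{n}^{ord}[p^{\infty}])=0$ (it is finite by Lemma \ref{lemma 7.0.2}, hence has trivial $W$-corank), dividing by $m$ and letting $m\to\infty$, these two contributions combine to give
\[
\operatorname{corank}_{W}H^{1}(K^{S}/K, \hat{J}_{n}^{ord}[p^{\infty}])-\operatorname{corank}_{W}H^{2}(K^{S}/K, \hat{J}_{n}^{ord}[p^{\infty}])=\tfrac{1}{2}[K:\mathbb{Q}]\cdot p^{n-1}\cdot \operatorname{corank}_{\Lambda}\mathfrak{g}(\overline{\mathbb{Q}}),
\]
where I should double-check the bookkeeping so that the $d\cdot p^{n-1}\cdot\operatorname{corank}_{\Lambda}\mathfrak{g}(\overline{\mathbb{Q}})$ from the $p$-adic places and the cancellation of half of it against the archimedean term produce exactly the factor $\tfrac{1}{2}$; this sign/constant bookkeeping is the step most prone to error.

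Next I would deduce the $\Lambda$-adic identity. Using the control sequence of Corollary \ref{cor 4.3.13}, namely $0 \to \hat{J}^{ord}_{n}[p^{\infty}] \to \mathfrak{g} \xrightarrow{\omega_{n}} \mathfrak{g} \to 0$, and the long exact sequence of $\text{Gal}(K^{S}/K)$-cohomology together with Corollary \ref{cor 7.1.4}-(1) ($H^{i}=0$ for $i\geq 3$), I get short exact sequences relating $H^{i}(K^{S}/K, \hat{J}_{n}^{ord}[p^{\infty}])$ to the $\omega_{n}$-torsion and $\omega_{n}$-cotorsion of $H^{i}(K^{S}/K, \mathfrak{g})$ for $i=1,2$, as well as the $H^{0}$ term which is finite and bounded by Lemma \ref{lemma 7.0.2}. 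For a cofinitely generated $\Lambda$-module $M$, one has $\operatorname{corank}_{W}(M[\omega_{n}]) = p^{n-1}\operatorname{corank}_{\Lambda}M + O(1)$ (the error bounded independently of $n$ since the characteristic-ideal contributions and the finite submodule contribute only bounded amounts per step). Applying this to $H^{1}(K^{S}/K, \mathfrak{g})$ and $H^{2}(K^{S}/K, \mathfrak{g})$ and comparing the leading ($p^{n-1}$) coefficients in the finite-level identity just proved yields
\[
\operatorname{corank}_{\Lambda}H^{1}(K^{S}/K, \mathfrak{g})-\operatorname{corank}_{\Lambda}H^{2}(K^{S}/K, \mathfrak{g})=\tfrac{1}{2}[K:\mathbb{Q}]\cdot \operatorname{corank}_{\Lambda}\mathfrak{g}(\overline{\mathbb{Q}}).
\]

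Finally, the ``in particular'' clause is immediate: by Corollary \ref{cor 7.1.4}-(2), $H^{2}(K^{S}/K, \mathfrak{g})$ is a $\Lambda$-cofree module, so it is zero if and only if its $\Lambda$-corank is zero, and plugging $\operatorname{corank}_{\Lambda}H^{2}(K^{S}/K, \mathfrak{g})=0$ into the displayed identity gives $\operatorname{corank}_{\Lambda}H^{1}(K^{S}/K, \mathfrak{g})=\tfrac{1}{2}[K:\mathbb{Q}]\cdot\operatorname{corank}_{\Lambda}\mathfrak{g}(\overline{\mathbb{Q}})$, and conversely. I expect the genuine obstacle to be purely the first step: correctly extracting the constant $\tfrac12$ from Tate's formula by carefully matching the local term at $p$ against the archimedean term, where Theorem \ref{theorem 4.3.11}-(2) (equality of the $\pm$-eigenspace coranks of $c$) is exactly what makes the archimedean contribution equal to half the total corank; everything after that is formal $\Lambda$-module bookkeeping and limit-taking.
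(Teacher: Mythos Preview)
Your proposal is correct and follows essentially the same route as the paper: the first identity is obtained from the classical global Euler characteristic formula together with Theorem \ref{theorem 4.3.11}-(2) (which supplies the factor $\tfrac12$ via the equality of $\pm$-eigenspace coranks), the second identity is deduced by comparing $W$-coranks along the control sequence of Corollary \ref{cor 4.3.13} and extracting the leading $p^{n-1}$-coefficient, and the final equivalence comes from the $\Lambda$-cofreeness of $H^{2}$ in Corollary \ref{cor 7.1.4}-(2). The paper records the passage to $\Lambda$-coranks via the dual exact sequence $0 \to H^{1}(K^{S}/K,\mathfrak{g})^{\vee}/\omega_n \to H^{1}(K^{S}/K,\hat{J}_n^{ord}[p^{\infty}])^{\vee} \to \mathfrak{g}(K)^{\vee}[\omega_n] \to 0$, which is exactly the Pontryagin dual of part of the long exact sequence you invoke, so the two arguments are the same up to presentation.
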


\medskip

The assertion about the $W$-corank follows from the usual global Euler characteristic formula combined with Theorem \ref{theorem 4.3.11}-(2). The last equivalence follows from Corollary \ref{cor 7.1.4}-(2).

\begin{proof} If we compute the $W$-$\corank$ of the exact sequence $$0 \rightarrow \frac{H^{1}(K^{S}/K, \mathfrak{g})^{\vee}}{\omega_nH^{1}(K^{S}/K, \mathfrak{g})^{\vee}}  \rightarrow H^{1}(K^{S}/K, \hat{J}_{n}^{ord}[p^{\infty}])^{\vee} \rightarrow \mathfrak{g}(K)^{\vee}[\omega_n] \rightarrow 0,$$ the quantity
\begin{align*}
\corank_{W}H^{1}(K^{S}/K, \hat{J}_{n}^{ord}[p^{\infty}])-p^{n-1}\cdot \corank_{\Lambda}H^{1}(K^{S}/K, \mathfrak{g})
\end{align*}
is bounded independent of $n$. Since the same assertion holds for $H^{2}$, we get the assertion about the $\Lambda$-corank.
\end{proof}

\subsection{Application of the Poitou-Tate duality} \label{sec 7.2}

\medskip

In this section, we prove a theorem relating $\Lambda$-coranks of $\sha^{1}_{K}(\mathfrak{g})$ and $\sha^{2}_{K}(\mathfrak{g}')$. Main tool is the Poitou-Tate duality between $\sha^{1}$ and $\sha^{2}$. We state a technical lemma without proof.

\medskip

\begin{lemma}\label{lemma 7.2.7} (1) Let $A$ be a cofinitely generated $W$-module. Then $A[p^{m}]$ and $\frac{A}{p^{m}A}$ are finite modules. For all sufficiently large $m$, $|A[p^m]|=p^{m \cdot \rank_{\mathbb{Z}_p}W \cdot \corank_{W}A+c}$ for some constant $c$ independent of $m$ and $|\frac{A}{p^{m}A}|$ is a constant independent of $m$.

\smallskip

(2) Let $X$ be a cofinitely generated $\Lambda$-module. Then $X[\omega_{n}]$ and $\frac{X}{\omega_{n}X}$ are cofinitely generated $W$-modules. For all sufficiently large $n$, $\corank_{W}X[\omega_{n}]=p^{n \cdot \rank_{\Lambda}X}+c$ for some constant $c$ independent of $n$, and $\corank_{W}\frac{X}{\omega_{n}X}$ is a constant independent of $n$. 

\end{lemma}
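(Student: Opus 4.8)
The plan is to reduce everything to well-known facts about finitely generated $\Lambda$-modules via Pontryagin duality. Write $M:=X^\vee$, which is a finitely generated $\Lambda$-module; duality turns $X[\omega_n]$ into $(M/\omega_n M)^\vee$ and $X/\omega_n X$ into $(M[\omega_n])^\vee$, so that $\corank_W X[\omega_n]=\rank_W (M/\omega_n M)$ and $\corank_W (X/\omega_n X)=\rank_W(M[\omega_n])$. Thus part (2) is equivalent to: for a finitely generated $\Lambda$-module $M$, one has $\rank_W(M/\omega_n M)=p^{n}\cdot\rank_\Lambda M+c$ and $\rank_W M[\omega_n]=c'$ for $n\gg 0$, with $c,c'$ independent of $n$. (The first displayed formula in the statement is presumably meant as $\corank_W X[\omega_n]=p^n\cdot\rank_\Lambda X+c$; I would phrase it that way.) Part (1) is the analogous and simpler $\mathbb{Z}_p$-statement, which I would dispatch first: writing $B:=A^\vee$, a finitely generated $W$-module, $A[p^m]^\vee\simeq B/p^m B$ and $(A/p^mA)^\vee\simeq B[p^m]$, and the structure theorem $B\simeq W^s\oplus(\text{finite})$ immediately gives $|B/p^mB|=|W/p^mW|^s\cdot(\text{const})=p^{m\cdot s\cdot\rank_{\mathbb{Z}_p}W+c}$ for $m\gg 0$ and $|B[p^m]|$ eventually constant; note $s=\corank_W A$.

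For part (2) I would invoke the structure theory recalled in the Notations section: there is a pseudo-isomorphism $M\to E(M)=\Lambda^r\oplus\bigoplus_i \Lambda/g_i^{e_i}$ with finite kernel and cokernel. First I would check that both quantities $\rank_W(M/\omega_n M)$ and $\rank_W(M[\omega_n])$ are insensitive, up to a bounded additive error, to replacing $M$ by $E(M)$: a finite module contributes $0$ to $W$-rank and, via the snake lemma applied to multiplication by $\omega_n$ on a short exact sequence $0\to M'\to M\to M''\to 0$ with $M'$ or $M''$ finite, changes the ranks of the kernel and cokernel of $\omega_n$ only by the (bounded) $W$-ranks of $M'[\omega_n]$, $M'/\omega_n M'$, etc., which are bounded by $|M'|$. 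Since $\omega_{n+1}=\omega_{n,n+1}\omega_n$ with $\omega_{n+1,n}$ running over distinct irreducibles, for $n$ large $\omega_n$ is coprime to every $g_i$ appearing in $E(M)$; hence $\Lambda/g_i^{e_i}$ is $\omega_n$-torsion-free and $(\Lambda/g_i^{e_i})/\omega_n$ is finite. Therefore for $n\gg 0$, $E(M)[\omega_n]=0$ (so $\rank_W M[\omega_n]$ is bounded) and $\rank_W(E(M)/\omega_n E(M))=\rank_W(\Lambda^r/\omega_n)=r\cdot\rank_W(\Lambda/\omega_n\Lambda)$.

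It remains to compute $\rank_W(\Lambda/\omega_n\Lambda)$. Since $\omega_n(T)=(1+T)^{p^{n-1}}-1$ is a distinguished polynomial of degree $p^{n-1}$, the Weierstrass / division theorem gives that $\Lambda/\omega_n\Lambda$ is a free $W$-module of rank $p^{n-1}$; hence $\rank_W(E(M)/\omega_n E(M))=r\,p^{n-1}=p^{n-1}\rank_\Lambda M$. Combining with the bounded-error comparison between $M$ and $E(M)$, we get $\rank_W(M/\omega_n M)=p^{n-1}\rank_\Lambda M+c$ for $n\gg 0$, and $\rank_W M[\omega_n]$ eventually constant; dualizing back yields the asserted formulas for $X$. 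I do not expect a genuine obstacle here — the only mild care needed is the bookkeeping in the snake-lemma comparison showing that the pseudo-isomorphism to $E(M)$ only perturbs the two $W$-ranks by a bounded amount, and remembering that "for all sufficiently large $n$" is exactly what lets us discard the finitely many $g_i$ that are powers of some $\omega_{m+1,m}$ (equivalently, lets us assume $\gcd(\omega_n,\mathrm{char}_\Lambda M_{\Lambda\text{-tor}})=1$). The one place to be slightly careful in the write-up is matching the normalization $\omega_n$ has degree $p^{n-1}$, so the leading term is $p^{n-1}\rank_\Lambda X$, consistent with Lemma \ref{lemma 7.2.7}.
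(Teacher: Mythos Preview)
The paper states this lemma without proof, so there is nothing to compare against; your approach via Pontryagin duality and the structure theorem for finitely generated $\Lambda$-modules is the natural one and is essentially correct.

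There is, however, one genuine slip. You write that ``for $n$ large $\omega_n$ is coprime to every $g_i$ appearing in $E(M)$'' and later that this ``lets us assume $\gcd(\omega_n,\mathrm{char}_\Lambda M_{\Lambda\text{-tor}})=1$''. This is backwards: as $n$ grows, $\omega_n$ acquires \emph{more} irreducible factors $\omega_{m+1,m}$, so if some $g_i=\omega_{m+1,m}$ then $g_i\mid\omega_n$ for all $n>m$, and the gcd condition fails for all large $n$. The fix is easy: treat the factors $\Lambda/\omega_{m+1,m}^{e}$ separately. For $n>m$ one has $\omega_n=\omega_{m+1,m}\cdot h$ with $h$ coprime to $\omega_{m+1,m}$, and a direct check gives
\[
(\Lambda/\omega_{m+1,m}^{e})[\omega_n]\simeq \Lambda/\omega_{m+1,m},\qquad (\Lambda/\omega_{m+1,m}^{e})\big/\omega_n\simeq \Lambda/\omega_{m+1,m},
\]
both of $W$-rank $\deg\omega_{m+1,m}$, independent of $n$. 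So these factors contribute a constant to both quantities, and your conclusion stands with a possibly larger constant $c$. (In fact your snake-lemma comparison is even cleaner than you suggest: since the kernel and cokernel of a pseudo-isomorphism are finite, hence of $W$-rank $0$, the $W$-ranks of $M[\omega_n]$ and $M/\omega_nM$ agree \emph{exactly} with those of $E(M)$, not merely up to a bounded error.)

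You are right that the displayed formula in the statement has a typo: $\omega_n$ has degree $p^{n-1}$, so the leading term is $p^{n-1}\cdot\corank_\Lambda X$, which is indeed how the lemma is used in the proof of Proposition~\ref{prop 7.2.8}.
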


Note first that by the local vanishing of $H^{2}$ of the Barsotti-Tate groups (Lemma \ref{lemma 6.1.1}-(2)), we have $$\sha^{2}_{K}(\hat{J}_{n}^{ord}[p^{\infty}])=H^{2}(K^{S}/K, \hat{J}_{n}^{ord}[p^{\infty}]) \quad \mathrm{and} \quad \sha^{2}_{K}(\mathfrak{g})=H^{2}(K^{S}/K, \mathfrak{g}).$$ 

\medskip

\begin{prop}\label{prop 7.2.8} (1) $\rank_{W}(\sha^{1}_{K}(\hat{J}_{n}^{ord}[p^{\infty}])^{\vee})=\rank_{W}(\sha^{2}_{K}(\hat{J'}_{n}^{ord}[p^{\infty}])^{\vee})$. In particular, $\sha^{1}_{K}(\hat{J}_{n}^{ord}[p^{\infty}])$ is finite if and only if $\sha^{2}_{K}(\hat{J'}_{n}^{ord}[p^{\infty}])=0$.

\smallskip

(2) $\rank_{\Lambda}(\sha^{1}(\mathfrak{g})^{\vee})=\rank_{\Lambda}(\sha^{2}(\mathfrak{g}')^{\vee})$. In particular, $\sha^{1}(\mathfrak{g})^{\vee}$ is $\Lambda$-torsion if and only if $\sha^{2}(\mathfrak{g}')=0$.
\end{prop}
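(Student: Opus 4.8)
The plan is to deduce both statements from the Poitou-Tate pairings of Proposition \ref{prop 5.3.9} together with the control-type short exact sequences from Corollary \ref{cor 4.3.13} and the finiteness/freeness facts already established for $H^2$. For part (1), fix $n$ and $m$. Proposition \ref{prop 5.3.9} gives a perfect pairing of finite groups $\sha^1_K(\hat J^{ord}_n[p^m]) \times \sha^2_K(\hat{J'}^{ord}_n[p^m])^\iota \to Q(W)/W$, so in particular $|\sha^1_K(\hat J^{ord}_n[p^m])| = |\sha^2_K(\hat{J'}^{ord}_n[p^m])|$. The strategy is then to relate the $p^m$-level $\sha$'s to the $p^\infty$-level ones: from the Kummer sequence $0\to \hat J^{ord}_n[p^m]\to \hat J^{ord}_n[p^\infty]\xrightarrow{p^m}\hat J^{ord}_n[p^\infty]\to 0$ one gets, compatibly on $\sha$'s, an exact sequence
\begin{equation*}
0 \to \frac{\sha^1_K(\hat J^{ord}_n[p^\infty])}{p^m} \to \sha^2_K(\hat J^{ord}_n[p^m]) \to \sha^2_K(\hat J^{ord}_n[p^\infty])[p^m] \to 0,
\end{equation*}
plus the surjection $\sha^1_K(\hat J^{ord}_n[p^m])\twoheadrightarrow \sha^1_K(\hat J^{ord}_n[p^\infty])[p^m]$ with kernel a quotient of $\sha^0$ (which is finite, bounded in $m$). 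Using Lemma \ref{lemma 7.2.7}-(1): $|\sha^1_K(\hat J^{ord}_n[p^\infty])[p^m]|$ grows like $p^{m\cdot \rank_{\mathbb Z_p}W\cdot \corank_W \sha^1_K(\hat J^{ord}_n[p^\infty])}$ up to a constant, while $|\frac{\sha^1_K(\hat J^{ord}_n[p^\infty])}{p^m}|$ is bounded; and since $\sha^2_K(\hat J^{ord}_n[p^\infty])=H^2(K^S/K,\hat J^{ord}_n[p^\infty])$ is $W$-cofree (Lemma \ref{lemma 7.1.3}-(2)), $|\sha^2_K(\hat J^{ord}_n[p^\infty])[p^m]|$ grows like $p^{m\cdot \rank_{\mathbb Z_p}W\cdot \corank_W \sha^2_K(\hat J^{ord}_n[p^\infty])}$. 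Comparing the exponential growth rates in $m$ of both sides of $|\sha^1_K(\hat J^{ord}_n[p^m])|=|\sha^2_K(\hat{J'}^{ord}_n[p^m])|$ forces
\begin{equation*}
\corank_W \sha^1_K(\hat J^{ord}_n[p^\infty]) = \corank_W \sha^2_K(\hat{J'}^{ord}_n[p^\infty]),
\end{equation*}
which is the $W$-rank equality of the duals; the "in particular" is immediate since $\sha^2_K(\hat{J'}^{ord}_n[p^\infty])$ is $W$-cofree, so it vanishes iff its corank is zero.

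For part (2), the plan is to run the identical argument one level up, replacing $[p^m]$ by $[\omega_n]$, $\hat J^{ord}_n[p^\infty]$ by $\mathfrak g$, and Lemma \ref{lemma 7.2.7}-(1) by Lemma \ref{lemma 7.2.7}-(2). Concretely, from Corollary \ref{cor 4.3.13}, $0\to \hat J^{ord}_n[p^\infty]\to \mathfrak g\xrightarrow{\gamma^{p^{n-1}}-1}\mathfrak g\to 0$, one extracts compatibly on $\sha$'s an exact sequence
\begin{equation*}
0 \to \frac{\sha^1_K(\mathfrak g)}{\omega_n} \to \sha^2_K(\hat J^{ord}_n[p^\infty]) \to \sha^2_K(\mathfrak g)[\omega_n] \to 0
\end{equation*}
(using $\sha^2_K(\hat J^{ord}_n[p^\infty])=H^2(K^S/K,\hat J^{ord}_n[p^\infty])$ and $\sha^2_K(\mathfrak g)=H^2(K^S/K,\mathfrak g)$ from the local vanishing noted before the proposition), together with a surjection $\sha^1_K(\hat J^{ord}_n[p^\infty])\twoheadrightarrow \sha^1_K(\mathfrak g)[\omega_n]$ whose kernel is a quotient of $\mathfrak g(K)/\omega_n\mathfrak g(K)$, finite and bounded in $n$ by Lemma \ref{lemma 7.0.2}. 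Now feed in part (1): $\corank_W \sha^1_K(\hat J^{ord}_n[p^\infty]) = \corank_W \sha^2_K(\hat{J'}^{ord}_n[p^\infty])$. Applying the growth estimates of Lemma \ref{lemma 7.2.7}-(2) to all four cofinitely generated $\Lambda$-modules $\sha^1_K(\mathfrak g)$, $\sha^2_K(\mathfrak g)$, $\sha^1_K(\mathfrak g')$, $\sha^2_K(\mathfrak g')$, and comparing the coefficients of $p^n$ in the resulting identity of $W$-coranks, yields $\rank_\Lambda \sha^1_K(\mathfrak g) = \rank_\Lambda \sha^2_K(\mathfrak g')$, i.e. the stated $\Lambda$-rank equality of the duals. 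Again $\sha^2_K(\mathfrak g')=H^2(K^S/K,\mathfrak g')$ is $\Lambda$-cofree by Corollary \ref{cor 7.1.4}-(2), so it is $\Lambda$-torsion iff it is zero iff its $\Lambda$-corank vanishes.

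The step I expect to be the main obstacle is establishing the $\sha$-compatible exact sequences cleanly — i.e. checking that the long exact cohomology sequences attached to $0\to \hat J^{ord}_n[p^m]\to\hat J^{ord}_n[p^\infty]\xrightarrow{p^m}\hat J^{ord}_n[p^\infty]\to 0$ and to $0\to\hat J^{ord}_n[p^\infty]\to\mathfrak g\xrightarrow{\gamma^{p^{n-1}}-1}\mathfrak g\to 0$ restrict to the $\sha$-subgroups, which requires a diagram chase comparing the global-to-local restriction maps level by level (using that $H^2_{loc}$ of the Barsotti-Tate groups vanishes, Lemma \ref{lemma 6.1.1}-(2), so the relevant local $\sha^2$ terms are the full local $H^2$ and behave well). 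Once those two sequences are in hand, everything else is bookkeeping with coranks via Lemma \ref{lemma 7.2.7}; no further input about the arithmetic of the Jacobians is needed beyond what is already quoted.
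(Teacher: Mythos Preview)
Your approach is essentially the paper's: compare $|\sha^1_K(\hat J^{ord}_n[p^m])|=|\sha^2_K(\hat{J'}^{ord}_n[p^m])|$ from Proposition~\ref{prop 5.3.9}, relate each side to the $p^\infty$-level $\sha$'s via the Kummer sequence, and read off the corank via Lemma~\ref{lemma 7.2.7}; then repeat with $\omega_n$ in place of $p^m$. The one imprecision is in the form of the auxiliary exact sequences. The kernel of $\sha^1_K(\hat J^{ord}_n[p^m])\to \sha^1_K(\hat J^{ord}_n[p^\infty])[p^m]$ is not a quotient of $\sha^0$, and the left term in your $\sha^2$-sequence is not $\sha^1_K(\hat J^{ord}_n[p^\infty])/p^m$; nor is the right map surjective in general. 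What the snake lemma actually gives (and what the paper writes) is
\[
0 \to A_{m,n} \to \sha^1_K(\hat J^{ord}_n[p^m]) \to \sha^1_K(\hat J^{ord}_n[p^\infty])[p^m] \to B_{m,n},
\]
\[
0 \to C_{m,n} \to \sha^2_K(\hat{J'}^{ord}_n[p^m]) \to \sha^2_K(\hat{J'}^{ord}_n[p^\infty])[p^m] \to D_{m,n},
\]
where $A_{m,n},B_{m,n}$ are the kernel and cokernel of $\hat J^{ord}_n[p^\infty](K)/p^m \to \prod_{v\in S}\hat J^{ord}_n[p^\infty](K_v)/p^m$, and $C_{m,n},D_{m,n}$ are the kernel and cokernel of $H^1(K^S/K,\hat{J'}^{ord}_n[p^\infty])/p^m \to \prod_{v\in S} H^1(K_v,\hat{J'}^{ord}_n[p^\infty])/p^m$. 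These sit inside cofinitely generated $W$-modules modulo $p^m$, so by Lemma~\ref{lemma 7.2.7}-(1) their sizes stabilize for large $m$, and the growth-rate comparison goes through exactly as you outline. The $\omega_n$-version is identical with $E_n,F_n,G_n,H_n$ playing the analogous roles. So the diagram chase you flag as ``the main obstacle'' is indeed where the work is, and once you correct the error terms as above there is nothing further to do.
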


The second parts of the assertions (1) and (2) follow from the cofreeness of $H^{2}$. (Lemma \ref{lemma 7.1.3}-(2) and Corollary \ref{cor 7.1.4}-(2))

\begin{proof} (1) We let 
\begin{align*}
&A_{m, n}:=\mathrm{Ker} \left( \frac{\hat{J}_{n}^{ord}[p^{\infty}](K)}{p^{m}\hat{J}_{n}^{ord}[p^{\infty}](K)} \rightarrow \prod_{v \in S} \frac{\hat{J}_{n}^{ord}[p^{\infty}](K_{v})}{p^{m}\hat{J}_{n}^{ord}[p^{\infty}](K_{v})} \right) \\
&B_{m, n}:=\mathrm{Coker} \left( \frac{\hat{J}_{n}^{ord}[p^{\infty}](K)}{p^{m}\hat{J}_{n}^{ord}[p^{\infty}](K)} \rightarrow \prod_{v \in S} \frac{\hat{J}_{n}^{ord}[p^{\infty}](K_{v})}{p^{m}\hat{J}_{n}^{ord}[p^{\infty}](K_{v})} \right)
\end{align*} and
\begin{align*}
&C_{m, n}:=\mathrm{Ker} \left(  \frac{H^{1}(K^{S}/K, \hat{J'}_{n}^{ord}[p^{\infty}])}{p^{m}H^{1}(K^{S}/K, \hat{J'}_{n}^{ord}[p^{\infty}])} \rightarrow \prod_{v \in S} \frac{H^{1}(K_{v}, \hat{J'}_{n}^{ord}[p^{\infty}])}{p^{m}H^{1}(K_{v}, \hat{J'}_{n}^{ord}[p^{\infty}])} \right)\\
&D_{m, n}:=\mathrm{Coker} \left(  \frac{H^{1}(K^{S}/K, \hat{J'}_{n}^{ord}[p^{\infty}])}{p^{m}H^{1}(K^{S}/K, \hat{J'}_{n}^{ord}[p^{\infty}])} \rightarrow \prod_{v \in S} \frac{H^{1}(K_{v}, \hat{J'}_{n}^{ord}[p^{\infty}])}{p^{m}H^{1}(K_{v}, \hat{J'}_{n}^{ord}[p^{\infty}])} \right).
\end{align*} 

\smallskip

For fixed $n$ and sufficiently large $m$, sizes of $A_{m, n}, B_{m, n}, C_{m, n}, D_{m, n}$ are constants by Lemma \ref{lemma 7.2.7}-(1).\\

For $i=0, 1$, we have the following commutative diagram:
\begin{center}
\begin{tikzcd}[column sep=4em, row sep=2em]
\frac{H^{i}(K^S/K, \hat{J}^{ord}_{n}[p^{\infty}])}{p^{m}H^{i}(K^S/K, \hat{J}^{ord}_{n}[p^{\infty}])} \arrow[d] \arrow[r] \arrow[d, "Res"'] \arrow[r, "\hookrightarrow"] & H^{i+1}(K^S/K, \hat{J}^{ord}_{n}[p^{m}]) \arrow[r] \arrow[d]  \arrow[d, "Res"] \arrow[r, "\twoheadrightarrow"] & H^{i+1}(K^S/K, \hat{J}^{ord}_{n}[p^{\infty}])[p^m] \arrow[d]  \arrow[d, "Res"] \\
\displaystyle \prod_{v \in S} \frac{H^{i}(K_v, \hat{J}^{ord}_{n}[p^{\infty}])}{p^{m}H^{i}(K_v, \hat{J}^{ord}_{n}[p^{\infty}])} \arrow[r] \arrow[r, "\hookrightarrow"] & \displaystyle \prod_{\substack{v \in S}}H^{i+1}(K_v, \hat{J}^{ord}_{n}[p^{m}]) \arrow[r] \arrow[r, "\twoheadrightarrow"] & \displaystyle \prod_{\substack{v \in S}}H^{i+1}(K_v, \hat{J}^{ord}_{n}[p^{\infty}])[p^m]
\end{tikzcd}
\end{center}

By the snake lemma, we have the following two exact sequences:
\begin{align*}
0 \rightarrow A_{m, n} \rightarrow \sha^{1}_{K}(\hat{J}_{n}^{ord}[p^{m}]) \rightarrow \sha^{1}_{K}(\hat{J}_{n}^{ord}[p^{\infty}])[p^{m}] \rightarrow B_{m, n}  \\
0 \rightarrow C_{m, n} \rightarrow \sha^{2}_{K}(\hat{J'}_{n}^{ord}[p^{m}]) \rightarrow \sha^{2}_{K}(\hat{J'}_{n}^{ord}[p^{\infty}])[p^{m}] \rightarrow D_{m, n}
\end{align*}

\medskip

By the Poitou-Tate duality (Proposition \ref{prop 5.3.9}), we know that $\sha^{1}_{K}(\hat{J}_{n}^{ord}[p^{m}])$ and $\sha^{2}_{K}(\hat{J'}_{n}^{ord}[p^{m}])$ have same size. Since sizes of the $A_{m, n}, B_{m, n}, C_{m, n}, D_{m, n}$ are independent of $m$ for sufficiently large $m$, Lemma \ref{lemma 7.2.7}-(1) shows $\rank_{W}(\sha^{1}_{K}(\hat{J}_{n}^{ord}[p^{\infty}])^{\vee})=\rank_{W}(\sha^{2}_{K}(\hat{J'}_{n}^{ord}[p^{\infty}])^{\vee})$.\\

(2) We let
\begin{align*}
&E_{n}:=\mathrm{Ker} \left( \frac{\mathfrak{g}(K)}{\omega_{n}\mathfrak{g}(K)} \rightarrow \prod_{v \in S} \frac{\mathfrak{g}(K_{v})}{\omega_{n}\mathfrak{g}(K_{v})} \right)\\
&F_{n}:=\mathrm{Coker} \left( \frac{\mathfrak{g}(K)}{\omega_{n}\mathfrak{g}(K)} \rightarrow \prod_{v \in S} \frac{\mathfrak{g}(K_{v})}{\omega_{n} \mathfrak{g}(K_{v})} \right)
\end{align*} and
\begin{align*}
&G_{n}:=\mathrm{Ker} \left( \frac{H^{1}(K^{S}/K, \mathfrak{g}')}{\omega_{n}H^{1}(K^{S}/K, \mathfrak{g}')} \rightarrow \prod_{v \in S} \frac{H^{1}(K_{v}, \mathfrak{g}')}{\omega_{n}H^{1}(K_{v}, \mathfrak{g}')} \right)\\
&H_{n}:=\mathrm{Coker} \left( \frac{H^{1}(K^{S}/K, \mathfrak{g}')}{\omega_{n}H^{1}(K^{S}/K, \mathfrak{g}')} \rightarrow \prod_{v \in S} \frac{H^{1}(K_{v}, \mathfrak{g}')}{\omega_{n}H^{1}(K_{v}, \mathfrak{g}')} \right).
\end{align*} For sufficiently large $n$, $W$-coranks of the $E_{n}, F_{n}, G_{n}, H_{n}$ are constants by Lemma \ref{lemma 7.2.7}-(2).\\

By the same token with the proof of (1), we have the following two exact sequences:
\begin{align*}
0 \rightarrow E_{n} \rightarrow \sha^{1}_{K}(\hat{J}_{n}^{ord}[p^{\infty}]) \rightarrow \sha^{1}_{K}(\mathfrak{g})[\omega_{n}] \rightarrow F_{n}  \\
0 \rightarrow G_{n} \rightarrow \sha^{2}_{K}(\hat{J'}_{n}^{ord}[p^{\infty}]) \rightarrow \sha^{2}_{K}(\mathfrak{g}')[\omega_{n}] \rightarrow H_{n}
\end{align*}

By (1), we know that $\sha^{1}_{K}(\hat{J}_{n}^{ord}[p^{\infty}])$ and $\sha^{2}_{K}(\hat{J'}_{n}^{ord}[p^{\infty}])$ have same $W$-corank. Since $W$-coranks of $E_{n}, F_{n}, G_{n}, H_{n}$ are independent of $n$ for sufficiently large $n$, Lemma \ref{lemma 7.2.7}-(2) shows that $$\rank_{\Lambda}(\sha^{1}(\mathfrak{g})^{\vee})=\rank_{\Lambda}(\sha^{2}(\mathfrak{g}')^{\vee}).$$
\end{proof}

\section{Structure of $\Lambda$-adic Selmer groups}

In this section, we study the $\Lambda$-adic Selmer groups which are defined as a kernel of certain global-to-local restriction maps. Following \cite{greenberg2000iwasawa}, we show that under mild assumptions, ``Selmer-defining map" is surjective (Corollary \ref{cor 8.2.8}). In the subsection \ref{sec 8.3}, using the twisted Flach pairing, we study the algebraic functional equation of Selmer groups between $(\alpha, \delta, \xi)$-tower and $(\delta, \alpha, \xi')$-tower which is one of the main results of this paper (Theorem \ref{theorem 8.3.10}).

\smallskip

We keep Notation \ref{No}, Convention \ref{Id} and Convention \ref{Gl} in this section.

\subsection{Basic properties} \label{sec 8.1}

We define various Selmer groups first.
\begin{definition}\label{definition 8.1.1}
\begin{enumerate}
\item[(1)] $\displaystyle \mathrm{Sel}_{K, p^m}(\hat{J}_{n}^{ord}):=\mathrm{Ker}(H^{1}(K^{S}/K, \hat{J}_{n}^{ord}[p^{m}]) \rightarrow 
\prod_{v \in S}\frac{H^{1}(K_{v}, \hat{J}_{n}^{ord}[p^{m}])}{ \hat{J}_{n}^{ord}(K_{v})/p^m \hat{J}_{n}^{ord}(K_{v})})$

\item[(2)] $\displaystyle \mathrm{Sel}_{K}(\hat{J}_{n}^{ord}):=\mathrm{Ker}(H^{1}(K^{S}/K, \hat{J}_{n}^{ord}[p^{\infty}]) \rightarrow 
\prod_{v \in S}\frac{H^{1}(K_{v}, \hat{J}_{n}^{ord}[p^{\infty}])}{ \hat{J}_{n}^{ord}(K_{v}) \otimes_{\mathbb{Z}_p} \mathbb{Q}_p/\mathbb{Z}_p})$

\item[(3)] $\displaystyle \mathrm{Sel}_{K}(J_{\infty}^{ord}):=\mathrm{Ker}(H^{1}(K^{S}/K, \mathfrak{g}) \rightarrow 
\prod_{v \in S}\frac{H^{1}(K_{v}, \mathfrak{g})}{ J_{\infty}^{ord}(K_{v}) \otimes_{\mathbb{Z}_p} \mathbb{Q}_p/\mathbb{Z}_p})=\lim_{\substack{\longrightarrow \\ n}}\mathrm{Sel}_{K}(\hat{J}_{n}^{ord})$
\end{enumerate}
\end{definition}

\smallskip

\begin{nota}\label{Selmer} We let $$s_n:\mathrm{Sel}_{K}(\hat{J}_{n}^{ord}) \rightarrow \mathrm{Sel}_{K}(J_{\infty}^{ord})[\omega_{n}]$$ be the natural restriction map. We also define $$s'_n:\mathrm{Sel}_{K}(\hat{J'}_{n}^{ord}) \rightarrow \mathrm{Sel}_{K}(J_{\infty}^{' ord})[\omega_{n}].$$ By Hida's control theorem (Theorem \ref{Theorem H}), $\mathrm{Coker}(s_n)$ and $\mathrm{Coker}(s'_n)$ are finite for all $n \geq 2$.
\end{nota}

\smallskip

\begin{remark}\label{remark 8.1.3} (1) $\mathrm{Sel}_{K, p^m}(\hat{J}_{n}^{ord})$ is finite for all $m, n$.

\smallskip

(2) We have an exact sequence  $$0 \rightarrow \frac{\hat{J}_{n}^{ord}[p^{\infty}](K)}{p^{m}\hat{J}_{n}^{ord}[p^{\infty}](K)} \rightarrow \mathrm{Sel}_{K, p^m}(\hat{J}_{n}^{ord}) \rightarrow \mathrm{Sel}_{K}(\hat{J}_{n}^{ord})[p^{m}] \rightarrow 0$$ for all $m, n$. In particular, $ \frac{ |  \mathrm{Sel}_{K, p^m}(\hat{J}_{n}^{ord}) | }{p^{m \cdot \corank_{W} \mathrm{Sel}_{K}(\hat{J}_{n}^{ord})}} $ becomes stable as $m$ goes to infinity.

\smallskip

(3) By definition of the Selmer groups, we have an injection $$\mathrm{Ker}(s_n) \hookrightarrow \mathrm{Ker} \left(H^{1}(K^S/K,\hat{J}_{n}^{ord}[p^{\infty}] ) \rightarrow H^{1}(K^S/K, \mathfrak{g})[\omega_n]\right) \simeq \frac{\mathfrak{g}(K)}{\omega_n\mathfrak{g}(K)}.$$ Hence $\mathrm{Ker}(s_n)$ is finite and bounded independent of $n$ by Lemma \ref{lemma 7.0.2}. Moreover, we have $$\displaystyle \lim_{\substack {\longrightarrow \\ n}}\mathrm{Ker}(s_n)=\lim_{\substack {\longrightarrow \\ n}}\mathrm{Coker}(s_n)=0.$$ We also have an injection
$$\displaystyle \lim_{\substack {\longleftarrow \\ n}}\mathrm{Ker}(s_n) \hookrightarrow N^{\vee}$$ by Lemma \ref{lemma 7.0.2}, where $N$ is the maximal finite $\Lambda$-submodule of $\mathfrak{g}(K)^{\vee}$.
\end{remark}

\smallskip

As a consequence of the last remark, we have the following formula relating the corank of $p$-adic Selmer groups of $(\alpha, \delta, \xi)$-tower and $(\delta, \alpha, \xi')$-tower.

\medskip

\begin{cor}\label{cor 8.1.4} (1) (Greenberg-Wiles formula) The value $\frac{\mid \mathrm{Sel}_{K, p^m}(\hat{J}_{n}^{ord}) \mid}{\mid \mathrm{Sel}_{K, p^m}(\hat{J'}_{n}^{ord}) \mid}$ becomes stationary as $m$ goes to infinity.

(2) $\corank_{W} \mathrm{Sel}_{K}(\hat{J}_{n}^{ord})=\corank_{W} \mathrm{Sel}_{K}(\hat{J'}_n^{ord})$ for all $(\alpha, \delta, \xi)$ and $n$. Hence we have $$\corank_{W} \left( \mathrm{Sel}_{K}(J_{\infty}^{ord})[\omega_{n}]\right)=\corank_{W} \left(\mathrm{Sel}_{K}(J_{\infty}^{' ord})[\omega_{n}]\right).$$\end{cor}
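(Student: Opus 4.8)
The plan is to deduce part (1) from the Greenberg--Wiles formula applied to the twisted Weil pairing, and then to read off part (2) by comparing $p$-power growth rates in $m$. By Proposition \ref{prop 5.1.5} the twisted Weil pairing exhibits $\hat{J'}_{n}^{ord}[p^m]$ as the Cartier dual $\mathrm{Hom}(\hat{J}_{n}^{ord}[p^m], \mu_{p^m})$ as a $\mathrm{Gal}(K^S/K)$-module, and by Proposition \ref{prop 5.2.7}-(2) the local Kummer conditions cutting out $\mathrm{Sel}_{K, p^m}(\hat{J}_{n}^{ord})$ and $\mathrm{Sel}_{K, p^m}(\hat{J'}_{n}^{ord})$ are exact orthogonal complements at every $v \in S$. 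Thus $\mathrm{Sel}_{K, p^m}(\hat{J}_{n}^{ord})$ and $\mathrm{Sel}_{K, p^m}(\hat{J'}_{n}^{ord})$ are a Selmer group and its dual Selmer group (both finite, by Remark \ref{remark 8.1.3}-(1)), and the Greenberg--Wiles formula (a standard consequence of the global Poitou--Tate exact sequence \cite[Theorem I.4.10]{milne2006arithmetic} already invoked for Proposition \ref{prop 5.3.9}) yields
\[
\frac{|\mathrm{Sel}_{K, p^m}(\hat{J}_{n}^{ord})|}{|\mathrm{Sel}_{K, p^m}(\hat{J'}_{n}^{ord})|} = \frac{|\hat{J}_{n}^{ord}[p^m](K)|}{|\hat{J'}_{n}^{ord}[p^m](K)|} \cdot \prod_{v \in S} \frac{|\hat{J}_{n}^{ord}(K_v)/p^m \hat{J}_{n}^{ord}(K_v)|}{|\hat{J}_{n}^{ord}[p^m](K_v)|}.
\]

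Next I would check that the right-hand side is eventually constant in $m$. The two global factors stabilize because $\hat{J}_{n}^{ord}[p^\infty](K)$ and $\hat{J'}_{n}^{ord}[p^\infty](K)$ are finite (Mordell--Weil). In the local product: a finite place $v \nmid p$ contributes $1$, since $\hat{J}_{n}^{ord}(K_v)$ is finite there and $|X/p^m X| = |X[p^m]|$ for any finite abelian group $X$; an archimedean place $v$ contributes $|\hat{J}_{n}^{ord}[p^m](K_v)|^{-1}$, which is $p^{-m c_v}$ times a constant, where $c_v$ is the $\mathbb{Z}_p$-corank of the $G_{K_v}$-invariants of $\hat{J}_{n}^{ord}[p^\infty](\overline{\mathbb{Q}})$ (the full corank for $v$ complex, exactly half for $v$ real, using the cofreeness and the eigenspace balance of Theorem \ref{theorem 4.3.11}); and a place $v \mid p$ contributes $p^{m d_v}$ times a constant, where $d_v$ is the $\mathbb{Z}_p$-rank of $\hat{J}_{n}^{ord}(K_v)$, computed from \cite[Lemma 5.5]{hida2015analytic} (as in the proof of Proposition \ref{prop 6.4.12}) and Proposition \ref{prop 4.3.12}. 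Comparing these expressions, and using $[K:\mathbb{Q}] = \sum_{v \mid p}[K_v:\mathbb{Q}_p] = r_1 + 2r_2$ with $r_1, r_2$ the numbers of real and complex places, one gets $\sum_{v \mid p} d_v = \sum_{v \mid \infty} c_v$; hence the $p$-power growth cancels, the right-hand side stabilizes, and (1) follows.

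For part (2), the exact sequence of Remark \ref{remark 8.1.3}-(2), the finiteness of $\hat{J}_{n}^{ord}[p^\infty](K)$, and Lemma \ref{lemma 7.2.7}-(1) give $|\mathrm{Sel}_{K, p^m}(\hat{J}_{n}^{ord})| = p^{m \cdot \rank_{\mathbb{Z}_p}W \cdot \corank_{W}\mathrm{Sel}_{K}(\hat{J}_{n}^{ord})} \cdot (\mathrm{const})$ for $m \gg 0$, and likewise for the $(\delta, \alpha, \xi')$-tower. By (1) the ratio of these two quantities is constant for $m \gg 0$, so their exponents agree and $\corank_{W}\mathrm{Sel}_{K}(\hat{J}_{n}^{ord}) = \corank_{W}\mathrm{Sel}_{K}(\hat{J'}_{n}^{ord})$. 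Finally, $s_n$ and $s'_n$ have finite kernel and cokernel (Hida's control theorem, Theorem \ref{Theorem H}, and Remark \ref{remark 8.1.3}-(3)), so $\corank_{W}\mathrm{Sel}_{K}(J_{\infty}^{ord})[\omega_n] = \corank_{W}\mathrm{Sel}_{K}(\hat{J}_{n}^{ord})$ and similarly for $J_{\infty}^{' ord}$, which gives the last assertion.

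I expect the main obstacle to be the $v \mid p$ versus $v \mid \infty$ bookkeeping in step (1): one must pin down the $\mathbb{Z}_p$-rank of $\hat{J}_{n}^{ord}(K_v)$ for $v \mid p$ through Hida's dimension formula, and the $\mathbb{Z}_p$-corank of the archimedean local invariants through the parity statement of Theorem \ref{theorem 4.3.11}-(2), and then verify that these two aggregate contributions cancel exactly — this is precisely the point at which the balanced $\Lambda$-adic structure of $\mathfrak{g}$ (equivalently, the duality between the two towers) enters. Everything else is routine manipulation with finite groups and $W$-coranks.
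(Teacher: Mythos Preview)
Your proposal is correct and follows essentially the same route as the paper: invoke the Greenberg--Wiles formula using the fact (Proposition \ref{prop 5.1.5} and Proposition \ref{prop 5.2.7}-(2)) that $\hat{J}_n^{ord}[p^m]$ and $\hat{J'}_n^{ord}[p^m]$ are Cartier dual with mutually orthogonal local Kummer conditions, then read off the corank equality from Remark \ref{remark 8.1.3}-(2) and pass to $\omega_n$-torsion via the control theorem. The paper's own proof is a single sentence citing exactly these inputs; you have simply unpacked the stabilization of the local product, which the paper leaves to the reader. Your identification of the $v\mid p$ versus $v\mid\infty$ cancellation (via \cite[Lemma 5.5]{hida2015analytic} and Theorem \ref{theorem 4.3.11}-(2)) is the right bookkeeping and is the only place any content enters beyond the formal duality.
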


\begin{proof} (1) follows from the Greenberg-Wiles formula \cite[Theorem 2.19]{darmon1995fermat} combined with Proposition \ref{prop 5.2.7}-(2). The first statement of (2) is a direct consequence of (1) and Remark \ref{remark 8.1.3}-(2). The second statement of (2) follows from Hida's control theorem (Theorem \ref{Theorem H}).
\end{proof}

\smallskip

\subsection{Surjectivity of global-to-local restriction map} \label{sec 8.2}

Recall that $\mathrm{Sel}_{K}(J_{\infty}^{ord})$ is defined as a kernel of global-to-local restriction map $\displaystyle H^{1}(K^{S}/K, \mathfrak{g}) \rightarrow 
\prod_{v \in S}\frac{H^{1}(K_{v}, \mathfrak{g})}{ J_{\infty}^{ord}(K_{v}) \otimes_{\mathbb{Z}_p} \mathbb{Q}_p/\mathbb{Z}_p}$. We let $C_{\infty}(K)$ be the \textbf{cokernel of this restriction map.} In this subsection, we show that this group is ``small" under mild assumptions. (Corollary \ref{cor 8.2.8}).\\

We first construct various exact sequences induced from the Poitou-Tate sequence. For a finitely or a cofinitely generated $\mathbb{Z}_p$-module $M$, we let $\displaystyle T_{p}M:=\lim_{\substack {\longleftarrow \\ n}}M[p^n] \simeq \mathrm{Hom}_{\mathbb{Z}_p}(\mathbb{Q}_p/\mathbb{Z}_p,M)$. Note that for $M$ with finite cardinality, we have $T_pM=0$ and $\mathrm{Ext}^{1}_{\mathbb{Z}_p}(\mathbb{Q}_p/\mathbb{Z}_p,M) \simeq M$ where the isomorphism is functorial in $M$. Hence the exact sequence $0 \rightarrow Q \rightarrow R \rightarrow U \rightarrow V \rightarrow 0$ of cofinitely generated $\mathbb{Z}_p$-modules with finite $Q$ induces another exact sequence $0 \rightarrow T_pR \rightarrow T_pU \rightarrow T_pV$. 

\smallskip

\begin{prop}\label{prop 8.2.5} (1) We have the following four exact sequences:
\begin{align}
\label{e_12}\tag{W} \displaystyle 0 \rightarrow \mathrm{Sel}_{K}(J_{\infty}^{ord}) \rightarrow H^{1}(K^{S}/K, \mathfrak{g}) \rightarrow  
\prod_{v \in S} \frac{H^{1}(K_{v}, \mathfrak{g}) }{J_{\infty}^{ord}(K_{v}) \otimes_{\mathbb{Z}_p} \mathbb{Q}_p/\mathbb{Z}_p} \rightarrow C_{\infty}(K) \rightarrow 0\\
\label{e_13}\tag{X}0 \rightarrow C_{\infty}(K) \rightarrow \left(\lim_{\substack{\longleftarrow \\ n}} \lim_{\substack{\longleftarrow \\ m}} \mathrm{Sel}_{K, p^m}(\hat{J'}_{n}^{ord})\right)^{\vee} \rightarrow H^{2}(K^{S}/K, \mathfrak{g}) \rightarrow 0\\
\label{e_14}\tag{Y}\displaystyle 0 \rightarrow \mathfrak{F}\left(\mathfrak{g}'(K)^{\vee}\right) \rightarrow \lim_{\substack{\longleftarrow \\ n}} \lim_{\substack{\longleftarrow \\ m}} \mathrm{Sel}_{K, p^m}(\hat{J'}_{n}^{ord}) \rightarrow \lim_{\substack {\longleftarrow \\ n}}T_{p}\mathrm{Sel}_{K}(\hat{J'}_{n}^{ord}) \rightarrow 0 \\
\label{e_15}\tag{Z}0 \rightarrow \lim_{\substack {\longleftarrow \\ n}}T_{p}\mathrm{Sel}_{K}(\hat{J'}_{n}^{ord}) \rightarrow \mathrm{Hom}_{\Lambda}(\mathrm{Sel}_{K}(J_{\infty}^{' ord})^{\vee}, \Lambda) \rightarrow \lim_{\substack {\longleftarrow \\ n}}T_{p}\mathrm{Coker}(s'_n)
\end{align}

(2) For all places $v$ of $K$ dividing $p$, let $$\epsilon_{v}:=\corank_{\Lambda}(J_{\infty}^{ord}(K_{v}) \otimes_{\mathbb{Z}_p} \mathbb{Q}_p/\mathbb{Z}_p)-\frac{1}{2}[K_v:\mathbb{Q}_p]\cdot \corank_{\Lambda}\mathfrak{g}(\overline{K_v}).$$  Note that $\epsilon_{v}$ is non-negative by Proposition \ref{prop 6.4.12}-(1). We have the following identity:$$\displaystyle \corank_{\Lambda}\left(\mathrm{Sel}_{K}(J_{\infty}^{ord})\right)=\corank_{\Lambda}\left(H^{2}(K^{S}/K, \mathfrak{g})\right)+\corank_{\Lambda}\left(C_{\infty}(K)\right)+\sum_{\substack{v \mid p}}\epsilon_{v}.$$
\end{prop}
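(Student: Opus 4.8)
The plan is to derive \eqref{e_12} from the definitions, to produce \eqref{e_13}, \eqref{e_14} and \eqref{e_15} by feeding finite-level duality statements into suitable direct and inverse limits, and then to read off part (2) by taking $\Lambda$-coranks in \eqref{e_12}. Sequence \eqref{e_12} is tautological, $\mathrm{Sel}_{K}(J_{\infty}^{ord})$ and $C_{\infty}(K)$ being the kernel and the cokernel of the displayed map. For \eqref{e_13} I would run the Cassels--Poitou--Tate exact sequence (\cite[Theorem I.4.10]{milne2006arithmetic}) for the finite module $\hat{J}_{n}^{ord}[p^{m}]$ with its Kummer local conditions. The twisted Weil pairing (Proposition \ref{prop 5.1.5}, Corollary \ref{cor 5.1.6}) identifies the Cartier-type dual of $\hat{J}_{n}^{ord}[p^{m}]$ with $\hat{J'}_{n}^{ord}[p^{m}]$ as a Galois module, compatibly with the $\Lambda$-actions up to the involution $\iota$, and Proposition \ref{prop 5.2.7}-(2) says the Kummer conditions of the two towers are exact orthogonal complements; hence the dual Selmer group attached to $\mathrm{Sel}_{K,p^{m}}(\hat{J}_{n}^{ord})$ is exactly $\mathrm{Sel}_{K,p^{m}}(\hat{J'}_{n}^{ord})$. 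Passing to $\varinjlim_{m}$ and then $\varinjlim_{n}$ (direct limits are exact and commute with the finite product over $S$), and using the local vanishing $H^{2}(K_{v},\mathfrak{g})=0$ of Lemma \ref{lemma 6.1.1}-(2) so that $\varinjlim_{n}\varinjlim_{m}\sha^{2}_{K}(\hat{J}_{n}^{ord}[p^{m}])=H^{2}(K^{S}/K,\mathfrak{g})$, one obtains an exact sequence that begins like \eqref{e_12}; Pontryagin duality turns the $\varinjlim$ of the dual Selmer groups into the dual of $\varprojlim_{n}\varprojlim_{m}\mathrm{Sel}_{K,p^{m}}(\hat{J'}_{n}^{ord})$, and the part of this sequence lying to the right of $C_{\infty}(K)$ is exactly \eqref{e_13}.

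For \eqref{e_14} I would apply $\varprojlim_{m}$ then $\varprojlim_{n}$ to the exact sequence of Remark \ref{remark 8.1.3}-(2) for the dual tower. Since $\hat{J'}_{n}^{ord}[p^{\infty}](K)=\mathfrak{g}'(K)[\omega_{n}]$ is finite (Mordell--Weil), all systems are Mittag--Leffler and $\varprojlim$ stays exact; one has $\varprojlim_{m}\mathrm{Sel}_{K}(\hat{J'}_{n}^{ord})[p^{m}]=T_{p}\mathrm{Sel}_{K}(\hat{J'}_{n}^{ord})$, while $\varprojlim_{n}\varprojlim_{m}\frac{\hat{J'}_{n}^{ord}[p^{\infty}](K)}{p^{m}\hat{J'}_{n}^{ord}[p^{\infty}](K)}$ is $\mathfrak{F}(\mathfrak{g}'(K)^{\vee})$ by the definition of $\mathfrak{F}$ in \S\ref{sec 6.2} together with $\mathfrak{g}'(K)=\varinjlim_{n}\mathfrak{g}'(K)[\omega_{n}]$. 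Finally \eqref{e_15} comes from applying $T_{p}=\mathrm{Hom}_{\mathbb{Z}_{p}}(\mathbb{Q}_{p}/\mathbb{Z}_{p},-)$ to the control four-term exact sequence $0\to\mathrm{Ker}(s'_{n})\to\mathrm{Sel}_{K}(\hat{J'}_{n}^{ord})\xrightarrow{s'_{n}}\mathrm{Sel}_{K}(J_{\infty}^{' ord})[\omega_{n}]\to\mathrm{Coker}(s'_{n})\to 0$: the outer terms are finite (Remark \ref{remark 8.1.3}-(3) and Hida's control theorem \ref{Theorem H}), so their $T_{p}$ vanishes and left-exactness of $T_{p}$ gives $0\to T_{p}\mathrm{Sel}_{K}(\hat{J'}_{n}^{ord})\to T_{p}(\mathrm{Sel}_{K}(J_{\infty}^{' ord})[\omega_{n}])\to T_{p}\mathrm{Coker}(s'_{n})$; one then takes $\varprojlim_{n}$, uses $T_{p}(\mathrm{Sel}_{K}(J_{\infty}^{' ord})[\omega_{n}])=(T_{p}\mathrm{Sel}_{K}(J_{\infty}^{' ord}))[\omega_{n}]$, and invokes the identification $\varprojlim_{n}(T_{p}\mathrm{Sel}_{K}(J_{\infty}^{' ord}))[\omega_{n}]\simeq\mathrm{Hom}_{\Lambda}(\mathrm{Sel}_{K}(J_{\infty}^{' ord})^{\vee},\Lambda)$ (the reflexive-hull computation of \cite{Lee2018}).

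For part (2), all four terms of \eqref{e_12} are cofinitely generated $\Lambda$-modules (Corollary \ref{cor 4.2.10}, Lemma \ref{lemma 6.1.1}, Corollary \ref{cor 7.1.4}, Proposition \ref{prop 6.4.12}), so $\corank_{\Lambda}$ is additive along it:
\[
\corank_{\Lambda}\mathrm{Sel}_{K}(J_{\infty}^{ord})=\corank_{\Lambda}H^{1}(K^{S}/K,\mathfrak{g})+\corank_{\Lambda}C_{\infty}(K)-\sum_{v\in S}\corank_{\Lambda}\frac{H^{1}(K_{v},\mathfrak{g})}{J_{\infty}^{ord}(K_{v})\otimes_{\mathbb{Z}_{p}}\mathbb{Q}_{p}/\mathbb{Z}_{p}}.
\]
At a place $v\nmid p$ (including archimedean $v$, since $p$ is odd) both $H^{1}(K_{v},\mathfrak{g})$ and $J_{\infty}^{ord}(K_{v})\otimes\mathbb{Q}_{p}/\mathbb{Z}_{p}$ have $\Lambda$-corank $0$, the latter because $\hat{J}_{n}^{ord}(K_{v})$ is finite, so only $v\mid p$ contributes. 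Inserting the global Euler characteristic formula (Proposition \ref{prop 7.1.6}) for $\corank_{\Lambda}H^{1}(K^{S}/K,\mathfrak{g})$ and the local one (Lemma \ref{lemma 6.1.2}-(2)) for $\corank_{\Lambda}H^{1}(K_{v},\mathfrak{g})$, the $v\mid p$ summand becomes $\frac{1}{2}[K_{v}:\mathbb{Q}_{p}]\corank_{\Lambda}\mathfrak{g}(\overline{K_{v}})-\epsilon_{v}$ by the definition of $\epsilon_{v}$. Since $\corank_{\Lambda}\mathfrak{g}(\overline{K_{v}})=\corank_{\Lambda}\mathfrak{g}(\overline{\mathbb{Q}})$ (Proposition \ref{prop 4.3.12}) and $\sum_{v\mid p}[K_{v}:\mathbb{Q}_{p}]=[K:\mathbb{Q}]$, the term $\sum_{v\mid p}\frac{1}{2}[K_{v}:\mathbb{Q}_{p}]\corank_{\Lambda}\mathfrak{g}(\overline{K_{v}})$ equals $\frac{1}{2}[K:\mathbb{Q}]\corank_{\Lambda}\mathfrak{g}(\overline{\mathbb{Q}})$, which cancels the matching term of the global Euler characteristic formula; what remains is exactly $\corank_{\Lambda}\mathrm{Sel}_{K}(J_{\infty}^{ord})=\corank_{\Lambda}H^{2}(K^{S}/K,\mathfrak{g})+\corank_{\Lambda}C_{\infty}(K)+\sum_{v\mid p}\epsilon_{v}$.

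The main obstacle I expect is the bookkeeping in part (1): extracting the Cassels--Poitou--Tate sequence in exactly the shape needed for \eqref{e_13} --- especially checking that the dual local conditions are precisely the Kummer images of the $(\delta,\alpha,\xi')$-tower, where Proposition \ref{prop 5.2.7}-(2) does the essential work, and that $\iota$ is the only twist that intervenes --- together with the exactness of the iterated limits in \eqref{e_13}--\eqref{e_15} and the two module-theoretic identifications $\varprojlim_{n}\varprojlim_{m}(\cdot)=\mathfrak{F}(\mathfrak{g}'(K)^{\vee})$ and $\varprojlim_{n}(T_{p}\mathrm{Sel}_{K}(J_{\infty}^{' ord}))[\omega_{n}]=\mathrm{Hom}_{\Lambda}(\mathrm{Sel}_{K}(J_{\infty}^{' ord})^{\vee},\Lambda)$. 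By contrast, part (2) is essentially formal once \eqref{e_12} and the Euler characteristic formulas are available.
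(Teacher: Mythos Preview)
Your proposal is correct and follows essentially the same route as the paper. The only cosmetic difference is that for \eqref{e_13} and \eqref{e_14} the paper quotes ready-made exact sequences from \cite{coates2000galois} (already at the $p^{\infty}$ level in $m$) and then takes the limit in $n$, whereas you rebuild those sequences by starting at the $p^{m}$ level and passing to $\varinjlim_{m}$ or $\varprojlim_{m}$ first; the underlying Poitou--Tate input, the identification of the dual Selmer conditions via Proposition~\ref{prop 5.2.7}-(2), the $T_{p}$ argument for \eqref{e_15} using finiteness of $\mathrm{Ker}(s'_{n})$, and the corank computation for part~(2) are all the same as in the paper (which records the needed $\varprojlim_{n}T_{p}(-[\omega_{n}])\simeq \mathrm{Hom}_{\Lambda}(-,\Lambda)$ identification as Lemma~\ref{lemma A.0.3}-(2)).
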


\smallskip

\begin{proof} By \cite[page 11]{coates2000galois}, we have an exact sequence 
\begin{align*}
0 \rightarrow \mathrm{Sel}_{K}(\hat{J}_{n}^{ord}) \rightarrow H^{1}(K^S/K, \hat{J}^{ord}_{n}[p^{\infty}]) \rightarrow \prod_{v \in S} \frac{H^{1}(K_{v}, \hat{J}^{ord}_{n}[p^{\infty}]) }{\hat{J}_{n}^{ord}(K_{v}) \otimes_{\mathbb{Z}_p} \mathbb{Q}_p/\mathbb{Z}_p} \\
 \rightarrow \left(\lim_{\substack{\longleftarrow \\ m}} \mathrm{Sel}_{K, p^m}(\hat{J'}_{n}^{ord})\right)^{\vee} \rightarrow H^{2}(K^S/K, \hat{J}^{ord}_{n}[p^{\infty}]) \rightarrow 0.
\end{align*} Taking the direct limit with respect to $n$ proves \ref{e_12} and \ref{e_13}.\\

On the other hand, by \cite[Lemma 1.8]{coates2000galois}, we have an exact sequence $$\displaystyle 0 \rightarrow \hat{J'}_{n}^{ord}[p^{\infty}](K) \rightarrow  \lim_{\substack{\longleftarrow \\ m}} \mathrm{Sel}_{K, p^m}(\hat{J'}_{n}^{ord}) \rightarrow T_{p}\mathrm{Sel}_{K}(\hat{J'}_{n}^{ord}) \rightarrow 0.$$ Taking direct limit with respect to $n$ proves \ref{e_14}. \\

\smallskip

Since $\mathrm{Ker}(s'_n)$ is finite, the exact sequence $$0 \rightarrow \mathrm{Ker}(s'_n) \rightarrow \mathrm{Sel}_{K}(\hat{J'}_{n}^{ord})
 \rightarrow \mathrm{Sel}_{K}(J_{\infty}^{' ord})[\omega_{n}] \rightarrow  \mathrm{Coker}(s'_n) \rightarrow 0$$ induces another exact sequence $$0 \rightarrow T_{p}\mathrm{Sel}_{K}(\hat{J'}_{n}^{ord}) \rightarrow T_{p}(\mathrm{Sel}_{K}(J_{\infty}^{' ord})[\omega_{n}]) \rightarrow T_{p}\mathrm{Coker}(s'_n)$$ by the remark mentioned before this proposition. Taking projective limit with respect to $n$ combined with Lemma \ref{lemma A.0.3}-(2) in the appendix proves \ref{e_15}.
 
\medskip

Lastly, by Proposition \ref{prop 7.1.6}-(2) and Lemma \ref{lemma 6.1.2}-(2) we have:
\begin{align*}
&\corank_{\Lambda}\left(H^{1}(K^{S}/K, \mathfrak{g})\right)=\frac{1}{2}[K:\mathbb{Q}] \cdot \corank_{\Lambda}\mathfrak{g}(\overline{K})+\corank_{\Lambda}\left(H^{2}(K^{S}/K, \mathfrak{g})\right)\\
&\corank_{\Lambda}\left(H^{1}(K_{v}, \mathfrak{g})\right)=0 \quad (v \nmid p).
\end{align*}
Hence (2) follows from the calculation of $\Lambda$-coranks of the four terms in the sequence \ref{e_12}.
\end{proof}

\smallskip

\begin{cor} \label{cor 8.2.6} (1) $\displaystyle \lim_{\substack {\longleftarrow \\ n}}T_{p}\mathrm{Sel}_{K}(\hat{J'}_{n}^{ord})$ is a torsion-free $\Lambda$-module. Hence we have a natural isomorphism $$\displaystyle \mathfrak{F}\left(\mathfrak{g}'(K)^{\vee}\right) \simeq \left(\lim_{\substack{\longleftarrow \\ n}} \lim_{\substack{\longleftarrow \\ m}} \mathrm{Sel}_{K, p^m}(\hat{J'}_{n}^{ord})\right)_{\Lambda-\mathrm{tor}}$$
and these groups vanish if $\mathfrak{g}'(K)$ is finite. 

\smallskip

(2) $\displaystyle \lim_{\substack {\longleftarrow \\ n}}T_{p}\mathrm{Sel}_{K}(\hat{J'}_{n}^{ord})=0$ if and only if $\displaystyle \lim_{\substack{\longleftarrow \\ n}} \lim_{\substack{\longleftarrow \\ m}} \mathrm{Sel}_{K, p^m}(\hat{J'}_{n}^{ord})$ is a torsion $\Lambda$-module. If this is the case, $H^{2}(K^{S}/K, \mathfrak{g})=0$ and we have isomorphisms $$\displaystyle C_{\infty}(K)^{\vee} \simeq \lim_{\substack{\longleftarrow \\ n}} \lim_{\substack{\longleftarrow \\ m}} \mathrm{Sel}_{K, p^m}(\hat{J'}_{n}^{ord}) \simeq \mathfrak{F}\left(\mathfrak{g}'(K)^{\vee}\right)$$
\end{cor}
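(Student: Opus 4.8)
The plan is to read off everything from the exact sequences \eqref{e_13}--\eqref{e_15} of Proposition~\ref{prop 8.2.5}, using only two structural inputs beyond them: that $\mathrm{Hom}_{\Lambda}(-,\Lambda)$ always takes values in torsion-free $\Lambda$-modules, and that $H^{2}(K^{S}/K,\mathfrak{g})$ is $\Lambda$-cofree (Corollary~\ref{cor 7.1.4}-(2)). Throughout, abbreviate $A:=\mathfrak{F}\!\left(\mathfrak{g}'(K)^{\vee}\right)$, $B:=\lim_{\substack{\longleftarrow \\ n}}\lim_{\substack{\longleftarrow \\ m}}\mathrm{Sel}_{K, p^m}(\hat{J'}_{n}^{ord})$ and $C:=\lim_{\substack{\longleftarrow \\ n}}T_{p}\mathrm{Sel}_{K}(\hat{J'}_{n}^{ord})$, so that \eqref{e_14} reads $0\to A\to B\to C\to 0$.

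For part (1), I would first show $C$ is torsion-free: by \eqref{e_15} it injects into $\mathrm{Hom}_{\Lambda}\!\left(\mathrm{Sel}_{K}(J_{\infty}^{'ord})^{\vee},\Lambda\right)$, and choosing a surjection from a free $\Lambda$-module onto $\mathrm{Sel}_{K}(J_{\infty}^{'ord})^{\vee}$ realizes this $\mathrm{Hom}$ as a submodule of a product of copies of $\Lambda$, hence torsion-free over the domain $\Lambda$. Since $A$ is $\Lambda$-torsion (Proposition~\ref{prop 6.2.4}-(1)) and $C\cong B/A$ is torsion-free, the $\Lambda$-torsion submodule of $B$ is exactly the image of $A$; this is the asserted natural isomorphism. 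For the vanishing clause, if $\mathfrak{g}'(K)$ is finite then $X:=\mathfrak{g}'(K)^{\vee}$ is a $\Lambda$-module of finite length, hence annihilated by $\mathfrak{m}^{k}$ for $\mathfrak{m}=(p,T)$ and some $k\geq 1$. Each transition map in the direct system defining $\mathfrak{F}(X)$ is multiplication by $\omega_{n+1, n}$, and $\omega_{n+1, n}\equiv p\pmod{\mathfrak{m}}$ lies in $\mathfrak{m}$; thus any $k$ consecutive transitions amount to multiplication by an element of $\mathfrak{m}^{k}$, which annihilates $X$. Hence $\lim_{\substack{\longrightarrow \\ n}}\tfrac{X}{\omega_n X}=0$ and $A=\mathfrak{F}(X)=0$.

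For part (2), since $C$ is torsion-free the sequence $0\to A\to B\to C\to 0$ shows $B$ is $\Lambda$-torsion if and only if $C=B/A$ is torsion, equivalently $C=0$, which is the first claimed equivalence. Assuming $C=0$, we get $B\cong A=\mathfrak{F}\!\left(\mathfrak{g}'(K)^{\vee}\right)$, a torsion $\Lambda$-module. All terms of \eqref{e_13} are discrete, and $B$ is profinite (an inverse limit of the finite groups $\mathrm{Sel}_{K, p^m}(\hat{J'}_{n}^{ord})$, by Remark~\ref{remark 8.1.3}-(1)), so applying Pontryagin duality to \eqref{e_13} and using $B^{\vee\vee}\cong B$ yields an exact sequence $0\to H^{2}(K^{S}/K,\mathfrak{g})^{\vee}\to B\to C_{\infty}(K)^{\vee}\to 0$. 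By Corollary~\ref{cor 7.1.4}-(2) the submodule $H^{2}(K^{S}/K,\mathfrak{g})^{\vee}$ is $\Lambda$-free, and a free submodule of the torsion module $B$ is zero; hence $H^{2}(K^{S}/K,\mathfrak{g})=0$ and $C_{\infty}(K)^{\vee}\cong B\cong\mathfrak{F}\!\left(\mathfrak{g}'(K)^{\vee}\right)$, which finishes the proof.

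I do not expect a genuine obstacle: once Proposition~\ref{prop 8.2.5} is available, everything is formal homological algebra over $\Lambda$ combined with Pontryagin duality. The two points that need a little care are the vanishing of $\mathfrak{F}$ on a finite module — the explicit description in Proposition~\ref{prop 6.2.4}-(1) only yields finiteness, so one really does need the $\mathfrak{m}$-adic nilpotence observation above — and the bookkeeping about which limits are compact and which are discrete, so that Pontryagin duality may be applied term by term to \eqref{e_13}.
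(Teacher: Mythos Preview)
Your argument is correct and follows the same route as the paper: read everything off the sequences \eqref{e_13}, \eqref{e_14}, \eqref{e_15} of Proposition~\ref{prop 8.2.5}, use torsion-freeness of $\mathrm{Hom}_{\Lambda}(-,\Lambda)$ for the first claim, and $\Lambda$-cofreeness of $H^{2}(K^{S}/K,\mathfrak{g})$ for the second. The one place you diverge is the vanishing of $\mathfrak{F}\bigl(\mathfrak{g}'(K)^{\vee}\bigr)$ when $\mathfrak{g}'(K)$ is finite: the paper invokes Lemma~\ref{lemma A.0.3}-(1), which identifies $\mathfrak{F}(X)$ (for finite $X$, where the $[p^{\infty}]$ is automatic) with $\mathrm{Ext}^{1}_{\Lambda}(X,\Lambda)^{\iota}$, and this vanishes because $\Lambda$ is regular of dimension two. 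Your $\mathfrak{m}$-adic nilpotence argument is a more elementary alternative that avoids the $\mathrm{Ext}$ machinery altogether; both are valid, and yours is arguably cleaner here.
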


\begin{proof} We prove (1) first. Since $\mathrm{Hom}_{\Lambda}(\mathrm{Sel}_{K}(J_{\infty}^{' ord})^{\vee}, \Lambda)$ is a free $\Lambda$-module, $\displaystyle \lim_{\substack {\longleftarrow \\ n}}T_{p}\mathrm{Sel}_{K}(\hat{J'}_{n}^{ord})$ is a torsion-free $\Lambda$-module by the sequence \ref{e_15}. The second and the third assertion of (1) follows from the sequence \ref{e_14} and Lemma \ref{lemma A.0.3}-(1).

\medskip

The first statement of (2) follows directly from (1) and the fact that $\mathfrak{F}\left(\mathfrak{g}'(K)^{\vee}\right)$ is a torsion $\Lambda$-module. By the sequence \ref{e_13}, we have an injection $$\displaystyle H^{2}(K^{S}/K, \mathfrak{g})^{\vee} \hookrightarrow \lim_{\substack{\longleftarrow \\ n}} \lim_{\substack{\longleftarrow \\ m}} \mathrm{Sel}_{K, p^m}(\hat{J'}_{n}^{ord})$$ where $H^{2}(K^{S}/K, \mathfrak{g})^{\vee}$ is a $\Lambda$-free by the Corollary \ref{cor 7.1.4}-(2). 

Hence if $\displaystyle \lim_{\substack{\longleftarrow \\ n}} \lim_{\substack{\longleftarrow \\ m}} \mathrm{Sel}_{K, p^m}(\hat{J'}_{n}^{ord})$ is a torsion $\Lambda$-module, then $H^{2}(K^{S}/K, \mathfrak{g})=0$. Hence from the sequence \ref{e_13}, we get an isomorphism $$\displaystyle C_{\infty}(K)^{\vee} \simeq \lim_{\substack{\longleftarrow \\ n}} \lim_{\substack{\longleftarrow \\ m}} \mathrm{Sel}_{K, p^m}(\hat{J'}_{n}^{ord}).$$\end{proof}

\begin{prop}\label{prop 8.2.7} If $\mathrm{Sel}_{K}(J_{\infty}^{ord}) $ is a $\Lambda$-cotorsion module, then we have:

\smallskip

(1) $\sha^{1}_{K}(\mathfrak{g})$ is a cotorsion $\Lambda$-module.

\smallskip

(2) $H^{2}(K^{S}/K, \mathfrak{g})=0$. Hence $H^{1}(K^{S}/K, \mathfrak{g})^{\vee}$ has no non-trivial finite $\Lambda$-submodules by Corollary \ref{cor 7.1.5}. 

\smallskip

(3) $C_{\infty}(K)^{\vee}$ is a $\Lambda$-torsion module isomorphic to $\mathfrak{F}\left(\mathfrak{g}'(K)^{\vee}\right)$.

\smallskip

(4) For all places $v$ dividing $p$, we have 
\begin{align*}
&\corank_{\Lambda}\left(J_{\infty}^{ord}(K_{v}) \otimes_{\mathbb{Z}_p} \mathbb{Q}_p/\mathbb{Z}_p\right)=\frac{1}{2}[K_v:\mathbb{Q}_p] \cdot \corank_{\Lambda}\mathfrak{g}(\overline{K})
\end{align*}
\end{prop}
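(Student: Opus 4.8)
The plan is to read everything off the corank identity of Proposition~\ref{prop 8.2.5}-(2), together with the exact sequences \eqref{e_13} and \eqref{e_14} recorded there, once the hypothesis $\corank_{\Lambda}\mathrm{Sel}_{K}(J_{\infty}^{ord})=0$ is inserted.

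First I would dispose of (1) on its own, since it does not require the corank identity. The local conditions cutting out $\mathrm{Sel}_{K}(J_{\infty}^{ord})$ inside $H^{1}(K^{S}/K,\mathfrak{g})$ are weaker than the trivial local condition, because each local subgroup $J_{\infty}^{ord}(K_{v})\otimes_{\mathbb{Z}_p}\mathbb{Q}_{p}/\mathbb{Z}_{p}$ contains $0$; hence there is a canonical $\Lambda$-linear inclusion $\sha^{1}_{K}(\mathfrak{g})\hookrightarrow\mathrm{Sel}_{K}(J_{\infty}^{ord})$. Dualizing, $\sha^{1}_{K}(\mathfrak{g})^{\vee}$ is a quotient of the finitely generated torsion $\Lambda$-module $\mathrm{Sel}_{K}(J_{\infty}^{ord})^{\vee}$, hence torsion, which is (1).

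Next, for (2) and (4) I would feed the hypothesis into the identity of Proposition~\ref{prop 8.2.5}-(2): the left side is $0$, while on the right the two coranks are automatically nonnegative and every $\epsilon_{v}$ is nonnegative by Proposition~\ref{prop 6.4.12}-(1). Therefore each of the three summands vanishes. From $\sum_{v\mid p}\epsilon_{v}=0$ we get $\epsilon_{v}=0$ for every $v\mid p$, which unwinds to assertion (4). From $\corank_{\Lambda}H^{2}(K^{S}/K,\mathfrak{g})=0$, combined with the $\Lambda$-cofreeness of $H^{2}(K^{S}/K,\mathfrak{g})$ supplied by Corollary~\ref{cor 7.1.4}-(2), we obtain $H^{2}(K^{S}/K,\mathfrak{g})=0$; the statement about finite $\Lambda$-submodules of $H^{1}(K^{S}/K,\mathfrak{g})^{\vee}$ is then exactly Corollary~\ref{cor 7.1.5}. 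This proves (2).

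Finally, for (3), the same identity also gives $\corank_{\Lambda}C_{\infty}(K)=0$, i.e. $C_{\infty}(K)^{\vee}$ is $\Lambda$-torsion. Dualizing \eqref{e_13} and using the vanishing of $H^{2}(K^{S}/K,\mathfrak{g})$ just proved yields $C_{\infty}(K)^{\vee}\simeq\varprojlim_{n}\varprojlim_{m}\mathrm{Sel}_{K,p^{m}}(\hat{J'}_{n}^{ord})$, so this last module is $\Lambda$-torsion; Corollary~\ref{cor 8.2.6}-(2) then identifies it with $\mathfrak{F}\!\left(\mathfrak{g}'(K)^{\vee}\right)$, giving (3). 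The whole argument is essentially bookkeeping with coranks and the pre-established exact sequences, so I do not anticipate a genuine obstacle; the two points that need a little care are the inclusion $\sha^{1}_{K}(\mathfrak{g})\hookrightarrow\mathrm{Sel}_{K}(J_{\infty}^{ord})$ underpinning (1) — one must confirm that the $\sha$-condition is indeed the stricter local condition — and the sequencing in (3), where one must first establish that $\varprojlim_{n}\varprojlim_{m}\mathrm{Sel}_{K,p^{m}}(\hat{J'}_{n}^{ord})$ is $\Lambda$-torsion (via $C_{\infty}(K)^{\vee}$) before invoking Corollary~\ref{cor 8.2.6}-(2).
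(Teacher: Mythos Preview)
Your proposal is correct and follows essentially the same route as the paper's proof: assertion (1) via the inclusion $\sha^{1}_{K}(\mathfrak{g})\subset\mathrm{Sel}_{K}(J_{\infty}^{ord})$, assertions (2) and (4) by reading off the vanishing of each nonnegative summand in Proposition~\ref{prop 8.2.5}-(2), and assertion (3) by combining the vanishing of $H^{2}$ in sequence \eqref{e_13} with Corollary~\ref{cor 8.2.6}-(2). The only cosmetic difference is that the paper phrases the torsionness of the double inverse limit as coming ``from sequence \eqref{e_13} and the corank identity'' in one breath, whereas you separate out the intermediate identification $C_{\infty}(K)^{\vee}\simeq\varprojlim_{n}\varprojlim_{m}\mathrm{Sel}_{K,p^{m}}(\hat{J'}_{n}^{ord})$ explicitly before invoking Corollary~\ref{cor 8.2.6}-(2).
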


\begin{proof} The assertion (1) is obvious since $\sha^{1}_{K}(\mathfrak{g})$ is a $\Lambda$-submodule of  $\mathrm{Sel}_{K}(J_{\infty}^{ord})$. By the corank computation in Proposition \ref{prop 8.2.5}-(2), we have $$\corank_{\Lambda}\left(H^{2}(K^{S}/K, \mathfrak{g})\right)=\corank_{\Lambda}\left(C_{\infty}(K)\right)=\epsilon_{v}=0$$ for all $v$ dividing $p$. Hence we have (2) since $H^{2}(K^{S}/K, \mathfrak{g})$ is a cofree $\Lambda$-module. 

From the sequence \ref{e_13} in Proposition \ref{prop 8.2.5}-(1) and the above corank identity, $\displaystyle \lim_{\substack{\longleftarrow \\ n}} \lim_{\substack{\longleftarrow \\ m}} \mathrm{Sel}_{K, p^m}(\hat{J'}_{n}^{ord})$ is a torsion $\Lambda$-module. Now (3) follows from an exact sequence \ref{e_13} combined with Corollary \ref{cor 8.2.6}-(2).
\end{proof}

\medskip

We have the following corollary about the surjectivity of the ``Selmer-defining map", which is an analogue of \cite[Proposition 2.1]{greenberg2000iwasawa} and \cite[Proposition 2.5]{greenberg2000iwasawa}.

\medskip

\begin{cor}\label{cor 8.2.8} Suppose that $\mathrm{Sel}_{K}(J_{\infty}^{ord}) $ is a $\Lambda$-cotorsion.

(1) If $\mathfrak{g}'(K)$ is finite, then we have $C_{\infty}(K)=0$. 

\smallskip

(2) If we furthermore assume that $\mathfrak{g}'(K_{v})$ is finite for all $v \in S$ and $\mathfrak{g}'(K_{v})=0$ for all places $v | p$, then $\mathrm{Sel}_{K}(J_{\infty}^{ord})^{\vee} $ has no non-trivial pseudo-null $\Lambda$-submodules. 
\end{cor}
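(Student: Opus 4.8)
The plan is to follow Greenberg's strategy (as in \cite[Proposition 2.1, 2.5]{greenberg2000iwasawa}): deduce (1) from the structural results of \S\ref{sec 8.2}, then feed it into the four–term Poitou--Tate sequence \ref{e_12} to obtain (2). Since $\mathrm{Sel}_{K}(J_{\infty}^{ord})$ is $\Lambda$-cotorsion, Proposition \ref{prop 8.2.7}-(3) gives an isomorphism $C_{\infty}(K)^{\vee}\simeq \mathfrak{F}\!\left(\mathfrak{g}'(K)^{\vee}\right)$, and by the last assertion of Corollary \ref{cor 8.2.6}-(1) the right–hand side vanishes as soon as $\mathfrak{g}'(K)$ is finite; hence $C_{\infty}(K)^{\vee}=0$, i.e.\ $C_{\infty}(K)=0$, which is (1). (Alternatively one combines the exact sequences \ref{e_13}, \ref{e_14} with Corollary \ref{cor 8.2.6}-(2).)

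For (2), note first that the restriction maps $\mathfrak{g}'(K)\hookrightarrow \mathfrak{g}'(K_{v})$ are injective, so the hypotheses of (2) force $\mathfrak{g}'(K)$ to be finite; by (1) we get $C_{\infty}(K)=0$, and the sequence \ref{e_12} collapses to a short exact sequence
$$0 \to \mathrm{Sel}_{K}(J_{\infty}^{ord}) \to H^{1}(K^{S}/K,\mathfrak{g}) \xrightarrow{\ \psi\ } \prod_{v\in S}\frac{H^{1}(K_{v},\mathfrak{g})}{J_{\infty}^{ord}(K_{v})\otimes_{\mathbb{Z}_p}\mathbb{Q}_p/\mathbb{Z}_p}\to 0.$$
Next I would check that the target of $\psi$ is a cofree $\Lambda$-module. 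For $v\in S$ with $v\nmid p$ one has $J_{\infty}^{ord}(K_{v})\otimes_{\mathbb{Z}_p}\mathbb{Q}_p/\mathbb{Z}_p=0$, while the finiteness of $\mathfrak{g}'(K_{v})$ forces $H^{1}(K_{v},\mathfrak{g})=0$ by Corollary \ref{cor 6.3.8}-(1), so the $v$-component is zero. For $v\mid p$, the hypothesis $\mathfrak{g}'(K_{v})=0$ makes $H^{1}(K_{v},\mathfrak{g})$ a cofree $\Lambda$-module by Corollary \ref{cor 6.3.9}, and then the local quotient $\frac{H^{1}(K_{v},\mathfrak{g})}{J_{\infty}^{ord}(K_{v})\otimes_{\mathbb{Z}_p}\mathbb{Q}_p/\mathbb{Z}_p}$ is cofree by Proposition \ref{prop 6.4.12}-(5). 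Hence the Pontryagin dual $F$ of the target of $\psi$ is a \emph{free} $\Lambda$-module.

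Dualizing the displayed sequence yields $0\to F\to H^{1}(K^{S}/K,\mathfrak{g})^{\vee}\to \mathrm{Sel}_{K}(J_{\infty}^{ord})^{\vee}\to 0$ with $F$ free. By Proposition \ref{prop 8.2.7}-(2) we have $H^{2}(K^{S}/K,\mathfrak{g})=0$, so $H^{1}(K^{S}/K,\mathfrak{g})^{\vee}$ has no non-trivial finite $\Lambda$-submodule by Corollary \ref{cor 7.1.5}. Over the two–dimensional regular local ring $\Lambda$ a finitely generated module has no non-trivial pseudo-null submodule precisely when its $\mathrm{Ext}^{2}_{\Lambda}(-,\Lambda)$ vanishes (the commutative-algebra fact recorded in the appendix); applying $\mathrm{Hom}_{\Lambda}(-,\Lambda)$ to the short exact sequence and using $\mathrm{Ext}^{1}_{\Lambda}(F,\Lambda)=\mathrm{Ext}^{2}_{\Lambda}(F,\Lambda)=0$ (as $F$ is free) identifies $\mathrm{Ext}^{2}_{\Lambda}\!\left(\mathrm{Sel}_{K}(J_{\infty}^{ord})^{\vee},\Lambda\right)$ with $\mathrm{Ext}^{2}_{\Lambda}\!\left(H^{1}(K^{S}/K,\mathfrak{g})^{\vee},\Lambda\right)=0$, which gives the claim.

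The main obstacle is exactly this last transfer of the ``no pseudo-null submodule'' property across the dualized sequence: it goes through only because the local conditions at the primes above $p$ are genuinely $\Lambda$-cofree, so that $F$ is free and contributes neither $\mathrm{Ext}^{1}$ nor $\mathrm{Ext}^{2}$. That cofreeness is the whole point of the strong hypotheses $\mathfrak{g}'(K_{v})=0$ for $v\mid p$ (fed through Corollary \ref{cor 6.3.9} and Proposition \ref{prop 6.4.12}-(5)); with only weaker local information — e.g.\ mere finiteness at $p$ — the dual of the local term need not be free and the $\mathrm{Ext}$ comparison above would fail, requiring a more delicate argument.
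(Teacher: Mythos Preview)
Your proof is correct and follows essentially the same approach as the paper: both use Proposition \ref{prop 8.2.7}-(3) for part (1), then for part (2) establish that the local quotients vanish away from $p$ (via Corollary \ref{cor 6.3.8}-(1)) and are $\Lambda$-cofree at $p$ (via Corollary \ref{cor 6.3.9} and Proposition \ref{prop 6.4.12}-(5)), that $H^{1}(K^{S}/K,\mathfrak{g})^{\vee}$ has no finite submodule (via Proposition \ref{prop 8.2.7}-(2) and Corollary \ref{cor 7.1.5}), and finally transfer this property across the dualized short exact sequence with free left term. The only difference is in this last step: the paper simply cites \cite[Lemma 2.6]{greenberg2000iwasawa}, whereas you supply an $\mathrm{Ext}$ argument (using that over the two–dimensional regular local ring $\Lambda$ a finitely generated module has no nonzero pseudo-null submodule iff $\mathrm{Ext}^{2}_{\Lambda}(-,\Lambda)=0$, equivalently iff its projective dimension is at most $1$). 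One small inaccuracy: this $\mathrm{Ext}$ characterization is not actually ``recorded in the appendix'' of this paper, though it is standard (Auslander--Buchsbaum plus the Gorenstein property of $\Lambda$).
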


\begin{proof} By the Proposition \ref{prop 8.2.7}-(3), we have $C_{\infty}(K) \simeq \mathfrak{F}\left(\mathfrak{g}'(K)^{\vee}\right)\simeq 0$ since $\mathfrak{g}'(K)$ is finite.\\

For the second part, by Corollary \ref{cor 6.3.8}-(1),  Proposition \ref{prop 6.4.12}-(5) and Proposition \ref{prop 8.2.7}-(2), we have :

\begin{itemize}
\item $H^{1}(K_v, \mathfrak{g})=0$ for all $v$ in $S$ not dividing $p$. 

\item $\frac{H^{1}(K_{v}, \mathfrak{g})}{ J_{\infty}^{ord}(K_{v}) \otimes_{\mathbb{Z}_p} \mathbb{Q}_p/\mathbb{Z}_p}$ is a cofree $\Lambda$-module for all $v$ dividing $p$.

\item $H^{1}(K^{S}/K, \mathfrak{g})^{\vee}$ has no non-trivial finite $\Lambda$-submodules.
\end{itemize}

\medskip

Since $C_{\infty}(K)=0$ by the first part, we have an exact sequence $$\displaystyle 0 \rightarrow (\prod_{v \mid p}\frac{H^{1}(K_{v}, \mathfrak{g})}{ J_{\infty}^{ord}(K_{v}) \otimes_{\mathbb{Z}_p} \mathbb{Q}_p/\mathbb{Z}_p})^{\vee} \rightarrow H^{1}(K^{S}/K, \mathfrak{g})^{\vee} \rightarrow \mathrm{Sel}_{K}(J_{\infty}^{ord})^{\vee} \rightarrow 0,$$ where the first term is a $\Lambda$-free as we mentioned above. Now \cite[Lemma 2.6]{greenberg2000iwasawa} shows the desired assertion.
\end{proof}

\subsection{Algebraic functional equation of $\Lambda$-adic Selmer groups} \label{sec 8.3}

\smallskip

We first mention technical lemmas without proof.

\smallskip

\begin{lemma}\label{lemma 8.3.9} (1) Let $A$ and $B$ be finitely generated $\Lambda$-torsion modules. If there are $\Lambda$-linear maps $\phi:A \rightarrow B$ and $\psi:B \rightarrow A$ with finite kernels, then $A$ and $B$ are pseudo-isomorphic.

\smallskip

(2) Let $X$ and $Y$ be finitely generated $\Lambda$-modules. Suppose that $\rank_{\mathbb{Z}_p}\frac{X}{\omega_{n}X}=\rank_{\mathbb{Z}_p}\frac{Y}{\omega_{n}Y}$ holds for all $n$, and that there are two $\Lambda$-linear maps $ \mathfrak{G}(X) \rightarrow \mathfrak{F}(Y), \mathfrak{G}(Y) \rightarrow \mathfrak{F}(X) $ with finite kernels. Then $E(X)$ and $E(Y^{\iota})$ are isomorphic as $\Lambda$-modules. 
\end{lemma}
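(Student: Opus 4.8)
The plan is to prove (1) by a localization argument and then to deduce (2) from (1) together with the explicit form of $\mathfrak{F},\mathfrak{G}$ in Proposition \ref{prop 6.2.4}-(1)--(3) and a $\mathbb{Z}_p$-rank computation. For (1) I would localize at height-one primes: a finite $\Lambda$-module is pseudo-null, so $\phi$ and $\psi$ become injective after localizing at any height-one prime $\mathfrak{p}$ of $\Lambda$. Since $A$ and $B$ are torsion, $A_{\mathfrak{p}}$ and $B_{\mathfrak{p}}$ have finite length over the discrete valuation ring $\Lambda_{\mathfrak{p}}$, and the two injections $\phi_{\mathfrak{p}},\psi_{\mathfrak{p}}$ force $\mathrm{length}(A_{\mathfrak{p}})=\mathrm{length}(B_{\mathfrak{p}})$; an injection between modules of equal finite length over a discrete valuation ring is an isomorphism, so $A_{\mathfrak{p}}\simeq B_{\mathfrak{p}}$. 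Since the pseudo-isomorphism class of a finitely generated torsion $\Lambda$-module is determined by the isomorphism classes of its localizations at the height-one primes, this gives $E(A)\simeq E(B)$, i.e. $A$ and $B$ are pseudo-isomorphic.

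For (2), write $E(X)$ in the form of Proposition \ref{prop 6.2.4}, and set $P_X:=\bigoplus_i\Lambda/g_i^{l_i}$ (the part whose primes are coprime to every $\omega_n$) and $Q_X:=\bigl(\bigoplus_m\Lambda/\omega_{a_m+1,a_m}^{e_m}\bigr)\oplus\bigl(\bigoplus_n\Lambda/\omega_{b_n+1,b_n}\bigr)$, so $E(X)=\Lambda^{r}\oplus P_X\oplus Q_X$; write $E(Y)=\Lambda^{r'}\oplus P_Y\oplus Q_Y$ analogously. By Proposition \ref{prop 6.2.4}-(2), $E(\mathfrak{G}(X))=P_X\oplus\bigoplus_m\Lambda/\omega_{a_m+1,a_m}^{e_m-1}$, and similarly for $Y$. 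Using Proposition \ref{prop 6.2.4}-(3), so that $\mathfrak{F}(Z)^{\iota}$ is pseudo-isomorphic to $\mathfrak{G}(Z)$ for $Z=X,Y$, and the exactness of $(-)^{\iota}$, I would compose the two given maps with these pseudo-isomorphisms to obtain $\Lambda$-linear maps $\mathfrak{G}(X)\to\mathfrak{G}(Y)^{\iota}$ and $\mathfrak{G}(Y)^{\iota}\to\mathfrak{G}(X)$ with finite kernels (using that pseudo-isomorphism of finitely generated torsion $\Lambda$-modules is a symmetric relation). Part (1) then gives that $\mathfrak{G}(X)$ is pseudo-isomorphic to $\mathfrak{G}(Y)^{\iota}$, hence $E(\mathfrak{G}(X))\simeq E(\mathfrak{G}(Y))^{\iota}$. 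Since $\iota$ fixes every ideal $(\omega_{c+1,c})$ up to a unit (as $\iota(\omega_n)=\omega_n\cdot u$ with $u\in\Lambda^{\times}$) and permutes the remaining primes by $g\mapsto\iota(g)$, comparing the two sides via uniqueness of the elementary decomposition yields $P_X\simeq P_Y^{\iota}$ together with the equality, for every $c$, of the multisets $\{e_m:a_m=c\}$ and $\{f_{m'}:c_{m'}=c\}$; that is, the $\omega$-summands of exponent $\geq 2$ of $E(X)$ and $E(Y)$ agree.

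It remains to match the free ranks and the exponent-one $\omega$-summands, which are invisible to $\mathfrak{F}$ and $\mathfrak{G}$, and here the hypothesis on $\mathbb{Z}_p$-ranks enters. A direct computation — $\Lambda/\omega_n$ is $W$-free of rank $p^{n-1}$ by Weierstrass division; a summand $\Lambda/g^{l}$ with $g$ coprime to $\omega_n$ contributes nothing; and for $\Lambda/\omega_{c+1,c}^{k}$ one checks, using that $\omega_{c+1,c}$ is coprime to $\omega_n$ for $n\leq c$ and divides $\omega_n$ exactly once for $n\geq c+1$, that $\Lambda/(\omega_{c+1,c}^{k},\omega_n)$ is finite for $n\leq c$ and of $\mathbb{Z}_p$-rank $[W:\mathbb{Z}_p](p^{c}-p^{c-1})$ for $n\geq c+1$, independently of $k$ — gives, for every $n$,
$$\rank_{\mathbb{Z}_p}\!\Bigl(\frac{X}{\omega_n X}\Bigr)=[W:\mathbb{Z}_p]\Bigl(r\,p^{n-1}+\sum_{1\leq c\leq n-1}N_c(X)\,(p^{c}-p^{c-1})\Bigr),$$
where $N_c(X)$ counts the summands of $E(X)$ of the form $\Lambda/\omega_{c+1,c}^{\bullet}$ of arbitrary exponent, and likewise for $Y$. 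Equating these identities for $X$ and $Y$ at $n=1$ gives $r=r'$, and taking successive differences in $n$ gives $N_c(X)=N_c(Y)$ for all $c$; combined with the equality of the exponent-$\geq 2$ multisets from the previous paragraph, the full multiset of pairs $(c,e)$ with $\Lambda/\omega_{c+1,c}^{e}$ a summand of $E(X)$ agrees with its analogue for $E(Y)$, so $Q_X\simeq Q_Y$, and $Q_Y^{\iota}\simeq Q_Y$. Therefore
$$E(X)=\Lambda^{r}\oplus P_X\oplus Q_X\simeq\Lambda^{r'}\oplus P_Y^{\iota}\oplus Q_Y^{\iota}=(\Lambda^{r'}\oplus P_Y\oplus Q_Y)^{\iota}=E(Y)^{\iota}=E(Y^{\iota}),$$
using that $(-)^{\iota}$ commutes with finite direct sums, fixes $\Lambda$, and sends the elementary module $E(Y)$ to an elementary module, necessarily $E(Y^{\iota})$. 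The step I expect to be the main obstacle is exactly this last one: extracting from the single family of identities $\rank_{\mathbb{Z}_p}(X/\omega_n X)=\rank_{\mathbb{Z}_p}(Y/\omega_n Y)$ precisely the invariants $r$ and $N_c$ that $\mathfrak{F}$ and $\mathfrak{G}$ cannot detect, which hinges on getting the rank bookkeeping above correct and on the fact that the $\omega$-torsion contributions do not depend on the exponents.
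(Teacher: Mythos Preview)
The paper states Lemma \ref{lemma 8.3.9} explicitly \emph{without proof} (``We first mention technical lemmas without proof''), so there is nothing to compare against; your task was really to supply an argument, and the one you give is sound. Part (1) via localization at height-one primes is the standard route and is correct. For part (2), your use of Proposition \ref{prop 6.2.4} to reduce to matching (i) the $P$-part and the exponent-$\geq 2$ $\omega$-part through $E(\mathfrak{G}(X))\simeq E(\mathfrak{G}(Y))^{\iota}$, and (ii) the free rank and exponent-one $\omega$-part through the $\mathbb{Z}_p$-rank identities, is exactly the right decomposition of the problem.

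One small correction to your bookkeeping: the claim that ``$n=1$ gives $r=r'$'' is not quite right as stated, because $\omega_1=T$ is itself an irreducible factor of every $\omega_n$, so a summand $\Lambda/T^{k}$ (if present) contributes $[W:\mathbb{Z}_p]$ to $\rank_{\mathbb{Z}_p}(X/\omega_nX)$ for \emph{every} $n\geq 1$, not zero. With this term included, your successive-difference argument yields $r+N_c(X)=r'+N_c(Y)$ for all $c\geq 0$; since $N_c$ vanishes for $c$ large you still deduce $r=r'$ and then $N_c(X)=N_c(Y)$ for all $c$. So the conclusion stands, only the order of the deductions needs adjusting.
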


\smallskip

Now we compare $E\left(\mathrm{Sel}_{K}(J_{\infty}^{ord})^{\vee}\right)$ and $E\left(\mathrm{Sel}_{K}(J_{\infty}^{' ord})^{\vee}\right)$.

\begin{theorem}\label{theorem 8.3.10} We have an isomorphism $$E\left(\mathrm{Sel}_{K}(J_{\infty}^{ord})^{\vee}\right) \simeq E\left(\mathrm{Sel}_{K}(J_{\infty}^{' ord})^{\vee}\right)^{\iota}$$ of $\Lambda$-modules.
\end{theorem}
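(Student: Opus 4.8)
The plan is to reduce the theorem to Lemma~\ref{lemma 8.3.9}-(2), applied to the finitely generated $\Lambda$-modules $X:=\mathrm{Sel}_{K}(J_{\infty}^{ord})^{\vee}$ and $Y:=\mathrm{Sel}_{K}(J_{\infty}^{' ord})^{\vee}$. Since $X/\omega_n X\simeq\bigl(\mathrm{Sel}_{K}(J_{\infty}^{ord})[\omega_n]\bigr)^{\vee}$ for every $n$, the rank hypothesis $\mathrm{rank}_{\mathbb{Z}_p}(X/\omega_n X)=\mathrm{rank}_{\mathbb{Z}_p}(Y/\omega_n Y)$ is precisely the $W$-corank equality recorded in Corollary~\ref{cor 8.1.4}-(2). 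The substantive task is then to produce $\Lambda$-linear maps $\mathfrak{G}(X)\to\mathfrak{F}(Y)$ and $\mathfrak{G}(Y)\to\mathfrak{F}(X)$ with finite kernels; granting this, Lemma~\ref{lemma 8.3.9}-(2) gives $E(X)\simeq E(Y^{\iota})$, and $E(Y^{\iota})\simeq E(Y)^{\iota}$ is the routine observation that $(\Lambda/g^{e})^{\iota}\simeq\Lambda/\iota(g)^{e}$.

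To build $\mathfrak{G}(X)\to\mathfrak{F}(Y)$, I would factor it through the inverse system $\varprojlim_n\mathrm{Sel}_{K}(\hat{J'}_{n}^{ord})_{/div}$ taken along the twisted-covariant maps $V'_{n+1,n}$. On one side, the $n$-th layer of $\mathfrak{G}(X)$ is $(X/\omega_n X)[p^{\infty}]\simeq\bigl(\mathrm{Sel}_{K}(J_{\infty}^{ord})[\omega_n]_{/div}\bigr)^{\vee}$; composing with the dual of the control map $s_n$, which has finite kernel (Remark~\ref{remark 8.1.3}-(3)) and finite cokernel (Hida's control theorem, Theorem~\ref{Theorem H}), both bounded independently of $n$, and then applying the perfect twisted Flach pairing of Proposition~\ref{prop 5.4.11}-(2),(4), identifies this layer up to a $\Lambda$-map with uniformly bounded kernel and cokernel with $\mathrm{Sel}_{K}(\hat{J'}_{n}^{ord})_{/div}$ (the $\iota$-twist in Proposition~\ref{prop 5.4.11}-(4) is exactly what will produce the $\iota$ in the theorem). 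The transition maps of $\mathfrak{G}(X)$ are dual to the inclusions $\mathrm{Sel}_{K}(J_{\infty}^{ord})[\omega_n]\hookrightarrow\mathrm{Sel}_{K}(J_{\infty}^{ord})[\omega_{n+1}]$, which are compatible with $P_{n,n+1}$, and by Proposition~\ref{prop 5.4.11}-(3) the operators $P_{n,n+1}$ and $V'_{n+1,n}$ are adjoint for the twisted Flach pairing; hence under the identifications the transition maps become $V'_{n+1,n}$, giving a $\Lambda$-linear map $\mathfrak{G}(X)\to\varprojlim_n\mathrm{Sel}_{K}(\hat{J'}_{n}^{ord})_{/div}$ with finite kernel (a uniformly bounded family of finite kernels has finite inverse limit, $\varprojlim$ being left exact). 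On the other side, running the mirror computation via $s'_n$ identifies $\bigl((Y/\omega_n Y)[p^{\infty}]\bigr)^{\vee}\simeq\mathrm{Sel}_{K}(J_{\infty}^{' ord})[\omega_n]_{/div}$ with $\mathrm{Sel}_{K}(\hat{J'}_{n}^{ord})_{/div}$ up to uniformly bounded error, and here the direct limit defining $\mathfrak{F}(Y)$ is taken along multiplication by $\omega_{n+1,n}$, which by \eqref{e_7} together with Convention~\ref{Id} is exactly how $V'_{n+1,n}=1+\gamma+\cdots+\gamma^{p-1}$ acts; dualizing gives a $\Lambda$-linear map $\varprojlim_n\mathrm{Sel}_{K}(\hat{J'}_{n}^{ord})_{/div}\to\mathfrak{F}(Y)$ with finite kernel. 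The composite is the map we want. Exchanging the roles of $(\alpha,\delta,\xi)$ and $(\delta,\alpha,\xi')$ — whose dual tower is the $(\alpha,\delta,\xi)$-tower, so the very same twisted Flach pairing read in its other variable applies, with the analogue $V_{n+1,n}$ of $V'_{n+1,n}$ — produces $\mathfrak{G}(Y)\to\mathfrak{F}(X)$ with finite kernel in the same way.

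The main obstacle is the compatibility bookkeeping in the middle paragraph: one must check that the three a priori different descriptions of the transition maps — the projections defining $\mathfrak{G}(X)$, the operators $V'_{n+1,n}$ on the $\hat{J'}$-Selmer groups, and the duals of the norm maps inside $\mathfrak{F}(Y)$ — genuinely agree after the Flach identifications, and, equally importantly, that the finite kernels and cokernels introduced by $s_n$ and $s'_n$ are bounded \emph{uniformly in $n$}, so that they do not accumulate to an infinite kernel upon passing to the (co)limits. The uniform boundedness in Remark~\ref{remark 8.1.3} (via Hida's control theorem), the adjointness of $P_{n,n+1}$ and $V'_{n+1,n}$ in Proposition~\ref{prop 5.4.11}-(3), and the identity $V'_{n+1,n}=\omega_{n+1,n}$ coming from \eqref{e_7} and Convention~\ref{Id} are the three facts that carry the argument.
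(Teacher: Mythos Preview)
Your proposal is correct and follows essentially the same route as the paper: reduce to Lemma~\ref{lemma 8.3.9}-(2), use Corollary~\ref{cor 8.1.4}-(2) for the corank hypothesis, and manufacture the map $\mathfrak{G}(X)\to\mathfrak{F}(Y)$ by passing through the intermediate object $\varprojlim_n \mathrm{Sel}_K(\hat{J'}_n^{ord})_{/div}$ (transition maps $V'_{n+1,n}$), with the twisted Flach pairing of Proposition~\ref{prop 5.4.11} supplying the identification and the adjointness of $P_{n,n+1}$ and $V'_{n+1,n}$ handling the transition maps.

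One small correction: you assert that $\mathrm{Coker}(s_n)$ is bounded independently of $n$, but Theorem~\ref{Theorem H} only gives finiteness (boundedness is an extra hypothesis in Theorem~\ref{theorem 9.2.7}). Fortunately you do not need it. For your first half, the kernel at level $n$ is $\mathrm{Coker}(s_n)^{\vee}$, and $\varprojlim_n \mathrm{Coker}(s_n)^{\vee}=\bigl(\varinjlim_n \mathrm{Coker}(s_n)\bigr)^{\vee}=0$ by Remark~\ref{remark 8.1.3}-(3); so this step is in fact an isomorphism, exactly as the paper obtains by taking the direct limit of the $_{/div}$-sequence first and then dualizing. For your second half, the kernel is governed by $\varprojlim_n \mathrm{Ker}(s'_n)$, which is finite because it injects into the dual of the maximal finite submodule of $\mathfrak{g}'(K)^{\vee}$ (again Remark~\ref{remark 8.1.3}-(3)). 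So the uniform-boundedness worry in your final paragraph can be dropped; the two facts from Remark~\ref{remark 8.1.3}-(3) already carry the argument.
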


\begin{proof} By Lemma \ref{lemma 8.3.9}, it is sufficient to show the following two statements:

\begin{itemize}
\item $\corank_{W}\left(\mathrm{Sel}_{K}(J_{\infty}^{ord})[\omega_{n}]\right)=\corank_{W}\left(\mathrm{Sel}_{K}(J_{\infty}^{' ord})[\omega_{n}]\right)$ for all $n$.
\item There is a $\Lambda$-linear map $\mathfrak{G}\left(\mathrm{Sel}_{K}(J_{\infty}^{' ord})^{\vee}\right) \rightarrow \mathfrak{F}\left(\mathrm{Sel}_{K}(J_{\infty}^{ord})^{\vee}\right)$ with finite kernel. 
\end{itemize}

\medskip

The first statement is Corollary \ref{cor 8.1.4}-(2). For the second statement, consider an exact sequence $$0 \rightarrow \mathrm{Ker}(s_n) \rightarrow \mathrm{Sel}_{K}(\hat{J}_{n}^{ord}) \rightarrow \mathrm{Sel}_{K}(J_{\infty}^{ord})[\omega_{n}] \rightarrow  \mathrm{Coker}(s_n) \rightarrow 0.$$ By Lemma \ref{lemma A.0.2}-(3), we have $$ \mathrm{Ker}(s_n) \rightarrow \mathrm{Sel}_{K}(\hat{J}_{n}^{ord})_{/div}
 \rightarrow \left(\mathrm{Sel}_{K}(J_{\infty}^{ord})[\omega_{n}]\right)_{/div} \rightarrow  \mathrm{Coker}(s_n) \rightarrow 0.$$ By taking the direct limit and the Pontryagin dual, we get \begin{align}
 \lim_{\substack{\longleftarrow \\n}}\left(\mathrm{Sel}_{K}(\hat{J}_{n}^{ord})_{/div}\right)^{\vee} \simeq \mathfrak{G}\left(\mathrm{Sel}_{K}(J_{\infty}^{ord})^{\vee}\right).
\end{align}

\medskip

Now we look at $(\delta, \alpha, \xi')$-tower. By the same token, we get $$ \mathrm{Ker}(s'_n) \rightarrow \mathrm{Sel}_{K}(\hat{J'}_{n}^{ord})_{/div}
 \rightarrow \left(\mathrm{Sel}_{K}(J_{\infty}^{' ord})[\omega_{n}]\right)_{/div} \rightarrow  \mathrm{Coker}(s'_n) \rightarrow 0.$$ 
 
Note that all the terms in this sequence are finite. By taking projective limit, we get the following exact sequence:

\begin{align}\displaystyle \lim_{\substack{\longleftarrow \\n}}\mathrm{Ker}(s'_n) \rightarrow \lim_{\substack{\longleftarrow \\n}}\mathrm{Sel}_{K}(\hat{J'}_{n}^{ord})_{/div}
 \rightarrow \lim_{\substack{\longleftarrow \\n}}\left(\mathrm{Sel}_{K}(J_{\infty}^{' ord})[\omega_{n}]\right)_{/div}.
\end{align}

We now analyze the three terms in the above sequence (2).

\begin{itemize}
\item $\displaystyle \lim_{\substack{\longleftarrow \\n}}\mathrm{Ker}(s'_n)$ is finite by Remark \ref{remark 8.1.3}-(3).
\smallskip
\item By the perfectness of twisted Flach pairing (Proposition \ref{prop 5.4.11}-(4)), the middle term in (2) is isomorphic to $\displaystyle \lim_{\substack{\longleftarrow \\n}}\left(\mathrm{Sel}_{K}(\hat{J}_{n}^{ord})_{/div}\right)^{\vee}$, and this group is isomorphic to $\mathfrak{G}\left(\mathrm{Sel}_{K}(J_{\infty}^{ord})^{\vee}\right)$ as $\Lambda$-modules by the (1) above.
\smallskip
\item The last term in (2) is $\mathfrak{F}\left(\mathrm{Sel}_{K}(J_{\infty}^{' ord})^{\vee}\right)$ by the definition of the functor $\mathfrak{F}$.
\end{itemize}

Therefore, there is a $\Lambda$-linear map $\mathfrak{G}\left(\mathrm{Sel}_{K}(J_{\infty}^{ord})^{\vee}\right) \rightarrow \mathfrak{F}\left(\mathrm{Sel}_{K}(J_{\infty}^{' ord})^{\vee}\right)$ with the finite kernel, which shows the desired assertion.
\end{proof}

\smallskip

\begin{remark} \label{remark 8.3.11}
If we consider the Tate-Shafarevich groups instead of the Selmer groups, under the analogous assumptions (control of the Tate-Shafarevich groups), we get a pseudo-isomorphism between $$\mathfrak{G}\left(\sha^{1}_{K}(J_{\infty}^{ord})^{\vee}\right) \quad \mathrm{and} \quad\mathfrak{G}\left(\sha^{1}_{K}(J_{\infty}^{' ord})^{\vee}\right)^{\iota}.$$ The proof uses the (twisted) Cassels-Tate paiting, instead of Flach's one. The reason why we can only compare the values of $\mathfrak{G}$ for $\sha^1$ is that we do not have an analogue of the Greenberg-Wiles formula for the $\sha^1$.
\end{remark}

\smallskip

\section{Global $\Lambda$-adic Mordell-Weil group and $\Lambda$-adic Tate-Shafarevich groups}

\medskip

We keep Notation \ref{No}, Convention \ref{Id} and Convention \ref{Gl} in this section.

\subsection{$\Lambda$-adic Mordell-Weil groups} \label{sec 9.1}

We state an analogue of Proposition \ref{prop 6.4.10} whose proofs can be found in \cite[Theorem 2.1.2]{Lee2018}.

\begin{prop}\label{prop 9.1.1} (1) The natural map $\hat{J}^{ord}_{n}(K) \otimes_{\mathbb{Z}_p} \mathbb{Q}_p/\mathbb{Z}_p \rightarrow J^{ord}_{\infty}(K) \otimes_{\mathbb{Z}_p} \mathbb{Q}_p/\mathbb{Z}_p[\omega_n]$ has finite kernel for all $n$ whose orders are bounded as $n$ varies. 

\smallskip

(2) $(J^{ord}_{\infty}(K) \otimes_{\mathbb{Z}_p} \mathbb{Q}_p/\mathbb{Z}_p)^{\vee}$ is $\mathbb{Z}_p$-torsion-free. In particular, $(J^{ord}_{\infty}(K) \otimes_{\mathbb{Z}_p} \mathbb{Q}_p/\mathbb{Z}_p)^{\vee}$ has no non-trivial finite $\Lambda$-submodules.

\smallskip

(3) $\mathfrak{G}\left((J^{ord}_{\infty}(K) \otimes_{\mathbb{Z}_p} \mathbb{Q}_p/\mathbb{Z}_p)^{\vee}\right)=0$.

\smallskip

(4) We have a $\Lambda$-linear injection $$\displaystyle (J^{ord}_{\infty}(K) \otimes_{\mathbb{Z}_p} \mathbb{Q}_p/\mathbb{Z}_p)^{\vee} \hookrightarrow \Lambda^{r} 
\oplus\left(\bigoplus_{n=1}^{t} \frac{\Lambda}{\omega_{b_{n}+1, b_{n}}}\right)$$ with finite cokernel for some integers $r, b_1, \cdots b_n$. 
\end{prop}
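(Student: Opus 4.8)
The plan is to adapt the argument of \cite[Theorem 2.1.2]{Lee2018} to the modular tower; the only facts peculiar to our situation that enter are: $\hat{J}^{ord}_{r}(K)$ is a finitely generated $W$-module (Mordell--Weil theorem), killed by $\omega_r$, with finite torsion subgroup $\hat{J}^{ord}_{r}[p^{\infty}](K)=\mathfrak{g}(K)[\omega_r]$ whose order is bounded independently of $r$ (Lemma \ref{lemma 7.0.2}); and $\hat{J}^{ord}_{r}\simeq J^{ord}_{\infty}[\omega_r]$ as sheaves (Proposition \ref{prop 4.2.9}), so that $\hat{J}^{ord}_{r}(K)=J^{ord}_{\infty}(K)[\omega_r]$ and the transition maps in $J^{ord}_{\infty}(K)=\varinjlim_{r}\hat{J}^{ord}_{r}(K)$ are the inclusions $J^{ord}_{\infty}(K)[\omega_r]\hookrightarrow J^{ord}_{\infty}(K)[\omega_{r+1}]$. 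Write $M_{r}:=\hat{J}^{ord}_{r}(K)\otimes_{\mathbb{Z}_p}\mathbb{Q}_p/\mathbb{Z}_p$ for $r\le\infty$. Since $\hat{J}^{ord}_{r}(K)$ is finitely generated over $W$, each $M_{r}$ is a $W$-cofree module of finite corank and $M_{r}^{\vee}$ is a finitely generated free $W$-module; moreover $M_{\infty}=\varinjlim_{r}M_{r}$ is $p$-divisible, being a filtered colimit of divisible groups, and $M_{\infty}=\varinjlim_{r}M_{\infty}[\omega_r]$ since each $M_r$ is killed by $\omega_r$.

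I would first dispose of (2). By Pontryagin duality $M_{\infty}^{\vee}=\varprojlim_{r}M_{r}^{\vee}$ is an inverse limit of finitely generated free $W$-modules, hence a submodule of the $W$-torsion-free module $\prod_{r}M_{r}^{\vee}$; therefore $M_{\infty}^{\vee}$ is $\mathbb{Z}_p$-torsion-free, and in particular has no non-trivial finite $\Lambda$-submodule. (That $M_{\infty}^{\vee}$ is finitely generated over $\Lambda$ follows from the Kummer embedding $M_{\infty}\hookrightarrow H^{1}(K^{S}/K,\mathfrak{g})$ together with the finite generation of $H^{1}(K^{S}/K,\mathfrak{g})^{\vee}$ over $\Lambda$, obtained exactly as in Corollary \ref{cor 7.1.4} by applying topological Nakayama to the image of $H^{1}(K^{S}/K,\hat{J}^{ord}_{1}[p^{\infty}])$.) For (1), I would tensor the exact sequence $0\to J^{ord}_{\infty}(K)[\omega_r]\to J^{ord}_{\infty}(K)\xrightarrow{\omega_r}\omega_r J^{ord}_{\infty}(K)\to 0$ with $\mathbb{Q}_p/\mathbb{Z}_p$ and use $\mathrm{Tor}^{\mathbb{Z}_p}_{1}(-,\mathbb{Q}_p/\mathbb{Z}_p)=(-)[p^{\infty}]$; this identifies $\mathrm{Ker}(M_{r}\to M_{\infty}[\omega_r])$ with a subquotient of $J^{ord}_{\infty}(K)[p^{\infty}]\subseteq\mathfrak{g}(K)$, and the uniform bounds on $\mathfrak{g}(K)[\omega_\bullet]$ coming from Lemma \ref{lemma 7.0.2} then show this kernel is finite and bounded in $r$.

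For (3) and (4) I would combine (1), (2) and the $p$-divisibility of $M_{\infty}$ with the structure theory of \cite{Lee2018}. The key point is that $M_{\infty}^{\vee}/\omega_r M_{\infty}^{\vee}\simeq (M_{\infty}[\omega_r])^{\vee}$, and since $M_{r}\to M_{\infty}[\omega_r]$ has finite bounded kernel with divisible source, the non-divisible part of $M_{\infty}[\omega_r]$ is controlled uniformly in $r$; this forces $\mathfrak{G}(M_{\infty}^{\vee})=\varprojlim_{r}\bigl(M_{\infty}^{\vee}/\omega_r M_{\infty}^{\vee}\bigr)[p^{\infty}]=0$, which is (3). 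Feeding $\mathfrak{G}(M_{\infty}^{\vee})=0$ into Proposition \ref{prop 6.2.4}(2), together with the $W$-torsion bound above and the $\mathbb{Z}_p$-torsion-freeness from (2), one sees that in $E(M_{\infty}^{\vee})=\Lambda^{r}\oplus\bigoplus_i\Lambda/g_i^{l_i}$ no $g_i$ coprime to all $\omega_m$, no $\Lambda/p^{f}$, and no $\Lambda/\omega_{b+1,b}^{e}$ with $e\ge 2$ may occur, so $E(M_{\infty}^{\vee})=\Lambda^{r}\oplus\bigoplus_{n}\Lambda/\omega_{b_n+1,b_n}$. Finally, since $M_{\infty}^{\vee}$ has no non-trivial finite $\Lambda$-submodule, the pseudo-isomorphism $M_{\infty}^{\vee}\to E(M_{\infty}^{\vee})$ is injective with finite cokernel, which is (4).

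The main obstacle is the control statement (1): one has to track the $\mathrm{Tor}$ terms carefully when tensoring the control sequences with $\mathbb{Q}_p/\mathbb{Z}_p$ and extract the \emph{uniform} boundedness of $\mathrm{Ker}(M_{r}\to M_{\infty}[\omega_r])$ — as well as of the various torsion defects entering the third paragraph — from the single boundedness statement for $\mathfrak{g}(K)[\omega_r]$. The conceptual heart is then the passage from ``$M_{r}$ is $p$-divisible for every $r$'' to the rigid shape of $E(M_{\infty}^{\vee})$, i.e. the simultaneous vanishing of the $\mu$-invariant and of all higher powers of the $\omega_{b+1,b}$; this is exactly what the explicit description of the functors $\mathfrak{F}$ and $\mathfrak{G}$ (Proposition \ref{prop 6.2.4}, and the appendix of \cite{Lee2018}) is designed to deliver.
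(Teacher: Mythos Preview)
Your proposal is correct and follows essentially the same route as the paper, which gives no argument of its own and simply defers to \cite[Theorem 2.1.2]{Lee2018}; you have in effect unpacked that reference and identified exactly the two inputs specific to this setting (Proposition \ref{prop 4.2.9} and Lemma \ref{lemma 7.0.2}).

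One small correction: in your treatment of (1) you invoke ``the uniform bounds on $\mathfrak{g}(K)[\omega_\bullet]$ coming from Lemma \ref{lemma 7.0.2}''. That lemma does \emph{not} bound $\mathfrak{g}(K)[\omega_n]$ uniformly (indeed $\mathfrak{g}(K)[\omega_n]=\hat{J}^{ord}_n[p^{\infty}](K)$ can grow with $n$); what it bounds is $\mathfrak{g}(K)/\omega_n\mathfrak{g}(K)$. This is harmless for your argument, since the Tor computation you set up identifies $\mathrm{Ker}(M_r\to M_\infty)$ with $\bigl(\omega_r J^{ord}_{\infty}(K)\cap \mathfrak{g}(K)\bigr)/\omega_r\mathfrak{g}(K)$, which is a subgroup of $\mathfrak{g}(K)/\omega_r\mathfrak{g}(K)$ and hence finite of bounded order by the correct part of Lemma \ref{lemma 7.0.2}. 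Likewise in (3), ``uniformly bounded'' alone would not force $\varprojlim=0$; the point, as you indicate at the end, is that after dualizing the control map one sees $(M_\infty^{\vee}/\omega_n)[p^{\infty}]$ sits inside the dual of $\mathrm{Coker}(M_n\to M_\infty[\omega_n])$, and the vanishing of the limit then comes out of the explicit computation of $\mathfrak{G}$ on elementary modules (Proposition \ref{prop 6.2.4}) rather than from a naive size bound.
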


\begin{remark}\label{remark 9.1.2} (4) can be regarded as a $\Lambda$-adic analogue of the fact that $\hat{J}^{ord}_{n}(K) \otimes_{\mathbb{Z}_p} \mathbb{Q}_p/\mathbb{Z}_p$ is $W$-cofree.
\end{remark}

\smallskip

\subsubsection{Control and cotorsionness of $\Lambda$-adic Mordell-Weil group} \label{sec 9.1}
Next we show that the global Mordell-Weil groups are ``well-controlled" when $\Lambda$-adic Mordell-Weil group is a \emph{cotorsion}.

\smallskip

\begin{lemma}\label{lemma 9.1.3} (1) The natural map $\hat{J}^{ord}_{n}(K) \otimes_{\mathbb{Z}_p} \mathbb{Q}_p/\mathbb{Z}_p \rightarrow \hat{J}^{ord}_{n+1}(K) \otimes_{\mathbb{Z}_p} \mathbb{Q}_p/\mathbb{Z}_p$ has finite kernel for all $n$.

\smallskip

(2) We always have $\rank_{W}\hat{J}^{ord}_{n}(K) \leq \rank_{W}\hat{J}^{ord}_{n+1}(K)$. Moreover, equality holds if and only if the natural map in (1) is surjective.
\end{lemma}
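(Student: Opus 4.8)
The plan is to reduce both assertions to the elementary identity ``Albanese functoriality composed with Picard functoriality equals multiplication by the degree,'' combined with the structure theory of cofinitely generated $W$-modules.

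First I would record that for the finite morphism $X_{\alpha,\delta,\xi}(Np^{n+1})\twoheadrightarrow X_{\alpha,\delta,\xi}(Np^n)$, whose degree $d$ is finite and in fact equals $p$ (cf. Remark \ref{remark 2.2.4}, equation \eqref{e_7} and Corollary \ref{cor 4.3.13}), one has $A_{n+1,n}\circ P_{n,n+1}=[d]$ on $J_n$. Since $P_{n,n+1}$ is Hecke-equivariant (Remark \ref{remark 4.2.8}), it maps $\hat J^{ord}_n$ into $\hat J^{ord}_{n+1}$, and the composite $\hat J^{ord}_n\xrightarrow{P_{n,n+1}}\hat J^{ord}_{n+1}\xrightarrow{A_{n+1,n}}\hat J_n$ equals $[d]$, whose image automatically lies in $\hat J^{ord}_n$. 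Taking $K$-rational points and then $-\otimes_W Q(W)$, the resulting endomorphism of $\hat J^{ord}_n(K)\otimes_W Q(W)$ is multiplication by $d$, hence bijective; in particular $P_{n,n+1}$ is injective on $\hat J^{ord}_n(K)\otimes_W Q(W)$.

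Next, for (1): since $\hat J^{ord}_n(K)$ is a finitely generated $W$-module, $\hat J^{ord}_n(K)\otimes_{\mathbb Z_p}\mathbb Q_p/\mathbb Z_p\simeq (Q(W)/W)^{r_n}$ as $W$-modules, with $r_n=\rank_W\hat J^{ord}_n(K)$, and the kernel of a $W$-linear map between such cofree divisible modules is finite if and only if the map becomes injective after $-\otimes_W Q(W)$ (equivalently, after applying $T_p$). That injectivity was established in the previous step, so the kernel of the natural map $\hat J^{ord}_n(K)\otimes_{\mathbb Z_p}\mathbb Q_p/\mathbb Z_p\to\hat J^{ord}_{n+1}(K)\otimes_{\mathbb Z_p}\mathbb Q_p/\mathbb Z_p$ is finite.

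Finally, for (2): by (1) this natural map is a $W$-linear map $(Q(W)/W)^{r_n}\to (Q(W)/W)^{r_{n+1}}$ with finite kernel, so its image is a divisible $W$-submodule of $W$-corank $r_n$ inside a module of $W$-corank $r_{n+1}$; hence $r_n\le r_{n+1}$. If $r_n=r_{n+1}$, the cokernel has $W$-corank $0$ and, being a quotient of a divisible module, is itself divisible, hence zero, so the map is surjective; conversely, if the map is surjective then its kernel being finite forces $r_{n+1}=r_n$ by comparing coranks. I expect no serious obstacle; the only point demanding attention is verifying that the ordinary idempotent $e$ and the passage to $K$-rational points are compatible with $P_{n,n+1}$ and $A_{n+1,n}$, so that $A_{n+1,n}\circ P_{n,n+1}=[d]$ genuinely descends to $\hat J^{ord}_n(K)$. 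This follows from the Hecke-equivariance of $P_{n,n+1}$ together with the fact that $[d]$ preserves $\hat J^{ord}_n$, so one never needs $A_{n+1,n}$ itself to respect ordinary parts.
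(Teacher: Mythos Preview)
Your argument is correct, and it takes a genuinely different route from the paper's proof. For part~(1) the paper simply invokes Proposition~\ref{prop 9.1.1}-(1): since the transition map $\hat J^{ord}_n(K)\otimes\mathbb Q_p/\mathbb Z_p\to\hat J^{ord}_{n+1}(K)\otimes\mathbb Q_p/\mathbb Z_p$ is a factor of the map to the limit $J^{ord}_\infty(K)\otimes\mathbb Q_p/\mathbb Z_p$, its kernel is contained in the (finite) kernel of the latter. Your approach instead stays entirely at finite level, exploiting the classical identity $A_{n+1,n}\circ P_{n,n+1}=[p]$ to force injectivity of $P_{n,n+1}$ after tensoring with $Q(W)$. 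This is more elementary in that it avoids the $\Lambda$-adic control results altogether; the paper's route is quicker only because Proposition~\ref{prop 9.1.1} is already on the shelf, and it incidentally yields uniform boundedness of the kernels (not needed for this lemma). For part~(2) the two arguments coincide: both use that the modules in question are $W$-cofree and compare coranks.

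One small expositional point: when you write that the kernel is finite ``if and only if the map becomes injective after $-\otimes_W Q(W)$,'' this should be read as applying $\otimes_W Q(W)$ to the underlying finitely generated modules $\hat J^{ord}_n(K)$ (or, equivalently, as first applying $T_p$ to the divisible modules and then tensoring), since $(Q(W)/W)\otimes_W Q(W)=0$ literally. Your parenthetical ``(equivalently, after applying $T_p$)'' makes the intended meaning clear, so this is not a gap.
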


\begin{proof} (1) is a direct consequence of Proposition \ref{prop 9.1.1}-(1). The first inequality of (2) follows from (1). The second part of (2) follows from the fact that $\hat{J}^{ord}_{n+1}(K) \otimes_{\mathbb{Z}_p} \mathbb{Q}_p/\mathbb{Z}_p$ is a cofree $W$-module.
\end{proof}

\smallskip

\begin{theorem}\label{theorem 9.1.4} The following conditions are equivalent:

(1) The natural inclusion $J^{ord}_{\infty}(K) \otimes_{\mathbb{Z}_p} \mathbb{Q}_p/\mathbb{Z}_p[\omega_n] \hookrightarrow J^{ord}_{\infty}(K) \otimes_{\mathbb{Z}_p} \mathbb{Q}_p/\mathbb{Z}_p[\omega_{n+1}]$ is an isomorphism for almost all $n$.

\smallskip

(2) $\rank_{W}\hat{J}^{ord}_{n}(K)$ stabilizes as $n$ goes to infinity.

\medskip

If this equivalence holds, the natural map $\hat{J}^{ord}_{n}(K) \otimes_{\mathbb{Z}_p} \mathbb{Q}_p/\mathbb{Z}_p \rightarrow J^{ord}_{\infty}(K) \otimes_{\mathbb{Z}_p} \mathbb{Q}_p/\mathbb{Z}_p[\omega_n]$ is surjective except for finitely many $n$ and $$\rank_{W}\hat{J}^{ord}_{n}(K)=\lambda \left((J^{ord}_{\infty}(K) \otimes_{\mathbb{Z}_p} \mathbb{Q}_p/\mathbb{Z}_p)^{\vee}\right)$$ for all sufficiently large $n$.
\end{theorem}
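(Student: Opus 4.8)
The plan is to funnel all three assertions through the single dichotomy of whether $r:=\rank_\Lambda\big((J^{ord}_\infty(K)\otimes_{\mathbb Z_p}\mathbb Q_p/\mathbb Z_p)^\vee\big)$ vanishes. Write $M:=J^{ord}_\infty(K)\otimes_{\mathbb Z_p}\mathbb Q_p/\mathbb Z_p$ and $A_n:=\hat{J}^{ord}_n(K)\otimes_{\mathbb Z_p}\mathbb Q_p/\mathbb Z_p$, so $M=\varinjlim_n A_n$, $s_n:=\rank_W\hat{J}^{ord}_n(K)=\corank_W A_n$, and recall from Proposition \ref{prop 9.1.1}-(4) that $M^\vee$ is a finitely generated $\Lambda$-module. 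By Proposition \ref{prop 9.1.1}-(1) there are natural maps $\theta_n\colon A_n\to M[\omega_n]\subseteq M$ with finite kernel; in particular $M=\bigcup_n M[\omega_n]$. Two inputs will be used repeatedly: this last equality, and Lemma \ref{lemma 9.1.3}-(2), which says that $A_n\to A_{n+1}$ is \emph{surjective} exactly when $s_n=s_{n+1}$ (the nontrivial implication being forced by the $W$-cofreeness of $A_{n+1}$).

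I would first show $(1)\iff r=0$. Condition $(1)$ asserts that the ascending chain $\{M[\omega_n]\}_n$ is eventually stationary, which in view of $M=\bigcup_n M[\omega_n]$ is equivalent to $M=M[\omega_N]$ for some $N$, i.e.\ to $\omega_N\cdot M^\vee=0$. Given $(1)$, dualizing the equalities $M[\omega_n]=M[\omega_{n+1}]$ gives $\omega_nM^\vee=\omega_{n+1}M^\vee=\omega_{n+1,n}\cdot(\omega_nM^\vee)$; since $\omega_{n+1,n}$ is a distinguished polynomial of positive degree it lies in $(p,T)$, so Nakayama forces $\omega_nM^\vee=0$ and hence $r=0$. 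Conversely, if $r=0$ then Proposition \ref{prop 9.1.1}-(4) makes $M^\vee$ a finite-index submodule of $\bigoplus_j\Lambda/\omega_{b_j+1,b_j}$, which is annihilated by $\omega_N$ once $N>\max_j b_j$; hence $\omega_NM^\vee=0$ and $(1)$ holds.

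Next I would treat the two implications. For $(2)\Rightarrow(1)$: if $s_n$ stabilizes then Lemma \ref{lemma 9.1.3}-(2) makes the transition maps $A_n\to A_{n+1}$ surjective for $n\gg0$, so the canonical map $A_N\to M=\varinjlim_nA_n$ is surjective for $N\gg0$; dualizing yields $M^\vee\hookrightarrow A_N^\vee$, and as $A_N^\vee$ is finitely generated over $W$ this forces $M^\vee$ finitely generated over $W$, so $r=0$ and $(1)$ holds by the previous step. For $(1)\Rightarrow(2)$ and the refinement: assuming $(1)$ we have $\omega_NM^\vee=0$, so $M^\vee$ is finitely generated over $W$, whence $\mu(M^\vee)=0$ and $\corank_WM=\rank_WM^\vee=\lambda(M^\vee)$. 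The finite-kernel maps $\theta_n\colon A_n\to M[\omega_n]\subseteq M$ then give $s_n=\corank_WA_n\le\corank_WM=\lambda(M^\vee)$ for all $n$; combined with the monotonicity $s_n\le s_{n+1}$ of Lemma \ref{lemma 9.1.3}-(2), the integer sequence $(s_n)$ is bounded and non-decreasing, hence eventually constant — this is $(2)$. Once $s_n$ is constant, Lemma \ref{lemma 9.1.3}-(2) again makes $A_n\to A_{n+1}$ surjective, so $A_n\to M=\varinjlim_mA_m$ is surjective for $n\gg0$; since $(1)$ gives $M[\omega_n]=M$ for $n\gg0$, this says $\theta_n$ is surjective for $n\gg0$, and as $\ker\theta_n$ is finite we conclude $s_n=\corank_WM=\lambda(M^\vee)$ for all sufficiently large $n$.

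The step I expect to be the main obstacle is $(2)\Rightarrow(1)$. The bound coming from $\theta_n$ only controls $s_n$ from above by $\corank_WM[\omega_n]$, which grows like $r\cdot p^{n-1}$ when $r>0$, so stabilization of $s_n$ cannot be read off from a rank comparison alone. The resolution is to exploit Lemma \ref{lemma 9.1.3}-(2) in its sharp form: stabilization of $s_n$ does not merely equate ranks, it makes the Picard-functoriality transition maps $A_n\to A_{n+1}$ \emph{surjective}, and this surjectivity is precisely what lets Pontryagin duality turn ``$s_n$ bounded'' into ``$M^\vee$ finitely generated over $W$''.
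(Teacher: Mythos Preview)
Your argument is correct. The paper itself does not spell out a proof of Theorem~\ref{theorem 9.1.4}, merely citing \cite[Theorem~2.1.5]{Lee2018}, so a line-by-line comparison is not possible from the text at hand; that said, your organization around the single pivot $r=0$ --- using Proposition~\ref{prop 9.1.1}-(4) for $r=0\Rightarrow(1)$, the sharp form of Lemma~\ref{lemma 9.1.3}-(2) (stabilization of $s_n$ forces \emph{surjectivity} of $A_n\to A_{n+1}$, hence of $A_N\to M$) for $(2)\Rightarrow r=0$, and the bounded-monotone argument for $(1)\Rightarrow(2)$ --- is the natural route and almost certainly what the cited proof does.
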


\smallskip

\begin{proof} Proof is almost identical as that of \cite[Theorem 2.1.5]{Lee2018}.
\end{proof}

\subsection{$\Lambda$-adic Tate-Shafarevich groups} \label{sec 9.2}

\medskip

Define a $\Lambda$-adic Tate-Shafarevich group by $\displaystyle \sha^{1}_{K}(J_{\infty}^{ord}):=\lim_{\substack{\longrightarrow \\ n}}\sha^{1}_{K}(\hat{J}_{n}^{ord})$ which fits into a natural exact sequence $$0 \rightarrow J^{ord}_{\infty}(K) \otimes_{\mathbb{Z}_p} \mathbb{Q}_p/\mathbb{Z}_p \rightarrow \mathrm{Sel}_{K}(J_{\infty}^{ord}) \rightarrow \sha^{1}_{K}(J_{\infty}^{ord}) \rightarrow 0.$$ We also have a natural map $\sha^{1}_{K}(\hat{J}_{n}^{ord}) \rightarrow \sha^{1}_{K}(J_{\infty}^{ord})[\omega_n]$. Note that $\sha^{1}_{K}(J_{\infty}^{ord})$ is a cofinitely generated $\Lambda$-module.

\smallskip

\subsubsection{Cotorsionness of $\Lambda$-adic $\sha^{1}$}
Next we show that under the Hida's control theorem of the Selmer groups (Theorem \ref{Theorem H}) and the finiteness of $\sha_{K}^{1}(\hat{J}_{n}^{ord})$, $\sha^{1}_{K}(J_{\infty}^{ord})^{\vee}$ is a finitely generated torsion $\Lambda$-module. This can be regarded as a $\Lambda$-adic analogue of the Tate-Shafarevich conjecture.

\smallskip

\begin{theorem}\label{theorem 9.2.5} Suppose that $\sha_{K}^{1}(\hat{J}_{n}^{ord})$ is finite for all $n$. Then the functor $\mathfrak{G}$ induces an isomorphism $$\sha^{1}_{K}(J_{\infty}^{ord})^{\vee} \simeq \mathfrak{G}\left(\mathrm{Sel}_{K}(J_{\infty}^{ord})^{\vee}\right)$$ of $\Lambda$-modules. In particular, $\sha^{1}_{K}(J_{\infty}^{ord})^{\vee}$ is a finitely generated torsion $\Lambda$-module.
\end{theorem}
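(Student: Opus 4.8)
The plan is to reduce the statement to the identity $\varprojlim_n \big(\mathrm{Sel}_K(\hat{J}_n^{ord})_{/div}\big)^\vee \simeq \mathfrak{G}\big(\mathrm{Sel}_K(J_\infty^{ord})^\vee\big)$ already established (as equation (1)) inside the proof of Theorem \ref{theorem 8.3.10}, and then to show that, under the finiteness hypothesis, the left-hand side is nothing but $\sha^1_K(J_\infty^{ord})^\vee$.

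First I would work at finite level $n$. The defining sequence
\[
0 \to \hat{J}_n^{ord}(K) \otimes_{\mathbb{Z}_p} \mathbb{Q}_p/\mathbb{Z}_p \to \mathrm{Sel}_K(\hat{J}_n^{ord}) \to \sha^1_K(\hat{J}_n^{ord}) \to 0
\]
has divisible kernel, so the kernel is contained in the maximal divisible subgroup $D$ of $\mathrm{Sel}_K(\hat{J}_n^{ord})$; but the image of $D$ in the finite group $\sha^1_K(\hat{J}_n^{ord})$ is a divisible subgroup, hence zero, so $D$ equals the kernel. Thus the surjection induces a natural isomorphism $\mathrm{Sel}_K(\hat{J}_n^{ord})_{/div} \xrightarrow{\ \sim\ } \sha^1_K(\hat{J}_n^{ord})$, compatible with the Picard-functoriality transition maps $P_{n,n+1}$ since $(-)_{/div}$ is functorial on cofinitely generated $\mathbb{Z}_p$-modules. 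Passing to the direct limit over $n$, and using that $\sha^1_K(\hat{J}_n^{ord}) = (\sha^1_K(\hat{J}_n^{ord}))_{/div}$ because the group is finite, gives $\varinjlim_n \mathrm{Sel}_K(\hat{J}_n^{ord})_{/div} \simeq \varinjlim_n \sha^1_K(\hat{J}_n^{ord}) = \sha^1_K(J_\infty^{ord})$, and Pontryagin duality yields
\[
\sha^1_K(J_\infty^{ord})^\vee \simeq \varprojlim_n \big(\mathrm{Sel}_K(\hat{J}_n^{ord})_{/div}\big)^\vee .
\]

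It remains to recall why the right-hand side equals $\mathfrak{G}\big(\mathrm{Sel}_K(J_\infty^{ord})^\vee\big)$; this uses only Hida's control theorem, not the finiteness hypothesis. Starting from $0 \to \mathrm{Ker}(s_n) \to \mathrm{Sel}_K(\hat{J}_n^{ord}) \to \mathrm{Sel}_K(J_\infty^{ord})[\omega_n] \to \mathrm{Coker}(s_n) \to 0$, applying $(-)_{/div}$ (Lemma \ref{lemma A.0.2}-(3)), using $\varinjlim_n \mathrm{Ker}(s_n) = \varinjlim_n \mathrm{Coker}(s_n) = 0$ (Remark \ref{remark 8.1.3}-(3)), and then taking the direct limit and the Pontryagin dual, one obtains $\varprojlim_n (\mathrm{Sel}_K(\hat{J}_n^{ord})_{/div})^\vee \simeq \varprojlim_n \big(\mathrm{Sel}_K(J_\infty^{ord})^\vee/\omega_n\big)[p^\infty] = \mathfrak{G}\big(\mathrm{Sel}_K(J_\infty^{ord})^\vee\big)$. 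Composing with the previous display gives the asserted isomorphism. The ``in particular'' clause is then immediate: $\mathrm{Sel}_K(J_\infty^{ord})^\vee$ is a finitely generated $\Lambda$-module, and $\mathfrak{G}(X)$ is finitely generated and $\Lambda$-torsion for every finitely generated $\Lambda$-module $X$ (Proposition \ref{prop 6.2.4}-(2)).

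I do not anticipate a serious obstacle: the heavy inputs (Hida's control theorem, and the computation of $\mathfrak{G}$ of the dual Selmer group) were already spent in Section \ref{sec 8.3}. The only point needing care is bookkeeping — checking that the finite-level isomorphism $\mathrm{Sel}_K(\hat{J}_n^{ord})_{/div} \simeq \sha^1_K(\hat{J}_n^{ord})$ is compatible, on the nose, with the very same inverse system of transition maps that appears in the derivation of equation (1) in the proof of Theorem \ref{theorem 8.3.10}, so that the two identifications of $\varprojlim_n (\mathrm{Sel}_K(\hat{J}_n^{ord})_{/div})^\vee$ may legitimately be composed.
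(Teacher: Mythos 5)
Your proposal is correct and follows essentially the same route as the paper: the paper's proof dualizes the sequence $0 \to \hat{J}_n^{ord}(K)\otimes\mathbb{Q}_p/\mathbb{Z}_p \to \mathrm{Sel}_K(\hat{J}_n^{ord}) \to \sha^1_K(\hat{J}_n^{ord}) \to 0$ and takes $[p^\infty]$-parts to identify $\sha^1_K(\hat{J}_n^{ord})^\vee$ with $\mathrm{Sel}_K(\hat{J}_n^{ord})^\vee[p^\infty]$, which is precisely the Pontryagin dual of your finite-level identification $\mathrm{Sel}_K(\hat{J}_n^{ord})_{/div}\simeq\sha^1_K(\hat{J}_n^{ord})$. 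The paper then re-derives the isomorphism $\varprojlim_n \mathrm{Sel}_K(\hat{J}_n^{ord})^\vee[p^\infty]\simeq\mathfrak{G}(\mathrm{Sel}_K(J_\infty^{ord})^\vee)$ by applying Lemma \ref{lemma A.0.2}-(3) to the control sequence and taking projective limits of compact modules, rather than citing display (1) from the proof of Theorem \ref{theorem 8.3.10}, but the underlying argument and the compatibility of transition maps is the same.
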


\smallskip

\begin{proof} By taking $p^{\infty}$-torsion part and projective limit of an exact sequence $$0 \rightarrow \sha_{K}^{1}(\hat{J}_{n}^{ord})^{\vee} \rightarrow \mathrm{Sel}_{K}(\hat{J}_{n}^{ord})^{\vee} \rightarrow (\hat{J}^{ord}_{n}(K) \otimes_{\mathbb{Z}_p} \mathbb{Q}_p/\mathbb{Z}_p)^{\vee} \rightarrow 0,$$ we have isomorphisms $\displaystyle  \sha^{1}_{K}(J_{\infty}^{ord})^{\vee} \simeq \lim_{\substack{\longleftarrow \\n}}\sha_{K}^{1}(\hat{J}_{n}^{ord})^{\vee} \simeq \lim_{\substack{\longleftarrow \\n}}\mathrm{Sel}_{K}(\hat{J}_{n}^{ord})^{\vee}[p^{\infty}]$. Hence it is sufficient to show the following isomorphism: $$\displaystyle \lim_{\substack{\longleftarrow \\n}}\mathrm{Sel}_{K}(\hat{J}_{n}^{ord})^{\vee}[p^{\infty}] \simeq \mathfrak{G}\left(\mathrm{Sel}_{K}(J_{\infty}^{ord})^{\vee}\right).$$

\smallskip

By Lemma \ref{lemma A.0.2}-(3), we get an exact sequence $$0 \rightarrow \mathrm{Coker}(s_n)^{\vee} \rightarrow \frac{\mathrm{Sel}_{K}(J_{\infty}^{ord})^{\vee}}{\omega_n \mathrm{Sel}_{K}(J_{\infty}^{ord})^{\vee}}[p^{\infty}]  \rightarrow \mathrm{Sel}_{K}(\hat{J}_{n}^{ord})^{\vee}[p^{\infty}] \rightarrow \mathrm{Ker}(s_n)^{\vee}$$ since $\mathrm{Coker}(s_n)$ is finite for all $n$. Since taking projective limit preserves the exact sequence of compact modules, we have an exact sequence $$\displaystyle 0=\lim_{\substack{\longleftarrow \\n}}\mathrm{Coker}(s_n)^{\vee} \rightarrow \lim_{\substack{\longleftarrow \\n}}\frac{\mathrm{Sel}_{K}(J_{\infty}^{ord})^{\vee}}{\omega_n \mathrm{Sel}_{K}(J_{\infty}^{ord})^{\vee}}[p^{\infty}] \rightarrow \lim_{\substack{\longleftarrow \\n}}\mathrm{Sel}_{K}(\hat{J}_{n}^{ord})^{\vee}[p^{\infty}] \rightarrow \lim_{\substack{\longleftarrow \\n}}\mathrm{Ker}(s_n)^{\vee}=0.$$ Since the second term of the above sequence is $\mathfrak{G}\left(\mathrm{Sel}_{K}(J_{\infty}^{ord})^{\vee}\right)$ by definition, we have the desired assertion.
\end{proof}

\smallskip

\begin{cor} \label{cor 9.2.6} Suppose that $\sha_{K}^{1}(\hat{J}_{n}^{ord})$ and $\sha_{K}^{1}(\hat{J'}_{n}^{ord})$ are finite for all $n$. Then we have $\Lambda$-linear isomorphisms 
\begin{align*}
E\left(\sha^{1}_{K}(J_{\infty}^{ord})^{\vee}\right) &\simeq E\left(\sha^{1}_{K}(J_{\infty}^{' ord})^{\vee}\right)^{\iota} \\
E\left((J_{\infty}^{ord}(K)\otimes_{\mathbb{Z}_p}\mathbb{Q}_p/\mathbb{Z}_p)^{\vee}\right) &\simeq E\left((J_{\infty}^{' ord}(K)\otimes_{\mathbb{Z}_p}\mathbb{Q}_p/\mathbb{Z}_p)^{\vee}\right)^{\iota}
\end{align*}
\end{cor}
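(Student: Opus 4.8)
The plan is to derive both isomorphisms from the functional equation for the $\Lambda$-adic Selmer groups, Theorem~\ref{theorem 8.3.10}, combined with Theorem~\ref{theorem 9.2.5} and the explicit description of the functor $\mathfrak{G}$ in Proposition~\ref{prop 6.2.4}. Two bookkeeping remarks will be used throughout. First, by Proposition~\ref{prop 6.2.4}-(2) the elementary module $E(\mathfrak{G}(X))$ is read off directly from $E(X)$: one discards the free summand and the length-one factors $\Lambda/\omega_{b+1,b}$, and lowers the exponent of each factor $\Lambda/\omega_{a+1,a}^{e}$ with $e\ge 2$ by one. Second, this recipe --- and, more generally, the passage from the characteristic ideal of a module to that of a submodule or quotient appearing below --- commutes with the twist $X\mapsto X^{\iota}$: indeed $\iota$ fixes every ideal $(\omega_{m+1,m})$ (because $\iota(\omega_n)=-(1+T)^{-p^{n-1}}\omega_n$ is an associate of $\omega_n$) and carries a prime coprime to all $\omega_n$ to another prime of the same kind. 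Hence $E(X)\simeq E(Y)^{\iota}$ forces $E(\mathfrak{G}(X))\simeq E(\mathfrak{G}(Y))^{\iota}$.

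First I would treat the Tate--Shafarevich groups. The hypothesis is symmetric in the two towers, so Theorem~\ref{theorem 9.2.5} applies to both and gives $\sha^{1}_{K}(J_{\infty}^{ord})^{\vee}\simeq\mathfrak{G}\!\left(\mathrm{Sel}_{K}(J_{\infty}^{ord})^{\vee}\right)$ and $\sha^{1}_{K}(J_{\infty}^{' ord})^{\vee}\simeq\mathfrak{G}\!\left(\mathrm{Sel}_{K}(J_{\infty}^{' ord})^{\vee}\right)$. Since $E\!\left(\mathrm{Sel}_{K}(J_{\infty}^{ord})^{\vee}\right)\simeq E\!\left(\mathrm{Sel}_{K}(J_{\infty}^{' ord})^{\vee}\right)^{\iota}$ by Theorem~\ref{theorem 8.3.10}, the first remark above immediately yields $E\!\left(\sha^{1}_{K}(J_{\infty}^{ord})^{\vee}\right)\simeq E\!\left(\sha^{1}_{K}(J_{\infty}^{' ord})^{\vee}\right)^{\iota}$, which is the first isomorphism.

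For the Mordell--Weil groups I would exploit the Pontryagin dual of the tautological exact sequence of subsection~\ref{sec 9.2}, namely $$0\to \sha^{1}_{K}(J_{\infty}^{ord})^{\vee}\to \mathrm{Sel}_{K}(J_{\infty}^{ord})^{\vee}\to \left(J_{\infty}^{ord}(K)\otimes_{\mathbb{Z}_p}\mathbb{Q}_p/\mathbb{Z}_p\right)^{\vee}\to 0.$$ Because $\sha^{1}_{K}(J_{\infty}^{ord})^{\vee}$ is $\Lambda$-torsion (Theorem~\ref{theorem 9.2.5}) it lies inside $\left(\mathrm{Sel}_{K}(J_{\infty}^{ord})^{\vee}\right)_{\Lambda-\mathrm{tor}}$, and passing to torsion parts produces a short exact sequence of torsion modules; multiplicativity of characteristic ideals then expresses $char_{\Lambda}\!\left(\left(J_{\infty}^{ord}(K)\otimes_{\mathbb{Z}_p}\mathbb{Q}_p/\mathbb{Z}_p\right)^{\vee}\right)$ as $char_{\Lambda}\!\left(\mathrm{Sel}_{K}(J_{\infty}^{ord})^{\vee}\right)$ divided by $char_{\Lambda}\!\left(\sha^{1}_{K}(J_{\infty}^{ord})^{\vee}\right)$, the latter being known from Proposition~\ref{prop 6.2.4}-(2) via Theorem~\ref{theorem 9.2.5}. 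Since $\left(J_{\infty}^{ord}(K)\otimes_{\mathbb{Z}_p}\mathbb{Q}_p/\mathbb{Z}_p\right)^{\vee}$ has an elementary module of the special shape $\Lambda^{r}\oplus\bigoplus\Lambda/\omega_{b+1,b}$ by Proposition~\ref{prop 9.1.1}-(4), it is determined by its $\Lambda$-rank and this characteristic ideal, and the $\Lambda$-rank equals that of $\mathrm{Sel}_{K}(J_{\infty}^{ord})^{\vee}$ (as $\sha^{1}_{K}(J_{\infty}^{ord})^{\vee}$ is torsion). Thus $E\!\left(\left(J_{\infty}^{ord}(K)\otimes_{\mathbb{Z}_p}\mathbb{Q}_p/\mathbb{Z}_p\right)^{\vee}\right)$ is a function of $E\!\left(\mathrm{Sel}_{K}(J_{\infty}^{ord})^{\vee}\right)$ through operations that commute with $\iota$, and Theorem~\ref{theorem 8.3.10} gives the second isomorphism. (In fact $E(Y)^{\iota}\simeq E(Y)$ here, since the elementary module of $Y=\left(J_{\infty}^{' ord}(K)\otimes_{\mathbb{Z}_p}\mathbb{Q}_p/\mathbb{Z}_p\right)^{\vee}$ involves only $\Lambda$ and the $\iota$-stable ideals $(\omega_{b+1,b})$.)

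The step I expect to require the most care is making precise, for the three modules in the displayed sequence, that the characteristic ideal of $\mathrm{Sel}_{K}(J_{\infty}^{ord})^{\vee}$ together with the a priori structure of the other two terms (Proposition~\ref{prop 9.1.1}-(4) and the $\mathfrak{G}$-description of Proposition~\ref{prop 6.2.4}-(2)) genuinely pins down all three elementary modules --- in particular, that each higher-order factor $\Lambda/\omega_{a+1,a}^{e}$ of $\mathrm{Sel}_{K}(J_{\infty}^{ord})^{\vee}$ contributes exactly one extra length-one factor $\Lambda/\omega_{a+1,a}$ on the Mordell--Weil side. This is a finite piece of commutative-algebra bookkeeping --- localize at each height-one prime $(\omega_{a+1,a})$ and $(g_i)$ of $\Lambda$ and compare invariant factors over the resulting discrete valuation rings --- but it is where the argument carries content; the remainder is a direct transport of Theorems~\ref{theorem 8.3.10} and~\ref{theorem 9.2.5} through these functorial operations.
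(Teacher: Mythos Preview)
Your proposal is correct and follows exactly the route the paper takes: the paper's own proof of Corollary~\ref{cor 9.2.6} is the single line ``This follows from Theorem~\ref{theorem 8.3.10} and Theorem~\ref{theorem 9.2.5},'' and what you have written is precisely the bookkeeping that makes this line honest. Your hesitation in the last paragraph is unnecessary: Lemma~\ref{lemma A.0.2}-(1) gives directly that the torsion sequence $0\to \sha^{1}_{K}(J_{\infty}^{ord})^{\vee}\to \bigl(\mathrm{Sel}_{K}(J_{\infty}^{ord})^{\vee}\bigr)_{\Lambda\text{-tor}}\to \bigl((J_{\infty}^{ord}(K)\otimes\mathbb{Q}_p/\mathbb{Z}_p)^{\vee}\bigr)_{\Lambda\text{-tor}}\to 0$ is exact, and since the last module has only simple $\omega_{b+1,b}$-factors (Proposition~\ref{prop 9.1.1}-(4)) its elementary module is determined by its characteristic ideal, so the invariant-factor comparison you sketch is immediate.
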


\smallskip

\begin{proof} This follows from Theorem \ref{theorem 8.3.10} and Theorem \ref{theorem 9.2.5}.
\end{proof}

\smallskip

\subsubsection{Estimates on the size of $\sha^{1}$} Under the boundedness of $\mathrm{Coker}(s_n)$, we can compute an estimate of the size of $\sha_{K}^{1}(\hat{J}_{n}^{ord})$.

\begin{theorem}\label{theorem 9.2.7} Suppose that $\mathrm{Coker}(s_n)$ is (finite and) bounded independent of $n$, and also suppose that $\sha^{1}_{K}(\hat{J}_{n}^{ord})$ is finite for all $n$. Then there exists an integer $\nu$ such that $$ | \sha^{1}_{K}(\hat{J}_{n}^{ord}) |=p^{e_n} \quad (n>>0)$$ where $$e_n=p^n \mu \left(\sha^{1}_{K}(\hat{J}_{\infty}^{ord})^{\vee} \right) + n \lambda \left(\sha^{1}_{K}(\hat{J}_{\infty}^{ord})^{\vee} \right) +\nu.$$\end{theorem}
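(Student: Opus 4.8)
The plan is to reduce the estimate for $|\sha^1_K(\hat{J}^{ord}_n)|$ to a purely $\Lambda$-module-theoretic growth statement for $\mathrm{Sel}_K(J^{ord}_\infty)^\vee$ and then apply Theorem \ref{theorem 9.2.5}. First I would dualize the exact sequence $0\to \hat{J}^{ord}_n(K)\otimes_{\mathbb{Z}_p}\mathbb{Q}_p/\mathbb{Z}_p\to \mathrm{Sel}_K(\hat{J}^{ord}_n)\to \sha^1_K(\hat{J}^{ord}_n)\to 0$. Since $\hat{J}^{ord}_n(K)\otimes_{\mathbb{Z}_p}\mathbb{Q}_p/\mathbb{Z}_p$ is $W$-cofree (Remark \ref{remark 9.1.2}), its dual is $W$-torsion-free, while $\sha^1_K(\hat{J}^{ord}_n)$ is finite by hypothesis; hence $\sha^1_K(\hat{J}^{ord}_n)^\vee$ is precisely the maximal finite submodule $\mathrm{Sel}_K(\hat{J}^{ord}_n)^\vee[p^\infty]$, and $|\sha^1_K(\hat{J}^{ord}_n)|=|\mathrm{Sel}_K(\hat{J}^{ord}_n)^\vee[p^\infty]|$. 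Writing $Y:=\mathrm{Sel}_K(J^{ord}_\infty)^\vee$, I would next take Pontryagin duals of the control sequence $0\to\mathrm{Ker}(s_n)\to\mathrm{Sel}_K(\hat{J}^{ord}_n)\xrightarrow{s_n}\mathrm{Sel}_K(J^{ord}_\infty)[\omega_n]\to\mathrm{Coker}(s_n)\to 0$ to obtain $0\to\mathrm{Coker}(s_n)^\vee\to Y/\omega_n Y\to\mathrm{Sel}_K(\hat{J}^{ord}_n)^\vee\to\mathrm{Ker}(s_n)^\vee\to 0$ and restrict to $p^\infty$-torsion. As $\mathrm{Ker}(s_n)$ is finite and bounded (Remark \ref{remark 8.1.3}-(3)) and $\mathrm{Coker}(s_n)$ is finite and bounded by assumption, a diagram chase gives $|\mathrm{Sel}_K(\hat{J}^{ord}_n)^\vee[p^\infty]|=|(Y/\omega_n Y)[p^\infty]|\cdot\varepsilon_n$ with $\varepsilon_n$ a bounded factor built from the orders of $\mathrm{Ker}(s_n)^\vee$ and $\mathrm{Coker}(s_n)^\vee$; a standard stabilization argument (these error terms inject into modules such as $\mathfrak{g}(K)/\omega_n\mathfrak{g}(K)$ that become constant for $n\gg 0$ by Lemma \ref{lemma 7.0.2}) shows $\varepsilon_n=p^{\nu_0}$ for $n\gg 0$.

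The remaining and main point is the growth formula $|(Y/\omega_n Y)[p^\infty]|=p^{e'_n}$ for $n\gg 0$, where $e'_n$ is governed by the Iwasawa invariants of $\mathfrak{G}(Y)$ in the normalization of the statement. I would prove this by passing to the elementary module: a pseudo-isomorphism $Y\to E(Y)$ has finite kernel and cokernel, so applying the snake lemma to multiplication by $\omega_n$ shows that $|(Y/\omega_n Y)[p^\infty]|$ and $|(E(Y)/\omega_n E(Y))[p^\infty]|$ differ by a factor that is bounded, and in fact eventually constant, in $n$. Writing $E(Y)=\Lambda^r\oplus\bigoplus_i\Lambda/g_i^{l_i}\oplus\bigoplus_m\Lambda/\omega_{a_m+1,a_m}^{e_m}\oplus\bigoplus_{n'}\Lambda/\omega_{b_{n'}+1,b_{n'}}$ as in Proposition \ref{prop 6.2.4}, one computes summand by summand: $\Lambda^r$ contributes a $W$-free quotient, hence nothing to the $p^\infty$-part; the $\Lambda/\omega_{b_{n'}+1,b_{n'}}$ summands contribute only a bounded, eventually constant amount; each $\Lambda/g_i^{l_i}$ with $g_i$ coprime to all $\omega_n$ contributes the classical Iwasawa growth attached to $\Lambda/g_i^{l_i}$; and, since for large $n$ the polynomial $\omega_n$ is divisible by $\omega_{a_m+1,a_m}$ to exactly the first power, each $\Lambda/\omega_{a_m+1,a_m}^{e_m}$ (with $e_m\geq 2$) contributes, up to an eventually constant factor, the same growth as $\Lambda/\omega_{a_m+1,a_m}^{e_m-1}$. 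Summing these and comparing with Proposition \ref{prop 6.2.4}-(2), which identifies $\mathfrak{G}(Y)=\mathfrak{G}(E(Y))$ up to pseudo-isomorphism with $\bigoplus_i\Lambda/g_i^{l_i}\oplus\bigoplus_m\Lambda/\omega_{a_m+1,a_m}^{e_m-1}$, shows that the total exponent $e'_n$ is exactly $p^n\mu(\mathfrak{G}(Y))+n\lambda(\mathfrak{G}(Y))+\nu_1$ for some constant $\nu_1$ and $n\gg 0$.

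Putting the two steps together and invoking Theorem \ref{theorem 9.2.5}, which gives an isomorphism $\sha^1_K(J^{ord}_\infty)^\vee\simeq\mathfrak{G}(Y)$ and hence $\mu(\mathfrak{G}(Y))=\mu(\sha^1_K(J^{ord}_\infty)^\vee)$, $\lambda(\mathfrak{G}(Y))=\lambda(\sha^1_K(J^{ord}_\infty)^\vee)$, we conclude $|\sha^1_K(\hat{J}^{ord}_n)|=p^{e_n}$ for $n\gg 0$ with $e_n=p^n\mu(\sha^1_K(J^{ord}_\infty)^\vee)+n\lambda(\sha^1_K(J^{ord}_\infty)^\vee)+\nu$, $\nu=\nu_0+\nu_1$. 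The main obstacle I expect is twofold: first, correctly bookkeeping the $\omega$-power summands of $E(Y)$ so that the growth rate is governed by $\mathfrak{G}(Y)$ rather than by $Y$ itself --- this is exactly the role of the functor $\mathfrak{G}$ and of Proposition \ref{prop 6.2.4}-(2); and second, upgrading the various individually bounded finite error terms (the control kernels and cokernels, and the discrepancies coming from the pseudo-isomorphisms) from mere $O(1)$ bounds to genuine stabilization, so that they assemble into a single integer $\nu$ valid for all sufficiently large $n$.
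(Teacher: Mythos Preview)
Your proposal is correct and follows essentially the same route as the paper: identify $|\sha^1_K(\hat{J}^{ord}_n)|$ with $|\mathrm{Sel}_K(\hat{J}^{ord}_n)^\vee[p^\infty]|$, pass through the control sequence to $|(Y/\omega_nY)[p^\infty]|$ with $Y=\mathrm{Sel}_K(J^{ord}_\infty)^\vee$, establish the growth formula $\log_p|(Y/\omega_nY)[p^\infty]|=p^n\mu(\mathfrak{G}(Y))+n\lambda(\mathfrak{G}(Y))+\nu$, and finish via Theorem~\ref{theorem 9.2.5}. The paper compresses your middle step into the phrase ``by the straight forward calculation'' and handles the control error simply by citing boundedness of $\mathrm{Ker}(s_n)$ and $\mathrm{Coker}(s_n)$; your elementary-module analysis via Proposition~\ref{prop 6.2.4} is exactly what that calculation unpacks to, and your explicit flagging of the bounded-versus-stabilizing issue for the error terms is a point the paper leaves implicit.
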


\smallskip

\begin{proof} By the straight forward calculation, we can check that for a finitely generated $\Lambda$-module $Y$, we have $$\mathrm{log}_{p} |\frac{Y}{\omega_nY}[p^{\infty}]|=p^n \mu \left(\mathfrak{G}(Y) \right) + n \lambda \left(\mathfrak{G}(Y) \right) + \nu$$ for all $n>>0$.

\medskip

If we let $Y=\mathrm{Sel}_{K}(J_{\infty}^{ord})^{\vee}$, this gives an estimate for the group $\frac{\mathrm{Sel}_{K}(J_{\infty}^{ord})^{\vee}}{\omega_n \mathrm{Sel}_{K}(J_{\infty}^{ord})^{\vee}}[p^{\infty}] $. Since $\mathrm{Ker}(s_n)$ and $\mathrm{Coker}(s_n)$ are bounded independent of $n$, we get the desired assertion since we have isomorphisms $$\mathrm{Sel}_{K}(\hat{J}_n^{ord})^{\vee}[p^{\infty}] \simeq \sha^{1}_{K}(\hat{J}_n^{ord})^{\vee} \quad \mathrm{and} \quad \sha^{1}_{K}(J_{\infty}^{ord})^{\vee} \simeq \mathfrak{G}\left(\mathrm{Sel}_{K}(J_{\infty}^{ord})^{\vee}\right).$$
\end{proof}

\begin{cor}\label{cor 9.2.8} Suppose that $\mathrm{Coker}(s_n), \mathrm{Coker}(s'_n)$ are (finite and) bounded independent of $n$,. If the groups $\sha^{1}_{K}(\hat{J}_{n}^{ord}), \sha^{1}_{K}(\hat{J'}_{n}^{ord})$ are finite for all $n$, then the ratios $$ \frac{|\sha^{1}_{K}(\hat{J}_{n}^{ord})|}{|\sha^{1}_{K}(\hat{J'}_{n}^{ord})|} \quad \mathrm{and} \quad \frac{|\sha^{1}_{K}(\hat{J'}_{n}^{ord})|}{|\sha^{1}_{K}(\hat{J}_{n}^{ord})|} $$ are bounded independent of $n$.
\end{cor}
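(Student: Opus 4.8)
The plan is to deduce this from the explicit size estimate of Theorem~\ref{theorem 9.2.7} combined with the comparison of elementary modules in Corollary~\ref{cor 9.2.6}. First I would observe that the hypotheses here---$\mathrm{Coker}(s_n)$ and $\mathrm{Coker}(s'_n)$ finite and bounded independent of $n$, and $\sha^{1}_{K}(\hat{J}_{n}^{ord})$, $\sha^{1}_{K}(\hat{J'}_{n}^{ord})$ finite for all $n$---are precisely what is needed to apply Theorem~\ref{theorem 9.2.7} (and Theorem~\ref{theorem 9.2.5}) to \emph{both} the $(\alpha,\delta,\xi)$-tower and the $(\delta,\alpha,\xi')$-tower. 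So there exist integers $\nu,\nu'$ such that, for all $n\gg 0$,
\begin{align*}
\log_p|\sha^{1}_{K}(\hat{J}_{n}^{ord})| &= p^n\mu\bigl(\sha^{1}_{K}(J_{\infty}^{ord})^{\vee}\bigr)+n\lambda\bigl(\sha^{1}_{K}(J_{\infty}^{ord})^{\vee}\bigr)+\nu,\\
\log_p|\sha^{1}_{K}(\hat{J'}_{n}^{ord})| &= p^n\mu\bigl(\sha^{1}_{K}(J_{\infty}^{' ord})^{\vee}\bigr)+n\lambda\bigl(\sha^{1}_{K}(J_{\infty}^{' ord})^{\vee}\bigr)+\nu'.
\end{align*}

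The key remaining step is to show that the $\mu$- and $\lambda$-invariants of the two $\Lambda$-adic Tate--Shafarevich duals coincide. I would extract this from Corollary~\ref{cor 9.2.6}, which gives $E\bigl(\sha^{1}_{K}(J_{\infty}^{ord})^{\vee}\bigr)\simeq E\bigl(\sha^{1}_{K}(J_{\infty}^{' ord})^{\vee}\bigr)^{\iota}$, together with the fact that twisting by $\iota$ leaves both Iwasawa invariants unchanged. For $\mu$ this is immediate because $\iota$ is a $W$-algebra automorphism of $\Lambda$ with $\iota(p)=p$. For $\lambda$ I would note that $\iota$ sends a distinguished polynomial $g$ of degree $d$ to a unit multiple of a distinguished polynomial of the same degree $d$: indeed $(1+T)^d\iota(g)(T)$ is a polynomial of degree $d$ whose leading coefficient is $g(-1)\in W^{\times}$ and whose lower coefficients lie in $pW$, while $(1+T)^d\in\Lambda^{\times}$. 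Hence the sum of the degrees of the non-$p$ parts of the elementary module---that is, $\lambda$---is preserved, so $\mu\bigl(\sha^{1}_{K}(J_{\infty}^{ord})^{\vee}\bigr)=\mu\bigl(\sha^{1}_{K}(J_{\infty}^{' ord})^{\vee}\bigr)$ and $\lambda\bigl(\sha^{1}_{K}(J_{\infty}^{ord})^{\vee}\bigr)=\lambda\bigl(\sha^{1}_{K}(J_{\infty}^{' ord})^{\vee}\bigr)$. (Here Theorem~\ref{theorem 9.2.5}, applied to both towers, is what guarantees that these duals are finitely generated torsion $\Lambda$-modules, so that the invariants are defined.)

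Granting this, the two displayed quantities differ by the constant $\nu-\nu'$ for all $n\gg 0$, so the ratio $|\sha^{1}_{K}(\hat{J}_{n}^{ord})|/|\sha^{1}_{K}(\hat{J'}_{n}^{ord})|$ is eventually equal to $p^{\nu-\nu'}$; the finitely many exceptional $n$ contribute finitely many finite nonzero ratios, and the maximum of these together with $p^{\nu-\nu'}$ furnishes the asserted bound. The reciprocal ratio is bounded by exactly the same argument with the two towers interchanged, which is legitimate since the hypotheses are symmetric in $(\alpha,\delta,\xi)$ and $(\delta,\alpha,\xi')$. I do not expect a genuine obstacle: the only mildly delicate point is the behaviour of distinguished polynomials under the involution $\iota$, which is the short explicit computation indicated above, and it is harmless that Corollary~\ref{cor 9.2.6} is a statement about elementary modules only, since $\mu$ and $\lambda$ depend only on $E(-)$.
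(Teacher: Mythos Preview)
Your proposal is correct and follows essentially the same route as the paper: the paper's proof simply cites Theorem~\ref{theorem 8.3.10}, Theorem~\ref{theorem 9.2.5}, and Theorem~\ref{theorem 9.2.7}, which is exactly what you unpack (your invocation of Corollary~\ref{cor 9.2.6} is just a repackaging of the first two). Your added verification that $\iota$ preserves $\mu$ and $\lambda$ is a welcome elaboration; note that your claim that the lower coefficients of $(1+T)^d\iota(g)(T)$ lie in $pW$ is most cleanly seen by reducing modulo $p$, where $g\equiv T^d$ forces $(1+T)^d\iota(g)\equiv(-T)^d$.
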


\begin{proof} This follows from Theorem \ref{theorem 8.3.10}, Theorem \ref{theorem 9.2.5}, and Theorem \ref{theorem 9.2.7}.
\end{proof}

\appendix
\renewcommand{\thetheorem}{\Alph{section}.\arabic{subsection}.\arabic{theorem}}

\section{Lemmas in commutative algebra}

In this appendix, we record some lemmas in commutative algebra.

\begin{lemma} \label{lemma A.0.1}Let $X$ be a finitely generated $\Lambda$-module. Then we have:
(1) $\displaystyle \lim_{\substack{\longleftarrow \\ n}} \frac{X}{\omega_{n}X} \simeq X$. 

(2) $\displaystyle \lim_{\substack{\longleftarrow \\n}}X[\omega_{n}]=0$, where the inverse limit is taken with respect to the norm maps. Hence $\displaystyle \lim_{\substack{\longrightarrow \\ n}} \frac{X^{\vee}}{\omega_{n} X^{\vee}}=0$.

(3) If $X[\omega_{n}]=0$ for all $n$, then $X$ has no non-trivial finite $\Lambda$-submodules and $char(X_{\Lambda-\mathrm{tor}})$ is coprime to $\omega_{n}$ for all $n$. 

(4) $X$ is a $\Lambda$-torsion module whose characteristic ideal is coprime to $\omega_{n}$ for all $n$ if and only if $\frac{X}{\omega_nX}$ is finite for all $n$. In this case, $\displaystyle \lim_{\substack{\longrightarrow \\ n}} X[\omega_{n}] \simeq K$ where $K$ is the maximal finite $\Lambda$-submodule of $X$. Here the direct limit is taken with respect to the natural inclusion.
\end{lemma}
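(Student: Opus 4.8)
The plan is to prove the four statements in order, using only standard structure theory of finitely generated $\Lambda$-modules (the existence of a pseudo-isomorphism $\phi: X \to E(X)$ with finite kernel and cokernel) together with the fact that $\omega_n = \prod_{j \leq n} \omega_{j,j-1}$ (with $\omega_{1,0} := \omega_1 = T$), where the $\omega_{j,j-1}$ are distinguished irreducible polynomials. First I would dispose of (1): the ideals $(\omega_n)$ are cofinal among the open ideals defining the $\Lambda$-adic topology (since $\omega_n \in (p,T)^{p^{n-1}}$ and conversely $(p,T)^{M}$ contains some $\omega_n$ for suitable choices after multiplying by a power of $p$ — more simply, $\omega_n \to 0$ in $\Lambda$ and $X$ is a finitely generated module over the complete Noetherian local ring $\Lambda$, so $X$ is $\omega_n$-adically complete), hence $\varprojlim_n X/\omega_n X \simeq X$. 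For the Pontryagin dual reformulation in (2), note $\left(\varprojlim_n X[\omega_n]\right)^\vee \simeq \varinjlim_n (X[\omega_n])^\vee \simeq \varinjlim_n X^\vee/\omega_n X^\vee$, so the two assertions in (2) are equivalent and it suffices to prove $\varprojlim_n X[\omega_n] = 0$ where the transition maps are multiplication by $\omega_{n+1,n} = \omega_{n+1}/\omega_n$.

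For (2), I would argue via the pseudo-isomorphism $\phi: X \to E(X)$. The kernel of $\phi$ is finite, so after replacing $n$ by something large the norm maps kill it; thus it suffices to treat $X = \Lambda/g^e$ for a prime $g$, the case $X = \Lambda$ being trivial ($\Lambda[\omega_n] = 0$). If $g \nmid \omega_n$ for all $n$, then $(\Lambda/g^e)[\omega_n] = 0$ and we are done. If $g = \omega_{k,k-1}$ for some $k$, then for $n \geq k$ we have $\omega_n = \omega_{k,k-1} h_n$ with $\gcd(\omega_{k,k-1}, h_n) = 1$, so $(\Lambda/g^e)[\omega_n]$ is the image of $g^{e-1} \cdot (\text{something bounded})$ and the transition map, multiplication by $\omega_{n+1,n}$, maps this into $\omega_{n+1,n}\cdot(\Lambda/g^e)$; since $\gcd(\omega_{n+1,n}, g) = 1$ for $n+1 > k$, iterating these multiplications eventually annihilates the finite group, giving $\varprojlim = 0$. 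For (3): if $X[\omega_n] = 0$ for all $n$ then no $\omega_{k,k-1}$ divides $\mathrm{char}(X_{\Lambda\text{-tor}})$ (else $E(X)$ would have a summand $\Lambda/\omega_{k,k-1}^e$ contributing $\omega_{k,k-1}$-torsion, contradicting injectivity of $X \to E(X)$ on torsion, which holds since $X[\omega_n]=0$ forces $\ker\phi$ — a finite, hence torsion, module — to vanish); and a finite $\Lambda$-submodule $F \subseteq X$ would be killed by some $\omega_n$ (finite modules are killed by $(p,T)^M \supseteq$ something, but more directly $F \subseteq X[\omega_n]$ for large $n$ since $F$ is finite and $\bigcup_n X[\omega_n]$ captures all torsion killed by the $\omega_n$), hence $F = 0$.

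For (4): the forward direction is immediate, since if $\mathrm{char}(X_{\Lambda\text{-tor}})$ is coprime to every $\omega_n$ and $X$ is torsion, then $\omega_n$ is coprime to the characteristic ideal, so $X/\omega_n X$ is finite by the structure theory (on $E(X) = \bigoplus \Lambda/g_i^{e_i}$, $(\Lambda/g_i^{e_i})/\omega_n$ is finite exactly when $\gcd(g_i,\omega_n)=1$; the finite kernel/cokernel of $\phi$ only changes things by a bounded amount). Conversely, if $X/\omega_n X$ is finite for all $n$, then in particular $\mathrm{rank}_\Lambda X = 0$ (a free summand would give infinite $\Lambda^r/\omega_n$), so $X$ is torsion, and finiteness of $(\Lambda/g_i^{e_i})/\omega_n$ forces $\gcd(g_i,\omega_n)=1$, i.e. the characteristic ideal is coprime to $\omega_n$. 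For the last claim, $\varinjlim_n X[\omega_n]$ along the inclusions is the submodule $X[\omega_\infty] := \bigcup_n X[\omega_n]$ of $X$; I would show this equals the maximal finite submodule $K$. It is finite because it injects (via $\phi$, whose kernel is finite) into $\bigcup_n E(X)[\omega_n] = \bigcup_n (\bigoplus_i \Lambda/g_i^{e_i})[\omega_n]$, and each $(\Lambda/g_i^{e_i})[\omega_n]$ is finite of bounded order (bounded by the order of $(\Lambda/g_i^{e_i})[\omega_{k,k-1}^\infty]$ if $g_i = \omega_{k,k-1}$, else zero), so the union is finite; and it contains $K$ since a finite submodule is annihilated by $(p,T)^M$ for some $M$, and one checks $(p,T)^M \subseteq \sqrt{(\omega_n)}$ fails in general but $K$, being finite, is annihilated by $\omega_n$ for $n$ large (as $\omega_n \to 0$ in $\Lambda$ and acts topologically nilpotently on the finite discrete module $K$). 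The main obstacle I anticipate is (2) and the last part of (4): carefully tracking that the norm-map transition morphisms in $\varprojlim X[\omega_n]$ really do become zero on each bounded piece — this is where one must use coprimality of $\omega_{n+1,n}$ to the relevant irreducible factor rather than just a cardinality count, and get the direction of the maps right.
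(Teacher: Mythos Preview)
Your overall strategy (reduce to elementary pieces via the structure theorem) matches the paper's, and parts (3) and (4) are essentially fine modulo phrasing. But there is a genuine error in your treatment of (2) in the case $g=\omega_{k,k-1}$, and a smaller slip in (1).

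\medskip

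\textbf{The gap in (2).} For $X=\Lambda/\omega_{k,k-1}^{e}$ and $n\geq k$ you write that $X[\omega_n]$ is ``the image of $g^{e-1}\cdot(\text{something bounded})$'' and then that iterating multiplication by the $\omega_{n+1,n}$ ``eventually annihilates the finite group''. Neither claim is correct. One has
\[
X[\omega_n]=\omega_{k,k-1}^{\,e-1}\,\Lambda/\omega_{k,k-1}^{e}\ \simeq\ \Lambda/\omega_{k,k-1},
\]
which is a \emph{free} $W$-module of rank $p^{k-1}(p-1)$, hence infinite. Moreover, for $n\geq k$ the transition map $X[\omega_{n+1}]\to X[\omega_n]$ is multiplication by $\omega_{n+1,n}$, and in $\Lambda/\omega_{k,k-1}$ one computes (using $\omega_{n+1,n}(T)=\sum_{j=0}^{p-1}(1+T)^{jp^{n-1}}$ and $(1+T)^{p^{n-1}}\equiv 1$) that this is multiplication by $p$. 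So the transition maps are \emph{injective}, not eventually zero; the inverse limit vanishes because $\bigcap_N p^{N}(\Lambda/\omega_{k,k-1})=0$, i.e.\ by $p$-adic separatedness of a finite free $W$-module, not by annihilation. This is exactly how the paper argues: it identifies $X[\omega_n]\simeq\Lambda/\omega_{m+1,m}$ (an infinite $\mathbb{Z}_p$-free module) and then observes $\varprojlim=0$. Coprimality of $\omega_{n+1,n}$ and $\omega_{k,k-1}$ in the UFD $\Lambda$ does \emph{not} make $\omega_{n+1,n}$ a unit modulo $\omega_{k,k-1}$; it only makes $\Lambda/(\omega_{n+1,n},\omega_{k,k-1})$ finite.

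\medskip

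\textbf{The slip in (1).} The ideals $(\omega_n)$ are \emph{not} cofinal with the powers of $\mathfrak m=(p,T)$: since each $\omega_n$ is a distinguished polynomial, $\Lambda/\omega_n$ is $W$-free and $p\notin(\omega_n)$ for any $n$. Your fallback phrase ``$X$ is $\omega_n$-adically complete'' is not well-formed either. The clean way is to check $\varprojlim_n \Lambda/\omega_n\simeq\Lambda$ directly (this is the identification $W[\![\Gamma]\!]\simeq\varprojlim_n W[\Gamma/\Gamma^{p^{n-1}}]$) and then deduce the statement for general $X$ from a presentation $\Lambda^a\to\Lambda^b\to X\to 0$ using right-exactness of $\varprojlim$ on these compact towers.

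\medskip

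\textbf{Minor point in (3).} Saying the $\omega_{k,k-1}$-torsion in $E(X)$ ``contradicts injectivity of $X\to E(X)$'' has the logic backwards. What you actually use is: once $\ker\phi=0$ (which follows from $X[\omega_n]=0$), the finite cokernel of $X\hookrightarrow E(X)$ forces $E(X)[\omega_n]$ to be finite; but a summand $\Lambda/\omega_{k,k-1}^{e}$ would contribute the infinite module $\Lambda/\omega_{k,k-1}$ to $E(X)[\omega_n]$. That is the contradiction. The paper phrases this as: a factor $\omega_{n+1,n}$ in $\mathrm{char}(X_{\Lambda\text{-tor}})$ would give $X_{\Lambda\text{-tor}}[\omega_{n+1}]$ positive $\mathbb{Z}_p$-rank.
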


\medskip

\begin{proof} We only prove (2), (3) and (4). For (2), if $X$ is either finite, or free or isomorphic to $\frac{\Lambda}{g^{e}}$ for prime element $g$ which is coprime to $\omega_{n}$ for all $n$, the assertion is obvious. If $X$ is isomorphic to $\frac{\Lambda}{\omega_{m+1, m}^{e}}$ for some $m$ and $e \geq 1$, then $X[\omega_{n}] \simeq \frac{\Lambda}{\omega_{m+1, m}} \simeq \mathbb{Z}_p$ for all $n \geq m+1$, so we have $\displaystyle \lim_{\substack{\longleftarrow \\n}}X[\omega_{n}]=0$. 

Now consider the general $X$. By the structure theorem, we have a exact sequence $$\displaystyle 0 \rightarrow K \rightarrow X \rightarrow S=\Lambda^{r} \oplus (\bigoplus_{i=1}^{t} \frac{\Lambda}{f_{i}^{k_{i}}}) \rightarrow C \rightarrow 0$$ where $K$ and $C$ are finite modules, and $f_{1}, \cdots, f_{t}$ are prime elements of $\Lambda$. By what we have proven so far, $\displaystyle \lim_{\substack{\longleftarrow \\ n}}K[\omega_{n}]=\lim_{\substack{\longleftarrow \\ n}}S[\omega_{n}]=0$, so we get $\displaystyle \lim_{\substack{\longleftarrow \\ n}}X[\omega_{n}]=0$\\

Now we consider (3). Let $K$ be the maximal finite submodule of $X$. For large enough $n$, $K=K[\omega_{n}] \hookrightarrow X[\omega_{n}]$ so we get the first assertion. If $char(X_{\Lambda-\mathrm{tor}})$ contains $\frac{\omega_{n+1}}{\omega_{n}}$ for some $n$, then the structure theorem implies that $X_{\Lambda-\mathrm{tor}}[\omega_{n+1}]$ has $\mathbb{Z}_p$-rank at least $1$, which is a contradiction.\\

We lastly prove (4). The first equivalence follows directly from the structure theorem. Hence we have $K[\omega_{n}]=X[\omega_{n}]$ for all $n$. Since $K$ is finite, $K[\omega_{n}]=K$ for sufficiently large $n$, so the assertion follows.
\end{proof}

\smallskip

\begin{lemma}\label{lemma A.0.2} (1) Let $R$ be an integral domain and $Q(R)$ be the quotient field of $R$. Consider an exact sequence of $R$-modules $0 \rightarrow A \rightarrow B \rightarrow C \rightarrow 0$ where $A$ is a $R$-torsion module. Then we have a short exact sequence $0 \rightarrow A_{R-\mathrm{tor}} \rightarrow B_{R-\mathrm{tor}} \rightarrow C_{R-\mathrm{tor}} \rightarrow 0$.

\medskip

(2) Let $0\rightarrow A \rightarrow B \rightarrow C \rightarrow 0$ be a short exact sequence of finitely generated $\mathbb{Z}_{p}$-modules. If $A$ has finite cardinality, then we have a short exact sequence $0\rightarrow A=A[p^{\infty}] \rightarrow B[p^{\infty}] \rightarrow C[p^{\infty}] \rightarrow 0$.

\medskip

(3) If $0 \rightarrow X \rightarrow Y \rightarrow  Z \rightarrow W \rightarrow 0$ is an exact sequence of cofinitely generated $\mathbb{Z}_p$-modules with finite $W$, then the sequence $  X_{ / div} \rightarrow  Y_{/ div} \rightarrow  Z_{/ div} \rightarrow W \rightarrow 0$ is exact. 
\end{lemma}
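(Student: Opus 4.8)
The plan is to handle the three parts in order, deducing (2) from (1) and (3) from (2) via Pontryagin duality.

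For (1), I would base change to the fraction field. Since $Q(R)$ is a localization of $R$, it is flat over $R$, so tensoring $0 \to A \to B \to C \to 0$ with $Q(R)$ keeps it exact, and $A \otimes_R Q(R) = 0$ because $A$ is $R$-torsion. Placing this sequence beneath the original one and using the canonical maps $M \to M \otimes_R Q(R)$, whose kernel is precisely $M_{R-\mathrm{tor}}$, the snake lemma yields an exact sequence $0 \to A_{R-\mathrm{tor}} \to B_{R-\mathrm{tor}} \to C_{R-\mathrm{tor}} \to \operatorname{coker}(A \to A \otimes_R Q(R))$. The last term is $0$ since $A \otimes_R Q(R) = 0$, and $A_{R-\mathrm{tor}} = A$ by hypothesis, which gives the claimed short exact sequence. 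Part (2) is then immediate: a finite $\mathbb{Z}_p$-module is $p$-power torsion, so $A = A[p^\infty]$, and for a finitely generated $\mathbb{Z}_p$-module $M$ one has $M[p^\infty] = M_{\mathbb{Z}_p-\mathrm{tor}}$; thus (2) is the case $R = \mathbb{Z}_p$ of (1).

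For (3), the point is that $M \mapsto M_{/div}$ is neither left nor right exact in general, so I would not work with it directly but pass to Pontryagin duals, where it becomes the left-exact functor $N \mapsto N[p^\infty]$ via the isomorphism $(M_{/div})^\vee \simeq M^\vee[p^\infty]$ for cofinitely generated $\mathbb{Z}_p$-modules. Dualizing $0 \to X \to Y \to Z \to W \to 0$ gives an exact sequence $0 \to W^\vee \to Z^\vee \to Y^\vee \to X^\vee \to 0$ of finitely generated $\mathbb{Z}_p$-modules with $W^\vee$ finite, and, duality being exact, the target sequence $X_{/div} \to Y_{/div} \to Z_{/div} \to W \to 0$ is exact if and only if $0 \to W^\vee \to Z^\vee[p^\infty] \to Y^\vee[p^\infty] \to X^\vee[p^\infty]$ is exact. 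To obtain the latter I would split the dual four-term sequence into $0 \to W^\vee \to Z^\vee \to J \to 0$ and $0 \to J \to Y^\vee \to X^\vee \to 0$ with $J := \ker(Y^\vee \to X^\vee)$: part (2) applied to the first (using that $W^\vee$ is finite) gives $0 \to W^\vee \to Z^\vee[p^\infty] \to J[p^\infty] \to 0$ exact, while left-exactness of $N \mapsto N[p^\infty]$ applied to the second gives $0 \to J[p^\infty] \to Y^\vee[p^\infty] \to X^\vee[p^\infty]$ exact; splicing these two and dualizing back finishes the proof.

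The main obstacle is part (3): because $(-)_{/div}$ does not respect exact sequences, the argument hinges on the reduction via Pontryagin duality to the left-exact "$p$-power torsion" functor, and it is exactly the finiteness of $W$ (equivalently of $W^\vee$) that feeds into part (2) at the single spot where surjectivity of a map on torsion subgroups is needed.
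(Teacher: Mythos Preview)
Your proof is correct in all three parts. The paper does not give its own argument for this lemma but simply cites \cite[Lemma 2.1.4]{Lee2018}, so there is no in-paper proof to compare against; your snake-lemma reduction for (1), its specialization to (2), and the Pontryagin-duality splice for (3) constitute a complete and standard proof of the cited result.
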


\begin{proof} This is \cite[Lemma 2.1.4]{Lee2018}.
\end{proof}

\smallskip

\begin{lemma}
\label{lemma A.0.3}
(1) Let $M$ be a finitely generated $\Lambda$-module, and let $\lbrace \pi_{n}\rbrace$ be a sequence of non-zero elements of $\Lambda$ such that $\pi_0 \in m, \pi_{n+1} \in \pi_{n}m,  \frac{M}{\pi_{n}M}$ is finite for all $n$ where $m$ is the maximal ideal of $\Lambda$. Then we have an isomorphism $$\displaystyle \left(\lim_{\substack{\longrightarrow \\ n}} \frac{M}{\pi_{n} M}\right)^{\vee}   \simeq \mathrm{Ext}^1_{\Lambda}(M, \Lambda)^{\iota}$$ as $\Lambda$-modules.

\medskip

(2) For a finitely generated $\Lambda$-module $X$, we have an isomorphism $\displaystyle \lim_{\substack{\longleftarrow \\ n}}T_{p}(X^{\vee}[\omega_n]) \simeq \mathrm{Hom}_{\Lambda}(X, \Lambda)^{\iota}$.
\end{lemma}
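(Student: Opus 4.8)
The plan is to treat part (1) as the substantive assertion; part (2) is its ``$\mathbb{Z}_p$-linear'' analogue and is proved the same way.

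First I would observe that the standing hypothesis forces $M$ to be a torsion $\Lambda$-module whose characteristic ideal is prime to every $\pi_n$: applying a pseudo-isomorphism $M\to E(M)$ (finite kernel and cokernel), a free summand or a cyclic summand $\Lambda/g^e$ with $g\mid\pi_n$ would make $M/\pi_n M$ infinite. Next, a dévissage along $0\to M_0\to M\to\overline M\to 0$, where $M_0$ is the maximal finite $\Lambda$-submodule, reduces to $M$ having no nonzero finite $\Lambda$-submodule: on $M_0$ the functor $\bigl(\varinjlim_n(-)/\pi_n(-)\bigr)^\vee$ vanishes because the transition maps, multiplication by $\pi_{n+1}/\pi_n$, act on $M_0/\pi_nM_0=M_0$ through $\mathfrak{m}^n$ (note $\pi_n\in\mathfrak{m}^{n+1}$), which eventually annihilates $M_0$; and $\mathrm{Ext}^1_\Lambda(M_0,\Lambda)=0$ (resp. $\mathrm{Hom}_\Lambda(M_0,\Lambda)=0$) because a finite module over the two-dimensional regular local ring $\Lambda$ has projective dimension $2$, so by local duality its only nonvanishing $\mathrm{Ext}$ against $\Lambda$ is in degree $2$. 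A torsion $M$ with no finite submodule has depth $1$, hence (Auslander--Buchsbaum) projective dimension $1$, so it admits a length-one free resolution $0\to\Lambda^b\xrightarrow{\delta}\Lambda^b\to M\to 0$ with $\det\delta$ a generator of $\mathrm{char}_\Lambda(M)$; thus $\mathrm{Ext}^1_\Lambda(M,\Lambda)=\operatorname{coker}(\delta^t)$ and $\mathrm{Hom}_\Lambda(M,\Lambda)=0$.

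Now the core step. Reducing modulo $\pi_n$, $M/\pi_nM=\operatorname{coker}(\delta\bmod\pi_n)$; I would apply Pontryagin duality, the Hom--tensor adjunction over $\Lambda$, and the $\pi_n$-divisibility of $\Lambda^\vee$ (which kills $\mathrm{Ext}^1_{\Lambda/\pi_n}$ of a free module) to identify, as $\Lambda$-modules, $(M/\pi_nM)^\vee$ with $\bigl(\mathrm{Ext}^1_\Lambda(M,\Lambda)/\pi_n\mathrm{Ext}^1_\Lambda(M,\Lambda)\bigr)^\iota$, the twist $\iota$ being exactly the one built into the definition of $(-)^\vee$ in \S\ref{sec 1.4} — it surfaces concretely through the self-duality of the finite group ring $W[\Gamma/\Gamma^{p^{r-1}}]$ under $\gamma\mapsto\gamma^{-1}$, together with the perfect trace pairing of the \emph{unramified} extension $W/\mathbb{Z}_p$. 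Passing to the inverse limit over $n$ and invoking the evident generalization of Lemma~\ref{lemma A.0.1}-(1) — namely $\varprojlim_n Y/\pi_n Y\simeq Y$ for any finitely generated $\Lambda$-module $Y$ with $Y/\pi_n Y$ finite, which holds because then $\mathfrak{m}^{k}Y\subseteq\pi_nY\subseteq\mathfrak{m}^{n+1}Y$ so that $\{\pi_nY\}$ defines the $\mathfrak{m}$-adic topology — yields
\[
\Bigl(\varinjlim_n M/\pi_n M\Bigr)^\vee\;\simeq\;\varprojlim_n\bigl(\mathrm{Ext}^1_\Lambda(M,\Lambda)/\pi_n\mathrm{Ext}^1_\Lambda(M,\Lambda)\bigr)^\iota\;\simeq\;\mathrm{Ext}^1_\Lambda(M,\Lambda)^\iota .
\]
For part (2), using the functorial isomorphism $T_pA\simeq\mathrm{Hom}_{\mathbb{Z}_p}(A^\vee,\mathbb{Z}_p)$ and $X^\vee[\omega_n]=(X/\omega_nX)^\vee$ one rewrites $T_p(X^\vee[\omega_n])\simeq\mathrm{Hom}_{\mathbb{Z}_p}(X/\omega_nX,\mathbb{Z}_p)^\iota$, and the same resolution computation — now extracting the free part $\mathrm{Hom}_\Lambda(X,\Lambda)=\ker(\delta^t)$ (a second syzygy, hence free) rather than $\operatorname{coker}(\delta^t)$, and using $\mathrm{Hom}_{\mathbb{Z}_p}(W,\mathbb{Z}_p)\simeq W$ in place of the Pontryagin statement — gives $\varprojlim_n T_p(X^\vee[\omega_n])\simeq\mathrm{Hom}_\Lambda(X,\Lambda)^\iota$.

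The step I expect to be the main obstacle is precisely the identification in the core step: one must (i) pin down that, under the chosen isomorphisms, the maps $\times(\pi_{n+1}/\pi_n)$ defining the limit correspond to the natural projections $\mathrm{Ext}^1_\Lambda(M,\Lambda)/\pi_{n+1}\to\mathrm{Ext}^1_\Lambda(M,\Lambda)/\pi_n$ (resp., for (2), the reduction maps on the $\mathbb{Z}_p$-duals), so that the completeness statement above actually applies; and (ii) keep scrupulous track of the involution $\iota$, which enters only through the paper's twisted convention for Pontryagin duals and the duality of the group algebra, never from the free-resolution side. Once these bookkeeping points are settled, all the isomorphisms in sight are functorial, hence canonical, as asserted.
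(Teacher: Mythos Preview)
The paper does not actually prove this lemma: its entire proof reads ``This is \cite[Lemma A.1.2]{Lee2018}.'' So there is no in-paper argument to compare against, and your proposal is in fact supplying what the paper outsources.

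Your strategy is sound and is essentially the standard one. A few remarks on the details you flag as the ``main obstacle'':
\begin{itemize}
\item Your d\'evissage along $0\to M_0\to M\to\overline M\to 0$ is clean: the long exact $\mathrm{Ext}$ sequence collapses because $\mathrm{Hom}_\Lambda(-,\Lambda)$ vanishes on torsion, $\mathrm{Ext}^1_\Lambda(M_0,\Lambda)=0$, and $\mathrm{Ext}^2_\Lambda(\overline M,\Lambda)=0$ (projective dimension one), giving $\mathrm{Ext}^1_\Lambda(M,\Lambda)\simeq\mathrm{Ext}^1_\Lambda(\overline M,\Lambda)$; on the limit side you also need $\overline M[\pi_n]=0$ to get $0\to M_0/\pi_n\to M/\pi_n\to\overline M/\pi_n\to 0$, and this holds since $\overline M$ embeds in $E(M)$ on which $\pi_n$ is a nonzerodivisor.
\item For the core identification, a tidy way to see that the transition maps match is to dualize the \emph{resolution} rather than $M/\pi_nM$ directly: from $0\to\Lambda^b\xrightarrow{\delta}\Lambda^b\to M\to 0$ one gets $\mathrm{Ext}^1_\Lambda(M,\Lambda)=\mathrm{coker}(\delta^t)$, and reducing the \emph{same} map $\delta^t$ modulo $\pi_n$ and $\pi_{n+1}$ makes the projection $\mathrm{coker}(\delta^t)/\pi_{n+1}\to\mathrm{coker}(\delta^t)/\pi_n$ manifestly dual (under the finite-level self-duality of $\Lambda/\pi_n$) to multiplication by $\pi_{n+1}/\pi_n$ on $M/\pi_n\to M/\pi_{n+1}$.
\item Your invocation of a $\{\pi_n\}$-version of Lemma~\ref{lemma A.0.1}(1) is correct: the hypotheses $\pi_{n+1}\in\pi_n\mathfrak m$ and $Y/\pi_nY$ finite force $\{\pi_nY\}$ to be cofinal with $\{\mathfrak m^nY\}$, so $\varprojlim Y/\pi_nY\simeq Y$.
\item For part~(2) note that $X$ need not be torsion, so the free resolution is $0\to\Lambda^a\to\Lambda^{a+r}\to X\to 0$ only after killing the maximal finite submodule (which contributes nothing to either side, since $T_p$ of a finite group is zero and $\mathrm{Hom}_\Lambda(\mathrm{finite},\Lambda)=0$); after that your ``extract $\ker(\delta^t)$'' argument goes through, with the transition maps on $T_p(X^\vee[\omega_n])$ being the norm $\times\,\omega_{n+1}/\omega_n$ as used in the proof of Proposition~\ref{prop 8.2.5}.
\end{itemize}
In short: your argument is correct and is presumably close to what \cite{Lee2018} does; the paper itself offers nothing further to compare.
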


\begin{proof} This is \cite[Lemma A.1.2]{Lee2018}.
\end{proof}

\bibliography{reference}
\bibliographystyle{alpha}

\end{document}